\newcommand{\todo}
\newtheorem{theorem}{Theorem}
\newtheorem*{maintheorem}{Main theorem}
\newtheorem{theorem-definition}[theorem]{Theorem-Definition}
\newtheorem{corollary}[theorem]{Corollary}
\newtheorem{proposition}[theorem]{Proposition}
\newtheorem{lemma}[theorem]{Lemma}
\newtheorem{definition}[theorem]{Definition}
\newtheorem{remark}[theorem]{Remark}
\newtheorem{fact}[theorem]{Fact}
\newtheorem{example}[theorem]{Example}
\begin{document}

\newcommand{\R}{\mathbb{R}}
\newcommand{\N}{\mathbb{N}}
\newcommand{\C}{\mathbb{C}}
\newcommand{\p}{\partial}
\newcommand{\euc}{\operatorname{euc}}
\newcommand{\cont}{\operatorname{cont}}
\newcommand{\inj}{\operatorname{inj}}
\newcommand{\vol}{\operatorname{vol}}
\newcommand{\diam}{\operatorname{diam}}
\newcommand{\K}{\operatorname{K}}
\newcommand{\dist}{\operatorname{d}}
\newcommand{\interior}{\operatorname{int}}
\newcommand{\module}{\operatorname{mod}}
\newcommand{\area}{\mathcal{A}rea}
\newcommand{\M}{\mathcal{M}}

\title{
    \textsc{A compactness theorem for surfaces with Bounded Integral Curvature}    
}
\newpage
\author{Clément Debin\thanks{This research is supported by the ERC Advanced Grant 320939, Geometry and Topology of Open Manifolds (GETOM)}}

\setcounter{page}{1}

\maketitle{}

\normalsize{\begin{abstract}
We prove a compactness theorem for metrics with Bounded Integral Curvature on a fixed closed surface $\Sigma$. As a corollary, we obtain a compactification of the space of Riemannian metrics with conical singularities, where an accumulation of singularities is allowed.
\end{abstract}}

%\tableofcontents{}
%\pagebreak{}

\section*{Introduction and statement of the Main theorem}
%\addcontentsline{toc}{section}{Introduction}

The aim of this article is to compactify the space of metrics with conical singularities, on a fixed compact surface $\Sigma$. These metrics are Riemannian everywhere but at a finite number of points, where they look like an Euclidean cone; see section \ref{sectiondefinitionsingulariteconiques} for a precise definition. Since we allow cone points to accumulate, we need to define metrics with conical singularities ``along a curve'', or along a more complicated set, a Cantor set for example. In the case of flats metrics with conical singularities (we can think of polyhedra), the curvature is, in some sense to be made precise, concentrated at the cone points. Hence we need to understand what is a metric with curvature concentrated along a Cantor set.

In the late 1940's, A. Alexandrov and the school of Leningrad developed a very rich theory of singular surfaces. These are smooth surfaces, endowed with intrinsic metrics, for which there exists a natural notion of curvature, which is a Radon measure. They are called surfaces (respectively, metrics) with \emph{Bounded Integral Curvature}, denoted by ``B.I.C.`` in the sequel. The precise definition is given in section \ref{sectiondefinition}. For an exposition of the theory, see the book of A. Alexandrov and V. Zalgaller \cite{AZ}, the book of Y. Reshetnyak \cite{Reshetnyak_livre}, its article \cite{Reshetnyak_article} or the modern concise survey of M. Troyanov \cite{Troyanov_alexandrovsurfaces}.

The curvature measure is a fundamental object in this singular geometry. Its construction generalizes the Gauss-Bonnet formula: roughly speaking, the curvature of a geodesic triangle $ABC$ is $\overline{a}+\overline{b}+\overline{c}-\pi$, where $\overline{a},\overline{b}$ and $\overline{c}$ are the (upper) angles at $A,B$ and $C$ (see section \ref{sectiongeometricdefinition}). This theory includes smooth Riemannian metrics: in this case, the curvature measure is $\K_g d\mathcal{A}_g$, where $\K_g$ stands for the Gauss curvature, as well as metrics with conical singularities, where the curvature measure is $\K_g d\mathcal{A}_g$ + a sum of Dirac masses at the cone points ($\K_g$ is the Gauss curvature of the smooth part). Alexandrov surfaces of curvature bounded by above (the "CBA-spaces") or bounded by below (the "CBB-spaces") are also surfaces with B.I.C. 
The next example shows how a sequence of metrics with conical singularities can converge to a surface with B.I.C.:
%\begin{figure}
\begin{center}
\begin{tikzpicture}[scale=1]
%FIGURE N°1 :
%\draw (-3,-3.5) node [above] {$(\Sigma,d_1)$};
%cercle en haut
\draw ({-3+cos(180+10)},{0.5*sin(180+10)}) -- ({-3+cos(120+10)},{0.5*sin(120+10)});
\draw ({-3+cos(120+10)},{0.5*sin(120+10)}) -- ({-3+cos(60+10)},{0.5*sin(60+10)});
\draw ({-3+cos(60+10)},{0.5*sin(60+10)}) -- ({-3+cos(0+10)},{0.5*sin(0+10)});
\draw ({-3+cos(-180+10)},{0.5*sin(-180+10)}) -- ({-3+cos(-120+10)},{0.5*sin(-120+10)});
\draw ({-3+cos(-120+10)},{0.5*sin(-120+10)}) -- ({-3+cos(-60+10)},{0.5*sin(-60+10)});
\draw ({-3+cos(-60+10)},{0.5*sin(-60+10)}) -- ({-3+cos(-0+10)},{0.5*sin(-0+10)});
%cercle en bas
\draw [dashed] ({-3+cos(180+10)},{-2+0.5*sin(180+10)}) -- ({-3+cos(120+10)},{-2+0.5*sin(120+10)});
\draw [dashed] ({-3+cos(120+10)},{-2+0.5*sin(120+10)}) -- ({-3+cos(60+10)},{-2+0.5*sin(60+10)});
\draw [dashed] ({-3+cos(60+10)},{-2+0.5*sin(60+10)}) -- ({-3+cos(0+10)},{-2+0.5*sin(0+10)});
\draw ({-3+cos(-180+10)},{-2+0.5*sin(-180+10)}) -- ({-3+cos(-120+10)},{-2+0.5*sin(-120+10)});
\draw ({-3+cos(-120+10)},{-2+0.5*sin(-120+10)}) -- ({-3+cos(-60+10)},{-2+0.5*sin(-60+10)});
\draw ({-3+cos(-60+10)},{-2+0.5*sin(-60+10)}) -- ({-3+cos(-0+10)},{-2+0.5*sin(-0+10)});
%traits verticaux
\draw ({-3+cos(180+10)},{0.5*sin(180+10)}) -- ({-3+cos(180+10)},{-2+0.5*sin(180+10)});
\draw [dashed] ({-3+cos(120+10)},{0.5*sin(120+10)}) -- ({-3+cos(120+10)},{-2+0.5*sin(120+10)});
\draw [dashed] ({-3+cos(60+10)},{0.5*sin(60+10)}) -- ({-3+cos(60+10)},{-2+0.5*sin(60+10)});
\draw ({-3+cos(0+10)},{0.5*sin(0+10)}) -- ({-3+cos(0+10)},{-2+0.5*sin(0+10)});
\draw ({-3+cos(-60+10)},{0.5*sin(-60+10)}) -- ({-3+cos(-60+10)},{-2+0.5*sin(-60+10)});
\draw ({-3+cos(-120+10)},{0.5*sin(-120+10)}) -- ({-3+cos(-120+10)},{-2+0.5*sin(-120+10)});

%FIGURE N°2 :
%\draw (0,-3.5) node [above] {$(\Sigma,d_2)$};
%cercle en haut
\draw ({cos(180+10)},{0.5*sin(180+10)}) -- ({cos(135+10)},{0.5*sin(135+10)});
\draw ({cos(135+10)},{0.5*sin(135+10)}) -- ({cos(90+10)},{0.5*sin(90+10)});
\draw ({cos(90+10)},{0.5*sin(90+10)}) -- ({cos(45+10)},{0.5*sin(45+10)});
\draw ({cos(45+10)},{0.5*sin(45+10)}) -- ({cos(0+10)},{0.5*sin(0+10)});
\draw ({cos(0+10)},{0.5*sin(0+10)}) -- ({cos(-45+10)},{0.5*sin(-45+10)});
\draw ({cos(-45+10)},{0.5*sin(-45+10)}) -- ({cos(-90+10)},{0.5*sin(-90+10)});
\draw ({cos(-90+10)},{0.5*sin(-90+10)}) -- ({cos(-135+10)},{0.5*sin(-135+10)});
\draw ({cos(-135+10)},{0.5*sin(-135+10)}) -- ({cos(-180+10)},{0.5*sin(-180+10)});
%cercle en bas
\draw [dashed] ({cos(180+10)},{-2+0.5*sin(180+10)}) -- ({cos(135+10)},{-2+0.5*sin(135+10)});
\draw [dashed] ({cos(135+10)},{-2+0.5*sin(135+10)}) -- ({cos(90+10)},{-2+0.5*sin(90+10)});
\draw [dashed] ({cos(90+10)},{-2+0.5*sin(90+10)}) -- ({cos(45+10)},{-2+0.5*sin(45+10)});
\draw [dashed] ({cos(45+10)},{-2+0.5*sin(45+10)}) -- ({cos(0+10)},{-2+0.5*sin(0+10)});
\draw ({cos(0+10)},{-2+0.5*sin(0+10)}) -- ({cos(-45+10)},{-2+0.5*sin(-45+10)});
\draw ({cos(-45+10)},{-2+0.5*sin(-45+10)}) -- ({cos(-90+10)},{-2+0.5*sin(-90+10)});
\draw ({cos(-90+10)},{0-2+.5*sin(-90+10)}) -- ({cos(-135+10)},{-2+0.5*sin(-135+10)});
\draw ({cos(-135+10)},{-2+0.5*sin(-135+10)}) -- ({cos(-180+10)},{-2+0.5*sin(-180+10)});
%traits verticaux
\draw ({cos(180+10)},{0.5*sin(180+10)}) -- ({cos(180+10)},{-2+0.5*sin(180+10)});
\draw [dashed] ({cos(135+10)},{0.5*sin(135+10)}) -- ({cos(135+10)},{-2+0.5*sin(135+10)});
\draw [dashed] ({cos(90+10)},{0.5*sin(90+10)}) -- ({cos(90+10)},{-2+0.5*sin(90+10)});
\draw [dashed] ({cos(45+10)},{0.5*sin(45+10)}) -- ({cos(45+10)},{-2+0.5*sin(45+10)});
\draw ({cos(0+10)},{0.5*sin(0+10)}) -- ({cos(0+10)},{-2+0.5*sin(0+10)});
\draw ({cos(-45+10)},{0.5*sin(-45+10)}) -- ({cos(-45+10)},{-2+0.5*sin(-45+10)});
\draw ({cos(-90+10)},{0.5*sin(-90+10)}) -- ({cos(-90+10)},{-2+0.5*sin(-90+10)});
\draw ({cos(-135+10)},{0.5*sin(-135+10)}) -- ({cos(-135+10)},{-2+0.5*sin(-135+10)});

%FIGURE N°3 :
%\draw (3,-3.5) node [above] {$(\Sigma,d_3)$};
%cercle du haut :
\draw ({3+cos(180+10)},{0.5*sin(180+10)}) -- ({3+cos(144+10)},{0.5*sin(144+10)});
\draw ({3+cos(144+10)},{0.5*sin(144+10)}) -- ({3+cos(108+10)},{0.5*sin(108+10)});
\draw ({3+cos(108+10)},{0.5*sin(108+10)}) -- ({3+cos(72+10)},{0.5*sin(72+10)});
\draw ({3+cos(72+10)},{0.5*sin(72+10)}) -- ({3+cos(36+10)},{0.5*sin(36+10)});
\draw ({3+cos(36+10)},{0.5*sin(36+10)}) -- ({3+cos(0+10)},{0.5*sin(0+10)});
\draw ({3+cos(0+10)},{0.5*sin(0+10)}) -- ({3+cos(-36+10)},{0.5*sin(-36+10)});
\draw ({3+cos(-36+10)},{0.5*sin(-36+10)}) -- ({3+cos(-72+10)},{0.5*sin(-72+10)});
\draw ({3+cos(-72+10)},{0.5*sin(-72+10)}) -- ({3+cos(-108+10)},{0.5*sin(-108+10)});
\draw ({3+cos(-108+10)},{0.5*sin(-108+10)}) -- ({3+cos(-144+10)},{0.5*sin(-144+10)});
\draw ({3+cos(-144+10)},{0.5*sin(-144+10)}) -- ({3+cos(-180+10)},{0.5*sin(-180+10)});
%cercle du bas :
\draw [dashed] ({3+cos(180+10)},{-2+0.5*sin(180+10)}) -- ({3+cos(144+10)},{-2+0.5*sin(144+10)});
\draw [dashed] ({3+cos(144+10)},{-2+0.5*sin(144+10)}) -- ({3+cos(108+10)},{-2+0.5*sin(108+10)});
\draw [dashed] ({3+cos(108+10)},{-2+0.5*sin(108+10)}) -- ({3+cos(72+10)},{-2+0.5*sin(72+10)});
\draw [dashed] ({3+cos(72+10)},{-2+0.5*sin(72+10)}) -- ({3+cos(36+10)},{-2+0.5*sin(36+10)});
\draw [dashed] ({3+cos(36+10)},{-2+0.5*sin(36+10)}) -- ({3+cos(0+10)},{-2+0.5*sin(0+10)});
\draw ({3+cos(0+10)},{-2+0.5*sin(0+10)}) -- ({3+cos(-36+10)},{-2+0.5*sin(-36+10)});
\draw ({3+cos(-36+10)},{-2+0.5*sin(-36+10)}) -- ({3+cos(-72+10)},{-2+0.5*sin(-72+10)});
\draw ({3+cos(-72+10)},{-2+0.5*sin(-72+10)}) -- ({3+cos(-108+10)},{-2+0.5*sin(-108+10)});
\draw ({3+cos(-108+10)},{-2+0.5*sin(-108+10)}) -- ({3+cos(-144+10)},{-2+0.5*sin(-144+10)});
\draw ({3+cos(-144+10)},{-2+0.5*sin(-144+10)}) -- ({3+cos(-180+10)},{-2+0.5*sin(-180+10)});
%traits verticaux
\draw ({3+cos(180+10)},{0.5*sin(180+10)}) -- ({3+cos(180+10)},{-2+0.5*sin(180+10)});
\draw [dashed] ({3+cos(144+10)},{0.5*sin(144+10)}) -- ({3+cos(144+10)},{-2+0.5*sin(144+10)});
\draw [dashed] ({3+cos(108+10)},{0.5*sin(108+10)}) -- ({3+cos(108+10)},{-2+0.5*sin(108+10)});
\draw [dashed] ({3+cos(72+10)},{0.5*sin(72+10)}) -- ({3+cos(72+10)},{-2+0.5*sin(72+10)});
\draw [dashed] ({3+cos(36+10)},{0.5*sin(36+10)}) -- ({3+cos(36+10)},{-2+0.5*sin(36+10)});
\draw ({3+cos(0+10)},{0.5*sin(0+10)}) -- ({3+cos(0+10)},{-2+0.5*sin(0+10)});
\draw ({3+cos(-144+10)},{0.5*sin(-144+10)}) -- ({3+cos(-144+10)},{-2+0.5*sin(-144+10)});
\draw ({3+cos(-108+10)},{0.5*sin(-108+10)}) -- ({3+cos(-108+10)},{-2+0.5*sin(-108+10)});
\draw ({3+cos(-72+10)},{0.5*sin(-72+10)}) -- ({3+cos(-72+10)},{-2+0.5*sin(-72+10)});
\draw ({3+cos(-36+10)},{0.5*sin(-36+10)}) -- ({3+cos(-36+10)},{-2+0.5*sin(-36+10)});

%FIGURE LIMITE :
%\draw (0.55,-0.1) [scale=10] node {$\longrightarrow$};
\draw[->,>=latex] (4.5,-1) -- (6.5,-1);
%\draw (5.5,-1.2) node [below] {$m \rightarrow \infty$};
%\draw (8,-3.5) node [above] {$(\Sigma,d)$};
\draw [domain=-1:1] plot (8+\x,{0.5*sqrt(1-\x*\x)});
\draw [domain=-1:1] plot (8+\x,{-0.5*sqrt(1-\x*\x)});
\draw [dashed] [domain=-1:1] plot (8+\x,{-2+0.5*sqrt(1-\x*\x)});
\draw [domain=-1:1] plot (8+\x,{-2-0.5*sqrt(1-\x*\x)});
\draw (7,0) -- (7,-2);
\draw (9,0) -- (9,-2);
\end{tikzpicture}

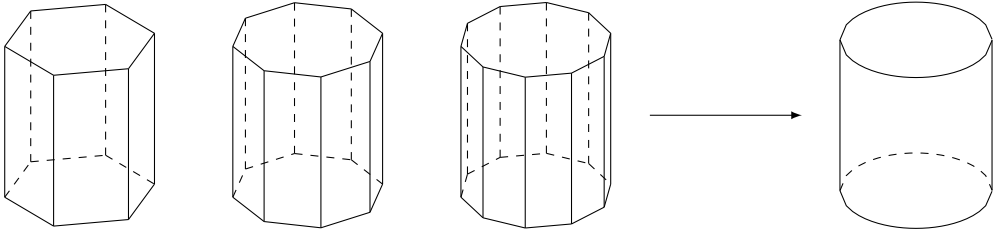
\captionof{figure}{Accumulation of singularities}
\label{figureintro}
\end{center}
%\end{figure}

\noindent The limit space is a cylinder (or a can: the top and the bottom belong to the surface), and the curvature measure of this singular surface is the usual angle measure on the two circles, at the top and at the bottom of the cylinder. We can imagine more complicated examples, for example the cone points may accumulate along a Cantor set in $\mathbb{S}^1$; the limit metric would then have conical singularities along a Cantor set, and the curvature measure would be the Hausdorff measure of the Cantor set.

Since these singular surfaces may be defined by approximation by smooth Riemannian surfaces (see definition \ref{definitionsurfacesBICbyapproximation}), most of the properties of smooth surfaces extend to this setting:  there is always a definite angle between any two geodesics, we have the existence of local conformal coordinates (see theorem-definition \ref{theoremedefinitionconformalcharts})... This last property is crucial in our article: the metric is locally induced by a (singular) Riemannian metric  $g_{\omega,h} = e^{2V[\omega](z) + 2h(z)} |dz|^2$, where $V[\omega]$ is the potential of the curvature measure $\omega$, and $h$ is a harmonic function. Hence, if we know the curvature measure, then we know the local expression the metric, up to a harmonic function. One of the key steps in this article is to obtain a control on this harmonic term (see theorem \ref{theoremetermeharmoniqueborne}). When we forget it (that is, we put $h=0$), then we have the following local convergence theorem, due to Y. Reshetnyak (see section \ref{sectionconformalcharts} for the definition of $\overline{d}_{\omega_m,0}$ and $\overline{d}_{\omega,0}$):
\begin{theorem} [Y. Reshetnyak, see \cite{Reshetnyak_livre}, theorem 7.3.1] \label{theoremeconvergencedistancereshetnjack}
Let $\omega_m^+$ and $\omega_m^-$ be a sequence of non-negative Radon measures with support in $\overline{D}(1/2)$, weakly converging to measures $\omega^+$ and $\omega^-$. Let $\omega_m := \omega_m^+ - \omega_m^-$ and $\omega := \omega^+ - \omega^-$. Then
\[
\overline{d}_{\omega_m,0} \underset{m \rightarrow \infty} \longrightarrow \overline{d}_{\omega,0},
\]
uniformly on any closed set $A \subset \overline{D}(1/2)$ such that $\omega^+(\{z\})<2\pi$ for every $z \in A$. That is, if $z_m \rightarrow z$ and $z'_m \rightarrow z'$, with $\omega^+(\{z\})<2\pi$ and $\omega^+(\{z'\})<2\pi$, then $\overline{d}_{\omega_m,0}(z_m,z'_m) \rightarrow \overline{d}_{\omega,0}(z,z')$.
\end{theorem}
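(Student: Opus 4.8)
Throughout, recall from Section~\ref{sectionconformalcharts} that $\overline d_{\omega,0}$ is the length metric on $\overline D(1/2)$ determined by the singular conformal metric $g_{\omega,0}=e^{2V[\omega]}\,|dz|^2$, with $V[\omega]=V[\omega^+]-V[\omega^-]$ built from the logarithmic potentials $V[\mu]$, and write $L_\omega(\gamma)=\int_\gamma e^{V[\omega]}\,|dz|$ for the $g_{\omega,0}$-length of a curve $\gamma$. The plan is to prove the two one-sided estimates
\[
\overline d_{\omega,0}(z,z')\ \le\ \liminf_{m\to\infty}\overline d_{\omega_m,0}(z_m,z'_m)
\qquad\text{and}\qquad
\limsup_{m\to\infty}\overline d_{\omega_m,0}(z_m,z'_m)\ \le\ \overline d_{\omega,0}(z,z')
\]
for every pair of sequences $z_m\to z$, $z'_m\to z'$ with $\omega^+(\{z\})<2\pi$ and $\omega^+(\{z'\})<2\pi$; the uniformity on $A$ follows at once, since a failure of uniformity on the compact set $A$ would produce such sequences violating one of the two estimates.

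I would first assemble the potential-theoretic input, with $M:=\sup_m\big(\omega_m^+(\overline D(1/2))+\omega_m^-(\overline D(1/2))\big)<\infty$ the uniform mass bound given by weak convergence. Then: (a) $V[\omega_m^\pm]\to V[\omega^\pm]$ in $L^1(\overline D(1/2))$, hence a.e.\ along a subsequence; (b) \emph{lower semicontinuity along sequences}: $w_m\to w$ forces $\liminf_m V[\omega_m^\pm](w_m)\ge V[\omega^\pm](w)$, since $(z,\zeta)\mapsto\log\frac1{|z-\zeta|}$ is lower semicontinuous and bounded below on $\overline D(1/2)^2$, so each truncation $\min(\,\cdot\,,N)$ is continuous, may be passed to the limit, and one then lets $N\uparrow\infty$; (c) the negative potentials are uniformly bounded below on $\overline D(1/2)$, so $e^{-V[\omega_m^-]}\le C_0$ there, with $C_0$ depending only on $M$. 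On top of this I would use two uniform comparisons with the Euclidean metric, both consequences of $M$ and elementary potential estimates: a \emph{uniform Euclidean lower bound} $\overline d_{\omega_m,0}(z,w)\ge\phi(|z-w|)$ with an increasing modulus $\phi$ depending only on $M$ (so small $g_{\omega_m,0}$-distance forces small Euclidean distance, uniformly in $m$), and a \emph{local upper bound}: if $\omega^+(\{w\})<2\pi$, pick $r$ with $\omega^+(\partial D(w,r))=0$, so that $\omega_m^+(D(w,r))\le 2\pi-\eta$ for $m$ large; integrating $e^{V[\omega_m^+]}$ along the pencil of Euclidean radii issuing from $w$ and selecting a good one by Fubini (the far mass of $\omega_m^+$ and all of $\omega_m^-$ contributing only uniformly bounded factors) gives $\overline d_{\omega_m,0}(w,\zeta)\le C(\eta)\,|\zeta-w|^{\eta/2\pi}$ for $\zeta\in D(w,r)$, uniformly in $m$. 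This local estimate --- that a point with $\omega^+(\{w\})<2\pi$ lies at finite distance with arbitrarily small metric neighbourhoods --- is precisely where the hypothesis is used.

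For the $\limsup$ inequality, fix $\varepsilon>0$ and a curve $\gamma$ from $z$ to $z'$ with $L_\omega(\gamma)\le\overline d_{\omega,0}(z,z')+\varepsilon$. Using the local estimate near the endpoints and near the at most countably many atoms of $\omega^+$, together with a generic small perturbation, I may assume --- losing another $\varepsilon$ --- that $\gamma$ stays at positive Euclidean distance from every atom of $\omega^+$, so that a thin Euclidean tube $T_\rho$ around $\gamma$ carries $\omega^+$-mass $<\eta_0<2\pi$. The key move is then to average the length over the normal perturbations $\gamma_v$, $v\in(-\rho,\rho)$, of $\gamma$: by Fubini,
\[
\int_{-\rho}^{\rho}L_{\omega_m}(\gamma_v)\,dv\ =\ \int_\gamma\!\int_{-\rho}^{\rho}e^{V[\omega_m]}\!\big(\gamma(t)+v\,n(t)\big)\,\big(1+O(\rho)\big)\,dv\,dt .
\]
On $T_\rho$ the family $\{e^{V[\omega_m]}\}_m$ is uniformly integrable --- the small-mass part of $\omega_m^+$ is handled by applying Jensen's inequality to its potential (here $\eta_0<2\pi$ is essential), the far part of $\omega_m^+$ and the factor $e^{-V[\omega_m^-]}\le C_0$ being uniformly bounded --- and converges a.e.\ to $e^{V[\omega]}$ by (a); hence Vitali's theorem gives $\int_{-\rho}^{\rho}L_{\omega_m}(\gamma_v)\,dv\to\int_{-\rho}^{\rho}L_\omega(\gamma_v)\,dv$ as $m\to\infty$. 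On the other hand, for $\rho$ small, $\int_{-\rho}^{\rho}L_\omega(\gamma_v)\,dv\le 2\rho\,(L_\omega(\gamma)+\varepsilon)$ --- again by Fubini, the local integrability of $e^{V[\omega^+]}$ near $\gamma$, and the Lebesgue differentiation theorem along the transversals. Therefore, for $m$ large, some $\gamma_v$ satisfies $L_{\omega_m}(\gamma_v)\le L_\omega(\gamma)+2\varepsilon$; joining $z_m,z'_m$ to the endpoints of $\gamma_v$ by short arcs (local estimate, valid since $\omega^+(\{z\}),\omega^+(\{z'\})<2\pi$) yields $\overline d_{\omega_m,0}(z_m,z'_m)\le\overline d_{\omega,0}(z,z')+O(\varepsilon)$ eventually, and $\varepsilon$ was arbitrary. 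For the $\liminf$ inequality, take $\gamma_m$ from $z_m$ to $z'_m$ with $L_{\omega_m}(\gamma_m)\le\overline d_{\omega_m,0}(z_m,z'_m)+1/m$, parametrized on $[0,1]$ proportionally to $g_{\omega_m,0}$-arclength; the $\limsup$ bound gives $\sup_m L_{\omega_m}(\gamma_m)<\infty$, so by $\phi$ the $\gamma_m$ are equicontinuous and Arzelà--Ascoli yields a subsequential uniform limit $\gamma$ from $z$ to $z'$; a lower-semicontinuity argument for the $g_{\omega,0}$-length along $\gamma_m\to\gamma$ --- using (b) and partitioning $[0,1]$ at points whose images avoid the finitely many atoms of $\omega^+$ of mass $\ge 2\pi$ --- then gives $\overline d_{\omega,0}(z,z')\le L_\omega(\gamma)\le\liminf_m L_{\omega_m}(\gamma_m)$.

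\textbf{The main obstacle is the $\limsup$ inequality.} The lower semicontinuity (b) runs the \emph{wrong way} for a fixed test curve --- $\liminf_m L_{\omega_m}(\gamma)$ may strictly exceed $L_\omega(\gamma)$ --- so one cannot simply evaluate $L_{\omega_m}$ on $\gamma$; the remedy is to average over a genuine family of competitors (the normal perturbations), which forces one first to push $\gamma$ off all atoms of $\omega^+$, including those of mass $<2\pi$ where $g_{\omega,0}$ has an honest cone point, and to transport the endpoints along, and the quantitative control of these moves is exactly what the local estimate --- hence the $2\pi$-threshold --- provides. The residual technical weight is the uniform-integrability (Vitali) step that permits passing to the limit inside $\int_\gamma$ although the potentials $V[\omega_m^+]$ are only $L^1$-bounded. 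By comparison, the $\liminf$ direction is soft --- Fatou together with (b) --- its only delicate point being the equicontinuity of the near-minimizers, which the uniform Euclidean lower bound $\phi$ supplies.
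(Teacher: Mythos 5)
The paper does not prove this statement: it is invoked as a known result of Y.~Reshetnyak, cited as Theorem~7.3.1 of \cite{Reshetnyak_livre}. There is therefore no proof in this paper against which to compare your sketch; the relevant standard is Reshetnyak's monograph.

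Judged on its own terms, your sketch has the right shape (two one-sided estimates, potential-theoretic convergence of $V[\omega_m^\pm]$, the local upper bound coming from the $2\pi$-threshold, and an averaging device to cope with the wrong-way semicontinuity), and you correctly identify the $\limsup$ direction as the genuine obstacle, for the right reason. But there is a concrete gap there. You assert that after pushing $\gamma$ off the atoms of $\omega^+$, a thin Euclidean tube $T_\rho$ around $\gamma$ carries $\omega^+$-mass $<\eta_0<2\pi$. Avoiding atoms does not give this: $\omega^+$ can be non-atomic (or have only tiny atoms) and yet concentrate along a one-dimensional set next to $\gamma$ --- a multiple of arc-length on a nearby arc, say --- so that $\omega^+(T_\rho)$ stays close to $\omega^+(\overline{D}(1/2))$, which may be far above $2\pi$, for every $\rho>0$. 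In that regime the Troyanov/Moser--Trudinger estimate underlying your uniform-integrability (Vitali) step fails on $T_\rho$, and the averaging argument does not close. The repair is to localize: subdivide $\gamma$ into short subarcs, each contained in a small disc on which $\omega^+$ has mass $<\eta_0$ --- possible precisely because $\omega^+$ has no atom of mass $\ge\eta_0$ near $\gamma$, so the mass of small discs centred on $\gamma$ is small --- run the perturbation-and-average step on each piece, and splice the perturbed subarcs together using your local upper bound, paying $O(\varepsilon)$ in total. This keeps the idea but requires restructuring the argument; as written, the claim about the tube mass is simply false and the $\limsup$ step collapses on it. A secondary point worth making explicit is that the $\liminf$ step's ``lower semicontinuity of length along $\gamma_m\to\gamma$'' needs more than pointwise lower semicontinuity of $V[\omega]$: the curves $\gamma_m$ are reparametrized and move, so one should pass through inscribed chord sums (using your Euclidean lower bound $\phi$) rather than trying to apply Fatou along a moving parametrization directly.
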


In this article, we use this local theorem to prove a \emph{global} convergence theorem for surfaces with B.I.C., and as a corollary we obtain a compactification of the space of Riemannian metrics with conical singularities. In the (classical) smooth setting, there are very well-known compactness results. Let $\Lambda,i,V$ be positive constants, and $\mathcal{M}_n(\Lambda,i,V)$ be the set of compact Riemannian $n$-manifolds with
\begin{enumerate}
\item |sectional curvature| $\leq \Lambda$,
\item injectivity radius $\geq i$,
\item volume $\leq V$.
\end{enumerate}
In the early 1980's, M. Gromov, in \cite{GLP}, stated the precompactness of the set $\mathcal{M}_n(\Lambda,i,V)$, in the Lipschitz topology: for every sequence $(X_m,g_m) \in \mathcal{M}_n(\Lambda,i,V)$, there exists a Riemannian $n-$manifold $X$, a Riemannian metric $g$ and diffeomorphisms $\varphi_m : X \rightarrow X_m$ such that, after passing to a subsequence, $(X,(\varphi_m)^* d_{g_m}) \rightarrow (X,d_g)$ in the Lipschitz topology ($d_{g_m}$ and $d_g$ are the length distance associated to the Riemannian metrics $g_m$ and $g$). This so-called Cheeger-Gromov convergence theorem was already implicit in the thesis of J. Cheeger in 1970. Since then, many articles were published on the subject, and the initial statement of M. Gromov was improved in two different ways: one only needs a bound on the Ricci curvature, and the convergence is much stronger than in the Lipschitz topology (see \cite{An}, \cite{AC}, \cite{GW}, \cite{K} and \cite{P}). We need to use harmonic coordinates in order to obtain the optimal regularity in the convergence (see \cite{DTK} and \cite{JK}).

For surfaces with B.I.C., the only convergence theorem known to the author deals with a sequence of metrics in a fixed conformal class (see the theorem 6.2 in \cite{Troyanov_alexandrovsurfaces}): it is a direct consequence of the local convergence theorem (theorem \ref{theoremeconvergencedistancereshetnjack}). When we look for a convergence theorem for a sequence of metrics $d_m$ on a surface $\Sigma$, at some point one needs to construct the diffeomorphisms $\varphi_m : \Sigma \rightarrow \Sigma$. It always involves serious work, for example by embedding the manifolds in some bigger space (see \cite{GLP}, \cite{HH} or the present article). Some of the consequences of a uniform convergence $d_m \rightarrow d$, up to diffeomorphisms (that is, there exists diffeomorphisms $\varphi_m : \Sigma \rightarrow \Sigma$ such that $(\varphi_m)^* d_m \rightarrow d$ uniformly on $\Sigma$) are described in \cite{Reshetnyak_livre} and \cite{AZ}: it deals with the length of converging curves, convergence of polygons, weak convergence of curvature measures, convergence of angles...

We want to adapt the three hypothesis of the compactness theorem for smooth Riemannian metrics to our singular setting. The hypothesis 1. deals with the sectional (Gauss) curvature, which does not exist everywhere in the singular setting, hence we ask for a bound on the curvature measure instead. In order to avoid a cusp, that is, a point $x \in \Sigma$ where the non-negative part of the curvature measure is $\omega^+(\{x\})=2\pi$ (such a point may be at infinite distance to any other point of the surface, see remark \ref{remarkcusp}), we ask for the inequality $\omega^+(B(x,\varepsilon)) \leq 2\pi-\delta$ for every $x \in \Sigma$ ($\varepsilon$ and $\delta$ are positive constants). The hypothesis 3., which deals with the volume, already makes sense - there is a natural notion of area on a surface with B.I.C.

So let us look at the hypothesis 2. In the smooth setting, a lower bound on the injectivity radius avoids a pinching of the manifold (as may happen, for example, when one factor of a torus $\mathbb{S}^1 \times \mathbb{S}^1$ shrinks to a point). But for surfaces with B.I.C., the injectivity radius does not make sense, and even for a surface with conical singularities, the injectivity radius of the (open) smooth part is zero (if $x$ is at a distance $r$ of a cone point, then $\inj(x) < r$). Hence we need to define some similar quantity, which makes sense for non-Riemannian metric spaces. We introduce the new notion of \emph{contractibility radius} (see section \ref{sectioncontractibilityradius}), which is the biggest $r$ such that all the closed balls of radius $s<r$ are homeomorphic to a closed disc (hence they are contractible). The important point is that a lower bound on the contractibility radius avoids a pinching of the surface. This notion is very natural: in the classical Cheeger-Gromov convergence theorem, one can replace a lower bound on the injectivity radius by a lower bound on the contractibility radius (see proposition \ref{propositionequivalencerayoncontractibiliterayoninjectivite}).

From now on, we fix $\Sigma$, a \emph{closed} surface: that is, a connected compact smooth surface, without boundary. Let $A,c,\varepsilon,\delta$ be some positive constants. Let $\M_\Sigma (A,c,\varepsilon,\delta)$ be the class of metrics $d$ with B.I.C. on $\Sigma$ such that:
\begin{enumerate}
\item For every $x\in \Sigma$ we have
\[
\omega^+ (B(x,\varepsilon)) \leq 2\pi-\delta ;
\]
\item the contractibility radius of $(\Sigma, d)$ verifies
\[
\cont({\Sigma, d}) \geq c ;
\]
\item the area of $(\Sigma,d)$ verifies
\[
 \area(\Sigma,d) \leq A.
\]
\end{enumerate}

% Note that these three conditions are very similar to the ones for the space $\mathcal{M}_n(\Lambda,i,V)$ we have introduced in the beginning of this section.

\begin{remark}
From now on, when considering a set $\M_\Sigma (A,c,\varepsilon,\delta)$, we always assume $\varepsilon < c$ (hence there exists conformal charts on balls of radius $\varepsilon$, see theorem-definition \ref{theoremedefinitionconformalcharts} and property \ref{propositioncontboulesouvertes}).
\end{remark}

The main result of the article is the following
\begin{maintheorem} \label{theoremeprincipal}
The space $\M_\Sigma(A,c,\varepsilon,\delta)$ is compact, in the uniform metric sense. That is for every sequence $d_m \in \M_\Sigma(A,c,\varepsilon,\delta)$, there exists a metric $d$ with B.I.C. such that, after passing to a subsequence, there are diffeomorphisms $\varphi_m : \Sigma \rightarrow \Sigma$ with
\[
(\varphi_m)^* d_m \underset{m \rightarrow \infty} \longrightarrow d \mbox{ uniformly on } \Sigma.                                                                                                                                                
\]
\end{maintheorem}

In the classical Cheeger-Gromov theorem, an easy packing argument shows that one can replace an upper bound on the area by an upper bound on the diameter. In our setting, if we want to do so, we also need to ask for an upper bound on the total measure curvature $|\omega|:=\omega^+ + \omega^-$ (see proposition \ref{propositionequivalencedefinition}):

\begin{corollary} \label{corollairetheoremeprincipal}
The space of metrics with B.I.C. verifying the conditions 1. and 2. above, and with diameter $\diam(\Sigma,d) \leq D$ and total measure curvature $|\omega|(\Sigma,d) \leq \Omega$ is compact, in the uniform metric sense. That is, for every sequence of metrics $d_m$ verifying the conditions above, there exists a metric $d$ with B.I.C. such that, after passing to a subsequence, there are diffeomorphisms $\varphi_m : \Sigma \rightarrow \Sigma$ with $(\varphi_m)^* d_m \rightarrow d$ uniformly on $\Sigma$.
\end{corollary}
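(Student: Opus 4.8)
The plan is to derive the Corollary from the Main theorem, the only genuinely new point being an \emph{a priori} bound on the area: I claim that conditions 1 and 2, together with $\diam(\Sigma,d)\le D$ and $|\omega|(\Sigma)\le\Omega$, force $\area(\Sigma,d)\le A$ for $A:=\tfrac12 D^2(2\pi+\Omega)$. Granting this, the class appearing in the Corollary is contained in $\M_\Sigma(A,c,\varepsilon,\delta)$, and the Main theorem applies verbatim: for every sequence $d_m$ there are a subsequence, diffeomorphisms $\varphi_m:\Sigma\to\Sigma$ and a metric $d$ with B.I.C. such that $(\varphi_m)^*d_m\to d$ uniformly on $\Sigma$.

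So the heart of the matter is the area bound, and this is exactly the place where the hypothesis $|\omega|\le\Omega$ is indispensable (without a bound on $\omega^-$ one can keep $\diam$ and condition 1 fixed while making the area arbitrarily large, e.g.\ by inserting many small bumps into a metric on $\mathbb S^2$). I would argue as follows. Fix $x\in\Sigma$ and set $\rho:=\max_{y}\dist(x,y)\le D$; then $\area(\Sigma,d)=\int_0^{\rho}L(s)\,ds$, where $L(s)$ is the length of the metric circle $\{\,\dist(x,\cdot)=s\,\}$. For $s<\rho$ the ball $B(x,s)$ is a proper open subsurface of $\Sigma$, hence has Euler characteristic $\le 1$, and the Gauss--Bonnet formula for surfaces with B.I.C.\ applied to $B(x,s)$ bounds the rotation of its boundary; together with the first-variation identity for $L$ this gives, in the appropriate weak sense, $L'(s)\le 2\pi-\omega(B(x,s))\le 2\pi+\omega^-(\Sigma)$. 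Since $L(0^+)=0$ and $\omega^-(\Sigma)\le|\omega|(\Sigma)\le\Omega$, integrating twice yields $\area(\Sigma,d)\le\tfrac12\rho^2(2\pi+\Omega)\le\tfrac12 D^2(2\pi+\Omega)$. (This is the Alexandrov estimate on the area of balls in a surface with B.I.C.; see \cite{AZ}, and compare proposition \ref{propositionequivalencedefinition}.) If one prefers to use only the local, conformal-chart form of such estimates, the same conclusion can be reached by covering $\Sigma$ with balls of radius $\varepsilon/2<c/2$ — on each of these the representation $g=e^{2V[\omega]+2h}|dz|^2$ is available and condition 1 makes $e^{2V[\omega]}$ integrable with a bound depending only on $\varepsilon$ and $\delta$ — and then controlling the number of such balls by a doubling argument at scale $\varepsilon$, which converts the bounds on $\diam$ and on $\omega^-$ into a bound on that number.

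Finally, as with the Main theorem, one checks that the limit $d$ inherits the constraints, so that the statement is a genuine compactness result: uniform convergence gives $\diam(\Sigma,d)=\lim_m\diam(\Sigma,(\varphi_m)^*d_m)\le D$, and uniform convergence up to diffeomorphisms forces weak convergence of the curvature measures (\cite{Reshetnyak_livre},\cite{AZ}), so that $|\omega|(\Sigma)\le\liminf_m|\omega_m|(\Sigma)\le\Omega$ by lower semicontinuity of the total variation on the compact surface $\Sigma$. The only substantial step in all of this is the area estimate of the preceding paragraph, and its subtlety is that it must be applied at the scale $s=\rho$ of the whole surface, far beyond the contractibility radius, so that one really leans on the global Gauss--Bonnet bookkeeping (or on the covering argument) rather than on a single conformal chart.
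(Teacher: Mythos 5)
Your argument is correct and follows essentially the paper's route: reduce to the Main theorem by showing that conditions 1, 2, $\diam\le D$ and $|\omega|\le\Omega$ together imply an a priori area bound, which in the paper is the converse direction of Proposition \ref{propositionequivalencedefinition} (proved there by writing $\Sigma=B(x,D+1)$ and invoking part 1 of Corollary \ref{corollairevolumeboulesalexandrov}). The Gauss--Bonnet/coarea sketch you give is a re-derivation of that same Shioya estimate, and the constant $\tfrac12 D^2(2\pi+\Omega)$ you obtain is consistent with it. Two small cautions: (i) the parenthetical alternative via the chart representation $g=e^{2V[\omega]+2h}|dz|^2$ is circular as written, since the only stated control on the harmonic term $h$ (Theorem \ref{theoremetermeharmoniqueborne}) already presupposes membership in some $\M_\Sigma(A,c,\varepsilon,\delta)$, i.e.\ an area bound; (ii) the closing remark that the estimate ``must be applied far beyond the contractibility radius'' is a non-issue, because the upper bound on the area of balls (part 1 of Corollary \ref{corollairevolumeboulesalexandrov}) carries no hypothesis on $\cont(\Sigma,d)$ -- only the lower bound (part 2) does.
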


A fortiori, we have compactness in the Gromov-Hausdorff sense of the sequence of metric spaces $(\Sigma,d_m)$. This property is true under much weaker assumptions: the set of surfaces with B.I.C. with diameter $\leq D$ and total measure curvature $\leq \Omega$ is precompact in the Gromov-Hausdorff topology, see \cite{Shioya}. Of course, in this case, the limit metric space may not be a surface (a sphere can for example shrink to a point).

For metrics with conical singularities, we obtain the following

\begin{corollary} \label{corollairesingconiques}
Consider on $\Sigma$ a sequence $(g_m)$ of Riemannian metrics with conical singularities at points $(p_i^m)_{i \in I_m}$, with angles $\theta_i^m$. Suppose that
\begin{enumerate}
\item for every $x \in \Sigma$, we have
\[
\int_{B_m(x,\varepsilon)} \K_m^+ d\mathcal{A}_m + \sum_{i} (2\pi - \theta_i^m)^+ \leq 2\pi -\delta,
\]
where the sum is taken over $i \in I_m$ such that $p_i^m \in B_m(x,\varepsilon)$;
\item the contractibility radius of $(\Sigma, d_m)$ verifies
\[
\cont({\Sigma, d_m}) \geq c ;
\]
\item the area of $(\Sigma,d_m)$ verifies
\[
 \area(\Sigma,d_m) \leq A.
\]
\end{enumerate}
Then, there exists a metric $d$ with B.I.C. such that, after passing to a subsequence, there are diffeomorphisms $\varphi_m : \Sigma \rightarrow \Sigma$ with
\[
(\varphi_m)^* d_m \underset{m \rightarrow \infty} \longrightarrow d \mbox{ uniformly on } \Sigma.                                                                                                                                                
\]
\end{corollary}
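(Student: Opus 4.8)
The plan is to observe that a Riemannian metric with conical singularities is a particular metric with B.I.C., and that hypotheses 1.--3. of the corollary are, once translated, exactly conditions 1.--3. defining the class $\M_\Sigma(A,c,\varepsilon,\delta)$; the corollary then reduces immediately to the Main theorem \ref{theoremeprincipal}.

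First I would recall (from section \ref{sectiondefinitionsingulariteconiques}, together with section \ref{sectiondefinition} and proposition \ref{propositionequivalencedefinition}) that if $g_m$ is a Riemannian metric on $\Sigma$ with conical singularities at the finite set $\{p_i^m\}_{i\in I_m}$, with angles $\theta_i^m$, then the associated length metric $d_m$ is a metric with B.I.C. whose curvature measure is
\[
\omega_m \;=\; \K_m\, d\mathcal{A}_m \;+\; \sum_{i\in I_m} (2\pi-\theta_i^m)\,\delta_{p_i^m},
\]
where $\K_m$ is the Gauss curvature of the smooth part and $\mathcal{A}_m$ the area measure, so that $\mathcal{A}_m(\Sigma)=\area(\Sigma,d_m)$. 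Since $I_m$ is finite, the cone points form an $\mathcal{A}_m$-null set, hence the absolutely continuous part $\K_m\,d\mathcal{A}_m$ and the atomic part are mutually singular, and the Jordan decomposition of $\omega_m$ reads
\[
\omega_m^+ \;=\; \K_m^+\, d\mathcal{A}_m \;+\; \sum_{i\in I_m} (2\pi-\theta_i^m)^+\,\delta_{p_i^m}.
\]
Therefore, for every $x\in\Sigma$,
\[
\omega_m^+\big(B_m(x,\varepsilon)\big) \;=\; \int_{B_m(x,\varepsilon)} \K_m^+\, d\mathcal{A}_m \;+\! \sum_{\,i\,:\,p_i^m\in B_m(x,\varepsilon)}\!\! (2\pi-\theta_i^m)^+ ,
\]
which is precisely the left-hand side appearing in hypothesis 1. of the corollary.

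Consequently, hypothesis 1. of the corollary is exactly the bound $\omega_m^+(B_m(x,\varepsilon))\le 2\pi-\delta$ for all $x\in\Sigma$, hypothesis 2. is $\cont(\Sigma,d_m)\ge c$, and hypothesis 3. is $\area(\Sigma,d_m)=\mathcal{A}_m(\Sigma)\le A$; these are conditions 1., 2. and 3. defining $\M_\Sigma(A,c,\varepsilon,\delta)$ (and, as always when considering such a class, we take $\varepsilon<c$). Hence $d_m\in\M_\Sigma(A,c,\varepsilon,\delta)$ for every $m$, and the Main theorem \ref{theoremeprincipal} yields a metric $d$ with B.I.C. together with diffeomorphisms $\varphi_m:\Sigma\to\Sigma$ such that, after passing to a subsequence, $(\varphi_m)^* d_m\to d$ uniformly on $\Sigma$ — which is the claim. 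There is essentially no obstacle beyond this identification: the only points needing a word of justification are the fact that a Riemannian metric with conical singularities is a metric with B.I.C. having the stated curvature measure (foundational material), and the elementary observation that the positive part $\omega_m^+$ splits along the absolutely continuous/atomic decomposition of $\omega_m$.
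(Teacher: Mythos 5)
Your argument is correct and is essentially the (unwritten) intended proof: once one recognizes that a metric with conical singularities is a metric with B.I.C.\ whose curvature measure is $\K_m\,d\mathcal{A}_m + \sum_i (2\pi-\theta_i^m)\,\delta_{p_i^m}$, and that its positive part decomposes as you indicate (the atomic and absolutely continuous pieces being mutually singular), hypotheses 1.--3.\ of the corollary become verbatim the defining conditions of $\M_\Sigma(A,c,\varepsilon,\delta)$, and the Main theorem applies directly.
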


\begin{comment}
The main ideas for proving the Main theorem are contained in a modern version of the proof of the Cheeger-Gromov convergence theorem, presented in \cite{HH}. In this proof, there are two very distinct steps:
\begin{itemize}
\item First step. A lower bound for the radius of balls on which we control the metric is proved. More precisely, if the coefficients of $g$ are $g_{ij}$ in this chart, then the eigenvalues of the matrix $[g_{ij}(x)]_{i,j = 1,2}$ are in some interval $[Q^{-1},Q]$ for some $Q>1$. We also ask the charts to be harmonic, and the functions $g_{ij}(x)$ to be controlled in some norm, but we do not want to enter into too many details here.
\item Second step. The theorem is proved, by using these charts to embed the manifold into some Euclidean space $\R^q$.
\end{itemize}
\end{comment}

We also obtain some interesting corollaries by considering smooth Riemannian metrics. In the case of non-positive Gauss curvature, the first condition is automatically satisfied, and the injectivity radius is half of the length of the smallest closed geodesic. Hence we obtain the
\begin{corollary} \label{corollairesingconiques}
Consider on $\Sigma$ a sequence $(g_m)$ of smooth Riemannian metrics, with non-positive sectional curvature, such that the length of the smallest closed geodesic is bounded by below, and the area is bounded by above.

Then, there exists a metric $d$ with B.I.C. such that, after passing to a subsequence, there are diffeomorphisms $\varphi_m : \Sigma \rightarrow \Sigma$ with
\[
(\varphi_m)^* d_{g_m} \underset{m \rightarrow \infty} \longrightarrow d \mbox{ uniformly on } \Sigma.                                                                                                                                                
\]
\end{corollary}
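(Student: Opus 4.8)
The plan is to derive this statement directly from the Main theorem, by checking that the length distances $d_{g_m}$ all lie in a common class $\M_\Sigma(A,c,\varepsilon,\delta)$ for suitably chosen \emph{uniform} constants. Recall first that a smooth Riemannian metric is a metric with B.I.C., with curvature measure $\omega_m = \K_{g_m}\, d\mathcal{A}_{g_m}$, so that $\omega_m^+ = (\K_{g_m})^+\, d\mathcal{A}_{g_m}$ (see section \ref{sectiondefinition}). Once the three hypotheses of the Main theorem are verified with constants independent of $m$, the conclusion is immediate.

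First I would dispose of the curvature condition. Since $\K_{g_m} \leq 0$ everywhere, we have $\omega_m^+ \equiv 0$, hence $\omega_m^+(B(x,\varepsilon)) = 0 \leq 2\pi - \delta$ for every $x \in \Sigma$, for \emph{any} radius $\varepsilon > 0$ and any $\delta \in (0, 2\pi)$; fix for instance $\delta = \pi$. Thus condition 1 of the Main theorem holds automatically and imposes no constraint on $\varepsilon$.

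Next I would treat the contractibility radius. By hypothesis there is $\ell_0 > 0$ such that the shortest closed $g_m$-geodesic has length $\geq \ell_0$ for all $m$. Because $\K_{g_m} \leq 0$, Jacobi field comparison shows that $(\Sigma, g_m)$ has no conjugate points, so Klingenberg's lemma gives $\inj(\Sigma, g_m) = \tfrac{1}{2}\,(\text{length of the shortest closed geodesic}) \geq \ell_0/2$, uniformly in $m$. For a smooth metric every closed ball of radius $s < \inj$ is the diffeomorphic image under the exponential map of a closed Euclidean ball, hence homeomorphic to a closed disc, so $\cont(\Sigma, d_{g_m}) \geq \inj(\Sigma, g_m) \geq \ell_0/2$ (one could also invoke Proposition \ref{propositionequivalencerayoncontractibiliterayoninjectivite} directly). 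I then set $c := \ell_0/2$, which gives condition 2 uniformly, and choose $\varepsilon := \ell_0/4 < c$ so that the standing assumption of the Remark following the definition of $\M_\Sigma(A,c,\varepsilon,\delta)$ is satisfied.

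Finally, condition 3 is exactly the hypothesis $\area(\Sigma, d_{g_m}) \leq A$. Therefore $d_{g_m} \in \M_\Sigma(A, c, \varepsilon, \delta)$ for all $m$, and the statement follows verbatim from the Main theorem. I do not expect a genuine obstacle in this argument: the only point requiring care is the uniformity of the constants, and in particular the classical identification of the injectivity radius with half the length of the shortest closed geodesic in the absence of conjugate points — this is what converts the hypothesis on closed geodesics into the uniform lower bound on the contractibility radius needed to apply the Main theorem.
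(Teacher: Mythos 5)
Your argument is correct and follows exactly the route the paper sketches in the sentence preceding the corollary: $\K_{g_m}\leq 0$ kills $\omega_m^+$ (so condition 1 of $\M_\Sigma(A,c,\varepsilon,\delta)$ holds for any $\varepsilon$ and $\delta$), the absence of conjugate points together with Klingenberg's lemma turns the closed-geodesic bound into a uniform lower bound on $\inj(\Sigma,g_m)$, which in turn bounds $\cont(\Sigma,d_{g_m})$ from below, and the area bound is condition 3; the Main theorem then gives the conclusion. One small slip worth noting: the parenthetical appeal to Proposition \ref{propositionequivalencerayoncontractibiliterayoninjectivite} points the wrong way, since that proposition deduces a lower bound on $\inj$ from one on $\cont$ (under a two-sided curvature bound), not the converse — the direction you actually need is the elementary inequality $\cont(\Sigma,d_g)\geq\inj(\Sigma,g)$ via the exponential map, which you do state and justify correctly just before the parenthesis.
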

\noindent The article is organized as follows:

In section 1, we define metrics with B.I.C., as well as metrics with conical singularities. We also state the existence of local conformal charts.

In section 2, we define the new notion of contractibility radius, we give some properties and we look at some examples. We also explain the link with the injectivity radius in the case of smooth Riemannian metrics with bounded sectional curvature.

In section 3, we prove two properties for surfaces with B.I.C.: one result concerns the volume of balls (by analogy with the case of smooth Riemannian metrics, when one has a control on the sectional curvature); another one is on the length of a line segment, for a singular Riemannian metric which has no harmonic term.

The heart of the article is the section 4: we prove preliminary properties for the set $\M_\Sigma(A,c,\varepsilon,\delta)$. Let $d \in \M_\Sigma(A,c,\varepsilon,\delta)$, and let $H : B(x,\varepsilon) \rightarrow D(1/2)$ be a conformal chart, with $H(x)=0$. First, we prove that the harmonic term for the metric is bounded on every compact set of $D(1/2)$ (this is theorem \ref{theoremetermeharmoniqueborne}). Then, we prove the fundamental theorem \ref{theoremerecouvrement}. Roughly speaking, we have a control on the images by $H$ of balls of "big" radii $B(x,\varepsilon/2)$ and $B(x,\varepsilon/4)$, and balls of "small" radii $B(x,\kappa\varepsilon)$ (for some small constant $\kappa>0$). This control has to be uniform, that is independent of the metric $d\in \M_\Sigma(A,c,\varepsilon,\delta)$ we consider.

In section 5, we prove the Main theorem. We present a detailed sketch of the proof at the beginning of the section: this is an adaptation of the proof of Cheeger-Gromov's compactness theorem presented in \cite{HH}.

In the appendix, we state some results of conformal geometry of annuli, needed in section 4. This standard material can be found in the book of L. Ahlfors \cite{Ahlfors}; we recall it to fix the notations.

\textbf{Notations:} the usual non-negative and non-positive parts of a real number $x$ are $x^+ := \max(x,0)$ and $x^- := \max(-x,0)$. If $f$ is a function, its non-negative and non-positive parts are $f^+(x) := (f(x))^+$ and $f^-(x) := (f(x))^-$, and if $\nu$ is a Radon measure, we define its non-negative and non-positive parts by
\[
 \nu^+(X) := \sup_{A \subset X} \nu(A) \mbox{ and } \nu^-(X) := \sup_{A \subset X} -\nu(A).
\]
$\nu^+$ and $\nu^-$ are two non-negative measures; we have $\nu = \nu^+ - \nu^-$, and we set $|\nu| := \nu^+ + \nu^-$.

\section{Surfaces with Bounded Integral Curvature}

We give two definitions of a surface with B.I.C. : the first one is geometric, and the second one is by approximation by smooth Riemannian surfaces. Then we state the fundamental property that they are locally isometric to a (singular) Riemannian metric, conformal to the euclidean metric $|dz|^2$. Finally we give the definition of a metric with conical singularities. All the notions presented here are taken from \cite{Reshetnyak_livre}, \cite{Reshetnyak_article} and \cite{Troyanov_alexandrovsurfaces}.

Let $\Sigma$ be a closed surface. Recall that a metric $d$ on $\Sigma$ is \emph{intrinsic} if for every $x,y \in \Sigma$ we have
\[
d(x,y) = \inf L(\gamma),
\]
where the infimum is taken over all continuous curves $\gamma : [0,1] \rightarrow \Sigma$, with $\gamma(0)=x$ and $\gamma(1)=y$, and where the length of $\gamma$ is defined by
\[
L(\gamma) := \sup_{0=t_0 \leq ... \leq t_n=1} ~ \left(\sum_{i=0}^{n-1} d(\gamma(t_i),\gamma(t_{i+1}))\right).
\]
In our setting, $\Sigma$ is compact, so if $d$ is an intrinsic metric on $\Sigma$ compatible with the topology, there always exists a minimizing geodesic between two points. That is for every $x,y\in \Sigma$, there exists a continuous curve $\gamma : [0,1] \rightarrow \Sigma$, with $\gamma(0)=x$ and $\gamma(1)=y$, such that $d(x,y)=L(\gamma)$.

\subsection{Definition} \label{sectiondefinition}

\subsubsection{A geometric definition} \label{sectiongeometricdefinition}
For this definition, see \cite{Reshetnyak_article}. Roughly speaking, metrics with B.I.C. are intrinsic metrics, for which a curvature measure is well-defined; we first define the curvature measure of a geodesic triangle (by analogy with the smooth case, where this is equal to the sum of the angles minus $\pi$), then we extend it to any Borel set.

Recall that if $OXY$ is a triangle in the Euclidean space, then if $x=|OX|$, $y=|OY|$ and $z=|XY|$, then the angle at $O$ is
\[
 \arccos\frac{x^2 + y^2 - z^2}{2xy}.
\]

Now, let $d$ be an intrinsic metric on $\Sigma$ (compatible with the topology), and let $\gamma_1,\gamma_2 : [0,\varepsilon) \rightarrow \Sigma$ be two continuous curves with $\gamma_1(0)=\gamma_2(0)=O$. Then we can define the upper angle at $O$ between $\gamma_1$ and $\gamma_2$ by
\[
\overline{a} = \limsup_{X \rightarrow O, Y \rightarrow O} \left(\arccos \frac{d(O,X)^2 + d(O,Y)^2 - d(X,Y)^2}{2d(O,X) d(O,Y)}\right)
\]
(the $\limsup$ is taken over points $X \in \gamma_1$ and $Y \in \gamma_2$). If we replace the $\limsup$ by a $\liminf$, we obtain the lower angle $\underline{a}$ at $O$. If $\overline{a}=\underline{a}$, then we say that the angle at $O$ between $\gamma_1$ and $\gamma_2$ exists, and we set $a := \overline{a}=\underline{a}$.

If $T=ABC$ is a triangle (that is, $A,B,C \in \Sigma$, and we specify some geodesics $\gamma_1 = [AB]$, $\gamma_2 =[BC]$ and $\gamma_3 =[CA]$), then we can define the upper excess of the triangle $ABC$ by
\[
 \overline{\delta}(T) := \overline{a} + \overline{b} + \overline{c}-\pi,
\]
where $\overline{a}$ is the upper angle at $A$ between $\gamma_1$ and $\gamma_3$, $\overline{b}$ is the upper angle at $B$ between $\gamma_1$ and $\gamma_2$, and $\overline{c}$ is the upper angle at $C$ between $\gamma_2$ and $\gamma_3$.

Remark that if the metric is Riemannian, then by the Gauss-Bonnet formula this quantity is $\overline{\delta}(T) = \int_T \K_g d\mathcal{A}_g$.
\begin{definition}
The metric $d$ is with B.I.C. on $\Sigma$ (and we say that $(\Sigma,d)$ is a surface with B.I.C.) if: 
\begin{enumerate}
 \item $d$ is an intrinsic metric on $\Sigma$;
 \item $d$ is compatible with the topology of $\Sigma$;
 \item for every $x \in \Sigma$, there exists a neighborhood $U$, homeomorphic to an open disc, and a constant $M(U)<\infty$ such that for any system $T_1,...,T_n$ of pairwise non-overlapping \emph{simple} (this technical condition is explained in detail in \cite{Reshetnyak_livre}) triangles contained in $U$ we have the inequality
\[
 \sum_{i=1}^n \overline{\delta}(T_i) \leq M(U).
\]
\end{enumerate}
\end{definition}

If $(\Sigma,d)$ is a surface with B.I.C., then we know that the angle between two geodesics always exists, and for a geodesic triangle $T$ with angles $a, b,c$ we set $\delta(T) := a + b + c - \pi$. We can then define the curvature measure: we set $\omega = \omega^+ - \omega^-$, where $\omega^+$ and $\omega^-$ are two non-negative Radon measures, which are defined as follows. If $U \subset \Sigma$ is an open set, we set
\[
 \omega^+ (U) = \sup \left(\sum_{i=1}^n \delta(T_i)^+\right) \mbox{ and } \omega^- (A) = \sup \left(\sum_{i=1}^n \delta(T_i)^-\right),
\]
where the supremum is taken over all system $(T_1,...,T_n)$ of pairwise non-overlapping simple triangles contained in $U$, and if $A \subset \Sigma$ is Borel set we set
\[
 \omega^+(A) := \inf_{\mbox{open sets }U \supset A} \omega^+(U) \mbox{ and } \omega^-(A) := \inf_{\mbox{open sets }U \supset A} \omega^-(U)
\]
(see \cite{AZ}, chapter 5). We will see another definition of the curvature measure $\omega$ by approximation with smooth Riemannian metrics (see the next section).

\begin{remark}
Rather surprisingly, for a geodesic triangle $T$ with angles $a,b,c$, we do not always have the equality $\omega(T)=a+b+c-\pi$; see \cite{Reshetnyak_livre} and \cite{AZ} for more details.
\end{remark}

If $d$ is a Riemannian metric, then the curvature measure is $\omega=\K_g d\mathcal{A}_g$, and we have $\omega^+ = \K_g^+ d\mathcal{A}_g$ and $\omega^- = \K_g^- d\mathcal{A}_g$.

\begin{example}
Let us see on an example how we can compute the excess of a triangle. Look at a geodesic triangle $T= ABC$ on a cylinder:
\begin{center}
\begin{tikzpicture}
% cylindre
\draw [domain=-1:1][samples=200] plot (2*\x,{0.6*sqrt(1-\x*\x)});
\draw [domain=-1:1] [samples=200] plot (2*\x,{-0.6*sqrt(1-\x*\x)});
\draw (-2,0) -- (-2,-2);
\draw (2,0.04) -- (2,-2);
% points & noms
\draw (0,0) node {$\bullet$};
\draw (0,0) node [above] {A};
\draw (2*-0.5,{-0.6*sqrt(1-0.5*0.5)}) node {$\bullet$};
\draw (2*-0.5,{-0.6*sqrt(1-0.5*0.5)}) node [above] {B};
\draw (1,{-2*sqrt(6-1)+2*sqrt(6-2*2)-0.6*sqrt(1-0.5*0.5)}) node {$\bullet$};
\draw (1,{-2*sqrt(6-1)+2*sqrt(6-2*2)-0.6*sqrt(1-0.5*0.5)}) node [below] {C};
\draw (2*0.5,{-0.6*sqrt(1-0.5*0.5)}) node {$\bullet$};
\draw (2*0.5,{-0.6*sqrt(1-0.5*0.5)}) node [above] {P};
% tracé du triangle géodésique
\draw [domain=-2:-1] plot (3+2*\x,{-2*sqrt(6-\x*\x)+2*sqrt(6-2*2)-0.6*sqrt(1-0.5*0.5)});
\draw (0,0) -- (2*-0.5,{-0.6*sqrt(1-0.5*0.5)});
\draw (2*0.5,{-0.6*sqrt(1-0.5*0.5)}) -- (2*0.5, {-2*sqrt(6-1)+2*sqrt(6-2*2)-0.6*sqrt(1-0.5*0.5)});
\draw (2*0.5,{-0.6*sqrt(1-0.5*0.5)}) -- (0,0);
\end{tikzpicture}
\captionof{figure}{}
\end{center}
We assume that $A$ is the center of the circle at the top of the cylinder, $B$ is on this circle, and $C$ is not on the top. Let $P$ be the intersection point between the circle and the geodesic between $A$ and $C$. Call $\theta$ the angle at $A$; in the geodesic triangle $ABC$, we are looking for the angles at $B$ and $C$. If we cut the cylinder along the circle, we obtain two parts:
\begin{center}
\begin{tikzpicture}
\draw (-2,0) circle (2);
\draw (-2,0) node {$\bullet$};
\draw [domain=240:300] plot ({-2+0.3*cos(\x)},{0.3*sin(\x)});
\draw (-2,0) node [above] {A};
\draw (-2,-0.75) node [above] {$\theta$};
\draw ({-2+2*cos(240)},{2*sin(240)}) node {$\bullet$};
\draw ({-2+2*cos(240)},{2*sin(240)}) node [below] {B};
\draw ({-2+2*cos(300)},{2*sin(300)}) node {$\bullet$};
\draw ({-2+2*cos(300)},{2*sin(300)}) node [below] {P};
\draw (-2,0) -- ({-2+2*cos(240)},{2*sin(240)});
\draw (-2,0) -- ({-2+2*cos(300)},{2*sin(300)});
\draw [dashed] ({-2+2*cos(240)-sin(240)},{2*sin(240)+cos(240)}) -- ({-2+2*cos(240)+sin(240)},{2*sin(240)-cos(240)});
\draw [dashed] ({-2+2*cos(300)-sin(300)},{2*sin(300)+cos(300)}) -- ({-2+2*cos(300)+sin(300)},{2*sin(300)-cos(300)});
\draw (1,0) node {and};
\draw (2.5,0) -- (5,0);
\draw (5,0) -- (5,-2);
\draw (2.5,0) -- (5,-2);
\draw (4.8,0) -- (4.8,-0.2);
\draw (5,-0.2) -- (4.8,-0.2);
\draw [domain=-38.7:0] plot ({2.5+0.3*cos(\x)},{0.3*sin(\x)});
\draw (3.2,-0.25) node {$b$};
\draw [domain=90:141.3] plot ({5+0.3*cos(\x)},{-2+0.3*sin(\x)});
\draw (4.8,-1.5) node {$c$};
\draw (2.5,0) node [left] {B};
\draw (5,0) node [right] {P};
\draw (5,-2) node [right] {C};
\end{tikzpicture}
\captionof{figure}{}
\end{center}
Call $b$ and $c$ the angles at $B$ and $C$ in the right (Euclidean) triangle $BPC$: we have $b+c=\pi/2$. Then in the geodesic triangle $ABC$, the angle at $B$ is $b+\pi/2$, and the angle at $C$ is $c$, hence the excess of the triangle is
\[
\delta(T) = (\theta + b + \pi/2 + c) - \pi = \theta.
\]
%Hence we see that the curvature measure is the angle measure on the circle at the top of the cylinder.
\end{example}

\subsubsection{A definition by approximation}

Metrics with B.I.C. can be uniformly approximated by Riemannian metrics. Indeed, we have the following alternative definition (see \cite{Troyanov_alexandrovsurfaces}):

\begin{definition} \label{definitionsurfacesBICbyapproximation}
 $d$ is a metric with B.I.C. on $\Sigma$ if:
\begin{enumerate}
\item $d$ is an intrinsic distance on $\Sigma$;
\item $d$ is compatible with the topology of $\Sigma$;
\item there exists a sequence $g_m$ of Riemannian metrics on $\Sigma$, with $(\int_\Sigma |\K_{g_m}| d\mathcal{A}_m)_{m \in \N}$ bounded, such that $d$ is the uniform limit of the metrics $d_{g_m}$ on $\Sigma$.
\end{enumerate}
\end{definition}

The third condition explains the terms ``Bounded Integral Curvature''. We can then define $\omega$ as the weak limit of $\K_{g_m} d\mathcal{A}_m$, and the area measure $d\mathcal{A}$ as the weak limit of $d\mathcal{A}_m$ (because of the equivalence of the two definitions, these measures do not depend of the choice of the sequence $(g_m)$). Note that the area measure coincides with the two-dimensional Hausdorff measure of the metric space $(\Sigma,d)$.

\subsection{Conformal charts} \label{sectionconformalcharts}

In the sequel, for $r>0$, we set
\[
 D(r) := \{ z \in \C, |z|<r \},
\]
and
\[
\overline{D}(r) := \{ z \in \C, |z|\leq r \}.
\]

Let $\omega$ be a Radon measure with support in $\overline{D}(1/2)$, and $h$ a harmonic function on $D(1/2)$. Consider the following (singular) Riemannian metric:
\[
 g_{\omega,h} = e^{2V[\omega](z) + 2h(z)} |dz|^2.
\]
The map $V[\omega]$ is the potential of the measure $\omega$, and is defined by
\begin{equation*} %\label{equationdefinitionpotentiel}
 V[\omega] (z) := \iint_{\C} \left(\frac{-1}{2\pi}\right) \ln|z-\xi| d\omega(\xi) : 
\end{equation*}
it is defined for almost every $z \in \C$, and $V[\omega] \in L^1_{loc}(\C)$. It verifies $\Delta V[\omega] = \omega$ in the weak sense, where the sign convention for the Laplace operator on $\C$ is $\Delta = -\frac{\partial^2}{\partial x^2} - \frac{\partial^2}{\partial y^2}$.

Since
\[
V[\omega] (z) = \iint_{D(1/2)} \left(\frac{-1}{2\pi}\right) \ln|z-\xi| d\omega(\xi),
\]
and $\omega = \omega^+ - \omega^-$, we can write $V[\omega] = V[\omega^+]-V[\omega^-]$. Moreover, for every $z,\xi \in D(1/2)$ we have $\ln|z-\xi| \leq 0$, so for almost every $z \in D(1/2)$ we have $V[\omega^+](z) \geq 0$ and $V[\omega^-](z) \geq 0$, hence
\[
 -V[\omega^-](z) \leq V[\omega](z) \leq V[\omega^+](z).
\]
These inequalities will be used many times in the sequel. We would not have such inequalities if $D(1/2)$ had been replaced by another set (the unit disc $D(1)$ for example); this explains why in theorem-definition \ref{theoremedefinitionconformalcharts}, the conformal charts are defined on $D(1/2)$.

Consider $\gamma : [0,1] \rightarrow D(1/2)$ a continuous simple curve (that is, $\gamma$ is injective), parametrized with constant speed $s$ (that is, the Euclidean length of the curve $\gamma_{|[t_1,t_2]}$ is $s\cdot (t_2-t_1)$ for every $t_1 \leq t_2$). We define the length of $\gamma$ for the singular Riemannian metric $g_{\omega,h}$ by
\[
L_{\omega,h}(\gamma) := \int_0^1 e^{V[\omega](\gamma(t)) + h(\gamma(t))} s  \cdot dt
\]
(we use a curve with constant speed because we only make a continuity assumption, so the quantity $|\gamma'(t)|$ may not exist). This integral makes sense, that is $V[\omega](\gamma(t))$ is well defined for almost every $t \in [0,1]$ (this is because $V[\omega]$ is the difference of two subharmonic functions; see \cite{Reshetnyak_livre}, p.99).

Then we set
\[
 d_{\omega,h}(z,z') := \inf\big\{ L_{\omega,h} (\gamma) \big\} \in [0,+\infty],
\]
where the infimum is taken over all continuous simple curves $\gamma : [0,1] \rightarrow D(1/2)$, parametrized with constant speed, with $\gamma(0)=z$ and $\gamma(1)=z'$. It is clear that $d_{\omega,h}$ verify all the properties to be a distance, except that we may have $d_{\omega,h}(z,z')=\infty$. A sufficient condition for $d_{\omega,h}$ to be a distance is the following (see \cite{Troyanov_alexandrovsurfaces}, proposition 5.3):
\begin{proposition} \label{propositioninegaliteomegaplus}
If for every $z \in D(1/2)$ we have $\omega^+(\{z\})<2\pi$, then $d_{\omega,h}$ is a distance on $D(1/2)$. 
\end{proposition}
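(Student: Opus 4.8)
The plan is to check the axioms of a distance for $d_{\omega,h}$ on $D(1/2)$. Non-negativity and $d_{\omega,h}(z,z)=0$ are immediate, and symmetry holds because reversing the parametrization of a (simple, constant-speed) curve changes neither its Euclidean length nor $L_{\omega,h}$. The triangle inequality follows by concatenating two competing curves, erasing the loops this may create, and reparametrizing at constant speed --- operations that keep the curve inside $D(1/2)$ and do not increase $L_{\omega,h}$. So the real content is twofold: (i) \emph{finiteness}, $d_{\omega,h}(z,z')<+\infty$ for all $z,z'$; and (ii) \emph{non-degeneracy}, $d_{\omega,h}(z,z')>0$ when $z\neq z'$. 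Both reduce to local statements near a point $z_0$. For (i) it suffices to show that each $z_0$ has a Euclidean neighbourhood $U_{z_0}$ with $\sup_{w\in U_{z_0}}d_{\omega,h}(z_0,w)<\infty$: chaining along the segment $[z,z']$, compact in the convex set $D(1/2)$ and hence covered by finitely many such $U_{z_0}$, then gives (i). For (ii) it suffices to find, for each $z_0$, a radius $\rho_0>0$ and a constant $\eta_0>0$ with $d_{\omega,h}(z_0,w)\ge\eta_0$ whenever $|w-z_0|=\rho_0$, since any competitor curve from $z_0$ to $z'$ with $|z'-z_0|\ge\rho_0$ must cross the circle $\{|w-z_0|=\rho_0\}$.

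For the local finiteness near $z_0$, the hypothesis $\omega^+(\{z\})<2\pi$ is exactly what is used, through the fact that the model conformal density $|w|^{-\alpha/\pi}$ attached to an atom of mass $\alpha$ is area-integrable near the atom precisely when $\alpha<2\pi$. Concretely, I would invoke the classical potential-theoretic statement (which is what lies behind \cite{Troyanov_alexandrovsurfaces}, proposition 5.3, and the discussion of cusps in the Introduction): since every atom of $\omega^+$ lying in the open disc $D(1/2)$ has mass $<2\pi$, the function $e^{2V[\omega^+]}$ is in $L^1_{loc}(D(1/2))$. As $h$ is harmonic, hence locally bounded, the conformal factor of $g_{\omega,h}$ satisfies $e^{2(V[\omega]+h)}\le e^{2h}\,e^{2V[\omega^+]}\in L^1_{loc}(D(1/2))$, so $g_{\omega,h}$ has locally finite area. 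Turning finite area into finite distance is then a standard argument: fix a small closed disc about $z_0$, write its area in polar coordinates centred at $z_0$, and apply Cauchy--Schwarz in the radial variable to conclude that for almost every direction $\theta$ the radial segment from $z_0$ to $z_0+\varepsilon\theta$ has finite $L_{\omega,h}$; using in addition that $V[\omega]$ is a difference of subharmonic functions --- hence finite and integrable along almost every line --- one upgrades this to the uniform bound $\sup_{w\in U_{z_0}}d_{\omega,h}(z_0,w)<\infty$.

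For the local non-degeneracy near $z_0$ one must prevent $e^{V[\omega]+h}$ from collapsing, and this is where I expect the main difficulty: $\omega^-$ is subject to \emph{no} $2\pi$-type bound (a cone angle $>2\pi$ being admissible), so $V[\omega^-]$ can be unbounded above and $e^{V[\omega]+h}$ can tend to $0$, in particular along a minimizing sequence of curves. The strategy is to use $V[\omega]\ge-V[\omega^-]$ together with the fact that $V[\omega^-]$ is superharmonic on $D(1/2)$ --- hence $\ge0$ there, finite quasi-everywhere, locally $L^1$ --- and, via the Riesz decomposition on a small disc about $z_0$, to replace $V[\omega^-]$ by the potential $p$ of the restriction of $\omega^-$ to that disc (the remaining term being harmonic, hence bounded). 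One is then reduced to showing that the length functional of $e^{-2p}\,|dw|^2$, with $p$ a nonnegative potential of a finite measure, separates points locally: here $\{p=+\infty\}$ is polar, its image under the Lipschitz map $w\mapsto|w-z_0|$ is Lebesgue-null, and one argues along circles $\{|w-z_0|=t\}$ avoiding it --- but the naive comparison (bounding $e^{-p}$ from below on such a circle) can fail, since $p$ may be unbounded even there, so a more careful potential-theoretic argument is needed, together with a separate treatment of the atoms of $\omega^-$. This delicate step is precisely the content of the cited theorem of Reshetnyak (\cite{Reshetnyak_livre}; see also \cite{Troyanov_alexandrovsurfaces}, proposition 5.3), which for a self-contained proof one would reproduce.
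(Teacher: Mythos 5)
The paper itself offers no proof of this proposition: it is quoted verbatim from \cite{Troyanov_alexandrovsurfaces}, Proposition~5.3, so there is no internal argument to compare with. Your reduction of the distance axioms (symmetry by reversal; triangle inequality by concatenation, chronological loop-erasure and constant-speed reparametrization, all of which keep curves in $D(1/2)$ and only decrease the line integral of the nonnegative density $e^{V[\omega]+h}$) is sound, and you correctly isolate finiteness and non-degeneracy as the two real issues. Deferring non-degeneracy to Reshetnyak/Troyanov is appropriate and consistent with what the paper does; that part is genuinely delicate.

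The finiteness step, however, has a concrete gap. Set $f=e^{V[\omega]+h}$. In polar coordinates centred at $z_0$, local finiteness of the area gives only $\int_0^{2\pi}\!\int_0^\varepsilon f^2\,r\,dr\,d\theta<\infty$, so for a.e.\ $\theta$ one controls $\int_0^\varepsilon f^2\,r\,dr$. Cauchy--Schwarz then yields
\[
\int_0^\varepsilon f\,dr=\int_0^\varepsilon (f\sqrt r)\,r^{-1/2}\,dr\le\Big(\int_0^\varepsilon f^2 r\,dr\Big)^{1/2}\Big(\int_0^\varepsilon r^{-1}\,dr\Big)^{1/2},
\]
and the second factor diverges; so this chain does not produce radii of finite $L_{\omega,h}$-length. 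Indeed $\int_0^\varepsilon f^2 r\,dr<\infty$ does not imply $\int_0^\varepsilon f\,dr<\infty$ in general (take $f(r)=r^{-1}(\ln(1/r))^{-1}$), and the fact that the two conditions coincide for the model power $f=r^{-\alpha/2\pi}$ is a coincidence of the power law, not a general principle. The also-invoked integrability of $V[\omega]$ along a.e.\ line bounds $\int|V[\omega]|\,dr$, not $\int e^{V[\omega]}\,dr$, and hence does not rescue the argument; and even granting finite length along a.e.\ ray, one would still need an additional step to reach an arbitrary nearby $w$ and to get the \emph{uniform} bound you want. The correct and much more direct route is exactly what this paper develops later for other purposes: shrink the disc so that $\omega^+$ has total mass $<2\pi$ there (possible since $\omega^+(\{z_0\})<2\pi$ and $\omega^+$ is Radon), apply Jensen's inequality to get $e^{V[\omega^+]}\in L^p$ for some $p>1$ (Proposition~\ref{theoremetroyanov}), and then bound the length of the straight segment $[z_0,w]$ directly by H\"older (cf.\ Proposition~\ref{propositionmajorationdistance}), with $h$ absorbed as a locally bounded factor. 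This produces an explicit uniform estimate and bypasses the area computation entirely.
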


\begin{remark} \label{remarkcusp}
 If $\omega^+(\{z_0\})=2\pi$ for some $z_0 \in D(1/2)$ (we say that $z_0$ is a \emph{cusp}), then $z_0$ may be at infinite distance to any other point $z \in D(1/2)$. For example, if we set $\omega = 2\pi \delta_0$ ($\delta_0$ is the Dirac mass at $0 \in \C$) and $h=0$, then $g_{\omega,h} = |z|^{-2} |dz|^2$, and we easily see that $d_{\omega,h}(0,z)=\infty$ for any $z \neq 0$.
\end{remark}

If the condition of proposition \ref{propositioninegaliteomegaplus} is satisfied, then we say that the metric has no cusp. $d_{\omega,h}$ is then compatible with the topology of $D(1/2)$ (as a subset of $\C$), and $(D(1/2),d_{\omega,h})$ is a surface with B.I.C. In the sequel, we will always assume that the metrics have no cusp. By the hypothesis 1., this is true for every $d \in \M_\Sigma (A,c,\varepsilon,\delta)$.

To state the local convergence theorem (theorem \ref{theoremeconvergencedistancereshetnjack}), we also need the following definition: if $z,z' \in D(1/2)$, we set 
\begin{equation} \label{equationdistancetechnique}
\overline{d}_{\omega,0}(z,z'):= \inf \big\{ L_{\omega,0}(\gamma) \big\}
\end{equation}
where the infimum is taken over all continuous simple curves $\gamma : [0,1] \rightarrow \overline{D}(1/2)$, parametrized with constant speed, with $\gamma(0)=z$ and $\gamma(1)=z'$. The difference with $d_{\omega,0}(z,z')$ is that the curves we are considering here can meet $\partial D(1/2)$. This technical detail is only needed in the proof of corollary \ref{corollaireconvergencedistance}. At every other place in the article, we will use $d_{\omega,h}$ or $d_{\omega,0}$.

\begin{example}
Let $g$ be a smooth Riemannian metric on $\Sigma$, and let $x \in \Sigma$. Around $x$, we can find local coordinates $z \in D(1/2)$ such that the metric is conformal to the Euclidean metric, that is
\[
 g = e^{2u(z)} |dz|^2.
\]
For the Gauss curvature and the area, we have the following formulas:
\[
 \K_g = (\Delta u) e^{-2u} \mbox{ and } d\mathcal{A}_g = e^{2u} d\lambda(z)
\]
(in all the sequel, $d\lambda$ is the Lebesgue measure on $\mathbb{C}$), so the curvature measure is $\omega = \K_g d\mathcal{A}_g = \Delta u ~ d\lambda(z)$. Let $h := u - V[\omega]$: by definition of the potential $V[\omega]$, $h$ is harmonic, so the metric has the following form:
\[
 g = e^{2V[\omega] + 2h} |dz|^2 = g_{\omega,h}.
\]
\end{example}

Next property is fundamental. Like Riemannian metrics, metrics with B.I.C. are locally conformal to the Euclidean metric: this is Theorem 4 in \cite{Reshetnyak_article}.

\begin{theorem-definition} \label{theoremedefinitionconformalcharts}
Let $(\Sigma,d)$ be a surface with B.I.C., with no cusp, and let $U$ be an open set, homeomorphic to an open disc, such that $\overline{U}$ is homeomorphic to a closed disc. Then there exists a map $H$, a measure $\omega_H$, defined in $D(1/2)$, and a harmonic function $h$ on $D(1/2)$ such that
\[
H : (U,d_{|U}) \rightarrow (D(1/2),d_{\omega_H,h})
\]
is an isometry. Such a map $H$ is called a \emph{conformal chart}.

We denote by $d_{|U}$ the intrinsic distance induced by $d$ on $U$: that is, $d_{|U}(x,y)$ is the infimum of the $d-$length of curves joigning $x$ and $y$ in $U$.

The measure $\omega_H$ is defined by $\omega_H = H_\# \omega$ ($\omega$ is the standard curvature measure associated to every surface with B.I.C.); that is, $\omega_H (A) = \omega(H^{-1}(A))$ for every Borel set $A \subset D(1/2)$.

Like for surfaces with Riemannian metrics, the area of any Borel set $A \subset U$ is
\[
 \area(A) = \int_{H(A)} e^{2V[\omega_H](z)+2h(z)} d\lambda(z).
\]

Moreover, this theorem shows that the surface $\Sigma$ has a natural structure of a Riemann surface (see \cite{Reshetnyak_livre}).

\end{theorem-definition}

\subsection{Surfaces with conical singularities} \label{sectiondefinitionsingulariteconiques}

\begin{definition}
A metric with conical singularities is a metric $d$ with B.I.C., with no cusp such that, if the Radon-Nikodym decomposition of the curvature measure $\omega$ with respect to the area measure $d\mathcal{A}$ reads
\[
\omega = \mu + \K d\mathcal{A}
\]
for some function $\K \in L^1_{loc}(d\mathcal{A})$, then the singular measure $\mu$ is a finite sum of Dirac masses.
\end{definition}

If $\mu = \sum_{i \in I} k_i \delta_{p_i}$ (where $\delta_{p_i}$ is the Dirac mass at $p_i$, and $k_i < 2\pi$), then in a neighborhood of any $p_i$ there are complex coordinates $z \in D(1/2)$ such that the singular metric reads
\[
g = |z|^{2\beta_i} e^{2u_i(z)}|dz|^2,
\]
with $\beta_i:=-k_i/2\pi>-1$, and $u_i \in L^1_{loc}(D(1/2))$, with $\Delta u_i \in L^1_{loc}(D(1/2))$ in the weak sense.

The metric looks like an Euclidean cone: the plane, endowed with the metric $g = |z|^{2\beta} |dz|^2$, is isometric an Euclidean cone of angle
\[
\theta = 2\pi (\beta+1) = 2\pi - k.
\]
For $\theta \in (0,2\pi)$, this can be obtained by gluing an angular sector of the plane:
\begin{center}
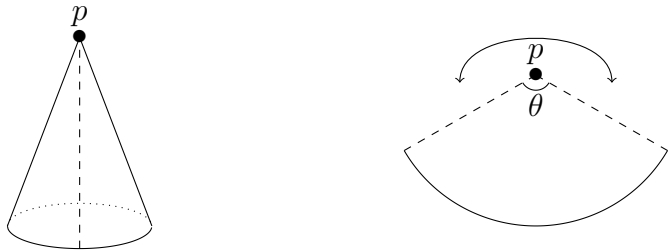

\begin{tikzpicture}
%le cône euclidien
%\draw (-6,-2) ellipse (0.95 and 0.3);
\draw [dotted] [domain = -6.95:-5.05] [samples = 200] plot (\x, {-2+0.3*sqrt(1-((\x+6)^2)/0.95^2))});
\draw [domain = -6.95:-5.05] [samples = 200] plot (\x, {-2-0.3*sqrt(1-((\x+6)^2)/0.95^2))});
\draw (-6.95,-2) -- (-6,0.5);
\draw (-5.05,-2) -- (-6,0.5);
\draw [dashed] (-6,-2.3) -- (-6,0.5);
\draw (-6,0.5) node {$\bullet$};
\draw (-6,0.5) node [above] {$p$};
%le domaine du plan
\draw [domain = -30:-150] [samples = 100] plot (\x : 2) ;
\draw [dashed] (0,0) -- ({2*cos(30)},{-2*sin(30)});
\draw [dashed] (0,0) -- ({2*cos(150)},{-2*sin(150)});
\draw[<->] (-1,-0.1) to[out=90,in=90] (1,-0.1);
\draw [-] [domain = -30:-150] [samples = 100] plot (\x : 0.2);
\draw (0,-0.4) node {$\theta$};
\draw (0,0) node {$\bullet$};
\draw (0,0) node [above] {$p$};
\end{tikzpicture}
\captionof{figure}{If we cut the cone, we obtain an angular sector of the plane.}
\end{center}

\section{Contractibility radius} \label{sectioncontractibilityradius}

We have already mentioned in the introduction that the contractibility radius is, in some sense, a generalization of the injectivity radius to non-Riemannian metric spaces: the important point is that a lower bound on the contractibility radius prevents a pinching of the surface. We first prove a proposition on the topology of closed balls, needed for the definition; then we give a criterion which ensures the positivity of the contractibility radius of some surface $(\Sigma,d)$.  We also look at some examples (the Euclidean cones), and we end this section by proving that, in Cheeger-Gromov's convergence theorem, one can replace a lower bound on the injectivity radius by a lower bound on the contractibility radius.

\subsection{Definition}

Let $(\Sigma,d)$ be a closed surface with B.I.C. If $x \in \Sigma$, we denote by $\overline{B}(x,r)$ the closed ball centered in $x$ and with radius $r$ (that is, the set of $y\in \Sigma$ with $d(x,y)\leq r$). Since the metric is intrinsic, it is the closure of $B(x,r)$. To define the contractibility radius, we need the following

\begin{proposition} \label{propositionavantdefcont}
For every $x \in \Sigma$, there exists some $r>0$ such that for every $s<r$, $\overline{B}(x,s)$ is homeomorphic to a closed disc.
\end{proposition}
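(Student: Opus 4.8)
\emph{Proof proposal.} The plan is to localise via the conformal chart of Theorem-Definition~\ref{theoremedefinitionconformalcharts}, identify the relevant metric balls with metric balls in the conformal model $(D(1/2),d_{\omega,h})$, and there analyse them after pulling out the atom of the curvature measure at the center.

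\textbf{Step 1 (localisation and identification of balls).} Fix $x\in\Sigma$. Since $d$ metrizes the topology of $\Sigma$, the point $x$ has a neighbourhood $U$, homeomorphic to an open disc with $\overline U$ homeomorphic to a closed disc, and some closed ball $\overline{B}(x,r_1)$ is contained in $U$. Theorem-Definition~\ref{theoremedefinitionconformalcharts} provides an isometry
\[
H:(U,d_{|U})\longrightarrow(D(1/2),d_{\omega_H,h}),
\]
and we set $z_0:=H(x)$, $\omega:=\omega_H$; as $d$ has no cusp, $\omega^+(\{z\})<2\pi$ everywhere, so $d_{\omega,h}$ is a genuine distance (Proposition~\ref{propositioninegaliteomegaplus}). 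One checks that for $s\le r_1$ the metric balls $\overline{B}^{d}(x,s)$ and $\overline{B}^{d_{|U}}(x,s)$ coincide: indeed $d\le d_{|U}$ on $U$, and conversely if $d(x,y)=s'\le r_1$ a minimizing geodesic from $x$ to $y$ (which exists, $\Sigma$ being compact and $d$ intrinsic) has length $s'$, hence stays in $\overline{B}^{d}(x,s')\subset\overline{B}^{d}(x,r_1)\subset U$, so $d_{|U}(x,y)\le s'$. Transporting by $H$, the set $\overline{B}^{d}(x,s)$ is then isometric, in particular homeomorphic, to the closed metric ball $\overline{B}^{d_{\omega,h}}(z_0,s)\subset D(1/2)$.

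\textbf{Step 2 (small balls in the conformal model).} It remains to produce $r_3>0$ such that $\overline{B}^{d_{\omega,h}}(z_0,s)$ is homeomorphic to a closed disc for $0<s<r_3$. Since $d_{\omega,h}$ induces the topology of $D(1/2)$, I fix $\rho_0\in(0,1/4)$ with $\overline D(z_0,\rho_0)\subset D(1/2)$ and $r_3>0$ with $\overline{B}^{d_{\omega,h}}(z_0,r_3)\subset D(z_0,\rho_0/2)$; moreover $|\omega|\big(D(z_0,\rho_0)\setminus\{z_0\}\big)\to0$ as $\rho_0\to0$ because $\omega^+$ and $\omega^-$ are Radon, so I may also take this quantity as small as needed. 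Now I pull out the atom at $z_0$: with $\beta_0:=-\omega(\{z_0\})/2\pi$, which satisfies $\beta_0>-1$ since $\omega(\{z_0\})\le\omega^+(\{z_0\})<2\pi$ (no cusp), the metric reads on $D(z_0,\rho_0)$
\[
g_{\omega,h}=|z-z_0|^{2\beta_0}\,e^{2w(z)}\,|dz|^2,\qquad w=V\big[\omega-\omega(\{z_0\})\delta_{z_0}\big]+h,
\]
so near $z_0$ we face a Euclidean cone of \emph{finite positive} angle $\theta_0=2\pi(\beta_0+1)$ — a topological disc — decorated by $e^{2w}$, where the curvature $\omega':=\omega-\omega(\{z_0\})\delta_{z_0}$ has no atom at $z_0$ and small total mass, while the harmonic part of $w$ (split $V[\omega']=V[\omega'|_{D(z_0,\rho_0)}]+(\text{harmonic})$ and add $h$) is continuous, hence bounded, on $\overline D(z_0,\rho_0/2)$. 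For such a metric one argues, via the smallness of $|\omega'|$, that the minimizing geodesics issued from $z_0$ spread out like the rays of the tangent cone, so the exponential-type map from a ball of the tangent cone onto $\overline{B}^{d_{\omega,h}}(z_0,s)$ is a homeomorphism once $s$ is small; equivalently, that $d_{\omega,h}(z_0,\cdot)$ has no critical point, in the sense of the angle structure available on surfaces with B.I.C., on a small punctured disc, so that $\overline{B}^{d_{\omega,h}}(z_0,s)$ is the cone over its boundary circle. Either way the ball is a closed topological disc.

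\textbf{Main obstacle.} The delicate point is Step~2: the factor $e^{2w}$ need not be bounded even after removing the atom at $z_0$, since further atoms — possibly infinitely many — may accumulate at $z_0$; hence a crude bi-Lipschitz comparison with the flat metric is not enough, a bi-Lipschitz image of a ball being in general not a ball. What rescues the argument is the \emph{quantitative} local theory: once $|\omega'|$ drops below a suitable threshold, $d_{\omega',0}$ is so close to a flat (resp.\ conical) metric that minimizing geodesics from $z_0$ cannot bend enough for the metric sphere $\{d_{\omega,h}(z_0,\cdot)=s\}$ to fail to be a Jordan curve. This is exactly the circle of local structure results of Alexandrov--Zalgaller and Reshetnyak (existence of the tangent cone, homeomorphy of the exponential map under a smallness hypothesis on the curvature) underlying Theorems~\ref{theoremeconvergencedistancereshetnjack} and~\ref{theoremetermeharmoniqueborne}; in the write-up I would invoke those rather than reprove them, so that the proposition follows from the reduction in Step~1.
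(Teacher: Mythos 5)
There is a genuine gap in Step~2, which you yourself flag under ``Main obstacle'' without closing it. The assertion that ``minimizing geodesics from $z_0$ spread out like rays of the tangent cone'' so that an ``exponential-type map'' is a homeomorphism onto $\overline{B}^{d_{\omega,h}}(z_0,s)$ is not something that follows from anything you cite. Theorem~\ref{theoremeconvergencedistancereshetnjack} is a statement about convergence of distance functions $\overline{d}_{\omega_m,0}\to\overline{d}_{\omega,0}$; it says nothing about the topology of metric balls in a single fixed metric. Theorem~\ref{theoremetermeharmoniqueborne} has the hypothesis $d\in\M_\Sigma(A,c,\varepsilon,\delta)$, which already assumes a lower bound on the contractibility radius — invoking it here to prove the very proposition that makes the contractibility radius well-defined would be circular, and in any case the present proposition is stated for an arbitrary surface with B.I.C. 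Moreover, on a surface with B.I.C. geodesics issued from a point may branch, so the very notion of an exponential map needs justification. The missing ingredient is a precise structural theorem saying that, under a smallness condition on $\omega^+$, the metric sphere $S(x,r)$ is a Jordan curve; you allude to such results being ``exactly the circle of local structure results of Alexandrov--Zalgaller and Reshetnyak,'' but without the precise statement the proof is not closed.

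The paper's proof is in fact shorter, more elementary, and uses a different route that avoids conformal charts entirely. It quotes the Burago--Stratilatova theorem (Theorem~9.1 in Reshetnyak's book, stated here as Theorem~\ref{theoremcriterepratique}): if $U$ is a topological disc containing $x$ with $\omega^+(U\setminus\{x\})<\pi$ and $S(x,r)\subset U$, then $S(x,r)$ is a Jordan curve. From this one deduces (Lemma~\ref{lemmecriterepratique}) that any closed ball $\overline{B}(x,s)\subset U$ is a closed topological disc, via the Jordan--Schoenflies theorem applied to the sphere $S(x,s)$ and a connectedness argument showing $B(x,s)$ is the bounded component of the complement of $S(x,s)$. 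The proposition then follows immediately: take a decreasing sequence of open discs $U_i$ shrinking to $\{x\}$; since $\omega^+$ is a Radon measure, $\omega^+(U_i\setminus\{x\})\to 0$, so $\omega^+(U_{i_0}\setminus\{x\})<\pi$ for some $i_0$, and one applies the lemma for a ball $\overline{B}(x,r)\subset U_{i_0}$. Note this argument does not even need the no-cusp hypothesis (the atom at $x$ is removed before computing the mass), whereas your Step~1 already requires no cusps to invoke Theorem-Definition~\ref{theoremedefinitionconformalcharts}. If you wish to keep your reduction, the tool that would close Step~2 is precisely the Burago--Stratilatova sphere theorem, applied in the conformal model — at which point the conformal-chart detour is unnecessary.
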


To prove this proposition, we need a lemma, which is a direct consequence of a result due to Y. D. Burago and M. B. Stratilatova, see \cite{Reshetnyak_livre}, theorem 9.1. Let $S(x,r)$ be the sphere with center $x$ and radius $r$ (that is, the set of $y \in \Sigma$ with $d(x,y)=r$). In the general case, the set $S(x,r)$ may be arranged in a rather complicated way. 

\begin{theorem}[Y. D. Burago and M. B. Stratilatova] \label{theoremcriterepratique}
Let $U$ be a set homeomorphic to an open disc, with $x \in U$ and $\omega^+(U-\{x\})<\pi$. If $S(x,r) \subset U$, then $S(x,r)$ is a Jordan curve.
\end{theorem}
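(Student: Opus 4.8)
The plan is to deduce Theorem~\ref{theoremcriterepratique} from the Burago--Stratilatova structure theorem for metric circles on surfaces with B.I.C.\ (Theorem~9.1 in \cite{Reshetnyak_livre}), the role of the hypothesis $\omega^+(U-\{x\})<\pi$ being to collapse that structure to its simplest form. First I would record the soft preliminaries. Since $d$ is intrinsic, any $y$ with $d(x,y)\le r$ is joined to $x$ by a minimizing geodesic lying in $\overline B(x,r)$, so $\overline B(x,r)$ is compact and path-connected and $S(x,r)$ is exactly its topological frontier in $\Sigma$; moreover, being contained in the topological disc $U$, the set $S(x,r)$ is null-homotopic in $\Sigma$, and it is nonempty for the (small) radii at stake since $\Sigma$ is a compact connected surface. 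In full generality the Burago--Stratilatova theorem describes $S(x,r)$ as a finite union of simple rectifiable arcs, the local combinatorics at each point $y$ (number of arc-germs, possible isolated points or branch points) being governed by the total angle of the surface at $y$ and by the fan of minimizing geodesics from $x$ to $y$.

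Granting that description, the step I would carry out is the curvature bookkeeping that rules out every degeneracy under the bound $\omega^+(U-\{x\})<\pi$. Each point $y\in S(x,r)$ lies in $U\setminus\{x\}$; if $S(x,r)$ failed to be locally a single embedded arc at some such $y$ --- $y$ isolated in $S(x,r)$, or $y$ a branch/self-touching point, or $y$ a cone-type point of total angle $\ne 2\pi$ --- or if $S(x,r)$ had a component bounding, inside $U$, a region on which $d(x,\cdot)>r$, then applying the Gauss--Bonnet formula for surfaces with B.I.C.\ to the appropriate subdomain of $U$ (a small disc around $y$, a geodesic bigon between two minimizing geodesics from $x$ to $y$, or the region cut off by the extra component) forces a concentration of positive curvature in $U\setminus\{x\}$ exceeding the admissible amount --- a contradiction. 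Hence $S(x,r)$ is a compact connected embedded $1$-submanifold of the open disc $U$, i.e.\ a simple closed curve, which is the assertion.

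The main obstacle --- and the reason it is cleanest to quote the cited result rather than reprove it --- is making these Gauss--Bonnet estimates rigorous in the non-smooth setting and identifying $\pi$ as the sharp threshold: one must argue directly with the curvature measure $\omega$, with upper and lower angles of fans of geodesics, and with the (occasionally delicate) behaviour of the distance function $d(x,\cdot)$ on a surface with B.I.C., which is exactly the technical content of Theorem~9.1 in \cite{Reshetnyak_livre}. Once that is granted, the proof reduces to the bookkeeping above together with the immediate verification that our data meet the hypotheses of the cited theorem, namely that $U$ is a topological disc, $x\in U$, $S(x,r)\subset U$, and $\omega^+(U-\{x\})<\pi$.
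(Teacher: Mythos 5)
The paper does not actually prove this theorem: it is stated verbatim as a cited result of Burago and Stratilatova, and the paper points the reader directly to Theorem~9.1 in Reshetnyak's survey \cite{Reshetnyak_livre}. Your proposal ultimately reduces to the same move — quote the structure theorem for metric circles on surfaces with B.I.C.\ and observe that the hypothesis $\omega^+(U-\{x\})<\pi$ collapses the general picture to a single simple closed curve — so in spirit you and the paper are doing the same thing, and that is the right call here.

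What you add beyond the paper is a heuristic explanation of \emph{why} $\pi$ is the threshold: isolated points, branch points, and extra components of $S(x,r)$ should each create, via a Gauss--Bonnet argument on a suitable bigon or subdomain, a lump of positive curvature of mass at least $\pi$ inside $U\setminus\{x\}$. That is a good mental model and is consistent with the statement, but as written it is not a proof and you correctly flag this yourself: none of the bigon/subdomain estimates is actually carried out, the B.I.C.\ version of Gauss--Bonnet with upper angles is invoked only informally, and the reduction of the general Burago--Stratilatova description (which allows isolated points, branchings, and several components) to the stated Jordan-curve conclusion is asserted rather than executed. Since the paper itself provides no proof at this point, there is no gap in your proposal \emph{relative to the paper}; but be aware that if one were required to actually derive the stated corollary from the more general Theorem~9.1 of \cite{Reshetnyak_livre}, the Gauss--Bonnet bookkeeping you sketch would need to be made precise — in particular, one must check that the relevant comparison domains stay inside $U$, that the angle excesses are controlled by $\omega^+$ and not by the signed measure $\omega$, and that removing the single point $x$ from $U$ in the curvature bound is exactly what lets the geodesic fans emanating from $x$ be ignored.
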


\begin{lemma} \label{lemmecriterepratique}
Let $U$ be a set homeomorphic to an open disc, with $x \in U$ and $\omega^+(U-\{x\})<\pi$.
If $\overline{B}(x,r) \subset U$, then for every $s \leq r$, $\overline{B}(x,s)$ is homeomorphic to a closed disc.
\end{lemma}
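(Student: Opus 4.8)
The plan is to deduce this lemma from Theorem \ref{theoremcriterepratique} (Burago--Stratilatova) together with Proposition \ref{propositionavantdefcont}'s setup. The key observation is that the hypotheses of the lemma are \emph{monotone}: if $\overline{B}(x,r)\subset U$ and $\omega^+(U-\{x\})<\pi$, then for every $s\le r$ we also have $\overline{B}(x,s)\subset \overline{B}(x,r)\subset U$, so the same set $U$ works for every smaller radius. Thus for \emph{every} $s\le r$, Theorem \ref{theoremcriterepratique} applies and tells us that $S(x,s)$ is a Jordan curve (provided $S(x,s)$ is nonempty; if $S(x,s)=\emptyset$ then $\overline B(x,s)=\Sigma$, which is impossible once $\overline{B}(x,r)\subsetneq\Sigma$, and in any case the degenerate case $s=0$ gives a point, which we may treat separately or exclude).

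First I would fix $s\in(0,r]$ and set $C:=S(x,s)$, a Jordan curve contained in the open disc $U$. By the Jordan curve theorem (in the plane, via a homeomorphism $U\cong D(1)$), $U\setminus C$ has exactly two connected components, one of which — call it $\Omega$ — is the bounded one, a topological open disc with $\overline{\Omega}=\Omega\cup C$ homeomorphic to a closed disc (Schoenflies). The core claim is then that $\overline{B}(x,s)=\overline{\Omega}$, i.e. that the closed ball is precisely the closed Jordan domain bounded by the sphere. To see $\overline{B}(x,s)\subset\overline\Omega$: a point $y$ with $d(x,y)<s$ cannot lie on $C=S(x,s)$, and cannot lie in the unbounded component of $U\setminus C$ because any path from $x$ to such a $y$ inside $U$... here one must be slightly careful. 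The clean argument: $x$ itself lies in one of the two components of $U\setminus C$ (it is not on $S(x,s)$ since $s>0$); call that component $\Omega$, and I must argue $\Omega$ is the bounded one and that $B(x,s)\subset\Omega$. Indeed $B(x,s)$ is connected (the metric is intrinsic, so it is path-connected by minimizing geodesics from $x$), contains $x$, and is disjoint from $C=S(x,s)$, hence $B(x,s)\subset\Omega$. Conversely, any point $y\in\overline\Omega$ satisfies $d(x,y)\le s$: a minimizing geodesic from $x$ to a far-away point must cross $C$ (since $C$ separates $\Omega$ from the unbounded component, and for $s<r$ there are points at distance $>s$, e.g. on $S(x,r)\ne\emptyset$ when $\overline B(x,r)\ne\Sigma$), and the crossing point is at distance exactly $s$; a Jordan-separation argument then shows every point of $\overline\Omega$ is within distance $s$ of $x$ — concretely, if $y\in\Omega$ had $d(x,y)>s$ then the minimizing geodesic from $x$ to $y$ would leave $\overline B(x,s)$, but it must stay in $U$ near $x$ and cross $C$, reaching a point of $C\subset\overline B(x,s)$... one shows the portion up to the last crossing already exits $\Omega$, contradiction. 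This gives $\overline B(x,s)=\overline\Omega$, which is homeomorphic to a closed disc.

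The degenerate boundary cases deserve a remark: for $s=r$ it may happen that $\overline B(x,r)$ touches $\partial U$, but since $\overline{B}(x,r)\subset U$ is assumed with $U$ open, in fact $\overline B(x,r)$ is a compact subset of the open set $U$, so there is still room; and Theorem \ref{theoremcriterepratique} is stated for $S(x,r)\subset U$, which holds. For very small $s$ one should note $S(x,s)\ne\emptyset$: this holds as long as $\overline B(x,s)\ne\Sigma$, which is automatic once $\Sigma\setminus U\ne\emptyset$ (true since $U$ is a disc and $\Sigma$ is closed) and $s$ is small enough that $\overline B(x,s)\subset U$.

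\textbf{Main obstacle.} The genuinely delicate point is the identification $\overline{B}(x,s)=\overline{\Omega}$ — i.e.\ that the metric ball is exactly the Jordan domain cut out by the metric sphere, with no ``holes'' and nothing extra. This is where one uses in an essential way that $d$ is intrinsic (so every point of $\overline B(x,s)$ is joined to $x$ by a minimizing geodesic lying in $\overline B(x,s)$, hence by compactness/continuity inside $\overline U\subset$ chart), combined with the Jordan curve separation. I expect the rest (monotonicity of hypotheses, invoking Burago--Stratilatova, Schoenflies) to be routine, and I would present the ball-identification step with care while keeping the topological bookkeeping brief by referencing the planar Jordan--Schoenflies theorem through the homeomorphism $U\cong D(1)$.
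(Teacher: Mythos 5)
Your proposal follows the same overall strategy as the paper: apply Burago--Stratilatova to get that $S(x,s)$ is a Jordan curve $C$ for each $s\le r$ (noting the hypotheses are monotone in $s$), pass through a homeomorphism $U\to\C$, and identify $\overline{B}(x,s)$ with the closed Jordan domain bounded by $C$, concluding via Jordan--Schoenflies. The inclusion $B(x,s)\subset\Omega$ (where $\Omega$ is the component of $U\setminus C$ containing $x$) is argued cleanly via connectedness.

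However, the reverse inclusion $\Omega\subseteq\overline{B}(x,s)$ — which you yourself flag as the main obstacle — is left genuinely incomplete, and I do not think the geodesic sketch closes the gap. If $y\in\Omega$ had $d(x,y)>s$, a minimizing geodesic $\gamma$ from $x$ to $y$ does meet $C$ exactly once, at arclength $t=s$ (since $d(x,\gamma(t))=t$ is strictly increasing along a minimizing geodesic, $\gamma(t)\notin C$ for all $t\ne s$). But the portion $\gamma\big((s,\ell]\big)$ is then a connected subset of $\Sigma\setminus C$ containing $y\in\Omega$, hence lies entirely in $\Omega$. So the geodesic touches $C$ at $t=s$ and ``bounces back'' into $\Omega$ on both sides; nothing in your sketch prevents this, and there is no ``last crossing that exits $\Omega$'' to exploit. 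The contradiction you gesture at does not materialize from that line of reasoning.

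The missing idea — and the one the paper uses — is that $B(x,s)$ is not merely a connected subset of $\Omega$: it is \emph{closed} in $U\setminus C$. Indeed, since the metric is intrinsic, $\overline{B}(x,s)=B(x,s)\sqcup S(x,s)$, so the closure of $B(x,s)$ inside $U\setminus C$ is $\overline{B}(x,s)\cap(U\setminus C)=B(x,s)$. Being nonempty, open, and closed in the two-component set $U\setminus C$, the set $B(x,s)$ must be a union of components; it cannot be the unbounded one (nor both), because $h(\overline{B}(x,s))$ is compact in $\C$ while the closure of the exterior is not. Hence $B(x,s)$ is exactly $\Omega$, and $\overline{B}(x,s)=\overline{\Omega}$. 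This replaces your geodesic argument by a purely topological one and closes the gap; the remainder of your write-up (monotonicity, Jordan--Schoenflies, handling of $s$ small or $s=r$) is fine.
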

\begin{proof}[Proof of lemma \ref{lemmecriterepratique}]
Let $h : U \rightarrow \C$ be a homeomorphism, and let $s \leq r$. Since $S(x,s) \subset \overline{B}(x,r) \subset U$, we can apply theorem \ref{theoremcriterepratique}: $h(S(x,s))$ is a Jordan curve $\Gamma$. $\C-\Gamma$ has two connected components: call the bounded component the "interior" of $\Gamma$, and the unbounded component the "exterior" of $\Gamma$. Since $B(x,s)$ is open and closed in $U-S(x,s)$, $h(B(x,s))$ is a connected component of $h(U-S(x,s))=\C-\Gamma$, hence we have either
\[
h(B(x,s))=\mbox{interior of } \Gamma, \mbox{ or } h(B(x,s))= \mbox{ exterior of } \Gamma.
\]
The second case is impossible, since the closure of $h(B(x,s))$ is $h(\overline{B}(x,s))$, which is compact, and the closure of the exterior of $\Gamma$ is non-compact.

Hence $h(B(x,s))$ is the interior of $\Gamma$, and $h(\overline{B}(x,s))$ is the closure of the interior of $\Gamma$. By the Jordan-Schoenflies' theorem, we know that these sets are (respectively) homeomorphic to an open (respectively closed) disc on the plane, and this ends the proof of lemma \ref{lemmecriterepratique}.
\end{proof}

\begin{proof}[Proof of proposition \ref{propositionavantdefcont}]
By the structure of smooth surface of $\Sigma$, we know that we can construct a decreasing sequence of open sets $(U_i)$, such that every $U_i$ is homeomorphic to an open disc, with
\[
\{x\} = \bigcap_{i\in\N} U_i.
\]
We have
\[
0=\omega^+ \big(\bigcap_{i\in\N} (U_i-\{x\}) \big) = \lim_{i \rightarrow \infty} \omega^+(U_i-\{x\}),
\]
hence there exists some $i_0 \in \N$ with $\omega^+(U_{i_0}-\{x\})<\pi$. Consider some $r>0$ such that $\overline{B}(x,r) \subset U_{i_0}$: we can apply lemma \ref{lemmecriterepratique}, and for every $s\leq r$, $\overline{B}(x,s)$ is homeomorphic to a closed disc. This ends the proof of proposition \ref{propositionavantdefcont}.
\end{proof}

We can then define the following \emph{contractibility radius}:
\[
\cont(\Sigma,d,x) := \sup \big\{r>0 \text{ } \big| \text{ for every } s<r,  \overline{B}(x,s) \mbox{ is homeomorphic to a closed disc} \big\}
\]
(by definition, $\cont(\Sigma,d,x)>0$) and
\[
\cont(\Sigma,d) := \inf_{x \in \Sigma} \cont(\Sigma,d,x).
\]
Since $\overline{B}(x,\diam \Sigma) = \Sigma$ is not homeomorphic to a closed disc, we have the inequalities 
\[
\cont(\Sigma,d,x) \leq \diam \Sigma \mbox{ and } \cont(\Sigma,d) \leq \diam \Sigma.
\]
The following proposition gives a criteria which ensures that $\cont(\Sigma,d)>0$. This will not be used in the sequel.
\begin{proposition}
If the non-negative part of the curvature measure of $(\Sigma,d)$ verifies $\omega^+(\{x\})<\pi$ for every $x \in \Sigma$, then $\cont(\Sigma,d)>0$.
\end{proposition}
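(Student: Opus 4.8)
The plan is to argue by contradiction, reducing the global statement to a single application of Lemma~\ref{lemmecriterepratique} near an accumulation point, with the necessary uniformity supplied by the compactness of $\Sigma$ and the triangle inequality for balls.

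First I would record the following local consequence of the hypothesis: for every $x \in \Sigma$ there is an open set $U_x \ni x$, homeomorphic to an open disc, with $\omega^+(U_x) < \pi$. Indeed, exactly as in the proof of Proposition~\ref{propositionavantdefcont}, the smooth structure of $\Sigma$ provides a decreasing sequence of open discs $(U_i)$ with $\bigcap_i U_i = \{x\}$; since $\omega^+$ is a finite Radon measure, continuity from above gives $\omega^+(U_i) \to \omega^+(\{x\}) < \pi$, so some $U_{i_0}$ works. In particular $\omega^+(U_x - \{y\}) \leq \omega^+(U_x) < \pi$ for every $y \in U_x$, which is precisely the hypothesis needed to invoke Lemma~\ref{lemmecriterepratique} at any point of $U_x$.

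Now suppose $\cont(\Sigma,d) = 0$. Then there is a sequence $(x_n)$ in $\Sigma$ with $\cont(\Sigma,d,x_n) \to 0$, and by compactness of $\Sigma$ we may assume $x_n \to x_\infty$ for some $x_\infty \in \Sigma$. Choose $U := U_{x_\infty}$ as above, and, since $d$ is compatible with the topology and $U$ is open, pick $\rho > 0$ with $B(x_\infty, 2\rho) \subset U$. For $n$ large enough that $d(x_n, x_\infty) < \rho$, the triangle inequality gives $\overline{B}(x_n, \rho) \subset B(x_\infty, 2\rho) \subset U$; since moreover $U$ is homeomorphic to an open disc and $\omega^+(U - \{x_n\}) < \pi$, Lemma~\ref{lemmecriterepratique} applies with $r = \rho$ and shows that $\overline{B}(x_n, s)$ is homeomorphic to a closed disc for every $s \leq \rho$. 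Hence $\cont(\Sigma,d,x_n) \geq \rho$ for all large $n$, contradicting $\cont(\Sigma,d,x_n) \to 0$. Therefore $\cont(\Sigma,d) > 0$.

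The only real point to watch is the uniformity: a priori the radius on which Lemma~\ref{lemmecriterepratique} is available could shrink as the base point varies, and the argument above handles this by fixing a single neighborhood $U$ around the limit point $x_\infty$ and using it simultaneously for all nearby $x_n$. Everything else is routine — finiteness and continuity from above of the Radon measure $\omega^+$, the existence of small disc neighborhoods coming from the smooth structure of $\Sigma$, and the inclusion of balls under the triangle inequality — so I do not expect any genuine obstacle beyond this.
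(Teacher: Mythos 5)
Your proof is correct and takes essentially the same approach as the paper: pass to a convergent subsequence $x_n \to x_\infty$, fix one neighborhood $U$ of $x_\infty$ with $\omega^+(U) < \pi$, and apply Lemma~\ref{lemmecriterepratique} with a radius that works uniformly for all nearby $x_n$. The only cosmetic difference is that you frame the argument by contradiction (taking $\cont(\Sigma,d,x_n) \to 0$), whereas the paper argues directly by taking a sequence realizing the infimum and concluding $\cont(\Sigma,d) \geq r/2$.
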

Conversely, in the next section, we show that the contractibility radius of an Euclidean cone with curvature at the vertex $p$ greater than $\pi$ (that is with $\omega^+(\{p\})>\pi$) is zero.

\begin{proof}
Let $(x_m)$ be a sequence in $\Sigma$ such that
\[
\cont(\Sigma,d,x_m) \underset{m \rightarrow \infty} \longrightarrow \cont(\Sigma,d).
\]
By compactness, we may assume $x_m \rightarrow x \in \Sigma$. As in the proof of proposition \ref{propositionavantdefcont}, consider a decreasing sequence of open sets $(U_i)$, such that every $U_i$ is homeomorphic to an open disc, with
\[
\{x\}= \bigcap_{i \in \N} U_i.
\]
Since $\omega^+(\{x\}) = \lim_{i \rightarrow \infty} \omega^+(U_i) < \pi$, there exists some $i_0 \in \N$ such that $\omega^+(U_{i_0})<\pi$. Consider some $r>0$ such that $\overline{B}(x,r) \subset U_{i_0}$. Then for $m$ large enough, we have $\overline{B}(x_m,r/2) \subset \overline{B}(x,r) \subset U_{i_0}$: since $\omega^+(U_{i_0}-\{x_m\}) \leq \omega^+(U_{i_0})<\pi$, we can apply lemma \ref{lemmecriterepratique} to obtain $\cont(\Sigma,d,x_m) \geq r/2$. We $m$ tends to infinity, we obtain $\cont(\Sigma,d) \geq r/2$.
\end{proof}

A lower bound for $\cont(\Sigma,d)$ avoids a pinching of the surface at the point $x$. In the following situation, $\cont(\Sigma,d,x)$ is very small:
\begin{center}
\begin{tikzpicture}[scale=0.23]
%\draw [domain = 10:350] [samples = 100] plot (\x : 1) ;
%\draw[-] (4,0.05) to[out=0,in=360] (4,-0.05);
%\draw[-] ({cos(10)},{sin(10}) to[out=270,in=180] (1.2,0.05);
%\draw[-] ({cos(350)},{sin(350}) to[out=90,in=180] (1.2,-0.05);
%\draw (1.2,0.05) -- (4,0.05);
%\draw (1.2,-0.05) -- (4,-0.05);
%\draw[-] (4,0.05) to[out=0,in=180] (13,3);
\draw (0,0)--(2,0);
\draw (0,-0.3)--(2,-0.3);
\draw[-] (0,0) to[out=180,in=0] (-2,3);
\draw[-] (-2,3) to[out=180,in=90] (-4,-0.15);
\draw[-] (-2,-3) to[out=180,in=270] (-4,-0.15);
\draw[-] (-2,-3) to[out=0,in=180] (0,-0.3);
\draw[-] (2,0) to[in=180,out=0] (16,6);
\draw (16,6)--(25,6);
\draw[-] (2,-0.3) to[in=180,out=0] (16,-6);
\draw (16,-6)--(25,-6);
\draw[-] (25,6) to[in=90,out=0] (30,0);
\draw[-] (25,-6) to[in=270,out=0] (30,0);
% le 1er trou :
\draw[-] (21,4) to[in=180,out=270] (23,3);
\draw[-] (23,3) to[in=270,out=180] (25,4);
\draw[-] (21.3,3.5) to[in=180,out=90] (23,4.5);
\draw[-] (23,4.5) to[in=90,out=0] (24.5,3.5);
% le 2ème trou :
\draw[-] (21,-3) to[in=180,out=270] (23,-4);
\draw[-] (23,-4) to[in=270,out=180] (25,-3);
\draw[-] (21.3,-3.5) to[in=180,out=90] (23,-2.5);
\draw[-] (23,-2.5) to[in=90,out=0] (24.5,-3.5);
%\draw [domain = -90:180] [samples = 100] plot (\x : 1);
\draw (1.3,-0.2) node {$\bullet$};
\draw (1.3,-0.2) node [above] {$x$};
%\draw (0,1) node [above] {$y$};
%\draw (0,-1) node {$\bullet$};
%\draw (0,-1) node [below] {$x$};
%\draw  (2.7,-0.05) -- (2.7,0.05);
%\draw[<-] (0.1,1.1) -- (2,1.5);
%\draw[<-] (2.7,0.1) -- (2.6,1.3);
%\draw (2,1.5) node [right] {$S(x,\pi)$};
\end{tikzpicture}

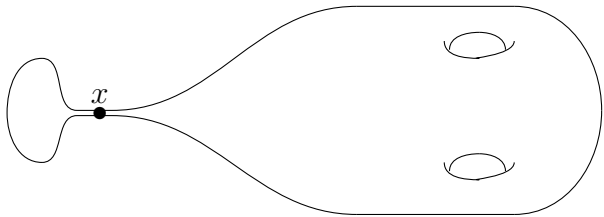
\captionof{figure}{The surface is pinched at the point $x$.}
\end{center}

\begin{remark}
To avoid a pinching of the surface, we could have defined the following natural quantity:
\[
\sup \big\{r>0 \text{ } \big| \overline{B}(x,r) \mbox{ is homeomorphic to a closed disc} \big\},
\]
but it is not relevant, since it is not small in the example given above.
\end{remark}

The definition of the contractibility radius deals with \emph{closed} balls. To be able to apply theorem-definition \ref{theoremedefinitionconformalcharts} with the \emph{open} balls $B(x,r)$, we need the following

\begin{proposition} \label{propositioncontboulesouvertes}
For every $s<\cont(\Sigma,d,x)$, $B(x,s)$ is homeomorphic to an open disc.
\end{proposition}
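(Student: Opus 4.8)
The plan is to exhaust the open ball $B(x,s)$ by a nested sequence of closed balls $\overline{B}(x,s_n)$ with $s_n\uparrow s$, each of which is a closed disc by the very definition of $\cont(\Sigma,d,x)$, and then to pass to their manifold interiors and apply M.\ Brown's theorem that a monotone union of open $n$-cells is an open $n$-cell. Concretely, I would fix $s<\cont(\Sigma,d,x)$ and a strictly increasing sequence $s_1<s_2<\cdots$ with $s_n\to s$. For each $n$ the closed ball $\overline{B}(x,s_n)$ is homeomorphic to a closed disc; let $V_n$ be its manifold interior, so that $V_n$ is homeomorphic to $\R^2$. The two claims to establish are that $V_1\subseteq V_2\subseteq\cdots$ and that $B(x,s)=\bigcup_n V_n$; granting these, $B(x,s)$ is a monotone union of open $2$-cells, hence an open $2$-cell, i.e.\ homeomorphic to an open disc.

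The point that requires care is the identification of the manifold interior $V_n$ with the topological interior $\interior_\Sigma\overline{B}(x,s_n)$, which is where invariance of domain enters. On one hand $V_n$ is a topological $2$-manifold without boundary contained in $\Sigma$ (itself a $2$-manifold without boundary), and a standard consequence of invariance of domain is that such a subset is automatically open in $\Sigma$; this gives $V_n\subseteq\interior_\Sigma\overline{B}(x,s_n)$. On the other hand, if a point $p$ of $\overline{B}(x,s_n)$ corresponds to a point of the bounding circle under a homeomorphism $\overline{B}(x,s_n)\cong\overline{D}(1)$, then $p$ cannot belong to any $\Sigma$-open set contained in $\overline{B}(x,s_n)$: such a set would be a boundaryless $2$-manifold having $p$ as a manifold-boundary point, which is impossible since the manifold boundary is a topological invariant. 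Hence $\interior_\Sigma\overline{B}(x,s_n)\subseteq V_n$, and the two sets coincide.

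With this identification, the nesting and the exhaustion reduce to elementary inclusions of metric balls. Since $B(x,s_n)$ is open in $\Sigma$ and contained in $\overline{B}(x,s_n)$, it lies in $\interior_\Sigma\overline{B}(x,s_n)=V_n$; and since $s_n<s_{n+1}$ one has $V_n\subseteq\overline{B}(x,s_n)\subseteq B(x,s_{n+1})$. Chaining these two facts yields $B(x,s_n)\subseteq V_n\subseteq B(x,s_{n+1})\subseteq V_{n+1}$, which gives both $V_1\subseteq V_2\subseteq\cdots$ and, after taking unions over $n$ and using $s_n\uparrow s$,
\[
B(x,s)=\bigcup_n B(x,s_n)=\bigcup_n V_n .
\]

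I do not expect a serious obstacle: the substance is the exhaustion trick together with the manifold-interior identification. The reason one cannot argue more directly --- applying the hypothesis on closed balls to open balls --- is that a relatively open subset of a closed disc that meets its boundary circle need not be homeomorphic to an open disc, so the passage from the closed balls $\overline{B}(x,s_n)$ to the open ball $B(x,s)$ genuinely needs Brown's monotone-union theorem (or an equivalent argument; for surfaces it could also be extracted from the classification of noncompact surfaces, but Brown's theorem is the most economical reference).
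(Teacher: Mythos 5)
Your proof is correct, but it takes a genuinely different route from the paper. The paper fixes $r\in(s,\cont(\Sigma,d,x))$, uses the given homeomorphism $H:\overline{B}(x,r)\to\overline{D}(1)$ to view $H(B(x,s))$ as a connected open proper subset of the plane, proves it is simply connected by a compactness argument (any simple closed curve in $H(B(x,s))$ already lies in some $H(\overline{B}(x,s-\iota))$, which is a closed disc), and then concludes by the Riemann mapping/uniformization theorem that it is homeomorphic to an open disc. Your argument instead exhausts $B(x,s)$ by the nested manifold interiors $V_n=\interior_\Sigma\overline{B}(x,s_n)$, each an open $2$-cell, and invokes Brown's monotone-union-of-open-cells theorem. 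Both proofs are complete; the paper's is slightly shorter and exploits the two-dimensional conformal structure that is already in play throughout the article, whereas yours is purely topological and dimension-independent in spirit, at the cost of invoking Brown's theorem and the invariance-of-domain argument identifying the abstract manifold interior of $\overline{B}(x,s_n)$ with its topological interior in $\Sigma$ (a step you rightly flag as the delicate one, and which you handle correctly).
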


\begin{proof}
Let $r \in (s,\cont(\Sigma,d,x))$. Let $H$ a homeomorphism between $\overline{B}(x,r)$ and the closed unit disc $\overline{D}(1)$. Since $H(B(x,s))$ is an open set of the plane, by the uniformization theorem, we only need to show that $H(B(x,s))$ is simply connected.

Let $\gamma:\mathbb{S}^1 \rightarrow H(B(x,s))$ be a continuous simple curve. By compactness, the curve $H^{-1}(\gamma)$ in $\Sigma$ is included in some ball $\overline{B}(x,s-\iota)$ for some $\iota>0$. Then $\gamma$ is in $H(\overline{B}(x,s-\iota))$, which is homeomorphic to a closed disc, hence simply connected. $\gamma$ is homotopic to zero in $H(\overline{B}(x,s-\iota))$, hence is homotopic to zero in $H(B(x,s))$ and this ends the proof.
\end{proof}

\subsection{An example: the case of an Euclidean cone}

Let us look at the case of a Euclidean cone of cone angle $\theta\in (0,2\pi)$. Recall that the curvature at the vertex $p$ of the cone is $k=2\pi - \theta$.
\begin{itemize}
 \item If $\theta>\pi$, then the contractibility radius at every point is infinite: every closed ball with center $x$ and radius $r$ is homeomorphic to a closed disc. Indeed, if $p$ is not in this ball, then it is a flat Euclidean ball; and if the vertex $p$ is in this ball, then we can cut the cone along the line passing by $p$ and $x$ to obtain the following picture (where we have drawn $S(x,r)$) :
\begin{center}
\begin{tikzpicture}[scale=2]
\draw [dashed] (-1.5,0.75)--(0,0);
\draw [dashed] (0,0)--(1.5,0.75);
\draw (-0.5,0.25) node {$\bullet$};
\draw (-0.5,0.25) node [below] {$x$};
\draw (0.5,0.25) node {$\bullet$};
\draw (0.5,0.25) node [below] {$x$};
\draw (0,0) node {$\bullet$};
\draw (0,0) node [above] {$p$};
\draw [domain=153:347] plot ({-0.5+0.7*cos(\x)},{0.25+0.7*sin(\x)});
%\draw (-1,0.5) circle (1.4);
\draw [domain=-167:27] plot ({0.5+0.7*cos(\x)},{0.25+0.7*sin(\x)});
%\draw (1,0.5) circle (1.4);
\draw[<->] (-0.75,0.5) to[out=70,in=110] (0.75,0.5) ;
\end{tikzpicture}
%\captionof{figure}{}
\end{center}

Gluing the cone back we see that $\overline{B}(x,r)$ is homeomorphic to a closed disc.

\begin{remark}
We already knew this result, by use of the lemma \ref{lemmecriterepratique}.
\end{remark}

\item But if $\theta<\pi$, the contractibility radius at $x$ goes to zero when $x$ tends to the vertex $p$ of the cone: if $x \neq p$, we have $\cont(\Sigma,d,x) < d(x,p)$.  Indeed, cut the cone along the line passing by $p$ and $x$. Then consider some $r$ such that the following situation occurs. The two balls, centered in $x$ and with radius $r$, have a non-empty intersection, and do not contain $p$ (this is possible because the angle at the vertex of the cone is less than $\pi$):
%(namely, take some $r$ with $\cos(\frac{\pi-\theta}{2}) d(x,p) < r < d(x,p)$).
\begin{center}
\begin{tikzpicture}[scale=2]
\draw [dashed] (-1.5,-1.5)--(0,0);
\draw [dashed] (0,0)--(1.5,-1.5);
\draw (-0.5,-0.5) node {$\bullet$};
\draw (-0.5,-0.5) node [below] {$x$};
\draw (0.5,-0.5) node {$\bullet$};
\draw (0.5,-0.5) node [below] {$x$};
\draw (0,0) node {$\bullet$};
\draw (0,0.1) node [above] {$p$};
\draw [domain=225:405] plot ({-0.5+0.58*cos(\x)},{-0.5+0.58*sin(\x)});
%\draw (-1,0.5) circle (1.4);
\draw [domain=135:315] plot ({0.5+0.58*cos(\x)},{-0.5+0.58*sin(\x)});
%\draw (1,0.5) circle (1.4);
\draw[<->] (-0.75,-0.3) to[out=90,in=90] (0.75,-0.3);
\end{tikzpicture}
%\captionof{figure}{}
\end{center}
Gluing the cone back, we see that $\overline{B}(x,r)$ is not homeomorphic to a closed disc (it does not contain the vertex $p$).

This example shows that a surface with a conical singularity of angle $<\pi$ has a contractibility radius equal to zero (so in corollary \ref{corollairesingconiques}, conical singularities of angles $<\pi$ are not allowed). This does not avoid an accumulation of singularities (as in Figure \ref{figureintro} in the introduction): in such a situation, the pointwise curvature of the singularities has to go to zero, hence the cone angles converge to $2\pi$. In the example described in figure \ref{figureintro}, we do have a lower bound for the contractibility radius.
\end{itemize}

The last example shows that in the Main Theorem, we can not have a conical singularity at $p$ of angle $< \pi$ (that is, we have $\omega^+(\{p\}) \leq \pi$). We can then wonder if hypothesis 1. is needed in the definition of $\M_\Sigma(A,c,\varepsilon,\delta)$; in other words, if $\cont(\Sigma,d) \geq c$ implies the existence of some $\varepsilon>0$ and $\delta>0$ such that for every $x \in \Sigma$ we have $\omega^+(B(x,\varepsilon)) \leq 2\pi-\delta$. This is not true, as the next example shows:

\begin{center}
\begin{tikzpicture}[scale=2]
\draw (0,0) -- (1,1.5);
\draw (0,0) -- (4,0);
\draw (1.5,0.5) -- (2.5,0.5);
\draw (1.5,0.7) -- (2.5,0.7);
%\draw (2.1,0.7) -- (2.6,0.7);
\draw (1.7,1.1) -- (2.7,1.1);
\draw (1.5,0.5) -- (1.5,0.7);
\draw (2.5,0.5) -- (2.5,0.7);
\draw (1.5,0.7) -- (1.7,1.1);
\draw (2.5,0.7) -- (2.7,1.1);
\draw (2.7,1.1) -- (2.7,0.9);
\draw (2.7,0.9) -- (2.5,0.5);
\draw [dotted] (1.7,1.1) -- (1.7,0.9);
\draw [dotted] (1.5,0.5) -- (1.7,0.9);
\draw [dotted] (1.7,0.9) -- (2.7,0.9);
\end{tikzpicture}
\captionof{figure}{}
\end{center}
Consider the plane outside the box, plus the four sides of the box, plus the top of the box. This flat surface is homeomorphic to the plane, and has 8 conical singularities: at the top of the box, 4 singularities of angle $3\pi/2$ (that is, of curvature $\pi/2$), and at the bottom of the box, 4 singularities of angle $5\pi/2$ (that is, of curvature $-\pi/2$). If the height of the box is small enough, then the contractibility radius is infinite (all the balls $\overline{B}(x,r)$ are homeomorphic to closed discs). If we multiply the metric by a scale factor so that the box shrinks to a point $x$, the contractibility radius is still infinite, but $\omega^+$ converge weakly to a Dirac mass at $x$, with mass $2\pi$.

\subsection{Equivalence of a lower bound on the injectivity radius and on the contractibility radius} \label{sectionequivalencerayoninjectivitécontractibilite}

If $d$ is a Riemannian metric and $x\in \Sigma$, for every $r<\inj(x)$, the exponential map at the point $x$ is a homeomorphism between a closed disc on the plane and the closed ball with center $x$ and radius $r$. Then we clearly have the inequality
\[
\cont(\Sigma,d) \geq \inj(\Sigma,g).
\]
Conversly, the next proposition shows that in the classical Cheeger-Gromov compactness theorem, one can replace a lower bound on the injectivity radius by a lower bound on the contractibility radius. This result will not be used in the sequel.

\begin{proposition} \label{propositionequivalencerayoncontractibiliterayoninjectivite}
 Let $\Lambda>0$ and $c>0$. There exists a constant $i=i(\Lambda,c)$ verifying the following property. For every closed Riemannian surface $(\Sigma,g)$ with $|K_g| \leq \Lambda$ and $\cont(\Sigma,d_g) \geq c$, we have
\[
 \inj(\Sigma,g) \geq i.
\]
\end{proposition}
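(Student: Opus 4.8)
The plan is to argue by contradiction using a compactness/blow-up argument, exploiting the fact that a lower bound on $|K_g|$ together with a lower bound on $\cont$ should prevent the injectivity radius from becoming arbitrarily small. Suppose the conclusion fails: then there is a sequence of closed Riemannian surfaces $(\Sigma_m, g_m)$ with $|K_{g_m}| \leq \Lambda$, $\cont(\Sigma_m, d_{g_m}) \geq c$, but $\inj(\Sigma_m, g_m) \to 0$. Pick points $x_m \in \Sigma_m$ realizing (or nearly realizing) the injectivity radius, so $\inj(x_m, g_m) =: \rho_m \to 0$. The standard dichotomy for the injectivity radius under bounded curvature (Klingenberg's lemma) says that either $\rho_m \geq \pi/\sqrt{\Lambda}$ (the conjugate-point case, which is excluded since $\rho_m \to 0$ and $\Lambda$ is fixed), or $2\rho_m$ equals the length of the shortest closed geodesic $\gamma_m$ through $x_m$. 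So for $m$ large we are in the second case: there is a closed geodesic $\gamma_m$ of length $2\rho_m \to 0$ passing through $x_m$.

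Next I would rescale: consider $\tilde g_m := \rho_m^{-2} g_m$, so that $|K_{\tilde g_m}| \leq \Lambda \rho_m^2 \to 0$ and $\inj(x_m, \tilde g_m) = 1$, with a closed geodesic through $x_m$ of $\tilde g_m$-length $2$. By the local Cheeger–Gromov compactness theory (bounded curvature plus injectivity radius bounded below gives a $C^{1,\alpha}$ limit of the pointed balls $(B_{\tilde g_m}(x_m, 10), \tilde g_m, x_m)$), a subsequence converges to a pointed flat surface (curvature $\to 0$) which, because it carries a closed geodesic of length $2$ at the basepoint while remaining a $2$-manifold, is locally a flat cylinder $\mathbb{R} \times (\mathbb{R}/2\mathbb{Z})$. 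Translating back to $(\Sigma_m, g_m)$: around $x_m$ there is, at scale $\rho_m$, a region looking more and more like a long thin flat cylinder of circumference $\sim 2\rho_m$. The key geometric consequence is that such a thin-neck region produces small closed balls that fail to be discs: a ball $\overline{B}_{g_m}(y_m, s)$ for a suitable center $y_m$ on the neck and radius $s$ slightly less than $\rho_m$ (but large compared to the circumference, so the ball wraps around the cylinder) is an annulus or a more complicated surface, not a closed disc. Hence $\cont(\Sigma_m, d_{g_m}, y_m) \leq C\rho_m \to 0$, contradicting $\cont(\Sigma_m, d_{g_m}) \geq c$.

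To make the last step clean and avoid invoking the full Cheeger–Gromov machinery, one can instead argue more directly: from $\inj(x_m) = \rho_m$ and $|K_{g_m}| \leq \Lambda$, comparison geometry gives a uniform two-sided control on the metric on $B_{g_m}(x_m, 3\rho_m)$ after rescaling (Rauch comparison bounds the metric between the model spaces of constant curvature $\pm\Lambda\rho_m^2 \to 0$), and the short closed geodesic $\gamma_m$ shows that going distance roughly $\rho_m$ from $x_m$ one returns near $x_m$ along two distinct homotopy classes. Concretely, choose $y_m$ a point at distance $\sim \rho_m$ from $\gamma_m$ inside this controlled region and $s \in (\rho_m/4, \rho_m/2)$; then $\overline{B}_{g_m}(y_m, s)$ contains a loop around $\gamma_m$ that is non-contractible inside the ball (this can be checked in the almost-flat model, where the ball is literally an annulus around a short core circle), so the ball is not simply connected, a fortiori not homeomorphic to a closed disc. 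This forces $\cont(\Sigma_m, d_{g_m}) < s < \rho_m/2 \to 0$, the desired contradiction; the constant $i(\Lambda, c)$ is then extracted by unwinding the implications (it is essentially $c$ times a dimensional constant, capped by $\pi/\sqrt{\Lambda}$).

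The main obstacle is the passage from "short closed geodesic at scale $\rho_m$" to "small non-disc ball": one must locate the ball center and radius so that the ball genuinely wraps around the short geodesic and identify its topology, which requires controlling the metric on a definite-size region around $x_m$ after rescaling. This is exactly where bounded curvature is used — via Klingenberg's injectivity-radius estimate to get the short closed geodesic in the first place, and via comparison geometry (or local compactness) to get the almost-flat thin-neck picture in which the ball's topology is transparent. One should be slightly careful that the short geodesic $\gamma_m$ might itself be homotopically trivial in $\Sigma_m$ (it can be), but that is irrelevant: contractibility of balls is a purely local condition about the ball, and in the rescaled limit the ball around $y_m$ retracts onto the core circle of the cylinder, which is non-contractible \emph{within the ball}, and that is all that is needed.
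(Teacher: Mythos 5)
Your proposal takes a genuinely different route from the paper's. The paper's proof is short and does not appeal to any compactness theory: Klingenberg reduces the problem to bounding below the length $l$ of the shortest closed geodesic $\gamma$; the hypothesis $\cont(\Sigma,d_g)\geq c$ is then used to enclose $\gamma$ (if $l<c$) in an open ball which is a topological disc, so that $\gamma$ bounds a sub-disc $U$; Gauss--Bonnet gives $2\pi=\int_U K_g\,d\mathcal A_g\le\Lambda\cdot\area(U)$, and an elementary area bound for $U$ in terms of $\Lambda$ and $l$ yields the lower bound on $l$. No limits, no blow-up, and the constant $i(\Lambda,c)$ is explicit. Your blow-up approach is conceptually reasonable, and the remark that a small non-disc ball contradicts $\cont\ge c$ even when $\gamma_m$ is null-homotopic in $\Sigma_m$ is correct and worth noting.

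However, there is a concrete error in the ball you pick, and it invalidates the argument as written. In a flat cylinder of circumference $L$, a metric ball $\overline B(y,s)$ is a topological disc whenever $s\le L/2$ and wraps into an annulus only when $s>L/2$. Here $L=2\rho_m$, so the threshold is $s=\rho_m$. Your choice --- a center $y_m$ at distance $\sim\rho_m$ from $\gamma_m$ and radius $s\in(\rho_m/4,\rho_m/2)$ --- gives a ball that does not wrap (indeed it does not even contain $\gamma_m$); in the model it is plainly a disc, and there is no contradiction. You must take $y_m$ on or near $\gamma_m$ and $s$ slightly \emph{larger} than $\rho_m$. Even with that fixed, the step ``the ball is literally an annulus in the almost-flat model'' hides the real work: metric balls are not automatically topologically stable under $C^{1,\alpha}$ convergence, so you need a further argument to transfer the non-disc conclusion from the pointed limit back to $(\Sigma_m,g_m)$ for large $m$. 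The cleanest such argument is: if $\overline B_{g_m}(y_m,s)$ were a disc, then $\gamma_m\subset\overline B_{g_m}(y_m,s)$ would bound a sub-disc of area $O(\rho_m^2)$, and Gauss--Bonnet would force $2\pi\le\Lambda\cdot O(\rho_m^2)\to0$ --- but that is precisely the paper's argument, with the rescaling and the Cheeger--Gromov limit stripped away. So the blow-up machinery, in addition to introducing this gap, is ultimately superfluous.
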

\begin{proof}
By a well-known result of W. Klingenberg, we know that we can find a lower bound for the injectivity radius in the following way:
\[
\inj(\Sigma,g) \geq \min\big(\pi/\sqrt{\Lambda} , \frac{1}{2} \cdot \mbox{length of the smallest closed geodesic} \big).
\]
Let $\gamma$ be a closed geodesic, of length $l$. We may assume $l < c$, and we look for a lower bound for $l$. $\gamma$ is included is some open ball $B$ with radius $l < \cont(\Sigma,d)$, which is homeomorphic to an open disc (see proposition \ref{propositioncontboulesouvertes}). Hence $\gamma$ bounds a domain $U \subset B$ homeomorphic to an open disc. Since the geodesic curvature of the boundary $\partial U$ of $U$ is identically zero, the Gauss-Bonnet's formula writes
\[
 2\pi = \int_U \K_g d\mathcal{A}_g,
\]
hence
\[
 2 \pi \leq \Lambda \cdot \area_g(U).
\]
But $U \subset B$, and since $\K_g \geq -\Lambda$, the area of the ball $B$ is at most the area of a ball of radius $l$ in a simply connected surface of constant curvature $-\Lambda<0$. This area is less than $\sqrt{\Lambda} \cdot \exp(l/\sqrt{\Lambda})$, so we get
\[
2\pi \leq \Lambda^{3/2} \cdot \exp(l/\sqrt{\Lambda}).
\]
Thus we have obtained
\[
 \inj(\Sigma,g) \geq \min\big( \frac{\pi}{\sqrt{\Lambda}} , \frac{c}{2}, \frac{\sqrt{\Lambda} \ln(2\pi/\Lambda^{3/2})}{2}\big),
\]
and this ends the proof.
\end{proof}

\section{Some results on surfaces with B.I.C.}

We prove two results for surfaces with B.I.C., which will be needed in the proof of the Main theorem.

\subsection{On the volume of balls}

We need to find an upper bound (respectively, a lower bound) for the volume of balls of radius $r$ in surfaces with B.I.C. In Riemannian geometry, this is a well-known fact that a lower bound (respectively, an upper bound) on the sectional curvature is sufficient. To generalize such results for surfaces with B.I.C., we need to have a property in Riemannian geometry which depends only on the curvature measure $\omega$, and not on the pointwise (Gauss) curvature. In \cite{Shioya}, T. Shioya proves the following:

\begin{theorem}\label{theoremevolumeboulesriemannien}
Let $(\Sigma,g)$ be a closed Riemannian surface, and let $x \in M$.
\begin{enumerate}
\item Let $r>0$. We have
\[
\area(B(x,r)) \leq \big( 2\pi + \omega^-(B(x,r)) \big) r^2/2,
\]
where
\[
\omega^-(B(x,r)) = \int_{B(x,r)} \K_g^- d\mathcal{A}_g
\]
is the non-positive part of the curvature of $B(x,r)$.
\item Let $r > 0$ such that $\cont(\Sigma,d_g,x) \geq r$ (that is for every $s<r$, $\overline{B}(x,s)$ is homeomorphic to a closed disc). Then
\[
\area(B(x,r)) \geq \big(2\pi - \omega^+(B(x,r)) \big) r^2/2,
\]
where
\[
\omega^+(B(x,r)) = \int_{B(x,r)} \K_g^+ d\mathcal{A}_g
\]
is the non-negative part of the curvature of $B(x,r)$.
\end{enumerate}
\end{theorem}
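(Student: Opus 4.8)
The proof is the classical comparison argument in geodesic polar coordinates (in the spirit of Fiala, Alexandrov and Hartman), organised so that only the curvature \emph{measure} and the topological control provided by the contractibility radius enter. Write $g$ in geodesic polar coordinates centred at $x$ on the segment domain $\widetilde{B}(x,R)\subset T_x\Sigma$: there $g=dt^2+J(t,\theta)^2\,d\theta^2$, where for each direction $\theta$ the Jacobi field $J$ solves $\partial_t^2 J=-\K_g\,J$ with $J(0,\theta)=0$, $\partial_t J(0,\theta)=1$, and $J>0$ on the interior of the segment domain (no conjugate point before the cut distance $\rho(\theta)$). Since $\exp_x$ is a diffeomorphism from the interior of $\widetilde{B}(x,R)$ onto $B(x,R)\setminus\mathrm{Cut}(x)$ and the cut locus is a null set,
\[
\area(B(x,r))=\int_0^r L(t)\,dt,\qquad L(t):=\int_{\{\theta\,:\,\rho(\theta)>t\}}J(t,\theta)\,d\theta ,
\]
and $L(t)$ is, up to the (generically finite) set of cut points lying on it, the length of the geodesic circle $S(x,t)$.

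\textbf{Upper bound.} No topological hypothesis is needed. From the Jacobi equation $\partial_t^2 J=-\K_g\,J\le \K_g^-\,J$, so integrating and using $\partial_t J(0,\theta)=1$ we get $\partial_t J(t,\theta)\le 1+\int_0^t \K_g^-(\tau,\theta)\,J(\tau,\theta)\,d\tau$. The directions with $\rho(\theta)=t$ only lower the derivative of $L$ (as $J\ge 0$), so for a.e.\ $t$
\[
L'(t)\ \le\ \int_{\{\rho(\theta)>t\}}\partial_t J(t,\theta)\,d\theta\ \le\ 2\pi+\int_{B(x,t)}\K_g^-\,d\mathcal{A}_g\ =\ 2\pi+\omega^-(B(x,t)) .
\]
Integrating once (using $L(0)=0$ and that $t\mapsto\omega^-(B(x,t))$ is nondecreasing) gives $L(t)\le\bigl(2\pi+\omega^-(B(x,r))\bigr)t$ for $t\le r$, and integrating again yields $\area(B(x,r))\le\bigl(2\pi+\omega^-(B(x,r))\bigr)r^2/2$.

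\textbf{Lower bound.} Here the hypothesis $\cont(\Sigma,d_g,x)\ge r$ is crucial: for every $s<r$ the closed ball $\overline{B}(x,s)$ is a topological closed disc, so Gauss--Bonnet applies to it. For a.e.\ $s$ the circle $S(x,s)$ is a piecewise smooth Jordan curve, smooth away from its finitely many cut points, where $\overline{B}(x,s)$ is locally the union of two half-discs and hence has a reentrant corner (exterior angle $\varepsilon_i(s)\le 0$). On the smooth part the geodesic curvature with respect to the outer normal equals $\partial_t J/J$, so the integral of the geodesic curvature over the smooth part of $S(x,s)$ is $\int_{\{\rho(\theta)>s\}}\partial_t J(s,\theta)\,d\theta$, and Gauss--Bonnet gives
\[
\int_{\{\rho(\theta)>s\}}\partial_t J(s,\theta)\,d\theta\ =\ 2\pi-\omega(\overline{B}(x,s))-\sum_i\varepsilon_i(s)\ \ge\ 2\pi-\omega^+(B(x,r)) .
\]
Integrating this inequality in $s$ over $(0,t)$ turns it into a lower bound for $\int_0^{2\pi}J(\min(\rho(\theta),t),\theta)\,d\theta$, and one further integration in $t$ over $(0,r)$ together with Fubini is meant to produce $\area(B(x,r))\ge\bigl(2\pi-\omega^+(B(x,r))\bigr)r^2/2$.

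\textbf{Main obstacle.} The last step is not formal. The quantity $\int_0^{2\pi}J(\min(\rho(\theta),t),\theta)\,d\theta$ exceeds $L(t)$, and its integral over $(0,r)$ exceeds $\area(B(x,r))$, by terms supported on the part of the cut locus that lies \emph{inside} $B(x,r)$ (the lengths of the minimizing geodesics from $x$ to those cut points); a priori these push the estimate in the wrong direction. One must show they are exactly absorbed by the favourable contributions — the reentrant corners $\varepsilon_i(s)\le 0$ together with the $\omega^-$ terms. This is precisely the place where a lower bound on $\cont$, rather than on the injectivity radius, suffices, and it rests on a careful analysis of the distance function $d(x,\cdot)$ on the topological disc $\overline{B}(x,s)$ and of how its cut locus is positioned there; this is the part of the argument demanding the most care.
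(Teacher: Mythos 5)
The paper does not actually prove this statement: it is cited verbatim from Shioya \cite{Shioya} and used as a black box (the paper's own contribution here is only the approximation argument that promotes it to Corollary \ref{corollairevolumeboulesalexandrov}). So there is no in-paper proof to compare against, and your proposal must be judged on its own merits.

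Your upper bound is fine: from $\partial_t^2 J\le \K_g^- J$ and $J(0)=0,\ \partial_tJ(0)=1$ you get $J(t,\theta)\le t+\int_0^t(t-\tau)\K_g^-(\tau,\theta)J(\tau,\theta)\,d\tau$, hence $L(t)\le 2\pi t+t\,\omega^-(B(x,t))$ and integrating once gives the result. This needs no topology, as you say.

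The lower bound, however, has a genuine gap, and it is exactly the one you flag. Your Gauss--Bonnet step does produce, for a.e.\ $s<r$,
\begin{equation*}
\int_{\{\rho(\theta)>s\}}\partial_t J(s,\theta)\,d\theta\ \ge\ 2\pi-\omega^+(B(x,r)),
\end{equation*}
and two integrations together with Fubini yield
\begin{equation*}
\area(B(x,r))\ +\ \int_0^{2\pi} J(\rho(\theta),\theta)\,\bigl(r-\rho(\theta)\bigr)^+\,d\theta\ \ge\ \bigl(2\pi-\omega^+(B(x,r))\bigr)\,\frac{r^2}{2},
\end{equation*}
where the second term on the left is the nonnegative excess supported on the cut directions with $\rho(\theta)<r$. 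You assert that this excess is ``exactly absorbed'' by the reentrant corners and the $\omega^-$ contributions, but you neither prove it nor give a mechanism for why it should be an identity rather than an inequality in the wrong direction. This is not a routine tidying-up step: it is the entire content of the theorem. The flat cylinder shows why — there $\omega^+\equiv 0$, the corners are exactly $-\pi$ at the two antipodal cut points, and the moment a cut point appears inside the ball the conclusion simply fails, which is why the disc hypothesis on $\overline{B}(x,s)$ is needed. What must actually be proved is a structural statement about the cut locus restricted to a topological disc: roughly, that the total length of the portion of the cut locus lying inside $B(x,r)$, weighted appropriately, is controlled by $\omega^+(B(x,r))$. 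This is a Fiala/Hartman-type lemma on the structure of $\mathrm{Cut}(x)$ inside a disc, and it is the heart of Shioya's argument; your sketch postpones it rather than supplying it. As written, the lower bound is not established.
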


\begin{remark}
The hypothesis needed to get a lower bound for the area is necessary: an Euclidean cylinder is flat (so $\omega^+=0$), and when the radius of the cylinder goes to zero, the area of a ball of fixed radius goes to zero.
\end{remark}

As a direct consequence, we obtain the

\begin{corollary} \label{corollairevolumeboulesalexandrov}
Let $(\Sigma,d)$ be a surface with B.I.C., and let $x \in \Sigma$.
\begin{enumerate}
\item Let $r>0$. We have
\[
\area(B(x,r)) \leq \big( 2\pi + \omega^-(B(x,r)) \big) r^2/2.
\]
\item Let $r> 0$ such that $\cont(\Sigma,d,x) \geq r$. Then
\[
\area(B(x,r)) \geq \left( 2\pi - \omega^+(B(x,r)) \right) r^2/32.
\]
\end{enumerate}
\end{corollary}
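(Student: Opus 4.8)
The statement should follow from Theorem \ref{theoremevolumeboulesriemannien} (Shioya's Riemannian estimate) by passing to the approximating sequence of smooth Riemannian metrics supplied by Definition \ref{definitionsurfacesBICbyapproximation}. So the plan is: take $(\Sigma,d)$ with B.I.C., fix $x\in\Sigma$, and choose a sequence $g_m$ of smooth Riemannian metrics with $\int_\Sigma|\K_{g_m}|\,d\mathcal{A}_m$ bounded and $d_{g_m}\to d$ uniformly on $\Sigma$. Then $\omega$ is the weak limit of $\K_{g_m}d\mathcal{A}_m$ and $d\mathcal{A}$ the weak limit of $d\mathcal{A}_m$. For part 1., apply the first inequality of Theorem \ref{theoremevolumeboulesriemannien} to each $g_m$ on the ball $B_m(x,r)$, obtaining $\area_m(B_m(x,r))\le(2\pi+\omega_m^-(B_m(x,r)))r^2/2$, and then pass to the limit: $\area_m$ converges to $\area$ on suitable balls (up to an arbitrarily small loss in the radius, using uniform convergence of distances to sandwich $B_m(x,r-\eta)\subset B(x,r)\subset B_m(x,r+\eta)$ for large $m$), and $\omega_m^-$ converges weakly to $\omega^-$ — here I would need weak lower/upper semicontinuity of mass on open/closed balls, again absorbed by a small perturbation of $r$. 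Taking $\eta\to 0$ gives the inequality for $(\Sigma,d)$; a routine argument handles the boundary sphere $S(x,r)$, which has measure zero for all but countably many $r$.

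For part 2., the extra hypothesis $\cont(\Sigma,d,x)\ge r$ must be transferred to the approximating metrics, at least for radii slightly smaller than $r$. The point is that if $s<r$, then $\overline{B}(x,s)$ is homeomorphic to a closed disc, and by uniform convergence $d_{g_m}\to d$ the balls $\overline{B}_m(x,s')$ for $s'$ slightly less than $s$ are contained in a neighborhood of $\overline{B}(x,s)$; one then invokes Lemma \ref{lemmecriterepratique} (with the curvature bound $\omega^+(U-\{x\})<\pi$ on a small disc $U$, which holds for $m$ large since $\omega_m^+\to\omega^+$ weakly and $\omega^+$ has no atom of mass $\ge\pi$ near a point where the contractibility radius is positive) to conclude $\cont(\Sigma,d_{g_m},x)\ge s'$ for large $m$. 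Then Theorem \ref{theoremevolumeboulesriemannien}(2) applies on $B_m(x,s')$, giving $\area_m(B_m(x,s'))\ge(2\pi-\omega_m^+(B_m(x,s')))s'^2/2$, and passing to the limit $m\to\infty$ and then $s'\to s\to r$ yields $\area(B(x,r))\ge(2\pi-\omega^+(B(x,r)))r^2/2$.

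The constant $32$ in the denominator of part 2., as opposed to the $2$ one would naively expect from Theorem \ref{theoremevolumeboulesriemannien}(2), is the tell-tale sign that the transfer of the contractibility hypothesis is lossy: we can only guarantee the Riemannian lower volume bound on balls of radius $\sim r/c_0$ for some absolute constant $c_0$ (so that $\overline{B}_m$ is honestly a disc with small enough curvature), and then $\area(B(x,r))\ge\area(B(x,r/c_0))\ge(2\pi-\omega^+(B(x,r/c_0)))(r/c_0)^2/2\ge(2\pi-\omega^+(B(x,r)))r^2/(2c_0^2)$, with $c_0=4$ giving $32$. The main obstacle — and the one point needing genuine care rather than routine measure-theoretic juggling — is precisely this: showing that a lower bound on $\cont(\Sigma,d,x)$ forces, for the approximating smooth metrics and $m$ large, a lower bound on $\cont(\Sigma,d_{g_m},x)$ on a definite (if smaller) scale. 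Everything else is standard weak-convergence bookkeeping: sandwiching balls between $B_m(x,r\pm\eta)$, using that spheres $S(x,r)$ carry no $\area$- or $\omega$-mass for a.e. $r$, and letting the perturbation parameters tend to zero.
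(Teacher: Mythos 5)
Your Part~1 is essentially the paper's argument (sandwich $\overline{B}(x,r-\varepsilon)$ between $d_m$-balls, apply Theorem~\ref{theoremevolumeboulesriemannien}, let $m\to\infty$, then $\varepsilon\to 0$), and is fine. In Part~2 you correctly identify the crux, namely transferring a contractibility lower bound to the approximating metrics $d_{g_m}$, and you correctly guess that losing a factor $4$ in the radius produces the $32$, but the mechanism you sketch does not work. You propose to find a disc $U$ containing $\overline{B}_m(x,s')$ with $\omega_m^+(U-\{x\})<\pi$, justifying this by the absence of an atom of mass $\geq\pi$ near $x$. Atomlessness is beside the point: $\omega^+$ could place mass close to $2\pi$ diffusely in an arbitrarily thin annulus around $x$, in which case no neighbourhood $U$ of $\overline{B}(x,s)$ — and indeed no ball $B(x,r/c_0)$ of any definite fraction of $r$ — satisfies $\omega^+(U-\{x\})<\pi$. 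The hypothesis $\cont(\Sigma,d,x)\geq r$ carries no upper bound on $\omega^+(B(x,s))$ for any $s$, so shrinking around the same centre $x$ cannot be made to work at a fixed scale.

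The paper sidesteps this by allowing the centre to change. After reducing to the nontrivial case $\omega^+(B(x,r))<2\pi$, it picks $y$ with $d(x,y)=r/2$ (which exists since $r\leq\cont(\Sigma,d,x)\leq\diam(\Sigma,d)$ and the metric is intrinsic); then $B(x,r/4)$ and $B(y,r/4)$ are disjoint subsets of $B(x,r)$, so at least one of them, say $B(z,r/4)$ with $z\in\{x,y\}$, satisfies $\omega^+(B(z,r/4))<\pi$. For that centre $z$, weak convergence plus Lemma~\ref{lemmecriterepratique} gives $\cont(\Sigma,d_m,z)\geq r/4-\varepsilon$ for $m$ large, so Shioya's lower bound applies on $B_m(z,r/4-\varepsilon)$; combined with $B_m(z,r/4-\varepsilon)\subset B(x,r-\varepsilon)$ and letting $\varepsilon\to 0$ this yields $\area(B(x,r))\geq\bigl(2\pi-\omega^+(B(x,r))\bigr)(r/4)^2/2$, i.e.\ the stated $r^2/32$. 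This ``pick the better of two disjoint quarter-radius balls'' step is the missing idea in your proposal; shrinking around $x$ alone cannot replace it.
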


The second inequality is not optimal, since $r^2/2$ is replaced by $r^2/32$; see the proof below.

\begin{remark}
Consider an Euclidean cone, with vertex $x$ and cone angle $\theta \in (0,\infty)$. The curvature measure is $\omega = (2\pi-\theta)\delta_x$ (where $\delta_x$ is the Dirac mass at $x$). We have
\[
2\pi + \omega^-(B(x,r)) = 2\pi + \max(\theta-2\pi,0) = \max(\theta,2\pi)
\]
and
\[
2\pi - \omega^+(B(x,r)) = 2\pi - \max(2\pi-\theta,0) = 2\pi + \min(\theta-2\pi,0) = \min(\theta,2\pi).
\]
Hence we have
\[
2\pi - \omega^+(B(x,r)) \leq \theta \leq 2\pi + \omega^-(B(x,r)),
\]
and since the area of $B(x,r)$ is $\theta r^2/2$ this gives
\[
\left(2\pi - \omega^+(B(x,r))\right)r^2/2 \leq \area(B(x,r)) \leq \left(2\pi + \omega^-(B(x,r))\right)r^2/2.
\]
\end{remark}

\begin{proof}[Proof of corollary \ref{corollairevolumeboulesalexandrov}.]
Let $g_m$ be a sequence of Riemannian metrics on $\Sigma$, such that $d_m := d_{g_m} \rightarrow d$ uniformly on $\Sigma$ when $m$ goes to infinity. Let $d\mathcal{A}_m$ (resp. $d\mathcal{A}$) be the area measure for $(\Sigma,d_m)$ (resp. $(\Sigma,d)$), and let $\omega_m=\omega_m^+ - \omega_m^-$ (resp. $\omega = \omega^+ - \omega^-$) be the curvature measure of $(\Sigma,d_m)$ (resp. $(\Sigma,d)$), with its non-negative and non-positive parts. Then we know (see \cite{Reshetnyak_livre}, theorems 8.1.9 and 8.4.3) that $d\mathcal{A}_m \rightarrow d\mathcal{A}$ and $\omega_m \rightarrow \omega$, weakly on $\Sigma$. We do not necessarily have $\omega_m^+ \rightarrow \omega^+$ and $\omega_m^- \rightarrow \omega^-$ (weakly on $\Sigma$), but we can choose a sequence of metrics $(g_m)$ verifying these properties; see \cite{AZ} and \cite{Reshetnyak_livre}.

In the sequel we will use the following classical property of converging measures: if $U$ is an open set and $K \subset U$ is a compact set, and if we have a (weakly) convergence of non-negative measures $\mu_m \rightarrow \mu$, then for every $\varepsilon>0$, for $m$ large enough we have $\mu(K) < \mu_m(U) + \varepsilon$ and $\mu_m(K) < \mu(U) + \varepsilon$.

\noindent\underline{Proof of 1.}
Remark that $\cup_{\varepsilon>0} \overline{B}(x,r-\varepsilon) = B(x,r)$, so
\[
 \area(B(x,r)) = \lim_{\varepsilon\rightarrow 0} \area(\overline{B}(x,r-\varepsilon)).
\]
Now, let $\varepsilon>0$. For $m$ large enough we have
\[
  \area(\overline{B}(x,r-\varepsilon)) < \area_m(B(x,r-3\varepsilon/4)) + \varepsilon,
\]
and with $B(x,r-3\varepsilon/4) \subset B_m(x,r-\varepsilon/2)$ for $m$ large enough we get
\[
  \area(\overline{B}(x,r-\varepsilon))  <  \area_m(B_m(x,r-\varepsilon/2)) + \varepsilon.
\]
Using theorem \ref{theoremevolumeboulesriemannien} with $r - \varepsilon/2$ we obtain
\[
\area(\overline{B}(x,r-\varepsilon)) \leq \big(2\pi + \omega_m^-(B_m(x,r-\varepsilon/2)) \big) (r-\varepsilon/2)^2/2 + \varepsilon,
\]
and with $B_m(x,r-\varepsilon/2) \subset B(x,r-\varepsilon/4)$ for $m$ large enough we get
\begin{eqnarray*}
\area(\overline{B}(x,r-\varepsilon)) & \leq & \big(2\pi + \omega_m^-(B(x,r-\varepsilon/4)) \big) (r-\varepsilon/2)^2/2 + \varepsilon \\
& \leq & \big(2\pi + \omega_m^-(B(x,r-\varepsilon/4)) \big) r^2/2 + \varepsilon.
\end{eqnarray*}
For $m$ large enough we have
\[
\omega_m^-(B(x,r-\varepsilon/4)) \leq \omega^-(B(x,r)) + \varepsilon,
\]
hence we obtain
\[
 \area(\overline{B}(x,r-\varepsilon)) \leq \big(2\pi + \omega^-(B(x,r)) + \varepsilon \big) r^2/2 + \varepsilon,
\]
and letting $\varepsilon \rightarrow 0$ this ends the proof.

\noindent\underline{Proof of 2.}
The assertion is trivial if $\omega^+(B(x,r)) \geq 2\pi$, so we may assume $\omega^+(B(x,r)) < 2\pi$. We can not directly apply theorem \ref{theoremevolumeboulesriemannien} for the Riemannian metrics $d_m$, because we may not have $\cont(\Sigma,d_m,x) \geq r$ (this is the reason why $r^2/2$ is replaced by $r^2/32$). Let $y \in B(x,r)$ be some point with $d(x,y) = r/2$. Then we have
\[
 B(x,r/4) \bigcap B(y,r/4) = \emptyset,
\]
and
\[
 B(x,r/4) \bigcup B(y,r/4) \subset B(x,r).
\]
Since $\omega^+(B(x,r)) < 2\pi$, this shows that we have $\omega^+(B(x,r/4)) <\pi$, or $\omega^+(B(y,r/4)) <\pi$. Let $z$ ($z = x$ or $y$) be a point with
\begin{equation} \label{equationomegaplus}
\omega^+(B(z,r/4)) < \pi. 
\end{equation}
Let $\varepsilon>0$. After passing to a subsequence, we may assume
\begin{equation} \label{equationmesuresconvergentes1}
\overline{B}_m(z,r/4-\varepsilon) \subset B(z,r/4-\varepsilon/2)
\end{equation}
and with equation (\ref{equationomegaplus}) we may also assume
\begin{equation} \label{equationmesuresconvergentes2}
 \omega_m^+(B(z,r/4-\varepsilon/2)) < \pi. 
\end{equation}
Let $U := B(z,r/4-\varepsilon/2)$. $U$ is homeomorphic to an open disc, and we have $\overline{B}_m(z,r/4-\varepsilon) \subset U$ and $\omega_m^+(U-\{z\})\leq \omega_m^+(U)<\pi$. We can then apply lemma \ref{lemmecriterepratique} to obtain
\[
\cont(\Sigma,d_m,z) \geq r/4-\varepsilon.
\]
We can then apply theorem \ref{theoremevolumeboulesriemannien}, with the metric $d_m$, the point $z$ and the radius $r/4-\varepsilon$: for $m$ large enough we have
\begin{equation} \label{equationapplicationtheoremdm}
\area_m(B_m(z,r/4-\varepsilon)) \geq  \big( 2\pi - \omega_m^+(B_m(z,r/4-\varepsilon)) \big) (r/4-\varepsilon)^2/2. 
\end{equation}
For $m$ large enough we also have
\[
 B_m(z,r/4-\varepsilon) \subset B(x,r-\varepsilon),
\]
so for $m$ large enough we get
\[
\left\{
\begin{array}{l}
\area(B(x,r)) \geq \area_m(B(x,r-\varepsilon)) - \varepsilon \\
\omega^+(B(x,r)) \geq \omega_m^+(B(x,r-\varepsilon)) - \varepsilon,
\end{array}
\right.
\]
which gives
\[
\left\{
\begin{array}{l}
\area(B(x,r)) \geq \area_m(B_m(z,r/4-\varepsilon)) - \varepsilon \\
\omega^+(B(x,r)) \geq \omega_m^+(B_m(z,r/4-\varepsilon)) - \varepsilon.
\end{array}
\right.
\]
With equation (\ref{equationapplicationtheoremdm}) we get
\[
\area(B(x,r)) \geq  \big( 2\pi - \omega^+(B(x,r)) - \varepsilon \big) (r/4-\varepsilon)^2/2 - \varepsilon,
\]
and letting $\varepsilon \rightarrow 0$ this ends the proof.
\end{proof}

\subsection{An upper bound for the length of a line segment}

We want to find an upper bound for the length of a line segment, for a singular Riemannian metric which has "no harmonic term", that is, when $g = e^{2V[\omega](z)} |dz|^2$ for some Radon measure $\omega$.

First, consider a Riemannian metric on $D(1/2)$ with a conical singularity at 0: $g = |z|^{2\beta} |dz|^2$ (for some $\beta>-1$), and let $\gamma$ be the line segment joigning 0 and a point $z \in D(1/2)$. The length of $\gamma$ is
\[
L(\gamma) = \int_0^1 |tz|^\beta |z| dt = \frac{1}{1+\beta}|z|^{1+\beta}.
\]
Moreover, the curvature measure is $\omega = -2\pi \beta \cdot \delta_0$ (where $\delta_0$ is the Dirac mass at 0), so the non-negative part of the curvature measure is $\omega^+(D(1/2)) = \max(0,-2\pi \beta)=2\pi \beta^-$ and we get
\[
1+\beta \geq 1 - \beta^- =1-\omega^+(D(1/2))/2\pi,
\]
hence
\[
L(\gamma) \leq \frac{1}{1-\omega^+(D(1/2))/2\pi}|z|^{1-\omega^+(D(1/2))/2\pi}.
\]
The next proposition shows how we can extend this result to arbitrary curvature measures.

\begin{proposition} \label{propositionmajorationdistance}
Let $\omega$ be a Radon measure defined in $D(1/2)$, with $\omega^+(D(1/2)) <2\pi$. Let $z,z' \in D(1/2)$, and let $\gamma(t) := (1-t)z + tz'$ be the line segment $[zz']$. Let $L(\gamma)$ be the length of this line segment for the singular metric $g = e^{2V[\omega](z)} |dz|^2$, then
\begin{equation} \label{inegalitedistance}
L(\gamma) \left( = \int_0^1 e^{V[\omega](\gamma(t))} |z-z'| dt) \right) \leq \frac{2}{1-\omega^+(D(1/2))/2\pi} |z-z'|^{1-\omega^+(D(1/2))/2\pi}.
\end{equation}
\end{proposition}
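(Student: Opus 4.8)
The strategy is to reduce everything to the single-point case treated just before the statement, by writing $V[\omega^+]$ as an average of potentials of Dirac masses and applying Jensen's inequality along the segment. First I would observe that since $\ln|w-\xi|\le 0$ for $w,\xi\in D(1/2)$, we have $V[\omega](w)\le V[\omega^+](w)$ for a.e. $w$, so it suffices to bound $\int_0^1 e^{V[\omega^+](\gamma(t))}|z-z'|\,dt$. Write $M:=\omega^+(D(1/2))<2\pi$ and $\alpha:=M/2\pi\in[0,1)$; if $M=0$ then $V[\omega^+]\le 0$ and $L(\gamma)\le |z-z'|\le \tfrac{2}{1-\alpha}|z-z'|^{1-\alpha}$ trivially (using $|z-z'|\le 1$), so assume $M>0$ and set $\mu:=\omega^+/M$, a probability measure on $D(1/2)$.

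Now
\[
V[\omega^+](w)=\frac{M}{2\pi}\int_{D(1/2)}(-\ln|w-\xi|)\,d\mu(\xi),
\]
so by Jensen's inequality applied to the convex function $e^{(\cdot)}$ and the probability measure $\mu$,
\[
e^{V[\omega^+](w)}=\exp\!\Big(\int \alpha(-\ln|w-\xi|)\,d\mu(\xi)\Big)\le \int |w-\xi|^{-\alpha}\,d\mu(\xi).
\]
Integrating along the segment and using Tonelli's theorem,
\[
\int_0^1 e^{V[\omega^+](\gamma(t))}\,dt \le \int_{D(1/2)}\Big(\int_0^1 |\gamma(t)-\xi|^{-\alpha}\,dt\Big)\,d\mu(\xi).
\]
The key geometric estimate is then an upper bound, uniform in $\xi$, for $\int_0^1 |\gamma(t)-\xi|^{-\alpha}\,dt$ where $\gamma(t)=(1-t)z+tz'$. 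Parametrizing the segment by Euclidean arclength $s\in[0,\ell]$ with $\ell=|z-z'|$, this becomes $\tfrac{1}{\ell}\int_0^\ell \operatorname{dist}(\gamma(s),\xi)^{-\alpha}\,ds$, and since $\xi$ lies at some distance from the line, the worst case is $\xi$ on the segment: one checks $\int_0^\ell (\text{dist along line})^{-\alpha}\,ds \le 2\int_0^{\ell}r^{-\alpha}\,dr = \tfrac{2}{1-\alpha}\ell^{1-\alpha}$ when the foot of the perpendicular is inside, and the bound is only better otherwise. Hence $\int_0^1 |\gamma(t)-\xi|^{-\alpha}\,dt \le \tfrac{2}{\ell}\cdot\tfrac{1}{1-\alpha}\ell^{1-\alpha}=\tfrac{2}{1-\alpha}\ell^{-\alpha}$, which is independent of $\xi$; plugging this in and multiplying by $\ell=|z-z'|$ gives exactly
\[
L(\gamma)\le |z-z'|\int_0^1 e^{V[\omega^+](\gamma(t))}\,dt \le \frac{2}{1-\alpha}|z-z'|^{1-\alpha},
\]
which is the claim since $\alpha=\omega^+(D(1/2))/2\pi$.

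The main obstacle I anticipate is the measure-theoretic bookkeeping needed to make the Jensen/Tonelli argument rigorous: $V[\omega^+](\gamma(t))$ is only defined for a.e. $t$, and one must ensure the double integral is legitimately finite and that the exchange of integration order is valid — this is where the hypothesis $\omega^+(D(1/2))<2\pi$ (i.e. $\alpha<1$) is used, guaranteeing $r\mapsto r^{-\alpha}$ is integrable near $0$. The elementary one-variable estimate for $\int |\gamma(t)-\xi|^{-\alpha}\,dt$ is routine (split according to whether the nearest point of the line to $\xi$ lies on the segment, and compare with the symmetric configuration $\int_{-\ell}^{\ell}|r|^{-\alpha}dr$), but it should be stated carefully since it drives the constant $2$ in the conclusion, exactly matching the heuristic single-singularity computation preceding the proposition.
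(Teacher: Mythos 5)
Your proof is correct, and it takes a genuinely different route from the paper's. The paper first reduces to the case where $\omega$ is a finite sum of Dirac masses (via weak approximation of $\omega^\pm$ and Fatou's lemma); for such $\omega$ the weight $e^{V[\omega]}$ is dominated by a finite product $\prod_s |\,\cdot\,-p_s|^{-\beta_s^-}$, and the paper then applies H\"older's inequality with exponents $q_s = -M/\beta_s^-$ (where $M = -\omega^+(D(1/2))/2\pi$) to split the product integral into a product of one-variable integrals $\int_0^1 |\gamma(t)-p_s|^M\,dt$. You instead stay with the general measure, normalize $\omega^+$ to a probability measure $\mu$, and use Jensen's inequality for the convex map $\exp$ to bound $e^{V[\omega^+](w)}$ by the \emph{average} $\int |w-\xi|^{-\alpha}\,d\mu(\xi)$ (with $\alpha = \omega^+(D(1/2))/2\pi$); Tonelli then reduces the claim to the same elementary one-variable estimate $\int_0^1 |\gamma(t)-\xi|^{-\alpha}\,dt \le \frac{2}{1-\alpha}|z-z'|^{-\alpha}$, uniformly in $\xi$. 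Both arguments hinge on this same estimate and both use the trick of replacing $|\gamma(t)-\xi|$ by the (smaller) distance $|t-\lambda|\cdot|z-z'|$ along the line, with the worst case at $\lambda=1/2$. Your route is more direct: it avoids the discretization/Fatou step entirely and replaces the multi-exponent H\"older with a single Jensen application, yielding the same constant $2$ (the paper's final use of $2^M\ge 1/2$ corresponds precisely to your slightly loose comparison $\le 2\int_0^{\ell} r^{-\alpha}\,dr$ instead of the sharp $2\int_0^{\ell/2}r^{-\alpha}\,dr$). You are right to set aside the case $\omega^+(D(1/2))=0$, since $\mu$ is then undefined and the bound is anyway immediate.
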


\begin{proof}~
\textbf{First step.}
We first show that this is sufficient to prove the proposition, in the case where $\omega$ is a sum of Dirac masses. If so, let $\omega$ be a Radon measure with $\omega^+(D(1/2))<2\pi$, and write $\omega$ as $\omega = \omega^+ - \omega^-$, where $\omega^+$ and $\omega^-$ are non-negative Radon measures. Let $\omega_m^+$ and $\omega_m^-$ be a sequence of sums of Dirac masses such that $\omega_m^+ \rightarrow \omega^+$ and $\omega_m^- \rightarrow \omega^-$ weakly, and let $\omega_m := \omega_m^+ - \omega_m^-$.

Let $L_m(\gamma)$ be the length of the line segment $\gamma$ for the singular metric $g_m = e^{2V[\omega_m](z)} |dz|^2$. For almost every $t \in [0,1]$ we have
\begin{multline*}
 V[\omega_m](\gamma(t)) = \iint_{D(1/2)} \left(\frac{-1} {2\pi}\right) \ln|\gamma(t)-\xi| d\omega_m(\xi) \underset{m \rightarrow \infty} \longrightarrow \\
 \iint_{D(1/2)} \left(\frac{-1}{2\pi}\right) \ln|\gamma(t)-\xi| d\omega(\xi) = V[\omega](\gamma(t)),
\end{multline*}
hence by Fatou's lemma we get
\begin{eqnarray*}
 L(\gamma) & = & \int_0^1 e^{V[\omega](\gamma(t))} |z-z'| dt \\
 & = & \int_0^1 \liminf_{m \rightarrow \infty} \big( e^{V[\omega_m](\gamma(t))} |z-z'| \big) dt \\
 & \leq & \liminf_{m \rightarrow \infty} \big( \int_0^1 e^{V[\omega_m](\gamma(t))} |z-z'| dt \big) \\
& = & \liminf_{m \rightarrow \infty} L_m(\gamma).
\end{eqnarray*}
If we apply the inequality (\ref{inegalitedistance}) with the measures $\omega_m$ (which are a sum of Dirac masses, with $\omega_m^+(D(1/2))<2\pi$ for $m$ is large enough), we get
\[
 L_m(\gamma) \leq \frac{2}{1-\omega_m^+(D(1/2))/2\pi}|z-z'|^{1-{\omega_m^+(D(1/2))}/2\pi},
\]
so
\begin{eqnarray*}
L(\gamma) & \leq & \liminf_{m\rightarrow\infty} \left(\frac{2}{1-\omega_m^+(D(1/2))/2\pi}|z-z'|^{1-{\omega_m^+(D(1/2))}/2\pi}\right) \\
& = & \frac{2}{1-\omega^+(D(1/2))/2\pi} |z-z'|^{1-\omega^+(D(1/2))/2\pi}.
\end{eqnarray*}

\textbf{Second step.}
We may now assume that $\omega$ is a sum of Dirac masses: there exists $p_1,...,p_n \in D(1/2)$ and $k_1,...,k_n \in \R$ such that
\[
\omega = \sum_{s=1}^n k_s \delta_{p_s}
\]
($\delta_{p_s}$ is the Dirac mass at $p_s$), and
\[
\omega^+(D(1/2)) = \sum_{s=1}^n k_s^+ < 2\pi.
\]
For almost every $z \in D(1/2)$ we have
\[
 V[\omega](z) = \iint_D \left(\frac{-1}{2\pi}\right) \ln|z-\xi| d\omega(\xi) = \sum_{s=1}^n \big( \frac{-k_s}{2\pi} \big) \ln|z-p_s| \leq \sum_{s=1}^n \big( \frac{-k_s^+}{2\pi} \big) \ln|z-p_s|,
\]
so if we set $\beta_s := -k_s/2\pi$, then we have $\beta_s^-= \max(0,-\beta_s) = \max(0,k_s/2\pi)=k_s^+/2\pi$, hence
\[
 e^{V[\omega](z)} \leq \prod_{s=1}^n |z -p_s|^{-\beta_s^-}.
\]
We then have
\[
L(\gamma) = \int_0^1 e^{V[\omega](\gamma(t))} |z-z'| dt \leq \int_0^1 \big(\prod_{s=1}^n |\gamma(t) -p_s|^{-\beta_s^-} \big) |z-z'| dt.
\]
Let $S := \{s \mbox{ such that } \beta_s^- >0 \}$.

If $S = \emptyset$, then $\beta_s^-=0$ for every $s$: we have $\omega^+(D(1/2))=0$, and the last inequality shows that $L(\gamma) \leq |z-z'|$, so the inequality (\ref{inegalitedistance}) is true (the factor 2 will be needed for the case $S \neq \emptyset$).

Then we may assume $S \neq \emptyset$. Let  $M := -\sum_{s \in S} \beta_s^-$: by hypothesis we have $-1<M<0$. For $s \in S$, let $q_s := -M/\beta_s^-$: we have
\[
q_s \geq 1 \mbox{ and } \sum_{s \in S} \frac{1}{q_s}= 1.
\]
Since $|\gamma(t)-p_s|^{-\beta_s^-} \leq 1$ if $\beta_s^- \leq 0$, we can apply Hölder's inequality as follows:
\begin{eqnarray}
\int_0^1 \big(\prod_{s=1}^n |\gamma(t) -p_s|^{-\beta_s^-} \big) dt & \leq & \int_0^1 \big(\prod_{s \in S} |\gamma(t) -p_s|^{-\beta_s^-} \big) dt \\
 & \leq & \prod_{s \in S} \big(\int_0^1 |\gamma(t) -p_s|^{-q_s \beta_s^-} dt \big)^{1/q_s} \\ \label{inegalitelemme}
 & = & \prod_{s \in S} \big(\int_0^1 |\gamma(t) -p_s|^M dt \big)^{1/q_s}.
\end{eqnarray}
Now, fix some $s\in S$ and consider $\int_0^1 |\gamma(t) -p_s|^M dt$. Let $p'_s$ be the projection of the point $p_s$ on the line $(zz')$:
\begin{center}
\begin{tikzpicture}
\draw (-4,0) -- (2,0);
%\draw (-1,0) -- (1,0);
\draw (-1,0) node {$\times$};
\draw (-1,-0.1) node [below] {$z$};
\draw (1,0) node {$\times$};
\draw (1,0) node [below] {$z'$};
\draw (0.2,0) node {$\bullet$};
\draw (0.2,0) node [below] {$\gamma(t)$};
\draw (-3,1) node {$\bullet$};
\draw (-3,1) node [left] {$p_s$};
\draw (-3,0) node {$\bullet$};
\draw (-3,0) node [below] {$p'_s$};
\draw (-3,1) -- (-3,0);
\draw (-3,0.2) -- (-2.8,0.2);
\draw (-2.8,0.2) -- (-2.8,0);
\end{tikzpicture}
\captionof{figure}{}
\end{center}
We have $|\gamma(t)-p'_s| \leq |\gamma(t)-p_s|$, so with $M<0$ we get
\[
\int_0^1 |\gamma(t) -p_s|^M dt \leq \int_0^1 |\gamma(t) -p'_s|^M dt.
\]
Since $p'_s$ belongs to the line $(zz')$, we can write $p'_s = (1-\lambda_s) z+ \lambda_s z'$ for some $\lambda_s \in \R$, hence
\[
|\gamma(t) -p'_s| = |(1-t)z + tz' - (1-\lambda_s) z - \lambda_s z'| = |t-\lambda_s| |z-z'|.
\]
Moreover, we easily see that the map $\lambda \mapsto \int_0^1 |t-\lambda|^M dt$ admits its maximum for $\lambda = 1/2$ (recall that $M<0$), hence
\[
 \int_0^1 |t-\lambda_s|^M dt \leq \int_0^1 |t-\frac{1}{2}|^M dt = \frac{1}{(M+1)2^{M+1}},
\]
so we obtain
\[
\int_0^1 |\gamma(t) -p_s|^M dt \leq \frac{|z-z'|^M}{(M+1)2^M}.
\]
If we put this in the inequality (\ref{inegalitelemme}), with $\sum_{s \in S} 1/q_s =1$, we get
\[
\int_0^1 \big(\prod_{s \in S} |\gamma(t) -p_s|^{-\beta_s^-} \big) dt \leq \frac{|z-z'|^M}{(M+1)2^M}.
\]
Since $M>-1$ we have $2^M \geq 1/2$, and with the equality $M = -\omega^+(D(1/2))/2\pi$ we obtain
\[
L(\gamma) \leq \frac{2}{1-\omega^+(D(1/2))/2\pi}|z-z'|^{1-\omega^+(D(1/2))/2\pi}.
\]
\end{proof}

\section{Preliminary properties of $\M_\Sigma(A,c,\varepsilon,\delta)$}

Before starting the proof of the Main theorem, we prove some important preliminary properties for the set $\M_\Sigma(A,c,\varepsilon,\delta)$.

\subsection{Another definition of $\M_\Sigma(A,c,\varepsilon,\delta)$}

By analogy with Cheeger-Gromov's convergence theorem, where we can replace a bound on the volume by a bound on the diameter, the following proposition shows that in the Main theorem, we can replace a bound on the area by a bound on the diameter and on the total curvature (this is corollary \ref{corollairetheoremeprincipal}). Recall that $|\omega|(\Sigma,d) = \omega^+(\Sigma,d) + \omega^-(\Sigma,d)$.
\begin{proposition} \label{propositionequivalencedefinition}
Let $\Sigma$ be a closed surface and let $c,\varepsilon,\delta>0$. Let $d$ be a metric with B.I.C. on $\Sigma$, verifying the properties 1. and 2. in the definition of $\M_\Sigma$, that is
\begin{equation*}
\begin{array}{l}
1. \mbox{ for every } x \in \Sigma, ~ \omega^+ (B(x,\varepsilon)) \leq 2\pi -\delta \\
2. ~\cont({\Sigma, d}) \geq c.
\end{array}
\end{equation*}
Then:
\begin{itemize}
 \item for every $A>0$, there exists some positive constants $D$ and $\Omega$ such that
\begin{equation*}
\area(\Sigma,d) \leq A  ~ (\Longleftrightarrow d \in \M_\Sigma(A,c,\varepsilon,\delta)) \implies
\left\lbrace
\begin{array}{rll}
\diam(\Sigma,d) \leq D\\
|\omega|(\Sigma,d) \leq \Omega.
\end{array}
\right.
\end{equation*}

\item Conversely, for every $\Omega,D>0$, there exists a positive constant $A$ such that
\begin{equation*}
\left.
\begin{array}{ccc}
\diam(\Sigma,d) \leq D\\
|\omega|(\Sigma,d) \leq \Omega
\end{array}
\right\rbrace \implies
\area(\Sigma,d) \leq A  ~ (\Longleftrightarrow d \in \M_\Sigma(A,c,\varepsilon,\delta)).
\end{equation*}
\end{itemize}
\end{proposition}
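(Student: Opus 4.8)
The plan is to treat the two implications separately, using the volume estimates of Corollary \ref{corollairevolumeboulesalexandrov} together with a Vitali-type packing argument that exploits hypotheses 1. and 2.

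\textbf{First implication: bounded area $\Rightarrow$ bounded diameter and bounded total curvature.}
First I would bound the total curvature. The non-negative part is easy: cover $\Sigma$ by finitely many balls $B(x_i,\varepsilon)$, $i=1,\dots,N$, where $N$ can be taken to depend only on $c$ and $\varepsilon$ (since $\varepsilon<c$, each $\overline B(x,\varepsilon)$ is a disc, and a maximal $\varepsilon/2$-separated set is finite with cardinality controlled via the lower area bound of Corollary \ref{corollairevolumeboulesalexandrov}.2 applied on balls of radius $\varepsilon/2$ — each such ball has area $\geq (2\pi-\omega^+(B))r^2/32 \geq \delta r^2/32$, and disjoint balls fit inside $\Sigma$ of area $\leq A$). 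Then $\omega^+(\Sigma)\leq \sum_i \omega^+(B(x_i,\varepsilon)) \leq N(2\pi-\delta)$. For $\omega^-$, I would invoke Gauss--Bonnet for surfaces with B.I.C.: $\omega(\Sigma)=2\pi\chi(\Sigma)$, so $\omega^-(\Sigma)=\omega^+(\Sigma)-2\pi\chi(\Sigma)$ is likewise bounded. This gives $|\omega|(\Sigma,d)\leq\Omega$ with $\Omega$ depending only on $c,\varepsilon,\delta,\Sigma$ (not even on $A$, though we are allowed to use $A$). For the diameter bound: let $x,y$ realize the diameter and let $\gamma$ be a minimizing geodesic from $x$ to $y$. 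Take a maximal set of points $z_0=x,z_1,\dots,z_k=y$ along $\gamma$ that are pairwise $c$-separated (so $k\geq \lfloor \diam/c\rfloor$); the balls $B(z_j,c/2)$ are pairwise disjoint, and by Corollary \ref{corollairevolumeboulesalexandrov}.2 (valid since $c/2<c\leq\cont(\Sigma,d,z_j)$ and $\omega^+(B(z_j,c/2))\leq\omega^+(B(z_j,\varepsilon))\leq 2\pi-\delta$ when $c/2\le\varepsilon$, otherwise one packs at scale $\varepsilon/2$ along $\gamma$ instead) each has area $\geq \delta(c/2)^2/32$. Disjointness forces $k\cdot\delta(c/2)^2/32 \leq A$, hence $\diam(\Sigma,d)\leq c\,(1+ 128A/(\delta c^2)) =: D$.

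\textbf{Second implication: bounded diameter and bounded total curvature $\Rightarrow$ bounded area.}
Here I would cover $\Sigma$ by balls of radius $c/2$ (or $\min(c,\varepsilon)/2$): take a maximal $c/2$-separated set $x_1,\dots,x_M$; then $\Sigma=\bigcup_j B(x_j,c)$... actually $\bigcup_j B(x_j,c/2)$ already covers. Since $\diam\leq D$, all these points lie in one ball, and the disjoint balls $B(x_j,c/4)$ are contained in $B(x_1,D+c/4)$; a lower area bound would let us count $M$, but more directly I bound the area of each covering ball from above using Corollary \ref{corollairevolumeboulesalexandrov}.1: $\area(B(x_j,c/2)) \leq (2\pi+\omega^-(B(x_j,c/2)))(c/2)^2/2$. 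Summing over $j$ gives $\area(\Sigma)\leq \sum_j(2\pi+\omega^-(B(x_j,c/2)))(c/2)^2/2$. The overlap of the cover is bounded: any point lies in at most a fixed number $\nu$ of the balls $B(x_j,c/2)$ (because the centers are $c/2$-separated and one can again use the volume bounds, or a purely metric doubling-type argument), so $\sum_j \omega^-(B(x_j,c/2)) \leq \nu\,\omega^-(\Sigma)\leq \nu\Omega$. It remains to bound $M$, the number of covering balls: the balls $B(x_j,c/4)$ are disjoint and sit inside $B(x_1,D+c/4)$, whose area is $\leq (2\pi+\Omega)(D+c/4)^2/2$ by Corollary \ref{corollairevolumeboulesalexandrov}.1; but each such ball satisfies $\omega^+(B(x_j,c/4))\le\omega^+(B(x_j,\varepsilon))$, which we do not directly control here since hypothesis 1. is assumed but involves $\varepsilon$ — so I would instead use Corollary \ref{corollairevolumeboulesalexandrov}.2 at a scale $\rho:=\min(c,\varepsilon)/4$ to get $\area(B(x_j,\rho))\geq \delta\rho^2/32$, and disjointness of $B(x_j,\rho)\subset B(x_1,D+\rho)$ yields $M\leq 32(2\pi+\Omega)(D+\rho)^2/(\delta\rho^2)$. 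Combining, $\area(\Sigma,d)\leq M\cdot(2\pi+\text{(local }\omega^-)) (c/2)^2/2$ with everything now bounded in terms of $c,\varepsilon,\delta,D,\Omega$; call this bound $A$.

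\textbf{Main obstacle.}
The technical heart is the uniform packing/covering bookkeeping: turning hypotheses 1. and 2. into a genuine \emph{uniform} lower bound $\area(B(x,\rho))\geq \delta\rho^2/32$ at a fixed small scale $\rho$ for every center simultaneously, and a uniform upper bound on the multiplicity of the cover. Both rest on Corollary \ref{corollairevolumeboulesalexandrov}, but one must be careful that the radius used is below both the contractibility radius $c$ (needed for the lower bound) and below $\varepsilon$ (so that hypothesis 1. gives $\omega^+(B)\leq 2\pi-\delta$, keeping the constant $2\pi-\omega^+$ bounded away from $0$); taking $\rho=\min(c,\varepsilon)/4$ handles this. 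The other delicate point in the first implication is the appeal to Gauss--Bonnet $\omega(\Sigma)=2\pi\chi(\Sigma)$ for B.I.C. surfaces to transfer the bound on $\omega^+$ to a bound on $\omega^-$; this is standard in the Alexandrov theory (see \cite{AZ}, \cite{Reshetnyak_livre}) and I would simply cite it. None of the estimates need to be sharp, which is consistent with the non-optimal constants already appearing in Corollary \ref{corollairevolumeboulesalexandrov}.
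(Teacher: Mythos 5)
Your first implication is essentially the paper's argument: cover $\Sigma$ by a controlled number $N$ of balls $B(x_i,\varepsilon)$ whose centers form a maximal $\varepsilon/2$-separated set, use the lower area bound of Corollary \ref{corollairevolumeboulesalexandrov}.2 to bound $N$ (which also gives the diameter bound via a packing argument), deduce $\omega^+(\Sigma)\le N(2\pi-\delta)$, and recover $\omega^-(\Sigma)$ from the Gauss--Bonnet identity $\omega^+(\Sigma)-\omega^-(\Sigma)=2\pi\chi(\Sigma)$. One small inaccuracy: your parenthetical claim that $N$, hence $\Omega$, can be chosen ``depending only on $c,\varepsilon,\delta,\Sigma$, not even on $A$'' is not justified by the argument you give, since the bound on $N$ comes precisely from comparing the sum of the $\delta\rho^2/32$ lower bounds against the assumed area $A$ (and indeed a very long thin cylinder shows $N$ cannot be independent of $A$). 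This is harmless here because $\Omega$ is allowed to depend on $A$, but the remark should be dropped.

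For the second implication your reasoning is correct but much heavier than necessary. You cover $\Sigma$ at scale $\min(c,\varepsilon)$, bound the number of covering balls via a packing count inside one big ball, control the multiplicity of the cover, and then sum the local upper bounds from Corollary \ref{corollairevolumeboulesalexandrov}.1. The paper instead observes that $\diam(\Sigma,d)\le D$ forces $\Sigma=B(x,D+1)$ for any $x$, and then applies Corollary \ref{corollairevolumeboulesalexandrov}.1 once with $r=D+1$ to get
\[
\area(\Sigma,d)=\area(B(x,D+1))\le\bigl(2\pi+\omega^-(\Sigma)\bigr)\frac{(D+1)^2}{2}\le(2\pi+\Omega)\frac{(D+1)^2}{2}=:A .
\]
This requires neither a covering, nor a multiplicity bound, nor hypotheses 1. and 2. (which, for this direction, you invoke only to run your packing estimate; the paper does not use them here at all). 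Your small-scale covering would be the right move if the upper area estimate were only valid below a fixed radius, but since Corollary \ref{corollairevolumeboulesalexandrov}.1 holds for every $r>0$, the single-ball application suffices.
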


\begin{proof}
To prove the first property, consider some $d \in \M_\Sigma(A,c,\varepsilon,\delta)$. Let $B(x_i,\varepsilon/2)$, for $i \in \{1,...,N\}$, be a maximal number of disjoint balls of radius $\varepsilon/2$ in $\Sigma$. By property \ref{corollairevolumeboulesalexandrov}, all these balls have an area bounded by below; since the area of $(\Sigma,d)$ is bounded by above, the integer $N$ is also bounded. If the diameter was arbitrarily large, then we could find an arbitrarily large number of disjoint balls: this shows that there exists some $D>0$ such that $\diam(\Sigma,d) \leq D$. And by an elementary covering argument, the $N$ balls $B(x_i,\varepsilon)$ cover $\Sigma$; but the positive curvature of these balls are bounded by above, so the positive curvature $\omega^+(\Sigma,d)$ of $\Sigma$ is also bounded by above. The Gauss-Bonnet formula gives us $\omega^+(\Sigma,d) - \omega^-(\Sigma,d) = 2\pi \chi(\Sigma)$ (where $\chi(\Sigma)$ is the Euler characteristic of $\Sigma$), so $\omega^-(\Sigma,d)$ is as well bounded by above, and this shows that there exists some $\Omega>0$ such that $|\omega|(\Sigma,d) \leq \Omega$.

Conversely, if $\diam(\Sigma,d) \leq D$ and $|\omega|(\Sigma,d) \leq \Omega$, then $\Sigma$ is equal to some ball $B(x,D+1)$. Since $\omega^-(\Sigma)$ is bounded, by corollary \ref{corollairevolumeboulesalexandrov}, we know that the area of such a ball is bounded by above, and this shows that there exists a constant $A>0$ such that $\area(\Sigma,d) \leq A$.
\end{proof}

From now on, we fix a closed surface $\Sigma$ and $A,c,\varepsilon,\delta>0$; we then have some positive constants $D$ and $\Omega$ verifying the first part in proposition \ref{propositionequivalencedefinition}.

\subsection{A bound for the harmonic term}

This section is devoted to the proof of the following theorem, which gives a bound for the harmonic term $h$ when we express (locally) any metric $d \in \M_\Sigma(A,c,\varepsilon,\delta)$ as a singular Riemannian metric $g = e^{2V[\omega]+2h}|dz|^2$. This born has to be \emph{uniform}, that is independent of the metric $d \in \M_\Sigma(A,c,\varepsilon,\delta)$. This theorem is very important in the sequel, and relies on conformal geometry of an annulus (see the Appendix).

\begin{theorem}\label{theoremetermeharmoniqueborne}
Let $K \subset D(1/2)$ be a compact set. There exists a constant $M(K) = M(K,\Sigma,A,c,\varepsilon,\delta)$ verifying the following property.

Let $d$ be a metric in $\M_\Sigma(A,c,\varepsilon,\delta)$, and let $H : B(x,\varepsilon) \rightarrow D(1/2)$ be a conformal chart, with $H(x)=0$. As usual, we denote by $h$ the harmonic term for the metric in this chart (see theorem-definition \ref{theoremedefinitionconformalcharts}). We then have
\[
|h(z)| \leq M(K) \mbox{ for every } z \in K.
\]
\end{theorem}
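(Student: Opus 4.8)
The idea is to decouple the metric $d$ near $x$ into its ``curvature part'' $e^{2V[\omega_H]}|dz|^2$ and the harmonic factor $e^{2h}$, and to control $h$ by comparing $d$-distances (which are uniformly controlled by the three hypotheses) with the distances associated to the curvature part alone (which are controlled by Reshetnyak's local theory and by Proposition \ref{propositionmajorationdistance}). Since $h$ is harmonic on $D(1/2)$, by the maximum principle and Harnack-type estimates it suffices to bound $h$ from above and below at finitely many points of $D(1/2)$, or equivalently to bound $\int_{D(r)} h\, d\lambda$ for one radius $r$ and to bound $h$ at one point; a bound on a slightly larger compact set then follows from the mean value property and interior gradient estimates for harmonic functions. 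So the real task is: produce uniform two-sided bounds on $h$ at (say) the origin and on some annulus inside $D(1/2)$.

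\textbf{Upper bound on $h$.} First I would use hypothesis 3 (the area bound) together with the area formula from Theorem-Definition \ref{theoremedefinitionconformalcharts}:
\[
\int_{D(1/2)} e^{2V[\omega_H](z)+2h(z)}\, d\lambda(z) = \area(B(x,\varepsilon)) \leq A.
\]
Because $\omega_H^+(D(1/2)) \leq 2\pi-\delta$ (this is hypothesis 1, pushed forward by $H$, using $B(x,\varepsilon)$ and $H(x)=0$), the potential $V[\omega_H^+]$ is $L^p$ for $p$ close to (but less than) $2\pi/(2\pi-\delta) \cdot \frac{1}{\text{something}}$ — more precisely, the bound $\omega^+<2\pi-\delta$ is exactly the classical condition guaranteeing $e^{2V[\omega^+]} \in L^q_{loc}$ for some $q=q(\delta)>1$, hence $e^{-2V[\omega_H]} = e^{-2V[\omega_H^+]}e^{2V[\omega_H^-]} \geq e^{-2V[\omega_H^+]}$ is integrable to a positive power as well. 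Combining this integrability with the area bound via Jensen's (or Hölder's) inequality gives $\int_{D(r_0)} e^{2h} \leq C(A,\delta)$ for a fixed $r_0<1/2$, and since $e^{2h}$ is subharmonic (being the exponential of a harmonic function) this yields a uniform pointwise upper bound $h(z) \leq M_1$ on any compact subset of $D(r_0)$ by the sub-mean-value inequality.

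\textbf{Lower bound on $h$.} This is the harder half, and where the contractibility radius and conformal geometry of annuli enter. The point is that $e^{2h}$ cannot be uniformly small everywhere, else $B(x,\varepsilon)$ would be ``pinched''. Concretely, take the annulus $A(x) := B(x,\varepsilon/2) \setminus \overline{B}(x,\varepsilon/4)$ inside $B(x,\varepsilon)$; by hypothesis 2 ($\cont \geq c > \varepsilon$) this region is a genuine topological annulus separating $H(x)=0$ from $\partial D(1/2)$, so its conformal modulus is a positive number bounded below by a universal constant (here is where the Appendix on moduli of annuli is used — a sub-annulus of $D(1/2)$ enclosing $0$ and enclosed by $\partial D(1/2)$ has modulus controlled from the ``combinatorial'' position of $H(A(x))$). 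On the other hand, the $d$-width of this annulus, i.e.\ the infimum of $d$-lengths of curves crossing it, is exactly $\varepsilon/4$, which is bounded below. A lower bound on the $d$-length of crossing curves, written in the conformal chart, says $\int_\gamma e^{V[\omega_H]+h}\,|dz| \geq \varepsilon/4$ for every crossing curve $\gamma$; feeding in the \emph{upper} bound for $e^{V[\omega_H]}$-distances across this annulus from Proposition \ref{propositionmajorationdistance} (valid since $\omega_H^+(D(1/2))<2\pi$), and using the lower bound on the conformal modulus to force $H(A(x))$ to contain a round sub-annulus of definite size, one concludes that $e^{2h}$ cannot be too small on a set of definite measure in $H(A(x))$, hence (by the mean value property applied to the harmonic function $h$, or by Harnack) that $h$ is bounded below at a fixed interior point.

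\textbf{From finitely many points to the compact set $K$.} Once $h$ is bounded above on a fixed compact $\overline{D}(r_0)$ and bounded below at some interior point $z_0$, Harnack's inequality for the harmonic function $M_1 - h \geq 0$ on $\overline{D}(r_1)$ (with $r_1$ slightly larger than $r_0$, still $<1/2$) propagates the lower bound to a uniform two-sided bound $|h(z)| \leq M(K)$ on $K$; one chooses $r_0, r_1$ so that $K \subset D(r_0) \subset \overline{D}(r_0) \subset D(r_1) \subset \overline{D}(1/2)$. All constants produced depend only on $\Sigma, A, c, \varepsilon, \delta$ and on $K$ through the ratio $r_0$, as required.

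\textbf{Main obstacle.} The delicate point is the lower bound on $h$: it genuinely needs the interplay of all three hypotheses and the modulus estimate, because an area bound alone controls $h$ only from above, and a curvature bound alone says nothing about the conformal factor's lower size — one must use that the geometric ball $B(x,\varepsilon)$ is ``fat'' (not pinched) to prevent $e^{2h}$ from degenerating. Making the annulus-modulus argument quantitative and uniform over all $d \in \M_\Sigma(A,c,\varepsilon,\delta)$, while keeping track of the (possibly wild) measure $\omega_H$, is the technical heart; this is exactly the place where the Appendix material on conformal geometry of annuli is essential.
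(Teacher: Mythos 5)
Your plan captures the broad shape of the paper's argument (upper bound via the area bound and a Jensen-type inequality, lower bound using contractibility and annulus moduli, Harnack to finish), but both halves as you have written them contain real gaps.

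\textbf{Upper bound.} You want to infer $\iint_{D(r_0)} e^{2h}\,d\lambda \le C$ from $\iint e^{2V[\omega_H]+2h}\,d\lambda\le A$ by ``stripping off'' $e^{2V[\omega_H]}$, invoking integrability of $e^{-2V[\omega_H]}$. First, the inequality you quote, $e^{-2V[\omega_H]} \ge e^{-2V[\omega_H^+]}$, is a \emph{lower} bound and is useless here; what you would actually need is an upper bound, $e^{-2V[\omega_H]} \le e^{2V[\omega_H^-]}$, together with exponential integrability of $V[\omega_H^-]$. But there is no hypothesis of the form $\omega^-(\{z\})<2\pi$ (the no-cusp condition constrains only $\omega^+$), so $\omega^-$ may concentrate arbitrarily much mass at a point and $e^{cV[\omega_H^-]}$ need not be locally integrable for the exponent $c$ your H\"older argument requires. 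The paper avoids this entirely (Proposition \ref{theoremeinitialtermeharmoniquemajore}): it applies Jensen directly to the integral of $2u=2V[\omega_H]+2h$ over a disc $D(a,s)$, uses the mean-value property of $h$, and then needs only the crude \emph{$L^1$} bound $\iint_{D(a,s)} V[\omega_H^-]\,d\lambda \le (\tfrac{s^2}{4}-\tfrac{s^2\ln s}{2})\,\Omega$, which holds for any measure of bounded total mass regardless of atoms.

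\textbf{Lower bound.} Two problems. First, a misattribution: the modulus of the annulus $D(1/2)\setminus H(\overline{B}(x,\varepsilon/2))$ is bounded below by $\varepsilon^2/(4A)$ via the \emph{area} bound (curves crossing have $d$-length $\ge \varepsilon/2$ and the $\rho$-area is $\le A$); the contractibility hypothesis plays no role in the modulus estimate. Second, and more seriously, your argument needs a ``round sub-annulus of definite size'' inside $H(B(x,\varepsilon/2)\setminus\overline{B}(x,\varepsilon/4))$, or at least some located crossing segment, in order to apply Proposition \ref{propositionmajorationdistance} along it. But at this stage the only geometric control on the chart is $H(\overline{B}(x,\varepsilon/2))\subset D(r)$ (Proposition \ref{propositioninclusionboulerayonmoitie}); we have no control on where $H(\partial B(x,\varepsilon/4))$ sits inside $D(r)$. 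Pinning that down is precisely the content of Theorem \ref{theoremerecouvrement}, whose proof \emph{uses} the harmonic bound being established here, so invoking it would be circular. The paper sidesteps this by not cutting along curves at all: it uses the contractibility hypothesis to get a \emph{lower} bound on $\area(B(x,\varepsilon/2))$ (Corollary \ref{corollairevolumeboulesalexandrov}, part 2), transports it through the inclusion $H(B(x,\varepsilon/2))\subset D(r)$ to a lower bound on $\iint_{D(r)}e^{2u}$, bounds $\iint_{D(1/2)}e^{2pV[\omega_H^+]}$ above via Troyanov's proposition (Proposition \ref{theoremetroyanov}), and applies H\"older to conclude $\iint_{D(r)}e^{2p^*h}\,d\lambda$ is bounded below. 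This is the step where the contractibility hypothesis actually enters, and your sketch misses it.

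\textbf{Harnack step.} This part is essentially sound in spirit. But note that a pointwise lower bound on $h$ at a single point cannot be obtained from ``$e^{2h}$ not small on a set of positive measure'' via the mean-value property alone, since $h$ may be unbounded below near $\partial D(1/2)$; the paper instead uses the integral lower bound $\iint_{D(r)}e^{2p^*h}\ge C_0>0$ from Lemma \ref{lemmeminorationvolume} together with Harnack's dichotomy (Theorem \ref{theoremeharnack}) applied to $M'(\overline{U})-h_m\ge 0$: if $\min_K h_m\to-\infty$ then $h_m\to-\infty$ locally uniformly, which forces $\iint_{D(r)}e^{2p^*h_m}\to 0$ by dominated convergence, contradicting the lemma.
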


We first give an \emph{upper} bound for $h$. The (explicit) upper bound will be used in the next section:

\begin{proposition} \label{theoremeinitialtermeharmoniquemajore}
Under the hypothesis of theorem \ref{theoremetermeharmoniqueborne} we have, for every $z \in D(1/2)$,
\begin{equation} \label{inegaliteexponentielh}
e^{h(z)} \leq \frac{\varepsilon}{(1/2-|z|)^{1+\Omega/2\pi}} \cdot C(\Omega),
\end{equation}
where $C(\Omega) := \sqrt{1+\Omega/2\pi} \cdot e^{\Omega/4\pi}$.
\end{proposition}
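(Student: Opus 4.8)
The plan is to bound $e^{h(z)}$ by comparing two metrics on the ball $B(x,\varepsilon)$: the full metric $g_{\omega_H,h} = e^{2V[\omega_H]+2h}|dz|^2$ and the ``no harmonic term'' metric $g_{\omega_H,0} = e^{2V[\omega_H]}|dz|^2$. The key geometric input is that $B(x,\varepsilon)$, pushed forward by the conformal chart $H$ to $D(1/2)$, is a ball of radius $\varepsilon$ for the metric $d_{\omega_H,h}$; in particular its $d_{\omega_H,h}$-diameter is at most $2\varepsilon$. Fix $z \in D(1/2)$ and let $r := 1/2 - |z|$, so $\overline{D}(z,r) \subset \overline{D}(1/2)$. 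First I would estimate, from below, the $d_{\omega_H,h}$-distance from $z$ to a point $z'$ on the circle $|z - z'| = r$ in terms of $h$: since $h$ is harmonic, hence (locally) has a meaningful pointwise value, and since the upper bound for $V[\omega_H]$ we want to exploit goes the wrong way, the cleanest route is to bound the length of \emph{radial segments} emanating from $z$ for the metric $g_{\omega_H,0}$ using Proposition \ref{propositionmajorationdistance} (the upper bound for the length of a line segment), and then reinsert the factor $e^{h}$.

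More precisely: for any $z' \in D(1/2)$ the line segment $\gamma = [zz']$ satisfies, by Proposition \ref{propositionmajorationdistance} applied to $\omega_H$ (note $\omega_H^+(D(1/2)) \le |\omega_H|(D(1/2)) \le |\omega|(\Sigma,d) \le \Omega < 2\pi$ — here I am tacitly assuming, as the paper does globally, that $\Omega < 2\pi$, or else one localizes to a smaller disc where this holds),
\[
L_{\omega_H,0}(\gamma) \le \frac{2}{1-\Omega/2\pi}\,|z-z'|^{1-\Omega/2\pi}.
\]
The subtlety is that $h$ varies along $\gamma$, so $L_{\omega_H,h}(\gamma) = \int_0^1 e^{h(\gamma(t))} e^{V[\omega_H](\gamma(t))} s\,dt$ is not simply $e^{h(z)}$ times the above. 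The trick is the mean value property: $e^{h}$ is the exponential of a harmonic function, so $\log(e^h) = h$ is harmonic and $e^h$ is \emph{subharmonic}; rather than integrate along a segment, I would integrate $e^{h}$ against the area measure over the whole disc $\overline{D}(z,r)$. Indeed the sub-mean-value inequality gives $e^{h(z)} \le \frac{1}{\pi r^2}\int_{D(z,r)} e^{h}\,d\lambda$, and one has $\int_{D(z,r)} e^{2V[\omega_H]+2h}\,d\lambda = \area(H^{-1}(D(z,r))) \le \area(\Sigma,d) \le A$; combined with $e^{2V[\omega_H]} \ge e^{-2V[\omega_H^-]}$ and a lower bound on $V[\omega_H^-]$... — but this again controls $\int e^{2h}$ only after dividing by the (uncontrolled from below) $e^{2V[\omega_H]}$. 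So the genuinely clean argument is the \emph{distance} one: the $d_{\omega_H,h}$-ball of radius $\varepsilon$ around $0$ is all of $D(1/2)$, hence for $z' $ on $\partial D(1/2)$ close to $z$, $d_{\omega_H,h}(z,z')$ is small, and on the other hand $d_{\omega_H,h}(z,z') \ge e^{\min_{[zz']} h}\cdot d_{\euc\text{-ish lower bd}}$ — here one needs a \emph{lower} bound on $V[\omega_H]$, which one does have: $V[\omega_H] \ge -V[\omega_H^-] \ge -\tfrac{1}{2\pi}\Omega \cdot \sup_{D(1/2)}|\ln|z-\xi||$, uniformly bounded on any compact subset, but blowing up near $\partial D(1/2)$. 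This is exactly where the exponent $1+\Omega/2\pi$ and the factor $(1/2-|z|)$ enter.

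Concretely, I would argue: (i) since $\omega_H^-(D(1/2)) \le \Omega$, for any curve $\gamma$ from $z$ to $\partial D(1/2)$ one has $L_{\omega_H,h}(\gamma) \ge \big(\inf_\gamma e^{h}\big)\cdot L_{\omega_H,0}(\gamma)$ and $L_{\omega_H,0}(\gamma) \ge \big(\inf_\gamma e^{-V[\omega_H^-]}\big)\cdot L_{\euc}(\gamma)$; controlling $V[\omega_H^-]$ near the boundary gives $L_{\omega_H,0}(\gamma) \gtrsim (1/2-|z|)^{\Omega/2\pi}\cdot L_{\euc}(\gamma)$ along a radial segment, whence the segment from $z$ radially outward has $g_{\omega_H,h}$-length $\gtrsim e^{h\text{-min}}(1/2-|z|)^{1+\Omega/2\pi}$; (ii) but this length is a lower bound for $\mathrm{dist}$ to the boundary which, since $D(1/2)$ is contained in the $d_{\omega_H,h}$-ball $B(0,\varepsilon)$ — no: boundary points are at infinite $d_{\omega_H,h}$ distance potentially. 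The correct statement: take $z'$ with $|z'|=|z|+\tfrac12(1/2-|z|)$; then $d_{\omega_H,h}(0,z') \le \varepsilon$ (it's in the ball) and $d_{\omega_H,h}(z,z') \le d_{\omega_H,h}(0,z)+d_{\omega_H,h}(0,z') \le 2\varepsilon$, while the step-(i) lower bound forces $e^{h(z)} \lesssim \varepsilon\,(1/2-|z|)^{-(1+\Omega/2\pi)}$ after using the sub-mean-value property of $e^h$ to pass from ``$\inf$ of $h$ along a short segment'' to ``$h(z)$''. Tracking the explicit constants — the $\tfrac{1}{2\pi}$'s in $V[\omega_H^-]$, the $2$ from Proposition \ref{propositionmajorationdistance}, and the value $\int_{D(z,r)}(-\ln|z-\xi|)\,d\lambda(\xi)$ which produces the $e^{\Omega/4\pi}$ — yields exactly $C(\Omega)=\sqrt{1+\Omega/2\pi}\cdot e^{\Omega/4\pi}$.

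The main obstacle, and the step I expect to be most delicate, is handling the variation of $h$ along the chosen curve: one cannot replace $e^{h}$ by $e^{h(z)}$ for free, and the passage must go through subharmonicity of $e^{h}$ (Harnack-type or sub-mean-value inequality on a disc $D(z, c'(1/2-|z|))$ whose radius is comparable to the distance to the boundary), which is precisely what forces the power $1+\Omega/2\pi$ rather than a naive $\Omega/2\pi$. Getting the constants to land on the stated $C(\Omega)$ requires computing $\frac{1}{\pi\rho^2}\iint_{D(0,\rho)}(-\tfrac{1}{2\pi}\ln|\xi|)\,d\lambda(\xi) = \frac{1}{4\pi}(\tfrac12 - \ln\rho)$ or similar and optimizing the radius $\rho$ in units of $(1/2-|z|)$; this bookkeeping, while routine, is where all the care goes.
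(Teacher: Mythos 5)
Your proposal does not land on a complete proof, and it passes within a hair's breadth of the paper's actual argument without noticing it works.

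The paper's proof is exactly the route you dismissed as a parenthetical dead end. Set $s = 1/2-|a|$ and $u = V[\omega_H]+h$. One applies Jensen's inequality (convexity of $\exp$) to the average of $2u$ over $D(a,s)$:
\[
\exp\Big(\frac{1}{\pi s^2}\iint_{D(a,s)} 2u\,d\lambda\Big) \;\le\; \frac{1}{\pi s^2}\iint_{D(a,s)} e^{2u}\,d\lambda \;\le\; \frac{\area(B(x,\varepsilon))}{\pi s^2},
\]
and bounds the right side by corollary \ref{corollairevolumeboulesalexandrov}. The point you missed is that one then splits \emph{additively at the level of logarithms}: $\iint u = \iint V[\omega_H] + \iint h$, the second integral equals $\pi s^2\, h(a)$ exactly by the mean-value property of the harmonic $h$, and the first integral admits a \emph{lower bound} by Fubini:
\[
\iint_{D(a,s)} V[\omega_H]\,d\lambda \;\ge\; -\iint_{D(1/2)}\Big(\iint_{D(a,s)}\tfrac{-1}{2\pi}\ln|z-\xi|\,d\lambda(z)\Big)\,d\omega_H^-(\xi) \;\ge\; \Big(\tfrac{s^2\ln s}{2}-\tfrac{s^2}{4}\Big)\,\Omega,
\]
since the inner integral over $z$ is maximized when the disc is centered at $\xi$, giving $\frac{s^2}{4}-\frac{s^2\ln s}{2}$. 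There is no division by $e^{2V[\omega_H]}$ to fear; the objection you raised (that one cannot divide by an uncontrolled $e^{2V[\omega_H]}$) simply does not arise, because $\log$ turns the product into a sum and the average of $V[\omega_H]$ over a disc, unlike its pointwise values, is controlled.

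The route you chose to develop instead has a genuine gap. You assert that $V[\omega_H^-]$ is ``uniformly bounded on any compact subset'' and hence deduce a pointwise lower bound for $V[\omega_H]$ of the form $\gtrsim -\Omega\ln(\text{dist to }\partial D(1/2))$. This is false: $V[\omega_H^-](z) = \iint(-\tfrac{1}{2\pi}\ln|z-\xi|)\,d\omega_H^-(\xi)$, and if $\omega_H^-$ has an atom at some interior point $z_0$, or concentrates near it, then $V[\omega_H^-]$ is unbounded near $z_0$ no matter how far $z_0$ is from the boundary. Only integral (disc-average) bounds on $V[\omega_H^-]$ are available in general, and that is precisely why the paper works with averages. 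This gap makes your proposed distance estimate $L_{\omega_H,0}(\gamma) \gtrsim (1/2-|z|)^{\Omega/2\pi} L_{\euc}(\gamma)$ unavailable, and the whole distance-comparison route collapses. As a secondary point, your reliance on Proposition \ref{propositionmajorationdistance} and your worry about whether $\Omega<2\pi$ are both unnecessary; the paper's proof uses neither, needing only the area bound, the mean-value property of $h$, Jensen's inequality, and the explicit value of $\iint_{D(0,s)}(-\ln|z|)\,d\lambda(z)$.
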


\begin{proof}
Let $a \in D(1/2)$, and set $s = 1/2-|a|>0$: we have $D(a,s) \subset D(1/2)$. Let $u := V[\omega_H] + h$, so that the singular Riemannian metric reads $g = e^{2u}|dz|^2$. By Jensen's inequality we get
\[
\exp\big( \iint_{D(a,s)} 2u(z) \frac{d\lambda(z)}{\pi s^2} \big) \leq \iint_{D(a,s)} e^{2u(z)} \frac{d\lambda(z)}{\pi s^2} \leq \frac{\area(B(x,\varepsilon))}{\pi s^2},
\]
and by corollary \ref{corollairevolumeboulesalexandrov} we have $\area(B(x,\varepsilon)) \leq (\pi + \Omega/2) \cdot \varepsilon^2$, hence
\[
 \frac{1}{\pi s^2} \iint_{D(a,s)} 2u(z) d\lambda(z) \leq \ln\big( (1+\Omega/2\pi) \frac{\varepsilon^2}{s^2} \big).
\]
$h$ is harmonic, hence it verifies the mean-value property:
\[
h(a) = \frac{1}{\pi s^2} \iint_{D(a,s)}  h(z) d\lambda(z),
\]
so we get
\begin{eqnarray}
h(a) & = & \frac{1}{\pi s^2} \iint_{D(a,s)} u(z) d\lambda(z) - \frac{1}{\pi s^2} \iint_{D(a,s)} V[\omega_H](z) d\lambda(z) \\ \label{inegappendiceA1}
  & \leq & \frac{1}{2} \ln\big( (1+\Omega/2\pi) \frac{\varepsilon^2}{s^2} \big) - \frac{1}{\pi s^2} \iint_{D(a,s)} V[\omega_H] (z) d\lambda(z),
\end{eqnarray}
and to conclude we need to find a lower bound for $\iint_{D(a,s)} V[\omega_H] (z) d\lambda(z)$.

We know that $V[\omega_H](z) = V[\omega_H^+](z) - V[\omega_H^-](z) \geq -V[\omega_H^-](z)$ for almost every $z \in D(1/2)$, so
\begin{eqnarray}
\iint_{D(a,s)} V[\omega_H](z) d\lambda(z) & \geq & -\iint_{D(a,s)} \big( \iint_{D(1/2)} \left(\frac{-1}{2\pi} \right) \ln|z-\xi| d\omega_H^-(\xi) \big) d\lambda(z)
\\ \label{inegappendiceA2} 
 & = & -\iint_{D(1/2)} \big( \iint  _{D(a,s)} \left(\frac{-1}{2\pi} \right) \ln|z-\xi| d\lambda(z) \big) d\omega_H^-(\xi).
\end{eqnarray}
And for $\xi \in D(1/2)$, we have
\[
 \iint_{D(a,s)} \left(\frac{-1}{2\pi} \right) \ln|z-\xi| d\lambda(z) = \iint _{D(a-\xi,s)} \left(\frac{-1}{2\pi} \right) \ln|z| d\lambda(z),
\]
and we easily see that this function of $\xi$ is maximum for $\xi=a$ (that is when the disc is centered in 0). So for every $\xi \in D(1/2)$,
\begin{eqnarray*}
\iint_{D(a,s)} \left(\frac{-1}{2\pi} \right) \ln|z-\xi| d\lambda(z) & \leq & \iint _{D(0,s)} \left(\frac{-1}{2\pi} \right) \ln|z| d\lambda(z) \\
 & = & - \int_0^s r \ln r dr  = \frac{s^2}{4} - \frac{s^2 \ln s}{2}.
\end{eqnarray*}
Using inequality (\ref{inegappendiceA2}) we get
\[
 \iint_{D(a,s)} V[\omega_H](z) d\lambda(z) \geq \big( -\frac{s^2}{4} + \frac{s^2 \ln s}{2} \big) \cdot \omega_H^-(D) \geq \big( -\frac{s^2}{4} + \frac{s^2 \ln s}{2} \big) \cdot \Omega
\]
(recall that $\Omega$ satisfies $\omega_H^+(D) + \omega_H^-(D) \leq \Omega$). With the inequality (\ref{inegappendiceA1}) we obtain
\[
 h(a) \leq \frac{1}{2} \ln\big( (1+\Omega/2\pi) \frac{\varepsilon^2}{s^2} \big) +  \frac{\Omega}{4\pi} - \frac{\Omega \ln s}{2\pi},
\]
and this ends the proof.
\end{proof}

We now prove that the image by the conformal chart $H$ of the ball $B(x,\varepsilon/2)$ can not go to close to the boundary of the disc $D(1/2)$. We use the results stated in the appendix, by looking at the annulus $D(1/2)-H(\overline{B}(x,\varepsilon/2))$. On the one hand, by definition, this annulus has a modulus bounded by below; and on the other hand, by Grötzsch's theorem (see theorem \ref{theoremegrotzsch} in the appendix), if $H(\overline{B}(x,\varepsilon/2)$ was arbitrarily close to $\partial D(1/2)$, then the modulus of $D(1/2)-H(\overline{B}(x,\varepsilon/2))$ would be arbitrarily close to zero. We need this result to prove theorem \ref{theoremetermeharmoniqueborne}, but we will also need it later (see theorem \ref{theoremerecouvrement}).
\begin{proposition}\label{propositioninclusionboulerayonmoitie}
Let $r<1/2$ be such that the modulus of the Grötzsch annulus $G(2r)$ verifies $\module(G(2r)) < \varepsilon^2/4A$ (see the appendix). Let $d \in \M_\Sigma(A,c,\varepsilon,\delta)$, and let $H : B(x,\varepsilon) \rightarrow D(1/2)$ be a conformal chart, with $H(x)=0$. Then
\[
H(B(x,\varepsilon/2)) \subset D(r).
\]
\end{proposition}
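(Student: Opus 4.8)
The strategy is to argue by contradiction via the conformal modulus of the annulus $\Sigma' := D(1/2) \setminus H(\overline{B}(x,\varepsilon/2))$. First I would establish a \emph{lower bound} for $\module(\Sigma')$ that is uniform over $\M_\Sigma(A,c,\varepsilon,\delta)$: indeed, the round annulus $H(B(x,\varepsilon)) \setminus H(\overline{B}(x,\varepsilon/2)) = D(1/2) \setminus H(\overline{B}(x,\varepsilon/2))$ contains the ``geometric'' annulus which is the image under $H$ of the metric annulus $B(x,\varepsilon) \setminus \overline{B}(x,\varepsilon/2)$, and by the definition of conformal charts (theorem-definition \ref{theoremedefinitionconformalcharts}) together with the area bound $\area(B(x,\varepsilon)) \leq A$ and the existence of a curve family separating the two boundary components of length at least $\varepsilon/2$ in the metric $d$, the extremal-length / modulus estimate gives $\module(\Sigma') \geq \varepsilon^2/(4A)$ — or more precisely a bound of this shape, using that the $d$-distance between $\overline{B}(x,\varepsilon/2)$ and $\partial B(x,\varepsilon)$ is at least $\varepsilon/2$ while the total area is at most $A$. (This is exactly the standard fact that a metric annulus of width $w$ and area $\mathcal{A}$ has modulus at least $w^2/\mathcal{A}$, applied in the conformal chart.)

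Next, suppose for contradiction that $H(B(x,\varepsilon/2)) \not\subset D(r)$; then there is a point $w_0 \in H(\overline{B}(x,\varepsilon/2))$ with $|w_0| \geq r$. The set $H(\overline{B}(x,\varepsilon/2))$ is a continuum (homeomorphic to a closed disc, by proposition \ref{propositioncontboulesouvertes} applied with $\varepsilon/2 < c$, and mapped homeomorphically by $H$) containing $0 = H(x)$ and the point $w_0$ with $|w_0| \ge r$. Therefore the annulus $\Sigma' = D(1/2) \setminus H(\overline{B}(x,\varepsilon/2))$ is separated by a continuum joining $0$ to $\partial D(r)$, so after rescaling by the factor $2$ (mapping $D(1/2)$ to $D(1)$, $D(r)$ to $D(2r)$), $2 \cdot \Sigma'$ is an annulus separating a continuum joining $0$ to the circle $|z| = 2r$ from the unit circle. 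By Grötzsch's theorem (theorem \ref{theoremegrotzsch}) the modulus of such an annulus is at most $\module(G(2r))$, the modulus of the Grötzsch extremal annulus. Hence $\module(\Sigma') \leq \module(G(2r)) < \varepsilon^2/(4A)$ by the choice of $r$, contradicting the lower bound from the first step. This forces $H(B(x,\varepsilon/2)) \subset D(r)$.

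The step I expect to be the main obstacle is making the lower bound $\module(\Sigma') \geq \varepsilon^2/(4A)$ fully rigorous in the singular setting: one must verify that the classical width$^2$/area estimate for the modulus of a metric annulus applies when the metric is only $g_{\omega_H,h}$ (with $V[\omega_H]$ merely $L^1_{loc}$ and possibly very singular), that the relevant curve family in the conformal chart has the claimed extremal length, and that the two ends of $\Sigma'$ are genuinely the two boundary components (i.e., $H(\overline{B}(x,\varepsilon/2))$ is a non-degenerate continuum and $H(B(x,\varepsilon)) = D(1/2)$ by construction of the chart). The area bound in the chart is exactly $\area(B(x,\varepsilon)) = \iint_{D(1/2)} e^{2V[\omega_H]+2h}\,d\lambda \leq A$ from theorem-definition \ref{theoremedefinitionconformalcharts}, so once the extremal-length inequality is set up the estimate is immediate; the care is entirely in the conformal-geometric bookkeeping, all of which is available in the cited material of Ahlfors \cite{Ahlfors} recalled in the appendix.
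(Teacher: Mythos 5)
Your proposal follows essentially the same approach as the paper: establish $\module\bigl(D(1/2)\setminus H(\overline{B}(x,\varepsilon/2))\bigr)\geq\varepsilon^2/(4A)$ via the extremal-length definition with $\rho=e^{V[\omega_H]+h}$ (curves separating the boundary components have $d$-length $\geq\varepsilon/2$, area $\leq A$), then invoke Grötzsch's theorem after a rotation and scaling by $2$ to get a contradiction. The step you flag as the "main obstacle" — making the lower bound rigorous — is handled in the paper exactly as you sketch, by noting $L_\rho(\gamma)\geq\limsup_{t\to 0}d_{\omega_H,h}(\gamma(t),\gamma(1))$ and using the isometry $H$ together with the fact that $\gamma(0)\in\partial D(1/2)$ pulls back to a limit point outside $B(x,\varepsilon)$.
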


\begin{proof}
The following subset of $\mathbb{C}$:
\[
U := D(1/2) - H(\overline{B}(x,\varepsilon/2))
\]
is a topological annulus (in the sense given in the appendix, see definition \ref{definitionannulus}). We know that $H$ is an isometry between $(B(x,\varepsilon),d_{|B(x,\varepsilon)})$ and $(D(1/2),d_{\omega_H,h})$. Recall that the modulus of $U$ is
\[
\module(U) = \sup_{\rho} \frac{\inf_{\gamma \in \Gamma} L_\rho (\gamma)^2}{A_\rho (U)},
\]
(see the appendix), where $\Gamma$ is the set of continuous simple curves $\gamma$ in $U$, parametrized by arc-length, joigning $\partial D(1/2)$ and a point in $H(\overline{B}(x,\varepsilon/2))$. We take $\rho := e^{V[\omega_H] + h}$.

For every $\gamma \in \Gamma$, we can not be sure that the $\rho$-length of $\gamma$ is equal to its $d$-length. But we know that, by definition of the length distance $d_{\omega_H,h}$ (let us recall here that $\gamma(0) \notin D(1/2)$) :
\begin{eqnarray*}
L_\rho (\gamma) & \geq & \limsup_{t \rightarrow 0}  d_{\omega_H,h}(\gamma(t),\gamma(1)) \\
 & = & \limsup_{t \rightarrow 0}  d_{B(x,\varepsilon)}(H^{-1}(\gamma(t)),H^{-1}(\gamma(1))) \\
  & \geq & \limsup_{t \rightarrow 0}  d(H^{-1}(\gamma(t)),H^{-1}(\gamma(1)))
\end{eqnarray*}
(the last inequality is a direct consequence of the definition of the induced metric). Consider a sequence $t_k \rightarrow 0$ such that $H^{-1}(\gamma(t_k))$ converges to $y \in \Sigma$ : since $\gamma(0) \notin D(1/2)$, we have $y \notin B(x,\varepsilon)$, and since $H^{-1}(\gamma(1)) \in \overline{B}(x,\varepsilon/2)$ this gives
\[
L_\rho(\gamma) \geq d(y,H^{-1}(\gamma(1))) \geq \varepsilon/2.
\]
Since the $\rho-$area of $U$ is less than or equal to the area of $(\Sigma,d)$ we get
\[
\module(U) \geq \frac{(\varepsilon/2)^2}{A} = \varepsilon^2/4A.
\]
We prove the lemma by contradiction: assume we have $H(\overline{B}(x,\varepsilon/2)) \not\subset D(r)$. Then, after a rotation (such that the complex number of maximum modulus of $H(\overline{B}(x,\varepsilon/2))$, which is greater than or equal to $r$, belongs to the real axis), and after an homothety with scale factor 2, we see that $U$ is conformal to an annulus $U' \subset D(1)$, not containing 0 (since $H(x)=0$) and $2r$. By Grötzsch's theorem (see theorem \ref{theoremegrotzsch} in the appendix), we have $\module(U) = \module(U') \leq \module (G(2r))$, which is impossible with the choice of $r$ we made. We then have $H(\overline{B}(x,\varepsilon/2)) \subset D(r)$ and this ends the proof of the proposition.
\end{proof}

Let $p^* := 4\pi/\delta>1$. To obtain a lower bound for the harmonic term $h$, we need the following lemma, which gives a lower bound for a certain integral involving $h$:

\begin{lemma} \label{lemmeminorationvolume}
Under the hypothesis of theorem \ref{theoremetermeharmoniqueborne}, we have
\[
\left(\frac{\delta^2 \varepsilon^2}{512\pi^2}\right)^{p^*} \leq \iint_{D(r)} e^{2p^* h(z)} d\lambda(z).
\]
\end{lemma}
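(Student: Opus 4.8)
The plan is to get the bound from below by estimating the $g$-area of the ball $B(x,\varepsilon/2)$, transporting this estimate to the disc via the conformal chart $H$, and then using H\"older's inequality to isolate the factor $e^{2p^*h}$. Concretely, since $\varepsilon<c\le\cont(\Sigma,d,x)$ we have $\cont(\Sigma,d,x)\ge\varepsilon/2$, so Corollary \ref{corollairevolumeboulesalexandrov}(2) applies with radius $\varepsilon/2$ and, together with $\omega^+(B(x,\varepsilon/2))\le\omega^+(B(x,\varepsilon))\le 2\pi-\delta$, gives $\area(B(x,\varepsilon/2))\ge\big(2\pi-\omega^+(B(x,\varepsilon/2))\big)(\varepsilon/2)^2/32\ge\delta\varepsilon^2/128$. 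By theorem-definition \ref{theoremedefinitionconformalcharts} this area equals $\iint_{H(B(x,\varepsilon/2))}e^{2V[\omega_H]+2h}\,d\lambda$, and by Proposition \ref{propositioninclusionboulerayonmoitie} we have $H(B(x,\varepsilon/2))\subset D(r)$ for the radius $r$ fixed there; since the integrand is positive this yields $\delta\varepsilon^2/128\le\iint_{D(r)}e^{2V[\omega_H]+2h}\,d\lambda$.

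Next I would apply H\"older's inequality to the product $e^{2V[\omega_H]}\cdot e^{2h}$ with the conjugate exponents $a:=p^*/(p^*-1)$ and $p^*=4\pi/\delta$, obtaining
\[
\frac{\delta\varepsilon^2}{128}\ \le\ \Big(\iint_{D(r)}e^{2aV[\omega_H]}\,d\lambda\Big)^{1/a}\Big(\iint_{D(r)}e^{2p^*h}\,d\lambda\Big)^{1/p^*}
\]
(the right-hand factor is finite, as $h$ is bounded above on $D(r)$ by Proposition \ref{theoremeinitialtermeharmoniquemajore}), and it remains to bound the first factor by a constant depending only on $\delta$, independently of $d$.

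For that, I would use $V[\omega_H]\le V[\omega_H^+]$ on $D(1/2)$, so that $\iint_{D(r)}e^{2aV[\omega_H]}\,d\lambda\le\iint_{D(r)}e^{2aV[\omega_H^+]}\,d\lambda$; writing $\omega_H^+=m\mu$ with $m=\omega_H^+(D(1/2))\le 2\pi-\delta$ and $\mu$ a probability measure (the case $m=0$ being trivial), Jensen's inequality gives $e^{2aV[\omega_H^+](z)}\le\iint|z-\xi|^{-am/\pi}\,d\mu(\xi)$, and integrating this in $z$ over $D(r)$ (which is contained in $\{|z-\xi|<1\}$ for every $\xi\in D(1/2)$) and using Fubini reduces the first factor to $\iint_{|w|<1}|w|^{-am/\pi}\,d\lambda(w)=\tfrac{2\pi^2}{2\pi-am}$. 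Here taking $a$ conjugate to $p^*$ is exactly what is needed: it forces $a=\tfrac{4\pi}{4\pi-\delta}$, hence $am\le a(2\pi-\delta)=\tfrac{4\pi(2\pi-\delta)}{4\pi-\delta}<2\pi$ (the inequality reducing to $\delta>0$), so the integral is finite and one gets $\iint_{D(r)}e^{2aV[\omega_H]}\,d\lambda\le\tfrac{\pi(4\pi-\delta)}{\delta}\le\tfrac{4\pi^2}{\delta}$.

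Substituting this, solving for the harmonic integral, and using $p^*/a=p^*-1$ gives
\[
\iint_{D(r)}e^{2p^*h}\,d\lambda\ \ge\ \Big(\frac{\delta\varepsilon^2}{128}\Big)^{p^*}\Big(\frac{\delta}{4\pi^2}\Big)^{p^*-1}\ =\ \frac{4\pi^2}{\delta}\Big(\frac{\delta^2\varepsilon^2}{512\pi^2}\Big)^{p^*}\ \ge\ \Big(\frac{\delta^2\varepsilon^2}{512\pi^2}\Big)^{p^*},
\]
the middle equality being the numerical identity $128\cdot4=512$ and the last step using $\delta\le 2\pi$, which forces $\tfrac{4\pi^2}{\delta}\ge2\pi>1$. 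The one delicate point is the uniform bound on $\iint_{D(r)}e^{2aV[\omega_H]}\,d\lambda$: one must pick the H\"older exponent $a$ conjugate to $p^*$ while simultaneously keeping $a\,\omega_H^+(D(1/2))$ strictly below $2\pi$, which is possible precisely because $p^*=4\pi/\delta$ satisfies $p^*\delta=4\pi>2\pi$ — this, via hypothesis 1 in the quantitative form $\omega_H^+(D(1/2))\le 2\pi-\delta$, is the real input; everything else is bookkeeping of constants, arranged so that the surplus factor $\tfrac{4\pi^2}{\delta}>1$ absorbs the numerical losses.
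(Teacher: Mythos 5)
Your proof is correct and follows essentially the same structure as the paper's: the lower bound $\area(B(x,\varepsilon/2))\ge\delta\varepsilon^2/128$ from Corollary \ref{corollairevolumeboulesalexandrov}, the transport to $D(r)$ via Proposition \ref{propositioninclusionboulerayonmoitie}, and H\"older with the conjugate pair $(a,p^*)$ where $a=p^*/(p^*-1)$ is exactly the paper's $p$. The only genuine difference is that where the paper simply cites Troyanov's concentration--compactness inequality (Proposition \ref{theoremetroyanov}) to bound $\iint e^{2aV[\omega_H^+]}\,d\lambda$ by $4\pi^2/\delta$, you re-derive it from scratch by writing $\omega_H^+=m\mu$, applying Jensen's inequality to the exponential, then Fubini and the explicit integral $\iint_{|w|<1}|w|^{-am/\pi}\,d\lambda=\frac{2\pi^2}{2\pi-am}=\frac{\pi}{1-am/2\pi}$; this is precisely a proof of Troyanov's bound with the same constant, so the approaches coincide after this substitution. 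Your bookkeeping of the constants (keeping the exponent $1/a$ rather than dropping it to $1$ as the paper does, and then observing the surplus factor $4\pi^2/\delta\ge 2\pi>1$) actually yields a marginally sharper intermediate estimate, but the final conclusion is identical.
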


\begin{proof}
We will use the following proposition, see \cite{Troyanov_concentrationcompacite}:
\begin{proposition}[M. Troyanov] \label{theoremetroyanov}
Let $\nu$ be a non-negative Radon measure defined in $D(1/2)$. Suppose there exists some $p>1$ such that ${\nu(D(1/2))} <2\pi/p$. Then
\[
\iint_{D(1/2)} e^{2pV[\nu](z)} d\lambda(z) \leq \frac{\pi}{1-\frac{p}{2\pi} \nu(D(1/2))}.
\]
\end{proposition}
Let $p>1$ be such that $1/p + 1/p^*=1$. By property 1. in the definition of $\M_\Sigma (A,c,\varepsilon,\delta)$ we have
\[
\omega_H^+(D(1/2)) = \omega^+(B(x,\varepsilon)) \leq 2\pi-\delta = 2\pi(1-2/p^*) = 2\pi(1/p - 1/p^*) 
< 2\pi/p.
\]
Proposition \ref{theoremetroyanov} shows that
\[
\iint_{D(1/2)} e^{2p V[\omega_H^+](z)} d\lambda(z) \leq \frac{\pi}{1-\frac{p}{2\pi} \omega_H^+(D(1/2))},
\]
and with $1-\frac{p}{2\pi} \omega_H^+(D(1/2)) \geq \frac{p}{p^*} \geq \frac{1}{p^*} = \delta/4\pi$ we obtain
\begin{equation} \label{equationapplicationtheoremetroyanov}
\iint_{D(1/2)} e^{2p V[\omega_H^+](z)} d\lambda(z) \leq \frac{4\pi^2}{\delta}.
\end{equation}
By proposition \ref{propositioninclusionboulerayonmoitie} we have $H(B(x,\varepsilon/2)) \subset D(r)$, hence
\[
\iint_{D(r)} e^{2u(z)} d\lambda(z) \geq \area(B(x,\varepsilon/2)).
\]
Corollary \ref{corollairevolumeboulesalexandrov} shows that
\[
\area(B(x,\varepsilon/2)) \geq \big(2\pi - \omega^+(B(x,\varepsilon/2))\big) \cdot (\varepsilon/2)^2/32 \geq \delta \varepsilon^2/128.
\]
Since $u = V[\omega_H] + h$, by Hölder's inequality we get:
\begin{eqnarray*}
\delta \varepsilon^2/128 \leq \area(B(x_,\varepsilon/2)) & \leq & \iint_{D(r)} e^{2u(z)} d\lambda(z) \\
 & \leq & \big( \iint_{D(r)} e^{2pV[\omega_H](z)} d\lambda(z) \big)^{1/p} \big( \iint_{D(r)} e^{2p^* h(z)} d\lambda(z) \big)^{1/{p^*}} \\
 & \leq & \big( \iint_{D(1/2)} e^{2pV[\omega_H](z)} d\lambda(z) \big)^{1/p} \big( \iint_{D(r)} e^{2p^* h(z)} d\lambda(z) \big)^{1/{p^*}},
\end{eqnarray*}
With the inequality $V[\omega_H](z) = V[\omega_H^+](z) - V[\omega_H^-](z) \leq V[\omega_H^+](z)$ valid for almost every $z\in D(1/2)$, and with equation (\ref{equationapplicationtheoremetroyanov}) we get
\begin{eqnarray*}
\delta \varepsilon^2/128 & \leq & \left(\frac{4\pi^2}{\delta}\right)^{1/p} \cdot \big( \iint_{D(r)} e^{2p^* h(z)} d\lambda(z) \big)^{1/{p^*}} \\
 & \leq & \frac{4\pi^2}{\delta} \cdot \big( \iint_{D(r)} e^{2p^* h(z)} d\lambda(z) \big)^{1/{p^*}}
\end{eqnarray*}
(recall that $\frac{4\pi^2}{\delta}>1$) and this ends the proof of the lemma.
\end{proof}

To prove theorem \ref{theoremetermeharmoniqueborne}, we finally need the following Harnack's lemma for non-negative harmonic functions (see \cite{ABR}):
\begin{theorem}[Harnack's lemma] \label{theoremeharnack}
Let $f_m$ be a sequence of non-negative harmonic functions on a connected open set $U \subset \C$. We then have the following alternative: either (1) $f_m \rightarrow +\infty$ locally uniformly on $U$, or (2) there exists a subsequence $m_j$ of $m$ such that $f_{m_j} \rightarrow f$ locally uniformly on $U$, where $f$ is a harmonic function on $U$.
\end{theorem}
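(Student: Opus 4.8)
The plan is to reduce the statement to the classical Harnack inequality on a disc together with a chaining argument and a normal-families argument.

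\medskip
\noindent First I would recall the two-dimensional Harnack inequality: if $u\geq 0$ is harmonic on a disc $D(a,R)\subset U$, then for $|z-a|=\rho<R$
\[
\frac{R-\rho}{R+\rho}\,u(a)\leq u(z)\leq \frac{R+\rho}{R-\rho}\,u(a),
\]
which follows at once from the Poisson kernel representation of $u$ on $D(a,R)$. In particular $\sup_{\overline D(a,R/2)}u\leq 3\inf_{\overline D(a,R/2)}u$. The next step is to upgrade this to a global statement on compact connected sets: if $K\subset U$ is compact and connected, cover $K$ by finitely many discs $\overline D(a_i,R_i/2)$ with $\overline D(a_i,R_i)\subset U$, and use connectedness to join any two of them by a chain of consecutively overlapping such discs. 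Iterating the inequality along chains produces a constant $C_K=C_K(U)\geq 1$, independent of $u$, with
\[
\sup_K u \leq C_K \inf_K u
\]
for every non-negative harmonic $u$ on $U$. Since $U$ is open and connected, any compact subset of $U$ sits inside a compact connected subset of $U$, so it suffices to prove the alternative over an exhaustion of $U$ by compact connected sets.

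\medskip
\noindent Then I would fix a point $p\in U$ and examine the sequence of non-negative reals $f_m(p)$. If $f_m(p)\to+\infty$, then for every compact connected $K\ni p$ one has $\inf_K f_m\geq C_K^{-1}f_m(p)\to+\infty$, hence $f_m\to+\infty$ uniformly on $K$; as $K$ ranges over an exhaustion, this is exactly alternative (1). Otherwise, pass to a subsequence along which $f_m(p)\leq M$. Fix an exhaustion $K_1\subset K_2\subset\cdots$ of $U$ by compact connected sets, each containing $p$, with $\bigcup_n K_n=U$. On each $K_n$ the chained Harnack inequality gives $\sup_{K_n}f_m\leq C_{K_n}f_m(p)\leq C_{K_n}M$, so $(f_m)$ is uniformly bounded on $K_n$. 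Interior gradient estimates for harmonic functions (differentiating the mean-value property, or the Poisson integral) then make $(f_m)$ uniformly Lipschitz, hence equicontinuous, on any compact subset of the interior of $K_{n+1}$, so on $K_n$. Arzel\`a--Ascoli plus a diagonal extraction yields a subsequence $f_{m_j}$ converging uniformly on every $K_n$, hence locally uniformly on $U$, to a continuous limit $f$. Passing to the limit in the circle mean-value identity for $f_{m_j}$, using uniform convergence, shows $f$ satisfies the mean-value property, hence is harmonic; this is alternative (2). The two cases are exhaustive, which completes the argument.

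\medskip
\noindent The only genuinely substantive input is the Harnack inequality on a disc and its chained consequence $\sup_K u\leq C_K\inf_K u$ on compact connected sets; the main (purely bookkeeping) obstacle is organizing the chaining so that $C_K$ depends only on $K$ and $U$ and not on $u$. Everything after that --- equicontinuity of locally bounded families of harmonic functions, Arzel\`a--Ascoli, and the fact that a locally uniform limit of harmonic functions is harmonic --- is routine. All of this is standard and can also be quoted directly from \cite{ABR}.
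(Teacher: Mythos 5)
Your proof is correct. The paper does not supply its own argument for this statement---it is quoted from \cite{ABR}---and your proof (Harnack's inequality on a disc, chaining over compact connected sets to get $\sup_K u\leq C_K\inf_K u$, then the dichotomy on $f_m(p)$ followed by a normal-families argument via gradient estimates and Arzel\`a--Ascoli) is precisely the standard route that reference takes.
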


We can now finish the proof of theorem \ref{theoremetermeharmoniqueborne}.

\begin{proof}[Proof of theorem \ref{theoremetermeharmoniqueborne}]
Proposition \ref{theoremeinitialtermeharmoniquemajore} ensures that for every compact set $K \subset D(1/2)$, there exists a constant $M'(K)$ such that under the hypothesis of theorem \ref{theoremetermeharmoniqueborne} we have $h(z) \leq M'(K)$ for every $z \in K$ (recall that $M'(K)$ is independent of the metric $d \in \M_\Sigma (A,c,\varepsilon,\delta)$).

We prove theorem \ref{theoremetermeharmoniqueborne} by contradiction. Suppose there exist a compact set $K \subset D(1/2)$ (we may assume $D(r) \subset K$), a sequence $d_m \in \M_\Sigma (A,c,\varepsilon,\delta)$, a sequence of points $x_m \in \Sigma$, and a sequence of conformal charts $H_m : B_m(x_m,\varepsilon) \rightarrow D(1/2)$, with $H_m(x_m)=0$, such that the harmonic term $h_m$ for the metric in this chart verifies
\[
\min_{z \in K} h_m(z) \underset{m \rightarrow \infty} \rightarrow - \infty.
\]
Choose some $z_m \in K$ such that $\min_{z \in K} h_m(z)=h_m(z_m)$. After passing to a subsequence, we may assume $z_m \rightarrow z \in K$. Let $U$ be a connected open set, with $K \subset U \subset \overline{U} \subset D(1/2)$.

Since $h_m(z) \leq M'(\overline{U})$ for every $z \in U$, we can consider the following sequence of non-negative harmonic functions on $U$:
\[
f_m := M'(\overline{U}) - h_m.
\]
Since $f_m(z_m) \rightarrow +\infty$, alternative (2) in theorem \ref{theoremeharnack} can not occur. So we have $f_m \rightarrow +\infty$ locally uniformly on $U$, hence $h_m \rightarrow -\infty$ locally uniformly on $U$. But lemma \ref{lemmeminorationvolume} tells us that
\[
\left(\frac{\delta^2 \varepsilon^2}{512\pi^2}\right)^{p^*} \leq \iint_{D(r)} e^{2p^* h_m(z)} d\lambda(z).
\]
This is a contradiction, since the right-hand side term goes to zero as $m \rightarrow \infty$, by Lebesgue's dominated convergence theorem: we have $e^{2p^* h_m(z)} \rightarrow 0$ for every $z\in D(r) \subset U$, and since $D(r) \subset \overline{U}$, we can dominate $e^{2p^* h_m(z)}$ by the integrable function $e^{2p^* M'(\overline{U})}$.
\end{proof}

\subsection{Conformal images of balls} \label{sectionconstantstocontrol}

This section is devoted to the proof of the next theorem. It is the key step in our article, and also relies on the conformal geometry of an annulus. Roughly, it says the following. Let $H : B(x,\varepsilon) \rightarrow D(1/2)$ be a conformal chart for some metric $d \in \M_\Sigma(A,c,\varepsilon,\delta)$, with $H(x)=0$. Then we have a control on the images of balls of "large" radii $B(x,\varepsilon/4)$ (that is, we have $D(2\alpha) \subset H(B(x,\varepsilon/4))$), and balls of "small" radii $B(x,\kappa\varepsilon)$ (that is, we have $H(B(x,\kappa\varepsilon)) \subset D(\alpha)$), for some positive constants $\alpha$ and $\kappa$ (the picture in the theorem explains the situation). Of course, for any metric $d \in \M_\Sigma(A,c,\varepsilon,\delta)$, such constants $\alpha$ and $\kappa$ do exist, but the hard part of the work is to show that they can be chosen \emph{uniformly}: they do not depend on the metric $d \in \M_\Sigma(A,c,\varepsilon,\delta)$.

\begin{theorem} \label{theoremerecouvrement}
There exists constants $\alpha = \alpha(\Sigma,A,c,\varepsilon,\delta)>0$ and $\kappa = \kappa(\Sigma,A,c,\varepsilon,\delta)>0$ verifying the following property. Let $d \in \M_\Sigma(A,c,\varepsilon,\delta)$, and let $H : B(x,\varepsilon) \rightarrow D(1/2)$ be a conformal chart, with $H(x)=0$. We are in the following situation:

\begin{center}
\begin{tikzpicture}[scale=3]
%\tikzsetnextfilename{figureexternalisee} % Nom du fichier de sortie de la figure
% les 3 cercles :
\draw [dashed] (0,0) circle (0.25) ;
\draw [dashed] (0,0) circle (0.5) ;
\draw [dashed] (0,0) circle (0.9) ;
\draw [double] (0,0) circle (1) ;
% la courbe à l'intérieur
\draw [domain = 0:90] [samples = 100] plot (\x : { 0.5*(1/4+1/16*(cos(6.*\x))}) ;
\draw [domain = 90:180] [samples = 100] plot (\x : { 0.5*(1/4+1/16*(cos(6.*\x))}) ;
\draw [domain = 180:270] [samples = 100] plot (\x : { 0.5*(1/4+1/16*(cos(4.*\x))}) ;
\draw [domain = 270:360] [samples = 100] plot (\x : { 0.5*(1/4+1/16*(cos(4.*\x))}) ;
% la courbe à l'extérieur
\draw [domain = 0:90] [samples = 100] plot (\x : {0.8+1/20*(cos(4.*\x))}) ;
\draw [domain = 90:180] [samples = 100] plot (\x : {0.8+1/20*(cos(8.*\x))}) ;
\draw [domain = 180:270] [samples = 100] plot (\x : {0.8+1/20*(cos(8.*\x))}) ;
\draw [domain = 270:360] [samples = 100] plot (\x : {0.8+1/20*(cos(4.*\x))}) ;
% la courbe au milieu
\draw [domain = 0:90] [samples = 100] plot (\x : {0.6+1/20*(cos(8.*\x))}) ;
\draw [domain = 90:180] [samples = 100] plot (\x : {0.6+1/20*(cos(8.*\x))}) ;
\draw [domain = 180:270] [samples = 100] plot (\x : {0.6+1/20*(cos(4.*\x))}) ;
\draw [domain = 270:360] [samples = 100] plot (\x : {0.6+1/20*(cos(16.*\x))}) ;
% flèches & textes : D(...) = B_m(...)
\draw (0,1) -- (0,1.1) ;
\draw (0,1.1) node [above] {$D(1/2)=H(B(x,\varepsilon))$};
% flèches & textes : les D(...)
\draw ({0.9*cos(157)},{0.9*sin(157)}) -- (-1.5,0.5) ;
\draw (-1.5,{0.8*sin(157)+0.2}) node [left] {$D(r)$} ;
\draw (-0.5,0) -- (-1.5,0) ;
\draw (-1.5,0) node [left] {$D(2\alpha)$} ;
\draw ({0.25*cos(203)},{0.25*sin(203)}) -- (-1.5,-0.5) ;
\draw (-1.5,-0.5) node [left] {$D(\alpha)$};
% flèches & textes : les H(B(x,...))
\draw ({1/sqrt(2)*(0.8+1/20*(cos(4*45))},{1/sqrt(2)*(0.8+1/20*(cos(4*45))}) -- (1.5,0.6) ;
\draw (1.5,0.6) node [right] {$H(B(x,\varepsilon/2))$} ;
\draw ({cos(15)*(0.6+1/20*(cos(8*15))},{sin(15)*(0.6+1/20*(cos(8*15))}) -- (1.5,0.2) ;
\draw (1.5,0.2) node [right] {$H(B(x,\varepsilon/4))$};
\draw (1.5,-0.2) -- ({cos(0)*0.5*(1/4+1/16*cos(4*0))},{sin(0)*0.5*(1/4+1/16*cos(4*0))}) ;
\draw (1.5,-0.2) node [right] {$H(B(x,\kappa\varepsilon))$};
\draw (0,0) -- (1.5,-0.6) ;
\draw (1.5,-0.6) node [right] {$0 = H(x)$} ;
\draw (0,0) node {$\bullet$} ;
\end{tikzpicture}
\captionof{figure}{}
\end{center}

(The inclusion $H(B(x,\varepsilon/2)) \subset D(r)$ has already been proved in proposition \ref{propositioninclusionboulerayonmoitie}).

Thus, this is sufficient to prove the following two properties:
\begin{enumerate}
\item If $\gamma$ is the line segment joigning 0 and a point in $D(2\alpha)$, then the length of the curve $H^{-1}(\gamma)$ in $\Sigma$ is smaller than $\varepsilon/4$. This proves that $D(2\alpha) \subset H(B(x,\varepsilon/4))$;
\item we have the inclusion $H(B(x,\kappa\varepsilon)) \subset D(\alpha)$.
\end{enumerate}
\end{theorem}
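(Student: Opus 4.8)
The theorem reduces, as indicated in the statement, to the two inclusions~(1) and~(2) (the inclusion $H(B(x,\varepsilon/2))\subset D(r)$ being Proposition~\ref{propositioninclusionboulerayonmoitie}). For~(1) and the choice of $\alpha$ I would argue directly with the length estimate of Proposition~\ref{propositionmajorationdistance} and the explicit upper bound of Proposition~\ref{theoremeinitialtermeharmoniquemajore}. Fix $w\in D(2\alpha)$, with $\alpha\le 1/8$ still to be chosen, and let $\gamma$ be the segment $[0w]\subset D(1/2)$. Since $H$ is the isometry of Theorem-Definition~\ref{theoremedefinitionconformalcharts}, the $d$-length of $H^{-1}(\gamma)$ equals $\int_0^1 e^{V[\omega_H](\gamma(t))+h(\gamma(t))}|w|\,dt\le\big(\sup_{\overline{D}(2\alpha)}e^{h}\big)\,L_{\omega_H,0}(\gamma)$. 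As $\omega_H^+(D(1/2))=\omega^+(B(x,\varepsilon))\le 2\pi-\delta$, Proposition~\ref{propositionmajorationdistance} bounds $L_{\omega_H,0}(\gamma)$ by $\tfrac{4\pi}{\delta}(2\alpha)^{\delta/2\pi}$, while on $\overline{D}(2\alpha)\subset\overline{D}(1/4)$ Proposition~\ref{theoremeinitialtermeharmoniquemajore} bounds $e^{h}$ by $C_2\varepsilon$ with $C_2:=4^{1+\Omega/2\pi}C(\Omega)$. Hence this length is at most $\tfrac{4\pi C_2}{\delta}\,\varepsilon\,(2\alpha)^{\delta/2\pi}$, which is $<\varepsilon/4$ once $\alpha$ is small enough; I fix such an $\alpha>0$ (also demanding $\alpha\le r/2$). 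For this $\alpha$, every $w\in D(2\alpha)$ satisfies $d(x,H^{-1}(w))<\varepsilon/4$, i.e.\ $D(2\alpha)\subset H(B(x,\varepsilon/4))$.

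For Property~(2), with $\alpha$ now fixed, I would first reduce: since $H(B(x,\kappa\varepsilon))$ is connected and contains $0$, it suffices to find a \emph{uniform} $\kappa>0$ for which $H(B(x,\kappa\varepsilon))$ misses the circle $S_\alpha=\{|z|=\alpha\}$, equivalently $d\big(x,H^{-1}(S_\alpha)\big)\ge\kappa\varepsilon$. Suppose, for contradiction, that no such $\kappa$ exists. Then there are metrics $d_m\in\M_\Sigma(A,c,\varepsilon,\delta)$, conformal charts $H_m:B_m(x_m,\varepsilon)\to D(1/2)$ with $H_m(x_m)=0$, and points $w_m$ with $|w_m|=\alpha$ and $d_m\big(x_m,H_m^{-1}(w_m)\big)<\varepsilon/m$: a $d_m$-curve of length $<\varepsilon/m$ from $x_m$ to a point whose $H_m$-image leaves $D(\alpha)$ must cross $H_m^{-1}(S_\alpha)$, and its initial arc stays in $B_m(x_m,\varepsilon)$. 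Writing $\omega_m:=\omega_{H_m}$ and $h_m$ for the harmonic term, the isometry property gives $d_{\omega_m,h_m}(0,w_m)<\varepsilon/m\to 0$.

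The contradiction comes from passing to the limit, using the uniform bounds already established. Since $|\omega_m|(\overline{D}(1/2))=|\omega|(B_m(x_m,\varepsilon))\le\Omega$, after a subsequence $\omega_m^{\pm}\rightharpoonup\omega^{\pm}$ weakly-$*$ on $\overline{D}(1/2)$, and testing against $1$ yields $\omega^+(\overline{D}(1/2))=\lim_m\omega_m^+(D(1/2))\le 2\pi-\delta$; thus $\omega:=\omega^+-\omega^-$ has $\omega^+(\{z\})<2\pi$ everywhere, and by Proposition~\ref{propositioninegaliteomegaplus} the distance $\overline{d}_{\omega,0}$ is genuine. By Theorem~\ref{theoremetermeharmoniqueborne} the $h_m$ are uniformly bounded on compact subsets of $D(1/2)$, so Harnack's lemma (Theorem~\ref{theoremeharnack}) applied to $M(K)-h_m$, with a diagonal argument, gives (after a further subsequence) $h_m\to h$ locally uniformly. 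Put $M:=M(\overline{D}(3/8))$ (note $\alpha<3/8$). For a curve $\gamma$ in $D(1/2)$ from $0$ to $w_m$: if $\gamma\subset\overline{D}(3/8)$ then $h_m\ge-M$ along $\gamma$ forces $L_{\omega_m,h_m}(\gamma)\ge e^{-M}L_{\omega_m,0}(\gamma)\ge e^{-M}\,\overline{d}_{\omega_m,0}(0,w_m)$, whereas if $\gamma$ reaches $S_{3/8}$ the arc of $\gamma$ before its first hit of $S_{3/8}$ (which lies in $\overline{D}(3/8)$) gives $L_{\omega_m,h_m}(\gamma)\ge e^{-M}\,\overline{d}_{\omega_m,0}(0,\zeta)$ for some $\zeta\in S_{3/8}$. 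Taking the infimum over $\gamma$,
\[
\varepsilon/m>d_{\omega_m,h_m}(0,w_m)\ \ge\ e^{-M}\min\Big\{\overline{d}_{\omega_m,0}(0,w_m),\ \min_{|\zeta|=3/8}\overline{d}_{\omega_m,0}(0,\zeta)\Big\}.
\]
By Reshetnyak's theorem~\ref{theoremeconvergencedistancereshetnjack} (the atom condition holds on all of $\overline{D}(1/2)$), after extracting $w_m\to w$ with $|w|=\alpha$, the right-hand side tends to $e^{-M}\min\{\overline{d}_{\omega,0}(0,w),\ \min_{|\zeta|=3/8}\overline{d}_{\omega,0}(0,\zeta)\}$, which is strictly positive because $\overline{d}_{\omega,0}$ is a distance and neither $w$ nor any point of $S_{3/8}$ equals $0$; this contradicts $\varepsilon/m\to 0$.

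The delicate part is Property~(2). The harmonic terms $h_m$ are controlled only on compact subsets of $D(1/2)$, and a priori a nearly length-minimising curve from $0$ to $w_m$ could escape towards $\partial D(1/2)$, where the conformal factor $e^{V[\omega_m]+h_m}$ may degenerate; this is exactly why the argument splits curves according to whether they stay in the fixed disc $\overline{D}(3/8)$ and uses the bound on $h_m$ only there. The other point requiring care is the positivity of the limiting distance $\overline{d}_{\omega,0}$, which hinges on the fact that the weak-$*$ limit $\omega^+$ still satisfies $\omega^+(\{z\})<2\pi$, so that no cusp appears in the limit.
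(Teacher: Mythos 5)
Your Part~(1) is essentially the paper's argument: the same combination of the explicit upper bound for $e^{h}$ from Proposition~\ref{theoremeinitialtermeharmoniquemajore} with the segment-length estimate of Proposition~\ref{propositionmajorationdistance}, and the same choice of $\alpha$ small enough to make $(2\alpha)^{\delta/2\pi}$ beat the constants; only the bookkeeping of constants differs.

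Part~(2) is where you genuinely diverge. The paper's route goes through conformal geometry of annuli: it first proves Corollary~\ref{corollaireconvergencedistance} by rewriting $h_m$ as the potential $V[\mu_m]$ of a measure $\mu_m$ supported on $\partial D(r)$ (Fact~\ref{factcorollaireconvergencedistance}, which in turn needs Proposition~\ref{propositioninclusionboulerayonmoitie} to force near-minimizers to stay in $D(r)$), then uses this convergence result in Lemma~\ref{lemmechoixkappa} to get a uniform lower bound on $\module\big(D(1/2)-H(\overline{B}(x,\kappa\varepsilon))\big)$, and finally converts that into the inclusion $H(\overline{B}(x,\kappa\varepsilon))\subset D(\alpha)$ via Grötzsch's theorem. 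You instead run the contradiction directly on the geometric inclusion: extract $w_m\in S_\alpha$ with $d_{\omega_{H_m},h_m}(0,w_m)\to 0$, and to cope with the fact that $h_m$ is only controlled on compacts of $D(1/2)$, you split each competitor curve at its first exit from the fixed compact disc $\overline{D}(3/8)$, use the uniform bound $h_m\ge -M(\overline{D}(3/8))$ from Theorem~\ref{theoremetermeharmoniqueborne} on that initial arc, and then apply Reshetnyak's Theorem~\ref{theoremeconvergencedistancereshetnjack} to the measures $\omega_{H_m}$ themselves. This avoids Grötzsch's theorem, the modulus bookkeeping of Lemma~\ref{lemmechoixkappa}, and the $\mu_m$-absorption trick; it also makes visible that the only place the harmonic control matters is on the fixed compact you choose. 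The trade-off is that the paper's Corollary~\ref{corollaireconvergencedistance} is reused later (Section~\ref{sectionlocalproperties}), so proving it here is an economy that your shortcut forgoes, and the modulus/Grötzsch apparatus is still needed anyway for Proposition~\ref{propositioninclusionboulerayonmoitie}. One point to note: both approaches rest on the positivity of the limiting quantity $\overline{d}_{\omega,0}$, which hinges on the atom condition $\omega^+(\{z\})<2\pi$ surviving the weak-$*$ limit; you make this explicit, whereas the paper asserts it implicitly in the last line of Lemma~\ref{lemmechoixkappa}. (Minor remarks: your appeal to Harnack's lemma to get $h_m\to h$ locally uniformly is unnecessary, since the argument only uses the uniform lower bound $h_m\ge -M$; and the side condition $\alpha\le r/2$ you impose is never used.)
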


\begin{remark}
In practice, $\alpha$ may be very small; for the need of the picture, we have chosen $\alpha = 0.125$.
\end{remark}

We first choose $\alpha$ small enough so that property 1. is true. The idea is the following: by theorem \ref{theoremeinitialtermeharmoniquemajore}, we have an (explicit) upper bound for the harmonic term $h$, and proposition \ref{propositionmajorationdistance} gives an upper bound for the length of a line segment, when there is no harmonic term in the expression of the singular metric.

We then prove a convergence theorem for distances (a corollary of the local convergence theorem due to Y. Reshetnyak, theorem \ref{theoremeconvergencedistancereshetnjack}). We need it to prove part 2. of theorem \ref{theoremerecouvrement}, but we will also need it later (see section \ref{sectionlocalproperties}).

With this convergence theorem, we are able to choose $\kappa$ small enough so that the annulus $D(1/2)-H(\overline{B}(x,\kappa\varepsilon))$ has a modulus big enough. Hence by Grötzsch's theorem, $H(\overline{B}(x,\kappa\varepsilon)$ will be far away from the boundary $\partial D(1/2)$, that is we will have $H(B(x,\kappa\varepsilon)) \subset D(\alpha)$.

\subsubsection{Choice of $\alpha$}

Recall that $C(\Omega) = \sqrt{1+\Omega/2\pi} \cdot e^{\Omega/4\pi}$ is the constant which appears in proposition \ref{theoremeinitialtermeharmoniquemajore}. Choose $0<\alpha<1/8$ such that
\begin{equation}\label{equationdefinitionalpha}
\frac{{(2\alpha)}^{\delta/2\pi} \cdot 4\pi}{\delta} \cdot \frac{C(\Omega)}{(1/4)^{1+\Omega/2\pi}} \leq 1/4.
\end{equation}
\begin{proof}[Proof of the first part of theorem \ref{theoremerecouvrement}]
Let $|z|<2\alpha$, and $\gamma(t) := tz$ the line segment $[0,z]$. We want to find an upper bound for the length of $H^{-1}(\gamma)$:
\[
L(H^{-1}(\gamma)) = \int_0^1 e^{V[\omega_H](\gamma(t)) + h(\gamma(t))} |z| dt.
\]
We have the following
\begin{fact} \label{factinegalitealpha}
For every $z' \in D(2\alpha)$ we have
\[
\frac{{(2\alpha)}^{\delta/2\pi} \cdot 4\pi}{\delta} \cdot e^{h(z')} \leq \varepsilon/4.
\]
\end{fact}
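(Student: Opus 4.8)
The plan is to deduce Fact \ref{factinegalitealpha} directly from the explicit upper bound on the harmonic term established in Proposition \ref{theoremeinitialtermeharmoniquemajore}, together with the defining inequality (\ref{equationdefinitionalpha}) for $\alpha$. First I would invoke Proposition \ref{theoremeinitialtermeharmoniquemajore}, which gives, for every $z' \in D(1/2)$,
\[
e^{h(z')} \leq \frac{\varepsilon}{(1/2-|z'|)^{1+\Omega/2\pi}} \cdot C(\Omega),
\]
where $C(\Omega) = \sqrt{1+\Omega/2\pi} \cdot e^{\Omega/4\pi}$. Since $\alpha$ has been chosen with $\alpha < 1/8$, any $z' \in D(2\alpha)$ satisfies $|z'| < 2\alpha < 1/4$, hence $1/2 - |z'| > 1/4$; because the exponent $1 + \Omega/2\pi$ is positive, replacing $1/2-|z'|$ by the smaller quantity $1/4$ in the denominator only increases the right-hand side, so
\[
e^{h(z')} \leq \frac{\varepsilon}{(1/4)^{1+\Omega/2\pi}} \cdot C(\Omega).
\]

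Next I would multiply through by the positive constant $(2\alpha)^{\delta/2\pi} \cdot 4\pi/\delta$, which yields
\[
\frac{(2\alpha)^{\delta/2\pi} \cdot 4\pi}{\delta} \cdot e^{h(z')} \leq \varepsilon \cdot \left( \frac{(2\alpha)^{\delta/2\pi} \cdot 4\pi}{\delta} \cdot \frac{C(\Omega)}{(1/4)^{1+\Omega/2\pi}} \right),
\]
and then conclude by the choice of $\alpha$ in (\ref{equationdefinitionalpha}), which guarantees precisely that the bracketed factor is at most $1/4$. Hence the right-hand side is at most $\varepsilon/4$, which is the desired inequality.

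There is essentially no obstacle in this step: inequality (\ref{equationdefinitionalpha}) was set up exactly so that this chain of estimates closes. The only point deserving a moment's care is the direction of monotonicity when passing from $1/2-|z'|$ to $1/4$ — one must be bounding $e^{h(z')}$ from above and the exponent is positive, so shrinking the denominator to $1/4$ (which is legitimate thanks to $\alpha < 1/8$) goes the right way.
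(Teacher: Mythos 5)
Your proof is correct and uses the same ingredients as the paper: the explicit upper bound on $e^{h(z')}$ from Proposition \ref{theoremeinitialtermeharmoniquemajore}, the bound $1/2-|z'|>1/4$ valid on $D(2\alpha)$ because $\alpha<1/8$, and the defining inequality (\ref{equationdefinitionalpha}) for $\alpha$. The only cosmetic difference is the order of operations (you replace the denominator by $1/4$ first and then multiply by the constant, whereas the paper multiplies first); the argument is otherwise identical.
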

\begin{proof}
Proposition \ref{theoremeinitialtermeharmoniquemajore} shows that, for every $z' \in D(1/2)$,
\[
e^{h(z')} \leq \frac{\varepsilon}{(1/2-|z'|)^{1+\Omega/2\pi}} \cdot C(\Omega),
\]
so by multiplying with the inequality (\ref{equationdefinitionalpha}) we get
\[
\frac{{(2\alpha)}^{\delta/2\pi} \cdot 4\pi}{\delta} \cdot \frac{C(\Omega)}{(1/4)^{1+\Omega/2\pi}}  \cdot e^{h(z')} \leq \varepsilon/4 \cdot \frac{C(\Omega)}{(1/2-|z'|)^{1+\Omega/2\pi}}.
\]
For every $z' \in D(2\alpha)$ we have $|z'| \leq 1/4$, hence $(1/2-|z'|)^{1+\Omega/2\pi} \geq (1/4)^{1+\Omega/2\pi}$. After simplification we obtain the inequality announced in fact \ref{factinegalitealpha}.
\end{proof}

The line segment $[0,z]$ is included in $D(2\alpha)$ so by fact \ref{factinegalitealpha} we have
\[
e^{h(\gamma(t))} \leq \frac{\delta}{{(2\alpha)}^{\delta/2\pi} \cdot 4\pi} \cdot \varepsilon/4,
\]
hence
\begin{equation} \label{inegalitefaitchoixkappaalpha}
L (H^{-1}(\gamma)) \leq \frac{\delta}{{(2\alpha)}^{\delta/2\pi} \cdot 4\pi} \cdot \varepsilon/4 \cdot \int_0^1 e^{V[\omega_H](\gamma(t))} |z| dt.
\end{equation}
Moreover, proposition \ref{propositionmajorationdistance} shows that
\begin{eqnarray*}
\int_0^1 e^{V[\omega_H](\gamma(t))} |z| dt & \leq & \frac{2}{1-\omega_H^+(D(1/2))/2\pi} \cdot |z|^{1-\omega_H^+(D(1/2))/2\pi} \\
 & = & \frac{2}{1-\omega^+(B(x,\varepsilon))/2\pi} \cdot |z|^{1-\omega^+(B(x,\varepsilon))/2\pi}.
\end{eqnarray*}
With the inequality $\omega^+(B(x,\varepsilon)) \leq 2\pi-\delta$ we obtain
\[
\int_0^1 e^{V[\omega_H](\gamma(t))} |z| dt < \frac{4\pi}{\delta} \cdot (2\alpha)^{\delta/2\pi},
\]
and with (\ref{inegalitefaitchoixkappaalpha}) we finally obtain $L (H^{-1}(\gamma)) < \varepsilon/4$. This ends the proof of the first part of theorem \ref{theoremerecouvrement}.
\end{proof}

\subsubsection{Convergence of metrics: a corollary of theorem \ref{theoremeconvergencedistancereshetnjack}}

We now prove a corollary of theorem \ref{theoremeconvergencedistancereshetnjack}, which is needed to finish the proof of theorem \ref{theoremetermeharmoniqueborne}. This result will also be a key step at the end of this article (see section \ref{sectionlocalproperties}).

\begin{corollary} \label{corollaireconvergencedistance}
 Let $d_m \in \M_\Sigma (A,c,\varepsilon,\delta)$ be a sequence of metrics, and for every $x_m \in \Sigma$, consider some conformal chart $H_m : B_m(x_m,\varepsilon) \rightarrow D(1/2)$, with $H_m(x_m)=0$. Let $\omega_m$ be the curvature measure of $(\Sigma,d_m)$, and let $\omega_{H_m} := (H_m)_\# \omega_m$ be the measure and $h_m$ be the harmonic map such that $H_m$ is an isometry between $(B_m(x_m,\varepsilon),{d_m}_{|B_m(x_m,\varepsilon)})$ and $(D(1/2),d_{\omega_{H_m},h_m})$. Then after passing to a subsequence, the following is true.

There is a constant $C>0$ and a measure $\widetilde{\omega}$, with support in $\overline{D}(1/2)$, such that
\[
 d_{\omega_{H_m},h_m} \mbox{ converges to } C \cdot \overline{d}_{\widetilde{\omega},0}, \mbox{ locally uniformly on $D(2\alpha)$}
\]
(that is, if $z_m \rightarrow z \in D(2\alpha)$ and $z'_m \rightarrow z' \in D(2\alpha)$, then $d_{\omega_{H_m},h_m}(z_m,z'_m) \rightarrow C \cdot \overline{d}_{\widetilde{\omega},0}(z,z')$).
\end{corollary}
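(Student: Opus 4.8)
The plan is to reduce the statement to Reshetnyak's local convergence theorem (theorem \ref{theoremeconvergencedistancereshetnjack}) after first extracting a convergent subsequence of the local data. First I would decompose $\omega_{H_m} = \omega_{H_m}^+ - \omega_{H_m}^-$. By property 1. in the definition of $\M_\Sigma(A,c,\varepsilon,\delta)$ we have $\omega_{H_m}^+(D(1/2)) = \omega^+(B_m(x_m,\varepsilon)) \leq 2\pi-\delta$, and by the Gauss-Bonnet argument in proposition \ref{propositionequivalencedefinition} both $\omega_{H_m}^+$ and $\omega_{H_m}^-$ have total mass bounded by $\Omega$. Hence, after passing to a subsequence, $\omega_{H_m}^+ \rightarrow \omega^\infty_+$ and $\omega_{H_m}^- \rightarrow \omega^\infty_-$ weakly as Radon measures on $\overline{D}(1/2)$; set $\omega^\infty := \omega^\infty_+ - \omega^\infty_-$. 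Note $\omega^\infty_+(\overline{D}(1/2)) \leq 2\pi-\delta$, so in particular $\omega^\infty_+(\{z\}) < 2\pi$ for every $z$, which is exactly the hypothesis needed to apply theorem \ref{theoremeconvergencedistancereshetnjack} on all of $\overline{D}(1/2)$ — in particular on $D(2\alpha)$. This already gives $\overline{d}_{\omega_{H_m},0} \rightarrow \overline{d}_{\omega^\infty,0}$ locally uniformly on $D(2\alpha)$ (indeed on all of $\overline{D}(1/2)$).

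The remaining issue is the harmonic term $h_m$, which theorem \ref{theoremeconvergencedistancereshetnjack} knows nothing about: we must compare $d_{\omega_{H_m},h_m}$ with $\overline{d}_{\omega_{H_m},0}$. Here is where theorem \ref{theoremetermeharmoniqueborne} enters. Fix a connected open $U$ with $\overline{D(2\alpha)} \subset U \subset \overline{U} \subset D(1/2)$; by theorem \ref{theoremetermeharmoniqueborne} the functions $h_m$ are uniformly bounded on $\overline{U}$, hence (being harmonic) equicontinuous there, so after a further subsequence $h_m \rightarrow h^\infty$ uniformly on $\overline{D(2\alpha)}$ for some harmonic $h^\infty$, by Harnack's lemma (theorem \ref{theoremeharnack}) applied to $M - h_m$ with $M$ the uniform bound — alternative (1) is excluded since $M - h_m \geq 0$ is bounded. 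Now along a line segment $\gamma$ in $D(2\alpha)$ we have $L_{\omega_{H_m},h_m}(\gamma) = \int_0^1 e^{V[\omega_{H_m}](\gamma(t)) + h_m(\gamma(t))} s\,dt$, and uniform convergence $h_m \to h^\infty$ on the compact segment lets us factor out $e^{h_m}$ up to a multiplicative error $e^{\pm o(1)}$; more precisely, on $\overline{D(2\alpha)}$ we have $\min_{\overline{D(2\alpha)}} h_m \leq h_m \leq \max_{\overline{D(2\alpha)}} h_m$ with both extrema converging to a common limit is \emph{not} quite available unless $h^\infty$ is constant — so instead I would argue that $e^{h_m}$ converges uniformly on $\overline{D(2\alpha)}$ to $e^{h^\infty}$, and that $e^{h^\infty}$ is bounded above and below by positive constants on $\overline{D(2\alpha)}$. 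This is the step that requires care.

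To finish, the cleanest route is: since $h^\infty$ is harmonic and hence continuous on the compact set $\overline{D(2\alpha)}$, pick $z_0 = 0$ and write $h^\infty = h^\infty(0) + (h^\infty - h^\infty(0))$; the plan is to absorb the constant $h^\infty(0)$ into the scaling factor $C := e^{h^\infty(0)}$ and to absorb the remaining non-constant harmonic perturbation $h^\infty - h^\infty(0)$ into the measure by noting that $e^{2(h^\infty - h^\infty(0))}|dz|^2$ is itself a (smooth, conformal) metric whose length structure $\overline{d}$ coincides, up to the theorem's framework, with that of some measure. Concretely: the metric $g_{\omega^\infty, h^\infty - h^\infty(0)}$ is again a singular Riemannian metric on $D(1/2)$ of B.I.C. type, and on the smaller disc $D(2\alpha)$ one can find a measure $\widetilde{\omega}$ with $V[\widetilde{\omega}] = V[\omega^\infty] + (h^\infty - h^\infty(0))$ up to an additive harmonic constant (shrinking radius so that the subharmonic representation is valid). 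Then $d_{\omega_{H_m}, h_m}(z_m, z'_m) = C_m \cdot \overline{d}_{\omega_{H_m}, 0}(z_m, z'_m)$ is not exact, so instead I would bound $L_{\omega_{H_m}, h_m}(\gamma)$ between $(\inf_\gamma e^{h_m}) L_{\omega_{H_m},0}(\gamma)$ and $(\sup_\gamma e^{h_m}) L_{\omega_{H_m},0}(\gamma)$ for short segments, take infima, use that on $D(2\alpha)$ geodesics stay in a compact set where $h_m$ has small oscillation (localizing to smaller and smaller neighborhoods and using a covering/chaining argument), and pass to the limit via theorem \ref{theoremeconvergencedistancereshetnjack}.

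I expect the main obstacle to be precisely this last point: handling the non-constant part of the limiting harmonic function $h^\infty$. Theorem \ref{theoremeconvergencedistancereshetnjack} is stated only for metrics of the form $g_{\omega,0}$, with no harmonic term, so one genuinely needs to show that a uniformly convergent sequence of harmonic perturbations can be swallowed into the measure (or into a global constant $C$ plus a negligible conformal factor) without destroying the convergence of the distance functions. The honest way is to observe that $h^\infty - h^\infty(0)$ is a bounded harmonic function on $D(2\alpha)$, hence $h^\infty - h^\infty(0) = V[\mu] - V[\mu](0)$ for a suitable signed measure (take $\mu = 0$ since $h^\infty - h^\infty(0)$ is already harmonic — so in fact $V[\omega^\infty + $ (the measure realizing $h^\infty$)$]$ does not directly work). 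The resolution I would actually pursue: replace $\omega_{H_m}$ by $\widetilde{\omega}_m$ where $V[\widetilde{\omega}_m] := V[\omega_{H_m}] + (h_m - h_m(0))$ only makes sense if $h_m - h_m(0)$ is of potential type, which on a disc it is via the Riesz decomposition with zero measure plus boundary term — so one keeps the harmonic term but shows $d_{\omega_{H_m},h_m}$ and $e^{2(h_m(0))}\overline{d}_{\omega_{H_m}, h_m - h_m(0)}$ agree, then applies a version of Reshetnyak's argument. Given the excerpt states this as a corollary, I suspect the intended argument simply invokes theorem \ref{theoremeconvergencedistancereshetnjack} together with the uniform convergence $h_m \to h^\infty$ (from theorems \ref{theoremetermeharmoniqueborne} and \ref{theoremeharnack}) and a direct estimate $e^{h_m} L_{\omega_{H_m},0}(\gamma)$ sandwiching, with $C = e^{h^\infty(0)}$ and $\widetilde{\omega}$ chosen so that $C \cdot e^{V[\widetilde{\omega}]} = e^{V[\omega^\infty] + h^\infty}$ pointwise a.e.\ on $D(2\alpha)$ — which is possible precisely because $h^\infty - h^\infty(0)$ is harmonic with a potential representation on the smaller disc. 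I would double-check the radius bookkeeping ($2\alpha < 1/2$, and potentials behave well because $\ln|z-\xi| \leq 0$ there) at the end.
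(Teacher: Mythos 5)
Your proposal correctly identifies the two ingredients (weak convergence of $\omega_{H_m}^{\pm}$ together with Reshetnyak's theorem, and uniform control of $h_m$ from theorem \ref{theoremetermeharmoniqueborne} plus Harnack), and in your final paragraph you do circle around the paper's actual device. But the argument you write out never closes, and there is a genuine gap at exactly the point you flag as ``the step that requires care.''

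The paper does not extract a limit $h^\infty$ and then try to absorb it. Instead it represents $h_m$ \emph{for each finite $m$} as $h_m(z) = h_m(0) + V[\mu_m](z)$ on $D(r)$, where $\mu_m := \tfrac12\,\tfrac{\partial h_m}{\partial\nu}\,|d\xi|$ is a signed measure supported on the circle $\partial D(r)$. Setting $\widetilde{\omega}_m := \omega_{H_m} + \mu_m$ gives the \emph{exact} identity $d_{\omega_{H_m},h_m}(u,u') = e^{h_m(0)}\,\overline{d}_{\widetilde{\omega}_m,0}(u,u')$ for $u,u' \in D(2\alpha)$ (Fact \ref{factcorollaireconvergencedistance}). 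This is what lets Reshetnyak's theorem be applied directly, with $C := \lim e^{h_m(0)}$ and $\widetilde{\omega} := \omega + \mu$ the weak limit of $\widetilde{\omega}_m$; the mass bound on $\mu_m$ comes from the Cauchy estimates on derivatives of the uniformly bounded harmonic $h_m$. Your sandwich $(\inf_\gamma e^{h_m})\,L_{\omega_{H_m},0}(\gamma) \le L_{\omega_{H_m},h_m}(\gamma) \le (\sup_\gamma e^{h_m})\,L_{\omega_{H_m},0}(\gamma)$ does not give a limit of the required form $C\cdot\overline{d}_{\widetilde{\omega},0}$ when $h^\infty$ is non-constant, and the chaining argument you gesture at would, if pushed through, still end up needing this potential-theoretic rewrite. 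Say clearly: ``a bounded harmonic function on $D(r)$ is $h_m(0)$ plus the logarithmic potential of a boundary measure,'' and then everything becomes an identity rather than an approximation.

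The second thing missing entirely is the geodesic-confinement step. The representation $h_m(z) = h_m(0) + V[\mu_m](z)$ is only valid for $z \in D(r)$, so Fact \ref{factcorollaireconvergencedistance} requires showing that curves competing for the distance $d_{\omega_{H_m},h_m}(u,u')$ between points $u,u'\in D(2\alpha)$ can be taken inside $D(r)$. The paper justifies this using theorem \ref{theoremerecouvrement} (part 1.) and proposition \ref{propositioninclusionboulerayonmoitie}: the endpoints pull back to $B_m(x_m,\varepsilon/4)$, and any curve that leaves $B_m(x_m,\varepsilon/2)$ (equivalently, leaves $D(r)$ on the chart side) has length at least $\varepsilon/2$, hence cannot be near-minimizing. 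Without this you cannot even state the identity of Fact \ref{factcorollaireconvergencedistance}, and the same issue would break your chaining/sandwich approach. So the proposal is on the right trail but is not a proof: supply the boundary-measure representation of $h_m$ explicitly, and supply the argument that near-minimizers stay in $D(r)$.
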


For the proof, we need to apply theorem \ref{theoremeconvergencedistancereshetnjack}, which is a convergence theorem for distances, when there is no harmonic term in the metric. Hence we need to get rid of $h_m$: to do so, we express $h_m$ as the potential of some measure, with support on a circle.

\begin{proof}
 We know that we can express an harmonic map in terms of its normal derivatives along a circle: for $z \in D(r)$ we have
\[
 h_m(z) = h_m(0) - \frac{1}{\pi} \int_{\partial D(r)} \ln|z-\xi| \cdot \frac{\partial h_m}{\partial \nu} (\xi) |d\xi|,
\]
where $\frac{\partial h_m}{\partial \nu}$ is the radial derivative of $h_m$. Hence for $z \in D(r)$, we can write $h_m$ as
\begin{equation} \label{equation1corollaireconvergencedistance}
h_m(z) = h_m(0) + V[\mu_m](z),
\end{equation}
where $\mu_m$ is the following measure with support in $\partial D(r)$ : $\mu_m := \frac{1}{2} \frac{\partial h_m}{\partial \nu} |d\xi|$. Let 
\[
\widetilde{\omega}_{m} := \omega_{H_m} + \mu_m.
\]
We have the following fact, which needs some justifications, since representation (\ref{equation1corollaireconvergencedistance}) is only valid for $z \in D(r)$:
\begin{fact} \label{factcorollaireconvergencedistance}
For $u,u' \in D(2\alpha)$ we have
\[
 d_{\omega_{H_m},h_m}(u,u') = e^{h_m(0)} \cdot \overline{d}_{\widetilde{\omega}_{m},0}(u,u').
\]
\end{fact}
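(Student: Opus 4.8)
The plan is to prove Fact \ref{factcorollaireconvergencedistance} by comparing the two length structures and arguing that every ``short'' curve between points of $D(2\alpha)$ may be taken inside $D(r)$, where representation (\ref{equation1corollaireconvergencedistance}) is valid. First I would make precise the relation between the length functionals: for any continuous simple curve $\gamma$ contained in $D(r)$, parametrized with constant speed, the representation $h_m(z) = h_m(0) + V[\mu_m](z)$ on $D(r)$ gives
\[
L_{\omega_{H_m},h_m}(\gamma) = \int_0^1 e^{V[\omega_{H_m}](\gamma(t)) + h_m(\gamma(t))} s\, dt = e^{h_m(0)} \int_0^1 e^{V[\omega_{H_m}](\gamma(t)) + V[\mu_m](\gamma(t))} s\, dt = e^{h_m(0)} L_{\widetilde{\omega}_m,0}(\gamma),
\]
using $V[\widetilde\omega_m] = V[\omega_{H_m}] + V[\mu_m]$ by linearity of the potential. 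So on the level of individual curves inside $D(r)$ the two lengths differ exactly by the constant factor $e^{h_m(0)}$.

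Next I would show that the infimum defining each distance is not changed by restricting to curves inside $D(r)$. Fix $u,u' \in D(2\alpha) \subset D(\alpha') $ for some $\alpha' < r/2$, say; the key point is that by the first part of theorem \ref{theoremerecouvrement} (already proved), the $H^{-1}$-images of line segments from $0$ to points of $D(2\alpha)$ have $d$-length $<\varepsilon/4$, hence $D(2\alpha) \subset H(B(x,\varepsilon/4))$, and by proposition \ref{propositioninclusionboulerayonmoitie} we have $H(B(x,\varepsilon/2))\subset D(r)$. Consequently any curve from $u$ to $u'$ whose $d_{\omega_{H_m},h_m}$-length is close to $d_{\omega_{H_m},h_m}(u,u')$ is homotopic (with fixed endpoints) to a curve inside $H(B(x,\varepsilon/2)) \subset D(r)$ of no greater length: indeed a minimizing (or near-minimizing) curve for the intrinsic metric stays in a neighborhood of $u,u'$ of controlled diameter, so it stays inside $D(r)$. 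The same argument applies to $\overline d_{\widetilde\omega_m,0}$, using that $\overline d_{\widetilde\omega_m,0}(u,u') \le d_{\omega_{H_m},h_m}(u,u')\,e^{-h_m(0)}$ is finite and small. Thus both infima are realized (in the limit) by curves lying in $D(r)$, on which the two length functionals agree up to the factor $e^{h_m(0)}$, which yields the claimed identity.

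The main obstacle I expect is the careful verification that near-minimizing curves between points of $D(2\alpha)$ can indeed be confined to $D(r)$ — one must rule out that a shortest path ``escapes'' toward $\partial D(1/2)$. The cleanest way is to exploit that $H^{-1}$ is an isometry onto $(B(x,\varepsilon),d_{|B(x,\varepsilon)})$: a curve from $u$ to $u'$ leaving $D(r)$ corresponds to a curve in $\Sigma$ leaving $H^{-1}(D(r)) \supset B(x,\varepsilon/2)$, hence reaching $d$-distance at least $\varepsilon/2 - \varepsilon/4 = \varepsilon/4$ from $H^{-1}(u)$ (which lies in $B(x,\varepsilon/4)$), whereas $d(H^{-1}(u),H^{-1}(u')) < \varepsilon/2$; so a curve of length close to the distance cannot do this once we restrict attention to sub-arcs. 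One must also observe that truncating and re-routing a near-minimizing curve through $D(r)$ does not increase its length — this follows because the portion being replaced lies outside $D(r)$ and can be substituted by a shorter sub-arc still inside $D(r)$, or simply by noting that the restricted infimum is an infimum over a smaller family, hence $\ge$, while the previous paragraph shows a near-optimal competitor lies in $D(r)$, giving $\le$. A minor additional point is that the distances $\overline d_{\widetilde\omega_m,0}$ use curves allowed to touch $\partial D(1/2)$, but since we have already confined everything to $D(r)$ with $r<1/2$ this plays no role here; it is only relevant in the passage to the limit via theorem \ref{theoremeconvergencedistancereshetnjack} in the rest of the proof of corollary \ref{corollaireconvergencedistance}.
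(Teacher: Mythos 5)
Your proposal follows the paper's proof essentially verbatim: both exploit the representation $h_m=h_m(0)+V[\mu_m]$ valid on $D(r)$ together with the isometry $H_m^{-1}$ onto $(B_m(x_m,\varepsilon),d_m|_{B_m(x_m,\varepsilon)})$ to show that any competitor curve leaving $D(r)$ corresponds to a curve in $\Sigma$ that exits $B_m(x_m,\varepsilon/2)$ while its endpoints lie in $B_m(x_m,\varepsilon/4)$, hence its length exceeds $d_m(y_m,y'_m)$ and both infima may be restricted to curves in $D(r)$, where the two length functionals differ by the factor $e^{h_m(0)}$. One small touch-up: your estimate gives $\varepsilon/4$ for a single traversal of the annulus $B_m(x_m,\varepsilon/2)\setminus B_m(x_m,\varepsilon/4)$, and you need to double it (out and back) to get the length lower bound $\varepsilon/2>d_m(y_m,y'_m)$; the paper states this explicitly as $2\cdot\varepsilon/4=\varepsilon/2$.
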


\begin{proof}
By definition,
\begin{equation} \label{equation2corollaireconvergencedistance}
 d_{\omega_{H_m},h_m}(u,u') = \inf_\gamma \int_0^l e^{V[\omega_{H_m}](\gamma(t)) + h_m(\gamma(t))} dt, 
\end{equation}
where the infimum is taken over all simple continuous curves $\gamma : [0,l] \rightarrow D(1/2)$, parametrized by arc length, with $\gamma(0)=u$ and $\gamma(l)=u'$. Let $y_m = (H_m)^{-1}(u)$ and $y'_m = (H_m)^{-1}(u')$. Since property 1. of theorem \ref{theoremerecouvrement} has already been proved, we have $(H_m)^{-1} (D(2\alpha)) \subset B_m(x_m,\varepsilon/4)$, hence $d_m(x_m,y_m) < \varepsilon/4$ and $d_m(x_m,y'_m) < \varepsilon/4$.

Now, assume that $\gamma : [0,l] \rightarrow D(1/2)$ is a continuous simple curve between $u$ and $u'$, parametrized by arc length, which is not included in $D(r)$. Let $\widetilde{\gamma} := (H_m)^{-1}(\gamma)$: this is a curve between $y_m$ and $y'_m$, and since $H_m(B_m(x_m,\varepsilon/2)) \subset D(r)$ (this is property \ref{propositioninclusionboulerayonmoitie}), $\widetilde{\gamma}$ is not included in $B_m(x_m,\varepsilon/2)$. But $\widetilde{\gamma}$ is a curve joigning two points in $B_m(x_m,\varepsilon/4)$, and has to leave $B_m(x_m,\varepsilon/2)$, so its length is greater than or equal to $2 \cdot \varepsilon/4 = \varepsilon/2$:
\begin{center}
\begin{tikzpicture}[scale=1.1]
\draw (0,0) node {$\bullet$};
\draw (0,0) node [left] {$x_m$};
\draw (-1.2,0.5) node {$B_m(x_m,\varepsilon/4)$};
\draw (-3.5,0) node {$B_m(x_m,\varepsilon/2)$};
\draw (0,0) ellipse (2.5cm and 1.5cm);
\draw (0,0) ellipse (4.5cm and 2cm);
\draw [domain = 0:60] [dashed] [samples = 100] plot (\x : {1+3*sin(3.*\x)}) ;
\draw (1,0) node [below] {$y_m$};
\draw ({cos(60)*(1+3*sin(3*60))},{sin(60)*(1+3*sin(3*60))}) node [below] {$y'_m$};
\draw ({cos(20)*(1+3*sin(3*20))-0.2},{sin(20)*(1+3*sin(3*20))-0.2}) node [left] {$\widetilde{\gamma}$};
\end{tikzpicture}
\captionof{figure}{}
\end{center}
Since the distance between $y_m$ and $y'_m$ is less than $\varepsilon/2$, this shows that in the formula (\ref{equation2corollaireconvergencedistance}) we can only consider curves $\gamma$ included in $D(r)$. For such curves $\gamma$ we can use the representation (\ref{equation1corollaireconvergencedistance}), so we get
\[
  d_{\omega_{H_m},h_m}(u,u') = e^{h_m(0)} \cdot \inf_\gamma \int_0^l e^{V[\widetilde{\omega}_{m}](\gamma(t))} dt, 
\]
where the infimum is taken over all continuous simple curves $\gamma : [0,l] \rightarrow D(r)$, parametrized by arc length, with $\gamma(0)=u$ and $\gamma(l)=u'$. For the same reason as before, this is equal to the infimum of the same quantity, over all the curves $\gamma : [0,l] \rightarrow \overline{D}(1/2)$, and this is exactly $e^{h_m(0)} \cdot \overline{d}_{\widetilde{\omega}_{m},0}(u,u')$ (see the equation (\ref{equationdistancetechnique}) in section \ref{sectionconformalcharts} for the definition of $\overline{d}_{\widetilde{\omega}_{m},0}$). This ends the proof of fact \ref{factcorollaireconvergencedistance}.
\end{proof}

By theorem \ref{theoremetermeharmoniqueborne}, the sequence $(h_m(0))_{m\in \N}$ is bounded, so after passing to a subsequence, we may assume that $e^{h_m(0)} \rightarrow C>0$. Moreover, for (bounded) harmonic maps, Cauchy's formula gives a bound for the derivatives (at some point $x$) of the map, in terms of a bound for the modulus of the map (on some ball centered in $x$). Since the harmonic maps $h_m$ are bounded on every compact subset of $D(1/2)$, this shows that $\frac{\partial h_m}{\partial \nu}$ is bounded on $\partial D(r)$ by a quantity which does not depend on $m$, hence $\mu_m^+(D(1/2))$ and $\mu_m^-(D(1/2))$ are bounded, so after passing to a subsequence we may assume that $\mu_m^+ \rightarrow \mu^+$ and $\mu_m^- \rightarrow \mu^-$ weakly. Since the supports of $\mu_m^+$ and $\mu_m^-$ are included in $\partial D(r)$, the supports of $\mu^+$ and $\mu^-$ are also included in $\partial D(r)$. Let $\mu:=\mu^+ - \mu^-$.

We may also assume that $\omega_{H_m}^+ \rightarrow \omega^+$, and $\omega_{H_m}^- \rightarrow \omega^-$ weakly. We set $\omega:=\omega^+ - \omega^-$. Since $\widetilde{\omega}_m=\omega_{H_m} + \mu_m$, we have $\widetilde{\omega}_m \rightarrow \widetilde{\omega}:= \omega+\mu$ weakly.

Since $\omega_{H_m}^+(D(1/2)) = \omega_m^+(B_m(x_m,\varepsilon))\leq 2\pi - \delta$, we have $\omega^+(\{z\})<2\pi$ for every $z \in D(2\alpha)$, and $\mu^+$ has its support in $\partial D(r)$, hence $\mu^+(\{z\})=0$ for every $z \in D(2\alpha)$. We have obtained $\widetilde{\omega}^+(\{z\})<2\pi$ for every $z \in D(2\alpha)$, we can then apply theorem \ref{theoremeconvergencedistancereshetnjack}: if $z_m \rightarrow z \in D(2\alpha)$ and $z'_m \rightarrow z' \in D(2\alpha)$, then
\[
 \overline{d}_{\widetilde{\omega}_{m},0}(z_m,z'_m) \rightarrow \overline{d}_{\widetilde{\omega},0}(z,z'),
\]
and fact \ref{factcorollaireconvergencedistance} gives
\[
 d_{\omega_{H_m},h_m}(z_m,z'_m) = e^{h_m(0)} \cdot \overline{d}_{\widetilde{\omega}_{m},0}(z_m,z'_m) \rightarrow C \cdot \overline{d}_{\widetilde{\omega},0}(z,z').
\]
\end{proof}

\subsubsection{Choice of $\kappa$}

We first choose $\kappa$ small enough so that the annulus $D(1/2)-H(\overline{B}(x,\kappa\varepsilon))$ has a modulus big enough (this is lemma \ref{lemmechoixkappa}); we can then use Grötzsch theorem to finish the proof of theorem \ref{theoremerecouvrement} (see below). Recall that $\module (G(2\alpha))$ is the modulus of the Grötzsch annulus $G(2\alpha)$.
\begin{lemma} \label{lemmechoixkappa}
There exists a constant $\kappa= \kappa(\Sigma,A,c,\varepsilon,\delta)>0$ verifying the following property. Under the hypothesis of theorem \ref{theoremerecouvrement}, the topological annulus (in the sense given in the appendix, see definition \ref{definitionannulus}) $D(1/2)-H(\overline{B}(x,\kappa\varepsilon))$ has a modulus greater than $\module (G(2\alpha))$.
\end{lemma}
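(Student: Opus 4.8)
The plan is to argue by contradiction, using the convergence statement of corollary \ref{corollaireconvergencedistance} to force the conformal images of the small balls $\overline{B}(x,\kappa\varepsilon)$ to shrink to the point $0=H(x)$, and then to conclude by monotonicity of the modulus. So suppose no such $\kappa$ exists. Then, choosing a sequence of positive reals $\kappa_m\to 0$, for each $m$ we obtain a metric $d_m\in\M_\Sigma(A,c,\varepsilon,\delta)$, a point $x_m\in\Sigma$ and a conformal chart $H_m:B_m(x_m,\varepsilon)\rightarrow D(1/2)$ with $H_m(x_m)=0$ such that the topological annulus $U_m:=D(1/2)-H_m(\overline{B}_m(x_m,\kappa_m\varepsilon))$ satisfies $\module(U_m)\leq\module(G(2\alpha))$. (For $m$ large, $\kappa_m\varepsilon<c\leq\cont(\Sigma,d_m,x_m)$, so $\overline{B}_m(x_m,\kappa_m\varepsilon)$ is homeomorphic to a closed disc and $U_m$ is genuinely a topological annulus.) Applying corollary \ref{corollaireconvergencedistance} and passing to a subsequence, I would obtain a constant $C>0$ and a measure $\widetilde{\omega}$ with $\widetilde{\omega}^+(\{z\})<2\pi$ on $D(2\alpha)$ such that $d_{\omega_{H_m},h_m}\to C\cdot\overline{d}_{\widetilde{\omega},0}$ locally uniformly on $D(2\alpha)$.

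The crucial step is to show that $\eta_m:=\sup\{\,|z|:z\in H_m(\overline{B}_m(x_m,\kappa_m\varepsilon))\,\}$ tends to $0$. I would argue this again by contradiction: if not, then after a further subsequence $\eta_m\geq\eta_0$ for some fixed $\eta_0\in(0,\alpha)$. The set $H_m(\overline{B}_m(x_m,\kappa_m\varepsilon))$ is compact and connected (the continuous image of a metric ball), contains $0=H_m(x_m)$, and, the supremum being attained, contains a point $z$ with $|z|=\eta_m\geq\eta_0$; hence it meets the circle $\{|z|=\eta_0\}$, say at $w_m=H_m(y_m)$ with $y_m\in\overline{B}_m(x_m,\kappa_m\varepsilon)$, so that $d_{\omega_{H_m},h_m}(0,w_m)=d_m(x_m,y_m)\leq\kappa_m\varepsilon\to 0$. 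Since $|w_m|=\eta_0<\alpha$, we have $w_m\in D(2\alpha)$, and after a further subsequence $w_m\to w$ with $|w|=\eta_0$ and $w\in D(2\alpha)$. Corollary \ref{corollaireconvergencedistance}, applied with $z_m=0\to 0$ and $z'_m=w_m\to w$, then gives $C\cdot\overline{d}_{\widetilde{\omega},0}(0,w)=\lim_m d_{\omega_{H_m},h_m}(0,w_m)=0$, hence $\overline{d}_{\widetilde{\omega},0}(0,w)=0$. But $\overline{d}_{\widetilde{\omega},0}$ is a genuine distance, compatible with the topology on $D(2\alpha)$ (there is no cusp there, since $\widetilde{\omega}^+(\{z\})<2\pi$ for $z\in D(2\alpha)$; cf.\ proposition \ref{propositioninegaliteomegaplus} and the discussion following it), so $w=0$ -- contradicting $|w|=\eta_0>0$. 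Thus $\eta_m\to 0$.

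It then remains to combine the two ingredients. By definition of $\eta_m$ we have $H_m(\overline{B}_m(x_m,\kappa_m\varepsilon))\subset\overline{D}(\eta_m)$, so for $m$ large (so that $\eta_m<1/2$) the round annulus $V_m:=\{\,z:\eta_m<|z|<1/2\,\}$ is contained in $U_m$ and separates the two boundary components of $U_m$ -- the inner boundary of $U_m$ lies inside $\overline{D}(\eta_m)$, while the outer one is $\partial D(1/2)$. By the monotonicity of the modulus under such nested, separating inclusions (a standard property, recalled in the appendix), together with the computation of the modulus of a round annulus (cf.\ the proof of proposition \ref{propositioninclusionboulerayonmoitie}),
\[
\module(U_m)\geq\module(V_m)=\frac{1}{2\pi}\ln\frac{1}{2\eta_m}\underset{m\to\infty}{\longrightarrow}+\infty,
\]
which contradicts $\module(U_m)\leq\module(G(2\alpha))<+\infty$. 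This produces the required constant $\kappa=\kappa(\Sigma,A,c,\varepsilon,\delta)>0$.

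The whole difficulty is concentrated in the crucial step, and there the only real input is corollary \ref{corollaireconvergencedistance} -- which in turn rests on the uniform bound for the harmonic term, theorem \ref{theoremetermeharmoniqueborne}; the remaining ingredients (connectedness of metric balls, subsequence bookkeeping, and elementary conformal geometry of round annuli) are routine. The one point that needs a little care is keeping the test points $w_m$ inside $D(2\alpha)$, where corollary \ref{corollaireconvergencedistance} is valid; this is why one intersects $H_m(\overline{B}_m(x_m,\kappa_m\varepsilon))$ with a circle $\{|z|=\eta_0\}$ for some $\eta_0<\alpha$, rather than using an arbitrary far-away point of that image.
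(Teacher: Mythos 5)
Your proof is correct and uses the same essential ingredients as the paper: reduce to a sequence via contradiction, invoke corollary \ref{corollaireconvergencedistance} to get locally uniform convergence of the $d_{\omega_{H_m},h_m}$ to $C\cdot\overline d_{\widetilde\omega,0}$ on $D(2\alpha)$, and then play this off against the modulus monotonicity for annuli. The organization is slightly different: the paper fixes a single $\iota$ with $\tfrac{1}{2\pi}\ln(1/2\iota)>\module(G(2\alpha))$ \emph{before} invoking the corollary, shows via the bounded-modulus hypothesis that $H_m(B_m(x_m,\varepsilon/m))\not\subset D(\iota)$ (this is the paper's ``Fact''), and then derives a point $z_m$ with $|z_m|=\iota$ and $d_{\omega_m,h_m}(0,z_m)\to 0$, contradicting the corollary in one stroke. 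You instead prove $\eta_m\to 0$ as an intermediate lemma (by a nested contradiction that is the mirror image of the paper's argument) and only then appeal to the modulus bound; both proofs pass through the same two facts about round annuli and the same use of positive-definiteness of $\overline d_{\widetilde\omega,0}$ near the origin. Your extra care about (i) the separating-inclusion hypothesis for modulus monotonicity, (ii) the set $H_m(\overline B_m(x_m,\kappa_m\varepsilon))$ being compact and connected so that it meets $\{|z|=\eta_0\}$, and (iii) keeping the test points inside $D(2\alpha)$ where the corollary applies, are all correct refinements that the paper elides. The one small step worth making fully explicit, as it is also implicit in the paper, is that $d_{\omega_{H_m},h_m}(0,w_m)={d_m}_{|B_m(x_m,\varepsilon)}(x_m,y_m)=d_m(x_m,y_m)$ because a minimizing $d_m$-geodesic from $x_m$ to $y_m$ has length $\le\kappa_m\varepsilon$ and therefore stays inside $B_m(x_m,\varepsilon)$.
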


\begin{remark}
This property is obvious in the Euclidean plane: an annulus $A(R_1,R_2)$ of boundary two concentric circles of radii $R_1<R_2$ has modulus $\frac{1}{2\pi} \ln(R_2/R_1)$, so this quantity goes to infinity when $R_1$ goes to zero.
\end{remark}

\begin{proof}
We prove the lemma by contradiction. Suppose there exists a sequence $d_m \in \M_\Sigma(A,c,\varepsilon,\delta)$, a sequence of points $x_m \in \Sigma$, a sequence of harmonic charts $H_m : B_m(x_m,\varepsilon) \rightarrow D(1/2)$, with $H_m(x_m)=0$, such that
\[
\module\big(D(1/2)-H_m(\overline{B}_m(x_m,\varepsilon/m))\big) \leq \module(G(2\alpha)).
\]
Let $\iota>0$ be such that $\frac{1}{2\pi}\ln(1/2\iota)>\module (G(2\alpha))$ (we may assume $\iota<2\alpha$). We have the following
\begin{fact} \label{factinclusionnoninclusion}
We have $H_m(B_m(x_m,\varepsilon/m)) \not\subset D(\iota)$.
\end{fact}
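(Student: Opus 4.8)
The plan is to derive a contradiction directly from the monotonicity of the conformal modulus, with essentially no analysis involved. So I would assume, for the index $m$ under consideration (with $m\geq 2$, the case $m=1$ being trivial since then $H_m(B_m(x_m,\varepsilon))=D(1/2)\not\subset D(\iota)$), that $H_m(B_m(x_m,\varepsilon/m))\subset D(\iota)$, and aim for a contradiction with the standing assumption $\module\big(D(1/2)-H_m(\overline{B}_m(x_m,\varepsilon/m))\big)\leq\module(G(2\alpha))$ of the current contradiction proof of lemma \ref{lemmechoixkappa}.

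First I would transfer the hypothesis from open balls to closed balls. Since $d_m$ is intrinsic, $\overline{B}_m(x_m,\varepsilon/m)$ is the closure of $B_m(x_m,\varepsilon/m)$; for $m\geq 2$ we have $\varepsilon/m<\varepsilon$, so $\overline{B}_m(x_m,\varepsilon/m)$ lies in the domain $B_m(x_m,\varepsilon)$ of the conformal chart $H_m$, and $H_m$, being a homeomorphism onto $D(1/2)$, maps it onto a compact, connected set which is exactly the closure of $H_m(B_m(x_m,\varepsilon/m))$; by the assumption this closure is contained in $\overline{D}(\iota)$.

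Then I would compare the topological annulus $U_m:=D(1/2)-H_m(\overline{B}_m(x_m,\varepsilon/m))$ with the round annulus $A:=\{z\in\C : \iota<|z|<1/2\}$. From $H_m(\overline{B}_m(x_m,\varepsilon/m))\subset\overline{D}(\iota)$ one gets $A\subset U_m$, and this inclusion is essential in the sense required for modulus monotonicity: the two components of $\widehat{\C}-A$ (where $\widehat{\C}=\C\cup\{\infty\}$) are $\overline{D}(\iota)$, which contains $0=H_m(x_m)$ and hence the whole connected set $H_m(\overline{B}_m(x_m,\varepsilon/m))$, and $\widehat{\C}-D(1/2)$; thus each component of $\widehat{\C}-A$ contains one of the two components of $\widehat{\C}-U_m$. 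The monotonicity of the modulus (recalled in the appendix, as already used in the proof of Grötzsch's theorem) then yields $\module(A)\leq\module(U_m)$. Since the round annulus has $\module(A)=\tfrac{1}{2\pi}\ln\tfrac{1}{2\iota}$, combining with $\module(U_m)\leq\module(G(2\alpha))$ gives $\tfrac{1}{2\pi}\ln\tfrac{1}{2\iota}\leq\module(G(2\alpha))$, which contradicts the choice of $\iota$. Hence $H_m(B_m(x_m,\varepsilon/m))\not\subset D(\iota)$.

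I do not expect a serious obstacle in this fact: all the genuine content (the inequality $\module(A)\leq\module(U_m)$ and the value of the modulus of a round annulus) is standard conformal geometry collected in the appendix. The only points needing a little care are the elementary topological bookkeeping — that $H_m$ sends the closure of the open ball into $\overline{D}(\iota)$, and that $A$ is \emph{essentially} contained in $U_m$, so that modulus monotonicity is applied in the direction $\module(A)\leq\module(U_m)$ rather than the reverse.
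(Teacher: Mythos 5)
Your argument is correct and is essentially the same as the paper's: assume for contradiction that $H_m(B_m(x_m,\varepsilon/m))\subset D(\iota)$, deduce that the round annulus $\{\iota<|z|<1/2\}$ is (essentially) contained in $D(1/2)-H_m(\overline{B}_m(x_m,\varepsilon/m))$, and invoke modulus monotonicity together with the choice of $\iota$. The only difference is that you spell out the passage from open to closed balls and the essential-inclusion hypothesis for monotonicity, which the paper treats as implicit.
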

\begin{proof}
Suppose $H_m(B_m(x_m,\varepsilon/m)) \subset D(\iota)$. Then we have
\[
D(1/2) - \overline{D}(\iota) \subset D(1/2)-H_m(\overline{B}_m(x,\varepsilon/m)),
\]
hence
\[
\module\big(D(1/2)-H_m(\overline{B}_m(x,\varepsilon/m))\big) \geq \module\big(D(1/2) - \overline{D}(\iota)\big) = \frac{1}{2\pi} \ln(1/2\iota) > \module (G(2\alpha)),
\]
and this is a contradiction.
\end{proof}

Consider the singular Riemannian metric $g_m = e^{2V[\omega_m] + 2h_m}|dz|^2$, such that $H_m$ is an isometry between $(B_m(x_m,\varepsilon), {d_m}_{|B_m(x_m,\varepsilon)})$ and $(D(1/2),d_{\omega_m,h_m})$.

By fact \ref{factinclusionnoninclusion}, there exists complex numbers $z_m$, with $|z_m|\geq \iota$, and $d_{\omega_m,h_m}(0,z_m) \leq \varepsilon/m$. By considering the intersection point between a geodesic from $0$ to $z_m$ and $\partial D(\iota)$, we may even assume that $|z_m|=\iota$ and $d_{\omega_m,h_m}(0,z_m) \leq \varepsilon/m$. By compactness, after passing to a subsequence we may assume $z_m \rightarrow z \neq 0$. Since $|z|=\iota<2\alpha$, by corollary \ref{corollaireconvergencedistance}, we know that there exists a constant $C>0$ and a Radon measure $\widetilde{\omega}$, with support in $\overline{D}(1/2)$, such that, after passing to a subsequence, we have
\[
 d_{\omega_m,h_m}(0,z_m) \underset{m \rightarrow \infty} \longrightarrow C \cdot \overline{d}_{\widetilde{\omega},0}(0,z) \neq 0,
\]
and this is absurd since $d_{\omega_m,h_m}(0,z_m) \leq \varepsilon/m \rightarrow 0$.
\end{proof}

\begin{proof}[Proof of the second part of theorem \ref{theoremerecouvrement}]
We will prove the inclusion
\[
H(\overline{B}(x,\kappa\varepsilon)) \subset D(\alpha)
\]
by contradiction : suppose we have
\[
H(\overline{B}(x,\kappa\varepsilon)) \not\subset D(\alpha).
\]
After a rotation, we may assume that $H(\overline{B}(x,\kappa \varepsilon))$ does not contain the point $\alpha \in \R \subset \C$.
Then, after an homothety of scale factor 2, we see that the annulus $D(1/2)-H(\overline{B}(x,\kappa\varepsilon))$ is conformal to an annulus $U \subset D(1)$, not containing 0 and $2\alpha$. By Grötzsch's theorem (see theorem \ref{theoremegrotzsch} in the appendix), we know that the modulus of this annulus is
\[
\module(U) = \module\big(D(1/2)-H(\overline{B}(x,\kappa\varepsilon))\big) \leq \module (G(2\alpha)),
\]
and this is a contradiction by lemma \ref{lemmechoixkappa}. This ends the proof of theorem \ref{theoremerecouvrement}.
\end{proof}

\section{Proof of the Main theorem}

We can now start the proof of the Main theorem. From now on, we consider a sequence of metrics $d_m \in \M_\Sigma(A,c,\varepsilon,\delta)$, that is
\begin{enumerate}
\item for every $x\in \Sigma$ we have $\omega_m^+ (B_m(x,\varepsilon)) \leq 2\pi -\delta$;
\item $\cont({\Sigma, d_m}) \geq c$;
\item $\area(\Sigma,d_m) \leq A$.
\end{enumerate}
Recall that we always assume $\varepsilon<c$. By proposition \ref{propositionequivalencedefinition}, we also have some constants $D>0$ and $\Omega>0$ such that
\[
\diam(\Sigma,d_m) \leq D \mbox{ and } |\omega|(\Sigma,d_m) \leq \Omega,
\]
and we have some constants $\alpha>0$ and $\kappa>0$ such that theorem \ref{theoremerecouvrement} is true.

We will often consider subsequences of the original sequence $(d_m)$; we will never change the name of the sequence, and we will assume that the sequence has the desired properties from the beginning.

\subsection*{Sketch of the proof}
\addcontentsline{toc}{subsection}{Sketch of the proof}
The proof is an adaptation of the proof of Cheeger-Gromov's compactness theorem, presented in \cite{HH}. We give an outline of the proof here: to understand it in its globality, we have simplified many of the arguments. See the proof below for precise statements.

\begin{enumerate}

\item We cover $\Sigma$ by open sets $B_m(x_i^m,\varepsilon)$, for $i \in \{1,...,N\}$ (by volume arguments, the number $N$ is independent of $m$). We can then define conformal charts $H_i^m : B_m(x_i^m,\varepsilon) \rightarrow D$ with $H_i^m(x_i^m)=0$.

\item We extend the charts $H_i^m$ to the whole surface $\Sigma$ by defining maps $\overline{H_i^m} : \Sigma \rightarrow D$, which are equal to $H_i^m$ near $x_i^m$.  We then embed $\Sigma$ into an Euclidean space $\R^q$:
\[
 \Psi^m(x) :\simeq (\overline{H_1^m}(x), ... , \overline{H_N^m}(x)) \in \R^q.
\]
The embedded surface $\Sigma^m$ is locally a graph over some subset $D' \subset D$: for example with $i=1$, we have, for $z \in D'$,
\begin{equation}   \label{equationsketchoftheproof}
 \Psi^m((H_1^m)^{-1}(z)) \simeq (z, \Theta_1^m(z)) 
\end{equation}
where $\Theta_1^m(z)= (\overline{H_2^m} \circ (H_1^m)^{-1} (z),..., \overline{H_N^m} \circ (H_1^m)^{-1} (z))$.

\item The maps $\overline{H_j^m} \circ (H_i^m)^{-1}$ are either zero, or looks like $H_j^m \circ (H_i^m)^{-1}$ when this last expression makes sense. Moreover, since the charts $H_i^m$ are conformal, the transition maps $H_j^m \circ (H_i^m)^{-1}$ are conformal maps between open sets of $\C$: since they are bounded, by Montel's theorem they will converge uniformly (up to a subsequence). We can pass to the limit in the representation of $\Sigma^m$ as a union of graphs (see the equation (\ref{equationsketchoftheproof})) , and define a subset $\Sigma^\infty \subset \R^q$. We prove that $\Sigma^\infty$ is an embedded surface.

\item For $m$ large enough, the embedded surfaces $\Sigma^m$ are in a tubular neighborhood of $\Sigma^\infty$. We can then project $\Sigma^m$ along the normals onto $\Sigma^\infty$ and define a map $\Pi^m : \Sigma^m \rightarrow \Sigma^\infty$. Since $\Sigma^m$ converge to $\Sigma^\infty$ (in the sense given above), we prove that $\Pi^m$ is actually a diffeomorphism.

\item We have diffeomorphisms $\Sigma \overset{\sim} \longrightarrow \Sigma^m \subset \R^q \overset{\sim} \longrightarrow \Sigma^\infty \subset \R^q$. We transport the initial metric $d_m$ to a metric $\widetilde{d_m}$ on $\Sigma^\infty$, so that $(\Sigma,d_m)$ is isometric to $(\Sigma^\infty,\widetilde{d_m})$. We finally show that the metric $\widetilde{d_m}$ converge, by using the local convergence theorem due to Reshetnyak (theorem \ref{theoremeconvergencedistancereshetnjack}).
\end{enumerate}

\subsection{Covering of $\Sigma$ and notations} \label{partieconstructioncartes}

Let $m \in \N$. Consider a maximal number $N(m)$ of disjoint balls $B_m(x_i^m, \kappa\varepsilon/4)$ in $\Sigma$. By corollary \ref{corollairevolumeboulesalexandrov}, we know that the area of these balls are bounded from below. Since the area of $(\Sigma,d_m)$ is bounded from above, this shows that the integer $N(m)$ is bounded: after passing to a subsequence, we can assume that $N(m)=N$ is constant. Moreover, by an elementary doubling property, we have
\begin{equation} \label{equationrecouvrementsigma}
\Sigma = \bigcup_{i=1}^N B_m(x_i^m, \kappa\varepsilon/2).
\end{equation}
By proposition \ref{propositioncontboulesouvertes} and theorem-definition \ref{theoremedefinitionconformalcharts} we can consider conformal charts
\[
H_i^m : B_m(x_i^m,\varepsilon) \rightarrow D(1/2),
\]
with $H_i^m(x_i^m)=0$. In the sequel, we set $\omega_i^m := (H_i^m)_\# \omega$, and $h_i^m$ is the harmonic function on $D(1/2)$ such that the singular Riemannian metric writes
\[
g_i^m := e^{2V[\omega_i^m](z) + 2h_i^m(z)} |dz|^2.
\]
Property 2. in theorem \ref{theoremerecouvrement} shows that we have
\[
H_i^m(B_m(x_i^m,\kappa\varepsilon)) \subset D(\alpha),
\]
and with the relation (\ref{equationrecouvrementsigma}) we obtain the following
\begin{fact} \label{propositionrecouvrementsigma}
We have the covering
\[
\Sigma = \bigcup_{i=1}^N (H_i^m)^{-1}(D(\alpha)).
\]
\end{fact}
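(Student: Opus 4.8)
The statement is essentially a bookkeeping consequence of the two facts already at our disposal: the covering relation \eqref{equationrecouvrementsigma}, which says $\Sigma = \bigcup_{i=1}^N B_m(x_i^m,\kappa\varepsilon/2)$, and the second part of theorem \ref{theoremerecouvrement}, which gives the inclusion $H_i^m(B_m(x_i^m,\kappa\varepsilon)) \subset D(\alpha)$ for each $i$. So the plan is simply to chain these inclusions together, after checking that the objects involved are legitimately defined.

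First I would note that $\kappa<1$ (the constant $\kappa$ produced by lemma \ref{lemmechoixkappa} is small, in any case smaller than $1$), so $B_m(x_i^m,\kappa\varepsilon) \subset B_m(x_i^m,\varepsilon)$, and therefore the conformal chart $H_i^m$, which is defined on $B_m(x_i^m,\varepsilon)$, is in particular defined on $B_m(x_i^m,\kappa\varepsilon)$; thus the expression $(H_i^m)^{-1}(D(\alpha))$ and the inclusion $H_i^m(B_m(x_i^m,\kappa\varepsilon)) \subset D(\alpha)$ both make sense. Applying $(H_i^m)^{-1}$ to the latter gives $B_m(x_i^m,\kappa\varepsilon) \subset (H_i^m)^{-1}(D(\alpha))$, and since $B_m(x_i^m,\kappa\varepsilon/2) \subset B_m(x_i^m,\kappa\varepsilon)$ we get $B_m(x_i^m,\kappa\varepsilon/2) \subset (H_i^m)^{-1}(D(\alpha))$.

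Taking the union over $i \in \{1,\dots,N\}$ and invoking \eqref{equationrecouvrementsigma}, we obtain
\[
\Sigma \;=\; \bigcup_{i=1}^N B_m(x_i^m,\kappa\varepsilon/2) \;\subset\; \bigcup_{i=1}^N (H_i^m)^{-1}(D(\alpha)) \;\subset\; \Sigma,
\]
so all these sets coincide, which is the claimed covering. There is no real obstacle here: the only point that requires a word is that the chart $H_i^m$ is indeed defined on $B_m(x_i^m,\kappa\varepsilon)$, which is immediate from $\kappa\varepsilon<\varepsilon$; everything else is the monotonicity of balls under inclusion of radii together with the already-established inclusion $H_i^m(B_m(x_i^m,\kappa\varepsilon))\subset D(\alpha)$.
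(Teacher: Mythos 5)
Your proof is correct and takes exactly the same route as the paper, which deduces the covering immediately from the inclusion $H_i^m(B_m(x_i^m,\kappa\varepsilon)) \subset D(\alpha)$ given by part 2 of theorem \ref{theoremerecouvrement} together with the covering relation \eqref{equationrecouvrementsigma}. Your extra remark that $\kappa\varepsilon < \varepsilon$ (so that $H_i^m$ is defined on $B_m(x_i^m,\kappa\varepsilon)$) is a harmless bit of bookkeeping that the paper leaves implicit.
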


\subsection{Embedding in an Euclidean space} \label{partieplongementespaceeuclidien}

We use a cut-off map $\varphi:\C \rightarrow [0,1]$ to extend the charts $H_i^m$ to the whole surface $\Sigma$; then, by an analog to Whitney's embedding theorem, we embed $\Sigma$ into an Euclidean space, and we show that this set is locally a graph.

Let $\varphi : \mathbb{C} \rightarrow [0,1]$ be a smooth function, with value 1 on $D(5\alpha/3)$ and 0 outside $D(2\alpha)$ (see the picture in theorem \ref{theoremerecouvrement}). For $1 \leq i \leq N$, we define on $\Sigma$ the following smooth maps:
\begin{itemize}
\item $\varphi_i^m := \varphi \circ H_i^m : \Sigma \rightarrow [0,1]$. By theorem \ref{theoremerecouvrement}, this map has value 1 on $B_m(x_i^m,\kappa\varepsilon)$, and 0 outside $B_m(x_i^m,\varepsilon/4)$;
\item $\overline{H_i^m} := \varphi_i^m H_i^m: \Sigma \rightarrow D(1/2)$. This map extends $H_i^m$. It is equal to $H_i^m$ on $B_m(x_i^m,\kappa\varepsilon)$, and is 0 outside $B_m(x_i^m,\varepsilon/4)$.
\end{itemize}

Let us now describe Withney's embedding. Let $q := 2N + N$. We define
\[
\begin{array}{llll}
\Psi^m : & \Sigma & \longrightarrow & \R^q  \\
 & x & \longmapsto & \Psi^m(x) = (\overline{H_1^m}(x),...,\overline{H_N^m}(x), \varphi_1^m(x), ... ,\varphi_N^m (x)).
\end{array}
\]
This is an easy verification to show that $\Psi^m$ is a smooth embedding, from $\Sigma$ into $\R^q$: the $2N$ first coordinates ensure the immersion property, and with the $N$ last coordinates we obtain injectivity. We denote by
\[
\Sigma^m := \Psi^m(\Sigma)
\]
the submanifold of $\R^q$ we have obtained.

Since $\varphi=1$ on $D(5\alpha/3)$, we remark that these submanifolds are locally graphs, parametrized by $D(5\alpha/3)$. Indeed, for $1\leq i \leq N$, the open sets $(H_i^m)^{-1}(D(5\alpha/3))$ cover $\Sigma$ (this is fact \ref{propositionrecouvrementsigma}), and for $z \in D(5\alpha/3)$ we have
\[
\varphi_i^m((H_i^m)^{-1}(z))=\varphi(z)=1 \mbox{ and } \overline{H_i^m}((H_i^m)^{-1}(z))=z,
\]
hence
\[
\Phi_i^m(z) := \Psi^m((H_i^m)^{-1}(z)) =
\]
\begin{equation} \label{eqcartesgraphees}
\bigg(\overline{H_1^m}((H_i^m)^{-1}(z)),...,z,...,\overline{H_N^m}((H_i^m)^{-1}(z)), \varphi_1^m((H_i^m)^{-1}(z)) ,...,1,...,\varphi_{N}^m((H_i^m)^{-1}(z))\bigg)
\end{equation}
(this is a graph since the $i$-th coordinate is $z$). $\Sigma^m$ is then the union of $N$ pieces of graphs:
\[
\Sigma^m = \bigcup_{i=1}^N \Phi_i^m(D(5\alpha/3)).
\]
If $x\in \Phi_i^m(D(5\alpha/3))$, we say that $x$ is in the graph number $i$.

\subsection{Convergence of the embedded surfaces $\Sigma^m$ to an embedded surface $\Sigma^{\infty}$}

We want to show the convergence of the maps defining $\Sigma^m$ as graphs, that is the convergence of the maps $\overline{H_j^m} \circ (H_i^m)^{-1}$ and $\varphi_j^m \circ (H_i^m)^{-1}$. We first show that, on some good open sets $V$, these maps are either zero on $V$, for every $m \in \N$, or $H_j^m \circ (H_i^m)^{-1}$ is well defined on $V$ (that is $(H_i^m)^{-1}(V) \subset B_m(x_j^m,\varepsilon)$), for every $m \in \N$. In the first case, the sequence of maps $\overline{H_j^m} \circ (H_i^m)^{-1}$ converges trivially; and in the second case, Montel's theorem allows us to conclude that this sequence of bounded conformal maps converges locally uniformly on $V$. By passing to the limit, we can define a subset $\Sigma^\infty \subset \R^q$. We prove that this set is an embedded surface.

\subsubsection{A preliminary study of the maps $\overline{H_j^m} \circ (H_i^m)^{-1}$ and $\varphi_j^m \circ (H_i^m)^{-1}$} \label{partieexpressionfonctionstransition}

Let $z \in D(1/2)$. If the expression $H_j^m \circ (H_i^m)^{-1}(z)$ makes sense (that is, if $(H_i^m)^{-1}(z) \in B_m(x_j^m,\varepsilon)$), then we have
\[
\left\lbrace
\begin{array}{rll}
\overline{H_j^m} \circ (H_i^m)^{-1}(z) &=& \varphi({H_j^m} \circ (H_i^m)^{-1}(z)) \cdot {H_j^m} \circ (H_i^m)^{-1}(z)\\
\varphi_j^m \circ (H_i^m)^{-1}(z)& =& \varphi({H_j^m} \circ (H_i^m)^{-1}(z)),
\end{array}
\right.
\]
otherwise (that is, if $(H_i^m)^{-1}(z) \notin B_m(x_j^m,\varepsilon)$) we have
\[
\overline{H_j^m} \circ (H_i^m)^{-1}(z)=0 \mbox{ and } \varphi_j^m \circ (H_i^m)^{-1}(z)=0.
\]
In some sense, we want to show that this dichotomy is valid \emph{uniformly in} $m \in \N$. After passing to a subsequence, the following proposition is true:

\begin{proposition} \label{propositionalternative}
There exists a finite covering with open sets
\[
 D(2\alpha) = \bigcup_{t \in T} V_t
\]
such that the following property is true (after passing to a subsequence). We fix $i,j \in \{1,...,N\}$ and $t \in T$. Then at least one of the following two properties is true:
\begin{itemize}
 \item (A) for every $m \in \N$, $V_t \subset H_i^m(B_m(x_i^m,\varepsilon) \cap B_m(x_j^m,\varepsilon))$.

Then $H_j^m \circ (H_i^m)^{-1}$ is well defined on $V_t$.
 \item (B) for every $m \in \N, \varphi_j^m \circ (H_i^m)^{-1}=0$ on $V_t$.

Then for every $m \in \N$, $\overline{H_j^m} \circ (H_i^m)^{-1} = 0$ $V_t$.
\end{itemize}
\end{proposition}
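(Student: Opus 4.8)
The plan is to produce the covering $\{V_t\}$ by a compactness argument, handling the dichotomy one pair $(i,j)$ at a time and then taking a common refinement. Fix $i,j$. For each point $z \in D(2\alpha)$, I want to find a small open ball $V$ around $z$ so that, after passing to a subsequence, either (A) or (B) holds on $V$ for every $m$ in the subsequence. Since $D(2\alpha)$ is not compact I should work on a slightly smaller closed set, say $\overline{D}(2\alpha - \eta)$, or simply note that it suffices to cover $\overline{D}(2\alpha)$ minus a thin collar and absorb the collar into the charts $H_j^m$ whose image reaches that far; but the cleanest route is to cover the compact set $\overline{D}(2\alpha)$ (possible since the conformal charts are defined on all of $D(1/2) \supsetneq \overline{D}(2\alpha)$) and then intersect with $D(2\alpha)$.

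\medskip

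\textbf{Local step.} Fix $z \in \overline{D}(2\alpha)$ and the pair $(i,j)$. Set $y^m := (H_i^m)^{-1}(z) \in \Sigma$, well defined since $z \in D(1/2)$. Two cases, distinguished by a subsequence. \emph{Case 1:} after passing to a subsequence, $d_m(y^m, x_j^m) \geq (1 - \tfrac\kappa{100})\varepsilon$ — more precisely, $d_m(y^m, x_j^m)$ stays bounded away from $\varepsilon$ from above by a definite gap, or at least $d_m(y^m, x_j^m) \to \ell$ with $\ell$ not "too close to $\varepsilon$". Hmm — let me instead split on whether $\liminf_m d_m(y^m, x_j^m) < \varepsilon/4$ or $\geq \varepsilon/4$. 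If $\liminf_m d_m(y^m,x_j^m) \geq \varepsilon/4$, then along a subsequence $d_m(y^m, x_j^m) > \varepsilon/8$ for all $m$; by continuity of the metric and since $H_i^m$ restricted to a small neighborhood of $x_i^m$ is an isometry onto a conformal metric, for a sufficiently small Euclidean ball $V = D(z,\rho)$ (radius $\rho$ chosen uniformly using the \emph{uniform} length bound of Proposition~\ref{propositionmajorationdistance} and the harmonic-term bound of Theorem~\ref{theoremetermeharmoniqueborne}, both of which give an $m$-independent lower bound for $d_m$-distance in terms of Euclidean distance near $0$... wait, $z$ need not be near $0$). I would instead use: $(H_i^m)^{-1}(D(2\alpha)) \subset B_m(x_i^m,\varepsilon/4)$ by the already-proved part~1 of Theorem~\ref{theoremerecouvrement}, so $y^m$ and the whole preimage of a small ball around $z$ lie in $B_m(x_i^m,\varepsilon/4)$; hence $d_m$-distances between preimages of points of $D(2\alpha)$ are controlled by the \emph{intrinsic} metric on $B_m(x_i^m,\varepsilon)$, i.e. by $d_{\omega_i^m,h_i^m}$, and by Corollary~\ref{corollaireconvergencedistance} (after a further subsequence) these converge locally uniformly on $D(2\alpha)$ to $C\cdot\overline d_{\widetilde\omega,0}$. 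So for $\rho$ small enough (depending only on the limit, hence, after the subsequence, usable for all large $m$, and then shrink finitely many times for small $m$) the $d_m$-diameter of $(H_i^m)^{-1}(D(z,\rho))$ is $< \varepsilon/16$. Then if $\liminf d_m(y^m,x_j^m) \geq \varepsilon/4$ we get $(H_i^m)^{-1}(D(z,\rho)) \cap B_m(x_j^m, \varepsilon/4 - \varepsilon/16) = \emptyset$ along the subsequence; since $\overline{H_j^m}$ is supported in $B_m(x_j^m,\varepsilon/4)$... this needs $\varepsilon/4$, not $\varepsilon/4-\varepsilon/16$. Adjust constants: use $\varepsilon/4 < \liminf$ is false in general — so I really should split on $\liminf d_m(y^m,x_j^m)$ versus the threshold $\varepsilon/2$, using that $H_j^m(B_m(x_j^m,\varepsilon/4)) \subset D(r)$ and the diameter bound gives case (B): $(H_i^m)^{-1}(V) \cap B_m(x_j^m,\varepsilon/4) = \emptyset$ when $d_m(y^m,x_j^m) > \varepsilon/4 + \varepsilon/16$; and case (A): $(H_i^m)^{-1}(V) \subset B_m(x_j^m,\varepsilon)$ when $d_m(y^m,x_j^m) < \varepsilon - \varepsilon/16$. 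Since every $y^m$ satisfies either $d_m(y^m,x_j^m) \le \varepsilon/4 + \varepsilon/16$ (forcing, along a subsequence, the $<\varepsilon-\varepsilon/16$ alternative, i.e. (A)) or $d_m(y^m,x_j^m) > \varepsilon/4$ (which along a subsequence either stabilizes $>\varepsilon/4+\varepsilon/16$ giving (B), or lies in $[\varepsilon/4,\varepsilon/4+\varepsilon/16]$ giving (A)), one of the two alternatives holds along a subsequence. This is the crux and the main obstacle: bookkeeping the constants so that the two "good" regimes overlap and exhaust all possibilities, while the diameter control from Corollary~\ref{corollaireconvergencedistance} is genuinely $m$-uniform.

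\medskip

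\textbf{Globalization.} For the fixed pair $(i,j)$, the balls $\{D(z,\rho_z)\}_{z \in \overline{D}(2\alpha)}$ cover the compact set $\overline D(2\alpha)$; extract a finite subcover, and along each of its finitely many balls pass to a subsequence realizing (A) or (B). Doing this successively for all finitely many pairs $(i,j) \in \{1,\dots,N\}^2$ and all finitely many balls requires only finitely many subsequence extractions, so a single final subsequence works simultaneously. Take $T$ to index the common refinement of the finitely many finite covers obtained (one per pair) intersected with $D(2\alpha)$; on each $V_t$ and for each $(i,j)$ the chosen alternative persists (passing to a smaller open set preserves both (A), an inclusion, and (B), a vanishing statement). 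Finally record the two consequences: under (A), $H_j^m\circ(H_i^m)^{-1}$ is defined on $V_t$ because $V_t \subset H_i^m(B_m(x_i^m,\varepsilon)\cap B_m(x_j^m,\varepsilon))$ means every point of $V_t$ is $H_i^m$-image of a point lying in the domain $B_m(x_j^m,\varepsilon)$ of $H_j^m$; under (B), $\overline{H_j^m}\circ(H_i^m)^{-1} = \varphi_j^m\circ(H_i^m)^{-1}\cdot H_j^m\circ(H_i^m)^{-1} = 0$ on $V_t$ since the scalar factor $\varphi_j^m\circ(H_i^m)^{-1}$ vanishes there. That completes the proof.
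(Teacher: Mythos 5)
Your overall plan — a pointwise dichotomy, a finite subcover by compactness, a pigeonhole extraction per pair $(i,j)$ and per piece of the cover — matches the paper's. But the decisive ingredient, an $m$-\emph{independent} radius on which the dichotomy is decided, is precisely what you do not pin down, and you abandon the paper's route to it for a misunderstanding. You write down the right idea first: use Proposition~\ref{propositionmajorationdistance} (length of a line segment for the metric without harmonic term) together with Theorem~\ref{theoremetermeharmoniqueborne} (uniform bound on $h$ on a compact $K \subset D(1/2)$), and then reject it with ``wait, $z$ need not be near~$0$.'' That concern is unfounded: Proposition~\ref{propositionmajorationdistance} bounds the length of the segment $[z,z']$ for \emph{any} $z,z' \in D(1/2)$, and Theorem~\ref{theoremetermeharmoniqueborne} with $K = \overline{D}(2\alpha)$ controls $e^{h}$ on the whole of $\overline{D}(2\alpha)$. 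Combining them gives exactly what you want and what the paper proves as Lemma~\ref{lemmealternative}: an explicit $\eta>0$ (equation~\eqref{equationeta}, $e^{M(\overline D(2\alpha))}\cdot\frac{4\pi}{\delta}\cdot\eta^{\delta/2\pi} = 3\varepsilon/4$) so that the $d_m$-length of $(H_i^m)^{-1}$ of any line segment of Euclidean length $\leq\eta$ inside $D(2\alpha)$ is at most $3\varepsilon/4$, uniformly in $m$ and $i$. This single estimate drives the dichotomy for every $m$ with no subsequences, after which the finitely many pigeonhole extractions finish the proof.

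Falling back on Corollary~\ref{corollaireconvergencedistance} instead is not fatal in principle, but it makes the argument heavier (an extra extraction per $i$ before the cover is even built, with the cover now depending on a limit object) and introduces a genuine boundary issue you gloss over: the Corollary gives convergence only for limit points in the \emph{open} disc $D(2\alpha)$, because its proof uses $\widetilde\omega^+(\{z\}) < 2\pi$ for $z \in D(2\alpha)$ and Part~1 of Theorem~\ref{theoremerecouvrement} (an inclusion of $(H_i^m)^{-1}(D(2\alpha))$, not of the closure, into $B_m(x_i^m,\varepsilon/4)$). ``Cover the compact $\overline D(2\alpha)$ ... possible since the conformal charts are defined on $D(1/2)$'' does not address this. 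Finally, the dichotomy step is never actually closed: you flag it yourself (``this needs $\varepsilon/4$, not $\varepsilon/4-\varepsilon/16$. Adjust constants''; ``this is the crux and the main obstacle''). The constants do work with a two-way split on whether $d_m(y^m,x_j^m) > 5\varepsilon/16$ — the ``$\leq$'' case uses $5\varepsilon/16 + \varepsilon/16 < \varepsilon$ to get (A), the ``$>$'' case uses $5\varepsilon/16 - \varepsilon/16 = \varepsilon/4$ to get (B) since $\varphi_j^m$ vanishes outside $B_m(x_j^m,\varepsilon/4)$ — but as written the verification is not carried through, and what you have is a sketch with acknowledged holes rather than a proof.
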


\begin{remark}
An open set $V_t$ may of course verify the two properties. Moreover, proposition (B) is verified when there are no transition maps between $B_m(x_i^m,\varepsilon)$ and $B_m(x_j^m,\varepsilon)$, that is when $B_m(x_i^m,\varepsilon) \cap B_m(x_j^m,\varepsilon) = \emptyset$.
\end{remark}

This proposition is a direct consequence of the following lemma. We set $\eta>0$ the constant verifying
\begin{equation} \label{equationeta}
e^{M(\overline{D}(2\alpha))} \cdot \frac{4\pi}{\delta} \cdot \eta^{\delta/2\pi} = 3\varepsilon/4
\end{equation}
(recall that $M(\overline{D}(2\alpha))$ is the constant which appears in theorem \ref{theoremetermeharmoniqueborne} for the compact set $K =\overline{D}(2\alpha)$).
\begin{lemma} \label{lemmealternative}
Let $m \in \N$, $i,j \in \{1,...,N\}$ and $z_0,z \in D(2\alpha)$. Suppose $\varphi_j^m \circ (H_i^m)^{-1} (z_0) \neq 0$. Then
\[
|z-z_0| \leq \eta \implies (H_i^m)^{-1}(z) \in B_m(x_j^m,\varepsilon) \mbox{ (hence } H_j^m \circ (H_i^m)^{-1}(z) \mbox{ exists)}.
\]
\end{lemma}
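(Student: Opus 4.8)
The plan is to reduce the statement to a single triangle inequality and then feed in the three estimates already established: the uniform bound on the harmonic term (theorem \ref{theoremetermeharmoniqueborne}), the upper bound on the length of a line segment (proposition \ref{propositionmajorationdistance}), and the inclusion $D(2\alpha)\subset H(B(x,\varepsilon/4))$ from part 1 of theorem \ref{theoremerecouvrement}. Write $y_0:=(H_i^m)^{-1}(z_0)$ and $y:=(H_i^m)^{-1}(z)$; these make sense since $z_0,z\in D(2\alpha)\subset D(1/2)$. We must show $d_m(x_j^m,y)<\varepsilon$, and since
\[
d_m(x_j^m,y)\le d_m(x_j^m,y_0)+d_m(y_0,y),
\]
it is enough to prove $d_m(x_j^m,y_0)<\varepsilon/4$ and $d_m(y_0,y)\le 3\varepsilon/4$.

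First I would dispose of the term $d_m(x_j^m,y_0)$. The hypothesis $\varphi_j^m\circ(H_i^m)^{-1}(z_0)\neq 0$ says $\varphi_j^m(y_0)\neq 0$; since $\varphi_j^m=\varphi\circ H_j^m$ and $\varphi$ vanishes outside $D(2\alpha)$, this forces $y_0\in B_m(x_j^m,\varepsilon)$ and $H_j^m(y_0)\in D(2\alpha)$. By part 1 of theorem \ref{theoremerecouvrement} (already proved), $(H_j^m)^{-1}$ of any point of $D(2\alpha)$ lies at $d_m$-distance $<\varepsilon/4$ from $x_j^m$, hence $d_m(x_j^m,y_0)<\varepsilon/4$.

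The heart of the argument is the bound $d_m(y_0,y)\le 3\varepsilon/4$. Because $H_i^m$ is an isometry from $(B_m(x_i^m,\varepsilon),(d_m)_{|B_m(x_i^m,\varepsilon)})$ onto $(D(1/2),d_{\omega_i^m,h_i^m})$ and the induced intrinsic metric dominates $d_m$, we get $d_m(y_0,y)\le d_{\omega_i^m,h_i^m}(z_0,z)$. Bound this in turn by the length of the line segment $\gamma(t)=(1-t)z_0+tz$, which stays in the convex set $D(2\alpha)$; splitting the integrand as $e^{V[\omega_i^m]+h_i^m}=e^{h_i^m}\cdot e^{V[\omega_i^m]}$, using theorem \ref{theoremetermeharmoniqueborne} on $\overline{D}(2\alpha)$ to bound $e^{h_i^m(\gamma(t))}\le e^{M(\overline{D}(2\alpha))}$, and then applying proposition \ref{propositionmajorationdistance} to $\omega_i^m$ (legitimate since $(\omega_i^m)^+(D(1/2))=\omega_m^+(B_m(x_i^m,\varepsilon))\le 2\pi-\delta<2\pi$), I obtain
\[
d_m(y_0,y)\le e^{M(\overline{D}(2\alpha))}\cdot\frac{2}{1-(\omega_i^m)^+(D(1/2))/2\pi}\cdot|z-z_0|^{1-(\omega_i^m)^+(D(1/2))/2\pi}.
\]
Finally, bounding the prefactor by $4\pi/\delta$ via $1-(\omega_i^m)^+(D(1/2))/2\pi\ge\delta/2\pi$, and using $|z-z_0|\le\eta<1$ with the same lower bound on the exponent to get $|z-z_0|^{1-(\omega_i^m)^+(D(1/2))/2\pi}\le\eta^{\delta/2\pi}$, the defining relation (\ref{equationeta}) of $\eta$ gives exactly $d_m(y_0,y)\le e^{M(\overline{D}(2\alpha))}\cdot\frac{4\pi}{\delta}\cdot\eta^{\delta/2\pi}=3\varepsilon/4$. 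Together with $d_m(x_j^m,y_0)<\varepsilon/4$ this yields $d_m(x_j^m,y)<\varepsilon$, i.e. $(H_i^m)^{-1}(z)\in B_m(x_j^m,\varepsilon)$.

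There is no serious obstacle: everything substantial is already in theorems \ref{theoremetermeharmoniqueborne} and \ref{theoremerecouvrement} and proposition \ref{propositionmajorationdistance}, and this lemma merely records their combination. The only points deserving care are (i) that $\varphi_j^m(y_0)\neq 0$ already gives the \emph{strict} inequality $d_m(x_j^m,y_0)<\varepsilon/4$, so that the final triangle inequality is strict; (ii) that the segment $[z_0,z]$ stays in $\overline{D}(2\alpha)$, where the uniform bound on $h_i^m$ is available; and (iii) the elementary monotonicity of $s\mapsto|z-z_0|^s$ (decreasing for $0<|z-z_0|<1$), which lets us replace the variable exponent by the constant $\delta/2\pi$. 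Uniformity in $m$ is automatic, since $M(\overline{D}(2\alpha))$, $\delta$, $\varepsilon$ and $\eta$ do not depend on $m$.
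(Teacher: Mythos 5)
Your proof is correct and follows essentially the same route as the paper: the identical triangle-inequality split $d_m(x_j^m,y)\le d_m(x_j^m,y_0)+d_m(y_0,y)$, the bound $d_m(x_j^m,y_0)<\varepsilon/4$ via $\varphi_j^m(y_0)\neq 0\Rightarrow H_j^m(y_0)\in D(2\alpha)$ and part 1 of theorem \ref{theoremerecouvrement}, and the bound $d_m(y_0,y)\le 3\varepsilon/4$ via the line segment, theorem \ref{theoremetermeharmoniqueborne}, proposition \ref{propositionmajorationdistance}, and the defining relation (\ref{equationeta}) for $\eta$. The only cosmetic difference is that you explicitly isolate the inequality $d_m(y_0,y)\le d_{\omega_i^m,h_i^m}(z_0,z)$ and note the monotonicity of $s\mapsto |z-z_0|^s$, which the paper uses implicitly.
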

\begin{proof}
Since $\varphi_j^m \circ (H_i^m)^{-1} (z_0) = \varphi(H_j^m \circ (H_i^m)^{-1}(z_0)) \neq 0$, we have $H_j^m \circ (H_i^m)^{-1} (z_0) \in D(2\alpha)$, so by theorem \ref{theoremerecouvrement} we have $H_j^m \circ (H_i^m)^{-1}(z_0) \in H_j^m(B_m(x_j^m,\varepsilon/4))$, hence
\begin{equation} \label{inegalitetemporairedistance}
d_m(x_j^m,(H_i^m)^{-1}(z_0)) < \varepsilon/4.
\end{equation}
Now we will use the same arguments as in the proof of the first part of theorem \ref{theoremerecouvrement}. Let $\gamma(t) := (1-t)z_0 + tz$ be the line segment between $z_0$ and $z$. We have
\[
L_m((H_i^m)^{-1}(\gamma)) = \int_0^1 e^{V[\omega_i^m](\gamma(t)) + h_i^m(\gamma(t))} |z-z_0| dt.
\]
Since the line segment $\gamma$ is included in $D(2\alpha)$, we have $e^{h_i^m(\gamma(t))} \leq e^{M(\overline{D}(2\alpha))}$. And using proposition \ref{propositionmajorationdistance}, we have
\begin{eqnarray*}
\int_0^1 e^{V[\omega_i^m](\gamma(t))} |z-z_0| dt & \leq & \frac{2}{1-(\omega_i^m)^+(D(1/2))/2\pi} \cdot |z-z_0|^{1-(\omega_i^m)^+(D(1/2))/2\pi} \\
 & \leq & \frac{4\pi}{\delta} \cdot |z-z_0|^{\delta/2\pi}.
\end{eqnarray*}
Thus we obtain
\[
L_m((H_i^m)^{-1}(\gamma)) \leq e^{M(\overline{D}(2\alpha))} \cdot \frac{4\pi}{\delta} \cdot |z-z_0|^{\delta/2\pi} \leq e^{M(\overline{D}(2\alpha))} \cdot \frac{4\pi}{\delta} \cdot \eta^{\delta/2\pi} = 3\varepsilon/4
\]
(we have chosen $\eta$ so that the last equality is true). Since $(H_i^m)^{-1}(\gamma)$ is a continuous curve in $\Sigma$ joigning $(H_i^m)^{-1}(z_0)$ and $(H_i^m)^{-1}(z)$, we have $d_m((H_i^m)^{-1}(z_0),(H_i^m)^{-1}(z)) \leq 3\varepsilon/4$,
and with the inequality (\ref{inegalitetemporairedistance}) we obtain $d_m(x_j^m,(H_i^m)^{-1}(z)) < 3\varepsilon/4+\varepsilon/4 = \varepsilon$, and this ends the proof.
\end{proof}

\begin{proof}[Proof of proposition \ref{propositionalternative}]
The following fact is a direct consequence of lemma \ref{lemmealternative}:
\begin{fact}
Let $m \in \mathbb{N}$, $i,j \in \{1,...,N\}$, and $V \subset D(2\alpha)$ be an open set with diameter less than $\eta$. Then at least one of the following two properties is true:
\begin{itemize}
 \item (A') we have $V \subset H_i^m(B_m(x_i^m,\varepsilon) \cap B_m(x_j^m,\varepsilon))$,
 \item (B') we have $\varphi_j^m \circ (H_i^m)^{-1}=0$ on $V$.
\end{itemize}
\end{fact}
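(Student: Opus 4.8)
The plan is to prove the Fact by contradiction on property (B'): assuming (B') fails, I will show that (A') holds. So suppose there is a point $z_0 \in V$ with $\varphi_j^m \circ (H_i^m)^{-1}(z_0) \neq 0$. Two preliminary observations. First, since $V \subset D(2\alpha)$ and the first part of theorem \ref{theoremerecouvrement} gives $D(2\alpha) \subset H_i^m(B_m(x_i^m,\varepsilon/4))$, the map $(H_i^m)^{-1}$ is defined on all of $V$ and takes its values in $B_m(x_i^m,\varepsilon/4) \subset B_m(x_i^m,\varepsilon)$; in particular $z_0$ is a legitimate base point for lemma \ref{lemmealternative}. Second, because $V$ has diameter strictly less than $\eta$, every $z \in V$ satisfies $|z-z_0| < \eta$, so the hypothesis $|z-z_0| \leq \eta$ of lemma \ref{lemmealternative} is met at every point of $V$.

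Now fix an arbitrary $z \in V$. Lemma \ref{lemmealternative} applied with this $z_0$ and $z$ yields $(H_i^m)^{-1}(z) \in B_m(x_j^m,\varepsilon)$. Combining with the first observation, $(H_i^m)^{-1}(z) \in B_m(x_i^m,\varepsilon) \cap B_m(x_j^m,\varepsilon)$. As $z$ was arbitrary in $V$, this says precisely $V \subset H_i^m\big(B_m(x_i^m,\varepsilon) \cap B_m(x_j^m,\varepsilon)\big)$, which is property (A'). Hence at least one of (A'), (B') always holds, proving the Fact.

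To then obtain proposition \ref{propositionalternative} from this Fact: since $\overline{D}(2\alpha)$ is compact, cover $D(2\alpha)$ by finitely many open sets $V_t$, $t \in T$, each of diameter less than $\eta$ (small discs, say). For each of the finitely many triples $(i,j,t) \in \{1,\dots,N\}^2 \times T$, the Fact asserts that for every $m$ at least one of (A') or (B') holds on $V_t$; by the pigeonhole principle one of these two alternatives holds for infinitely many $m$, hence after passing to a subsequence it holds for \emph{all} $m$ — which upgrades (A') to (A) and (B') to (B) of proposition \ref{propositionalternative}. Carrying out this extraction once for each of the finitely many triples yields a single subsequence along which the dichotomy holds simultaneously for all $(i,j,t)$.

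I expect no real obstacle at this stage: the Fact is an essentially immediate consequence of lemma \ref{lemmealternative}, and the only points requiring a moment's care are checking that $(H_i^m)^{-1}$ is actually defined on $V$ (this is exactly where the first part of theorem \ref{theoremerecouvrement} enters) and that the diameter bound produces the inequality $|z-z_0| < \eta$ needed to invoke the lemma. The genuine content sits upstream, in lemma \ref{lemmealternative} and, through the constant $\eta$ fixed by $(\ref{equationeta})$, in the uniform bound on the harmonic term from theorem \ref{theoremetermeharmoniqueborne}.
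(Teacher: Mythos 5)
Your proof is correct and follows the same route as the paper: assume (B') fails to get a point $z_0 \in V$ with $\varphi_j^m \circ (H_i^m)^{-1}(z_0) \neq 0$, note that the diameter bound makes every $z \in V$ satisfy $|z-z_0| \leq \eta$, and invoke lemma \ref{lemmealternative} to land in $B_m(x_j^m,\varepsilon)$. You spell out one detail the paper leaves implicit, namely that $(H_i^m)^{-1}$ is defined on $V$ via $D(2\alpha) \subset H_i^m(B_m(x_i^m,\varepsilon/4))$ from theorem \ref{theoremerecouvrement}, which is a useful clarification rather than a departure.
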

\begin{proof}
Assume (B') is not true. Then there exists some $z_0 \in V$ with $\varphi_j^m \circ (H_i^m)^{-1}(z_0) \neq 0$. And every $z \in V$ satisfies $|z-z_0| \leq \eta$, so $(H_i^m)^{-1}(z) \in B_m(x_j^m,\varepsilon)$ by lemma \ref{lemmealternative}: this shows that property (A') is true.
\end{proof}

Now, cover $D(2\alpha)$ by a finite number of open sets $V_t$, for $t \in T$, with diameter less than $\eta$. For every $i,j \in \{1,...,N\}$ and every $t \in T$, by the preceding fact, there exists an infinite number of integer $m$ verifying the same proposition, (A') or (B'). Hence there exists a subsequence $m'$ of $m$ such that this property ((A') or (B')) is verified for every $m'$. Taking a finite number of successive extractions, when $i,j \in \{1,...,N\}$ and $t\in T$, we obtain proposition \ref{propositionalternative}.
\end{proof}

\subsubsection{Convergence of the transition maps} \label{partieconvergencefonctionstransition}

Let $i,j \in \{1,..., N\}$. We want to show the convergences of the sequences of maps $\overline{H_j^m} \circ (H_i^m)^{-1}$ and $\varphi_j^m \circ (H_i^m)^{-1}$ on $D(2\alpha)$.

On an open set $V_t$ verifying property (B) in proposition \ref{propositionalternative}, we have, for every $m \in \N$, $\overline{H_j^m} \circ (H_i^m)^{-1}=0$ and $\varphi_j^m \circ (H_i^m)^{-1}=0$ on $V_t$, hence the sequences converge trivially.

Consider some open set $V_t$ such that property (A) in proposition \ref{propositionalternative} is satisfied. Then $H_j^m \circ (H_i^m)^{-1}$ is well defined on $V_t$. The maps $H_i^m$ and $H_j^m$ are conformal charts, so $H_j^m \circ (H_i^m)^{-1}$ is a conformal map between open subsets of $\C$: this classical property for surfaces with smooth Riemannian metrics extends to the class of surfaces with B.I.C. (this is theorem 7.3.1 in \cite{Reshetnyak_livre}). $(H_j^m \circ (H_i^m)^{-1})_{m\in\N}$ is then a sequence of uniformly bounded holomorphics (or anti-holomorphics) maps on $V_t$: by Montel's theorem, we know that after passing to a subsequence, $H_j^m \circ (H_i^m)^{-1}$ converge locally uniformly (as well as the derivatives) to some holomorphic (or anti-holomorphic) map on $V_t$. 

Let $A_{ji} := $ the union of the open sets $V_t$ such that property (A) in proposition \ref{propositionalternative} is satisfied, and $B_{ji} := $ the union of the open sets $V_t$ such that property (B) is satisfied. We have $D(2\alpha) = A_{ji} \cup B_{ji}$.

After considering successive subsequences, we can define a smooth map $H_{ji}$ on $A_{ji}$ by $H_{ji}(z) := \lim_{m\rightarrow \infty} H_j^m \circ (H_i^m)^{-1} (z)$.

For every $i,j \in \{1,...,N\}$ we have the following properties (after passing to a subsequence):
\begin{itemize}
\item there exists a smooth map $\varphi_{ji}$ on $D(2\alpha)$ such that
\[
\varphi_j^m \circ (H_i^m)^{-1} \underset{m \rightarrow \infty} \longrightarrow \varphi_{ji} \mbox{ locally uniformly (as well as the derivatives) on } D(2\alpha):
\]
$\varphi_{ji}$ is defined by $\varphi_{ji} = \varphi \circ H_{ji}$ on $A_{ji}$, and $\varphi_{ji}=0$ on $B_{ji}$.
\item There exists a smooth map $\overline{H_{ji}}$ on $D(2\alpha)$ such that
\[
\overline{H_j^m} \circ (H_i^m)^{-1} \underset{m \rightarrow \infty} \longrightarrow \overline{H_{ji}} \mbox{ locally uniformly (as well as the derivatives) on } D(2\alpha):
\]
$\overline{H_{ji}}$ is defined by $\overline{H_{ji}} = \varphi_{ji} H_{ji}$ on $A_{ji}$, and $\overline{H_{ji}}= 0$ on $B_{ji}$.
\end{itemize}

\subsubsection{Construction of the limit embedded surface $\Sigma^\infty$} \label{partieconstructionsurfacelimite}

Let $m$ tend to infinity in relation (\ref{eqcartesgraphees}): for $z \in D(5\alpha/3)$, set
\begin{equation} \label{equationdefinitionphiiinfini}
\Phi_i^\infty (z) := \bigg(\overline{H_{1i}}(z),...,z,...,\overline{H_{Ni}}(z), \varphi_{1i}(z),...,1,...,\varphi_{Ni}(z)\bigg).
\end{equation}
We also define the following subset of $\R^q$:
\[
 \Sigma^\infty := \bigcup_{i=1}^N \Phi_i^\infty (D(5\alpha/3)).
\]
If $x \in \Sigma^\infty$ is in the open set $\Phi_i^\infty (D(5\alpha/3))$, we will say that $x$ is in the graph number $i$. Since $\Sigma^m$ is covered by the sets $(H_i^m)^{-1}(D(\alpha))$, for $1 \leq i \leq N$, the following proposition is a straightforward verification:
\begin{proposition} \label{propositionrecouvrementsigmainfini}
We have
\[
 \Sigma^\infty = \bigcup_{i=1}^N \Phi_i^\infty (\overline{D}(\alpha)),
\]
hence
\[
 \Sigma^\infty = \bigcup_{i=1}^N \Phi_i^\infty (D(4\alpha/3)).
\]
\end{proposition}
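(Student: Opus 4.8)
The plan is to obtain the finite-radius covering of $\Sigma^\infty$ by transporting the covering $\Sigma = \bigcup_{i=1}^N (H_i^m)^{-1}(D(\alpha))$ of Fact \ref{propositionrecouvrementsigma} through the embedding $\Psi^m$, and then letting $m\to\infty$ while using the local uniform convergence $\Phi_i^m \to \Phi_i^\infty$ on $D(5\alpha/3)$ established in Section \ref{partieconvergencefonctionstransition}. First I would record the easy inclusions: since $\overline{D}(\alpha) \subset D(4\alpha/3) \subset D(5\alpha/3)$, definition (\ref{equationdefinitionphiiinfini}) gives
\[
\bigcup_{i=1}^N \Phi_i^\infty(\overline{D}(\alpha)) \subset \bigcup_{i=1}^N \Phi_i^\infty(D(4\alpha/3)) \subset \bigcup_{i=1}^N \Phi_i^\infty(D(5\alpha/3)) = \Sigma^\infty.
\]
So it will suffice to prove the reverse inclusion $\Sigma^\infty \subset \bigcup_{i=1}^N \Phi_i^\infty(\overline{D}(\alpha))$; once that is done, all four sets above coincide and both displayed equalities follow at once.

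For the remaining inclusion I would fix $x \in \Sigma^\infty$, so that $x = \Phi_{i_0}^\infty(z_0)$ for some index $i_0$ and some $z_0 \in D(5\alpha/3)$. Set $y_m := \Phi_{i_0}^m(z_0) = \Psi^m\big((H_{i_0}^m)^{-1}(z_0)\big) \in \Sigma^m$; by the local uniform convergence of $\Phi_{i_0}^m$ at the point $z_0$ we have $y_m \to x$ in $\R^q$. Applying the bijection $\Psi^m : \Sigma \to \Sigma^m$ to Fact \ref{propositionrecouvrementsigma} yields $\Sigma^m = \bigcup_{i=1}^N \Phi_i^m(D(\alpha))$, hence there are $j_m \in \{1,\dots,N\}$ and $w_m \in D(\alpha)$ with $y_m = \Phi_{j_m}^m(w_m)$. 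Since the index set is finite, after passing to a subsequence I may assume $j_m \equiv j$ is constant, and since $\overline{D}(\alpha)$ is compact I may assume further that $w_m \to w \in \overline{D}(\alpha)$. As $\overline{D}(\alpha)$ is a compact subset of $D(5\alpha/3)$, the convergence $\Phi_j^m \to \Phi_j^\infty$ is uniform there, so
\[
\big|y_m - \Phi_j^\infty(w)\big| \le \big|\Phi_j^m(w_m) - \Phi_j^\infty(w_m)\big| + \big|\Phi_j^\infty(w_m) - \Phi_j^\infty(w)\big| \longrightarrow 0 ,
\]
and comparing with $y_m \to x$ gives $x = \Phi_j^\infty(w) \in \Phi_j^\infty(\overline{D}(\alpha))$, which is what was wanted.

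I do not expect any genuine obstacle here — this is a compactness-and-limit routine, and the paper is right to call it a straightforward verification. The only point worth a word is that the two subsequence extractions (to freeze the index $j_m$ and to make $w_m$ converge) cost nothing for the conclusion: the set $\Sigma^\infty$ and all the maps $\Phi_i^\infty$ have already been fixed once and for all in Section \ref{partieconstructionsurfacelimite}, so the membership $x \in \bigcup_{i=1}^N \Phi_i^\infty(\overline{D}(\alpha))$ established along a subsequence holds for the original point $x$; since $x \in \Sigma^\infty$ was arbitrary, the proposition follows.
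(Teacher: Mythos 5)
Your argument is correct and is essentially the paper's own: both proofs approximate $x = \Phi_{i_0}^\infty(z_0)$ by points $\Phi_{i_0}^m(z_0)\in\Sigma^m$, rewrite these via the covering $\Sigma^m = \bigcup_i\Phi_i^m(D(\alpha))$, freeze the index and extract a convergent subsequence of the parameters in $\overline{D}(\alpha)$, and pass to the limit using the uniform convergence $\Phi_j^m\to\Phi_j^\infty$. You are slightly more explicit than the paper in justifying the step ``there exist $x_m\in\Sigma^m$ with $x_m\to x$'' and in noting that the subsequence extractions are harmless, but the route is the same.
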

\begin{proof}
Let $x \in \Sigma^\infty$: by definition, there exists some points $x_m \in \Sigma^m$ with $x_m \rightarrow x$. Every $x_m$ belongs to some open set $\Phi_{i(m)}^m(D(\alpha))$, for some $i(m) \in \{1,...,N\}$: after passing to a subsequence, we may assume this $i(m)$ is constant. For simplicity, assume $i(m)=1$. Thus there exists a sequence of complex numbers $z_m \in D(\alpha)$ with
\[
x_m = \Phi_1^m(z_m) =  \bigg(z_m,...,\overline{H_N^m}((H_1^m)^{-1}(z_m)),1,...,\varphi_{N}^m((H_1^m)^{-1}(z_m))\bigg).
\]
By compactness, after passing to a subsequence, we may assume $z_m \rightarrow z \in \overline{D}(\alpha)$, and by uniform convergence of all the maps which appear in the last equality, we get $x = \Phi_1^\infty (z)$, and this ends the proof.
\end{proof}

We easily see that such a "limit" of embedded submanifolds may not be a submanifold:
\begin{center}
\begin{tikzpicture}[scale=0.5]
% partie de gauche :
\draw [domain = 0:4.5] [samples = 500] plot (\x, {0.3+3*sqrt(2*\x) *exp(-2*\x)}) ;
\draw [domain = 0:4.5] [samples = 500] plot (9-\x, {0.3+3*sqrt(2*\x) *exp(-2*\x)}) ;
\draw (0,0.3)--(0,-0.3) ;
\draw (9,0.3)--(9,-0.3) ;
\draw [domain = 0:4.5] [samples = 500] plot (\x, {-0.3-3*sqrt(2*\x) *exp(-2*\x)}) ;
\draw [domain = 0:4.5] [samples = 500] plot (9-\x, {-0.3-3*sqrt(2*\x) *exp(-2*\x)}) ;
\draw (12,0) node [scale=2] {$\longrightarrow$} ;
% partie de droite :
\draw [domain = 15:19.5] [samples = 500] plot (\x, {3*sqrt(2*(\x-15)) *exp(-2*(\x-15))}) ;
\draw [domain = 15:19.5] [samples = 500] plot (39-\x, {3*sqrt(2*(\x-15)) *exp(-2*(\x-15))}) ;
\draw [domain = 15:19.5] [samples = 500] plot (\x, {-3*sqrt(2*(\x-15)) *exp(-2*(\x-15))}) ;
\draw [domain = 15:19.5] [samples = 500] plot (39-\x, {-3*sqrt(2*(\x-15)) *exp(-2*(\x-15))}) ;
\end{tikzpicture}
\captionof{figure}{}
\end{center}

But in this case, we have the following

\begin{proposition} \label{propositionsousvariétélisse}
$\Sigma^\infty$ is a (possibly disconnected) smooth embedded compact surface in $\R^q$.
\end{proposition}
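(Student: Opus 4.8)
The plan is to prove that $\Sigma^\infty$ coincides, near each of its points, with one of the ``graph pieces'' $S_i := \Phi_i^\infty(D(5\alpha/3))$, each of which is a smooth embedded $2$--submanifold of $\R^q$; compactness is then immediate. First I would check that each $\Phi_i^\infty : D(5\alpha/3) \to \R^q$ is a smooth embedding. By section \ref{partieconvergencefonctionstransition} its components $\overline{H_{ki}},\varphi_{ki}$ (and the $i$-th complex component $z\mapsto z$) are locally uniform limits, together with all their derivatives, of smooth maps, hence are smooth. Since the $i$-th complex coordinate of $\Phi_i^\infty(z)$ equals $z$, the map $\Phi_i^\infty$ is injective, its differential has rank $2$ everywhere (the corresponding $2\times 2$ block of the Jacobian is the identity), and it is a homeomorphism onto $S_i$, the inverse being the restriction of the projection onto the $i$-th coordinate. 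Thus each $S_i$ is a smooth embedded surface, and $\Sigma^\infty = \bigcup_{i=1}^N \Phi_i^\infty(\overline{D}(\alpha))$ (Proposition \ref{propositionrecouvrementsigmainfini}) is compact, being a finite union of compact sets.

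\textbf{The key step} is a compatibility relation for the limit maps: for every $w \in A_{ij}\cap D(5\alpha/3)$ with $H_{ij}(w)\in D(5\alpha/3)$, one has $\Phi_j^\infty(w) = \Phi_i^\infty\big(H_{ij}(w)\big)$. I would prove it by passing to the limit in the obvious finite-level identity. For fixed $m$ and $w\in A_{ij}$, the point $p_m := (H_j^m)^{-1}(w)$ lies in $B_m(x_i^m,\varepsilon)\cap B_m(x_j^m,\varepsilon)$, so $z^m := H_i^m\circ (H_j^m)^{-1}(w)$ is defined, $(H_i^m)^{-1}(z^m)=p_m$, and $\Psi^m(p_m)$ equals both $\Phi_j^m(w)$ and, once $z^m\in D(5\alpha/3)$, $\Phi_i^m(z^m)$ (by the graph formula (\ref{eqcartesgraphees})). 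As $z^m \to H_{ij}(w)\in D(5\alpha/3)$ this holds for large $m$; letting $m\to\infty$ and using $\Phi_j^m\to\Phi_j^\infty$, $\Phi_i^m\to\Phi_i^\infty$ locally uniformly on $D(5\alpha/3)$ together with $z^m\to H_{ij}(w)$ yields the claimed identity.

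Finally I would establish the local structure. Fix $x_0 \in \Sigma^\infty$ and, using Proposition \ref{propositionrecouvrementsigmainfini}, write $x_0 = \Phi_i^\infty(z_0)$ with $z_0 \in D(4\alpha/3)$. I claim $x_0$ has a neighbourhood $W$ in $\R^q$ with $\Sigma^\infty\cap W = S_i\cap W$; since $S_i\subseteq\Sigma^\infty$ is an embedded (boundaryless) surface, this shows $\Sigma^\infty$ is locally an embedded surface near $x_0$. If the claim failed there would be $y_k \in \Sigma^\infty\setminus S_i$ with $y_k\to x_0$; writing $y_k = \Phi_j^\infty(w_k)$ with $w_k\in D(4\alpha/3)$ and passing to a subsequence so that $j$ is constant and $w_k\to w_\infty\in\overline{D}(4\alpha/3)\subset D(5\alpha/3)$, we get $\Phi_j^\infty(w_\infty)=x_0$ and (since $y_k\notin S_i$) $j\neq i$. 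Comparing the $i$-th complex coordinate gives $z_0 = \overline{H_{ij}}(w_\infty)$, and comparing the $(N+i)$-th coordinate gives $\varphi_{ij}(w_\infty)=1$; as $\varphi_{ij}\equiv 0$ on $B_{ij}$ this forces $w_\infty\in A_{ij}$, whence $z_0 = \overline{H_{ij}}(w_\infty) = \varphi_{ij}(w_\infty)\,H_{ij}(w_\infty) = H_{ij}(w_\infty)\in D(5\alpha/3)$. By continuity of $H_{ij}$ on the open set $A_{ij}$, for $k$ large we have $w_k\in A_{ij}$ and $H_{ij}(w_k)\in D(5\alpha/3)$, so the compatibility relation gives $y_k = \Phi_j^\infty(w_k) = \Phi_i^\infty(H_{ij}(w_k))\in S_i$, a contradiction. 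Hence $\Sigma^\infty$ is a smooth, compact, possibly disconnected embedded surface in $\R^q$.

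The main obstacle I expect is the careful bookkeeping of the nested discs $D(\alpha)\subset D(4\alpha/3)\subset D(5\alpha/3)\subset D(2\alpha)$ and of the open sets $A_{ij},B_{ij}$ when passing to the limit: in particular, extracting the key identity on a domain large enough to run the local-structure argument, and making sure the pathology depicted in the figure is genuinely excluded — which it is, precisely because the transition maps converge in $C^\infty$ (via Montel's theorem) rather than merely uniformly.
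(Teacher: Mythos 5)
Your proof is correct, and it covers the same mathematical ground as the paper, but it packages the argument differently. The paper proves this proposition in two lines by invoking the second part of Lemma \ref{lemmetechniquecartesgraphees}, which is a quantitative statement: if $x_0 \in \Phi_i^\infty(D(4\alpha/3))$ then every $x \in \Sigma^\infty$ with $\|x - x_0\| < \alpha/6$ lies in $\Phi_i^\infty(D(5\alpha/3))$. The content of that lemma's proof is essentially what you call the compatibility relation, proved component by component by interchanging limits; your formulation $\Phi_j^\infty(w) = \Phi_i^\infty(H_{ij}(w))$ is a cleaner, all-coordinates-at-once version of the same fact, obtained directly from the trivial finite-level identity $\Phi_j^m(w) = \Phi_i^m(z^m)$. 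You then replace the paper's explicit $\alpha/6$-ball argument with a sequential contradiction argument, which is entirely adequate for this proposition (and both rest on the same two pivots: comparing the $(N+i)$-th coordinate to get $\varphi_{ij}(w_\infty) = 1 \neq 0$, hence $w_\infty \in A_{ij}$ and $H_{ij}(w_\infty)$ exists; and $C^\infty$ convergence of the transition maps from Montel). The only thing the sequential argument costs you is the explicit radius $\alpha/6$, which the paper does use again later in the same technical lemma (it feeds into the proof that $\Sigma^m$ sits in a tubular neighbourhood of $\Sigma^\infty$ via Proposition \ref{propositioninclusionpim}), but nothing is lost for the statement at hand.
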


This is a straightforward consequence of the following technical lemma, which will also be used later: points of $\Sigma^m$ (or points of $\Sigma^\infty$) which are close to points in the graph number $i$, are also in the graph number $i$.

\begin{lemma} \label{lemmetechniquecartesgraphees}
The following properties are true for every $i \in \{1,...,N\}$:
\begin{itemize}
\item[1.] Let $m \in \N$, and $x_0 \in \Phi_i^m (D(4\alpha/3)) \subset \Sigma^m$. For every $x \in \Sigma^m$,
\[
 ||x-x_0|| < \alpha/6 \Longrightarrow x \in \Phi_i^m(D(5\alpha/3)).
\]
\item[2.] Let $x_0 \in \Phi_i^\infty (D(4\alpha/3)) \subset \Sigma^\infty$. For every $x \in \Sigma^\infty$,
\[
 ||x-x_0|| < \alpha/6 \Longrightarrow x \in \Phi_i^\infty(D(5\alpha/3)).
\]
\item[2'.] Let $x_0 \in \Phi_i^\infty (D(\alpha)) \subset \Sigma^\infty$. For every $x \in \Sigma^\infty$,
\[
 ||x-x_0|| < \alpha/6 \Longrightarrow x \in \Phi_i^\infty(D(7\alpha/6)).
\]
\end{itemize}
($||.||$ is the Euclidean norm of $\R^q$.)
\end{lemma}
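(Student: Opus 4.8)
The plan is to prove part~1 by a direct computation with the coordinates of $\R^q$, exploiting the graph structure of $\Sigma^m$, and then to deduce parts~2 and~2' from part~1 by approximating the limit graphs $\Phi_i^\infty$ by the finite-level graphs $\Phi_i^m$ and passing to the limit. The only real content is bookkeeping with the radii $\alpha<1/8$, $4\alpha/3$, $5\alpha/3$, $7\alpha/6$.

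\medskip
\noindent\emph{Part 1.} Write $x_0=\Phi_i^m(z_0)$ with $z_0\in D(4\alpha/3)$, and $x=\Psi^m(y)$ with $y\in\Sigma$. The coordinates of $\Psi^m(y)$ that matter are the $i$-th ``$\overline H$''\nobreakdash-pair $\overline{H_i^m}(y)=\varphi_i^m(y)\,H_i^m(y)$ (which is $0$ unless $y\in B_m(x_i^m,\varepsilon/4)$) and the $i$-th ``$\varphi$''\nobreakdash-coordinate $\varphi_i^m(y)$. Since $|z_0|<4\alpha/3<5\alpha/3$, on $y_0:=(H_i^m)^{-1}(z_0)$ we have $\varphi_i^m(y_0)=\varphi(z_0)=1$ and $\overline{H_i^m}(y_0)=z_0$, so the $i$-th ``$\varphi$''\nobreakdash-coordinate and ``$\overline H$''\nobreakdash-pair of $x_0$ are $1$ and $z_0$. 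Assume now $\|x-x_0\|<\alpha/6$. From the ``$\varphi$''\nobreakdash-coordinate, $|\varphi_i^m(y)-1|<\alpha/6<1$, hence $\varphi_i^m(y)>1-\alpha/6>0$; therefore $y\in B_m(x_i^m,\varepsilon/4)$, $H_i^m(y)$ is defined, and $\varphi(H_i^m(y))=\varphi_i^m(y)\neq 0$ forces $|H_i^m(y)|<2\alpha$. From the ``$\overline H$''\nobreakdash-pair, $|\varphi_i^m(y)H_i^m(y)-z_0|<\alpha/6$, so $\varphi_i^m(y)\,|H_i^m(y)|<|z_0|+\alpha/6<3\alpha/2$, and dividing by $\varphi_i^m(y)>1-\alpha/6$ gives $|H_i^m(y)|<\frac{3\alpha/2}{1-\alpha/6}$. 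For $\alpha<1/8$ one checks elementarily that $\frac{3\alpha/2}{1-\alpha/6}<\frac{5\alpha}{3}$ (it is equivalent to $\alpha<3/5$), so $H_i^m(y)\in D(5\alpha/3)$, i.e. $x=\Psi^m(y)=\Phi_i^m(H_i^m(y))\in\Phi_i^m(D(5\alpha/3))$. This proves part~1, with the sharper bound $|H_i^m(y)|<\frac{3\alpha/2}{1-\alpha/6}$, which I reuse below.

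\medskip
\noindent\emph{Parts 2 and 2'.} Write $x_0=\Phi_i^\infty(z_0)$ and, by Proposition~\ref{propositionrecouvrementsigmainfini}, $x=\Phi_k^\infty(w)$ with $w\in D(4\alpha/3)$. By the locally uniform convergences of Section~\ref{partieconvergencefonctionstransition}, $\Phi_i^m(z_0)\to x_0$ and $\Phi_k^m(w)\to x$ in $\R^q$; since $\|x-x_0\|<\alpha/6$ is strict, $\|\Phi_k^m(w)-\Phi_i^m(z_0)\|<\alpha/6$ for $m$ large. Using that $\Phi_i^m(z_0)\in\Phi_i^m(D(4\alpha/3))$ (case~2), resp. $\Phi_i^m(z_0)\in\Phi_i^m(D(\alpha))\subset\Phi_i^m(D(4\alpha/3))$ (case~2'), part~1 gives $\Phi_k^m(w)=\Phi_i^m(u_m)$ where $u_m$ is the $i$-th ``$\overline H$''\nobreakdash-pair of $\Phi_k^m(w)$, i.e. $u_m=\overline{H_i^m}\circ(H_k^m)^{-1}(w)$; by Section~\ref{partieconvergencefonctionstransition} this converges locally uniformly to $\overline{H_{ik}}(w)=:u$, and by the proof of part~1, $|u_m|<\frac{3\alpha/2}{1-\alpha/6}$, a constant strictly below $5\alpha/3$. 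Hence $u_m\to u$ inside a fixed compact subset of the open disc $D(5\alpha/3)$, on which $\Phi_i^m\to\Phi_i^\infty$ uniformly, so $\Phi_i^m(u_m)\to\Phi_i^\infty(u)$; since also $\Phi_i^m(u_m)=\Phi_k^m(w)\to x$, we get $x=\Phi_i^\infty(u)\in\Phi_i^\infty(D(5\alpha/3))$, which is part~2. For part~2', since $|u_m|<5\alpha/3$ we have $\varphi(u_m)=1$, so $u_m$ is the $i$-th ``$\overline H$''\nobreakdash-pair of $\Phi_i^m(u_m)$, while $z_0$ is that of $\Phi_i^m(z_0)$ (as $|z_0|<\alpha<5\alpha/3$); thus $|u_m-z_0|\le\|\Phi_i^m(u_m)-\Phi_i^m(z_0)\|=\|\Phi_k^m(w)-\Phi_i^m(z_0)\|$, and letting $m\to\infty$ gives $|u-z_0|\le\|x-x_0\|<\alpha/6$, whence $|u|<|z_0|+\alpha/6<\alpha+\alpha/6=7\alpha/6$, i.e. $x=\Phi_i^\infty(u)\in\Phi_i^\infty(D(7\alpha/6))$.

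\medskip
\noindent The main obstacle is purely organizational: keeping track of which two blocks of coordinates in $\R^q$ carry the information ($\overline{H_i^m}$ and $\varphi_i^m$) and verifying that the radii $\alpha<1/8$, $4\alpha/3$, $5\alpha/3$, $7\alpha/6$ together with the cut-off levels of $\varphi$ fit together — in essence the single inequality $\frac{3\alpha/2}{1-\alpha/6}<\frac{5\alpha}{3}$. The one genuinely delicate point in parts~2 and~2' is that, when passing to the limit, the graph parameters $u_m$ must be known to stay in a compact subset of the \emph{open} disc $D(5\alpha/3)$ so that $\Phi_i^m(u_m)\to\Phi_i^\infty(u)$; this is exactly why it is worth recording the sharper bound $|H_i^m(y)|<\frac{3\alpha/2}{1-\alpha/6}$ in part~1 rather than merely $|H_i^m(y)|<\frac{5\alpha}{3}$.
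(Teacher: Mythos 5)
Your proof is correct. For part 1 you take essentially the same approach as the paper, but streamline it: the paper extracts the $i$-th $\overline H$-coordinate $z'$ of $x$, proves separately that $\varphi_i^m=1$ at the relevant point (so $z' = H_i^m(y)$), and then verifies coordinate-by-coordinate that $x=\Phi_i^m(z')$, using $(H_i^m)^{-1}(z') = (H_k^m)^{-1}(z)$ for the other blocks of coordinates. You instead set $y=(\Psi^m)^{-1}(x)$, bound $|H_i^m(y)|$ directly, and use $x = \Psi^m(y) = \Phi_i^m(H_i^m(y))$, which dispenses with the coordinate-wise check. You also record the slightly sharper radius bound $\tfrac{3\alpha/2}{1-\alpha/6}$ in place of the paper's $\tfrac{10}{9}\cdot\tfrac{3\alpha}{2}=\tfrac{5\alpha}{3}$; that sharpening is what makes your parts 2 and 2' run.

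For parts 2 and 2' you genuinely diverge from the paper. The paper re-runs the computation of part 1 verbatim at the level of the limit transition maps $H_{ji}$, $\varphi_{ji}$, $\overline{H_{ji}}$, and the last step — matching the remaining coordinates $\overline{H_{j1}}(z')=\overline{H_{ji}}(z)$ — forces an interchange of double limits, which the paper justifies briefly by locally uniform convergence. You avoid this entirely by an approximation argument: replace $x_0,x$ by $\Phi_i^m(z_0),\Phi_k^m(w)$, apply the already-proved part 1 for each large $m$, and pass to the limit in the graph parameter $u_m=\overline{H_i^m}\circ(H_k^m)^{-1}(w)$. This buys a conceptually cleaner limiting argument, at the cost of the one delicate point you rightly flag: to conclude $\Phi_i^m(u_m)\to\Phi_i^\infty(u)$ you need $u_m$ (hence $u$) to live in a fixed compact subset of the \emph{open} disc $D(5\alpha/3)$, which is exactly why the sharper bound from part 1 is indispensable. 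Your refinement for 2', comparing the $i$-th coordinates of $\Phi_i^m(u_m)$ and $\Phi_i^m(z_0)$ to get $|u-z_0|\leq\|x-x_0\|$, is a nice shortcut that the paper bypasses by simply declaring 2' ``perfectly analogous''. Both routes are valid; yours is a touch more economical and makes the role of the radius arithmetic more transparent.
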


\begin{proof}[Proof of proposition \ref{propositionsousvariétélisse}]
Compactness follows from proposition \ref{propositionrecouvrementsigmainfini}. Now, let $x_0 \in \Sigma^\infty$. By proposition \ref{propositionrecouvrementsigmainfini}, there exists $i \in \{1,...,N\}$ such that $x_0 \in \Phi_i^\infty(D(4\alpha/3))$. Second part of lemma \ref{lemmetechniquecartesgraphees} shows that
\[
B_{\euc}(x_0,\alpha/6) \cap \Sigma^\infty = B_{\euc} (x_0,\alpha/6) \cap \Phi_i^\infty(D(5\alpha/3))
\]
($B_{\euc}(x_0,\alpha/6)$ is the Euclidean ball with center $x_0$ and radius $\alpha/6$).
Since $\Phi_i^\infty(z)$ is a graph of a map (see the equality \ref{equationdefinitionphiiinfini}), this shows that $\Sigma^\infty$ is a submanifold of $\R^q$.
\end{proof}

\begin{proof}[Proof of lemma \ref{lemmetechniquecartesgraphees}]
We do the computations in the case $i=1$.

\noindent\underline{Proof of 1).} There exists some $z_0 \in D(4\alpha/3))$ such that
\[
x_0=\Phi_1^m(z_0)=\bigg(z_0,...,\overline{H_N^m}((H_1^m)^{-1}(z_0)),1,...,\varphi_{N}^m((H_1^m)^{-1}(z_0))\bigg),
\]
and we consider some $x \in \Sigma^m$ with $||x-x_0|| < \alpha/6$. There exists an integer $i \in \{1,...,N\}$ and $z \in D(4\alpha/3)$ such that
\[
x=\bigg(\overline{H_1^m}((H_i^m)^{-1}(z)),...,z,...,\overline{H_N^m}((H_i^m)^{-1}(z)), \varphi_1^m((H_i^m)^{-1}(z)) ,...,1,...,\varphi_{N}^m((H_i^m)^{-1}(z))\bigg).
\]
Set $z' = \overline{H_1^m}((H_i^m)^{-1}(z))$: we want to show $|z'|<5\alpha/3$ and $x = \Phi_1^m(z')$.

Since $|z'-z_0| \leq ||x-x_0|| < \alpha/6$, we have $|z'| < 4\alpha/3 + \alpha/6= 3\alpha/2$. For the same reason, $|\varphi_1^m((H_i^m)^{-1}(z))-1| \leq ||x-x_0|| < \alpha/6 < 1/10$, so $\varphi_1^m((H_i^m)^{-1}(z)) > 9/10$.

Since $\varphi_1^m((H_i^m)^{-1}(z)) \neq 0$, we know that $H_1^m((H_i^m)^{-1}(z))$ exists, so we have $z'=\overline{H_1^m}((H_i^m)^{-1}(z))=\varphi_1^m((H_i^m)^{-1}(z))\cdot H_1^m((H_i^m)^{-1}(z))$. Since $\varphi_1^m((H_i^m)^{-1}(z)) \geq 9/10$ we have
\[
 |z'| \geq \frac{9}{10} \cdot |H_1^m((H_i^m)^{-1}(z))|,
\]
hence
\[
  |H_1^m((H_i^m)^{-1}(z))| \leq \frac{10}{9} \cdot |z'| <  \frac{10}{9} \cdot \frac{3\alpha}{2} = \frac{5\alpha}{3}.
\]
Since $\varphi =1$ on $D(5\alpha/3)$, we get
\[
 \varphi_1^m((H_i^m)^{-1}(z)) = \varphi(H_1^m((H_i^m)^{-1}(z)))) =1,
\]
and we finally obtain $z' = H_1^m((H_i^m)^{-1}(z))$. We already have $|z'| < 5\alpha/3$. To show the equality $x = \Phi_1^m(z')$, we need to show
\[
x=\bigg(z',...,\overline{H_N^m}((H_1^m)^{-1}(z')),1,...,\varphi_{N}^m((H_1^m)^{-1}(z'))\bigg),
\]
so we need to prove the following equalities, for $j \geq 2$:
\[
\overline{H_j^m}((H_1^m)^{-1}(z')) = \overline{H_j^m}((H_i^m)^{-1}(z))
\]
and
\[
\varphi_{j}^m((H_1^m)^{-1}(z') = \varphi_{j}^m((H_i^m)^{-1}(z)),
\]
and these are directs consequences of the equality $(H_1^m)^{-1}(z') = (H_i^m)^{-1}(z)$.

\noindent\underline{Proof of 2).} (The proof of 2'. is perfectly analogous). The proof looks like the proof of 1., only the end will change.

There exists some $z_0 \in D(4\alpha/3))$ such that
\[
x_0 = \Phi_1^\infty(z_0) = \bigg(z_0,\overline{H_{21}}(z_0),...,\overline{H_{N1}}(z_0), 1,\varphi_{21}(z_0)...,\varphi_{N1}(z_0)\bigg),
\]
and some $i \in \{1,...,N\}$ and $z \in D(4\alpha/3)$ with
\[
x = \bigg(\overline{H_{1i}}(z),...,z,...,\overline{H_{Ni}}(z), \varphi_{1i}(z),...,1,...,\varphi_{Ni}(z)\bigg).
\]
We set $z' = \overline{H_{1i}}(z)$, and we want to show that $|z'|<5\alpha/3$, and $x = \Phi_1^\infty(z')$.

For the same reasons than in the proof of 1., we have $|z'|<3\alpha/2$, and $\varphi_{1i}(z)>9/10$: $H_{1i}(z)$ exists (that is $z$ is in some open set $V_t$ verifying property (A) in proposition \ref{propositionalternative}), and we have
\[
 |H_{1i}(z)| < 5\alpha/3.
\]
We get $\varphi_{1i}(z) = 1$, and finally $z' = \varphi_{1i}(z)  \cdot H_{1i}(z) = H_{1i}(z)$.
We have to show the equality:
\[
x = \Phi_1^\infty(z') = \bigg(z',\overline{H_{21}}(z'),...,\overline{H_{N1}}(z'), 1,\varphi_{21}(z')...,\varphi_{N1}(z')\bigg).
\]
So we need to show the following equalities, for $j \geq 2$:
\[
 \overline{H_{j1}}(z') = \overline{H_{ji}}(z)
\]
and
\[
 \varphi_{j1}(z') = \varphi_{ji}(z).
\]
The first equality writes
\[
 \lim_{m \rightarrow \infty} \overline{H_j^m} \circ (H_1^m)^{-1} \bigg( \lim_{m'\rightarrow \infty} H_1^{m'} \circ (H_i^{m'})^{-1} (z) \bigg) = \lim_{m \rightarrow \infty} \overline{H_j^m} \circ (H_i^m)^{-1} (z),
\]
and this equality is true because all the convergences are uniform. The second equality writes
\[
 \lim_{m \rightarrow \infty} \varphi_j^m \circ (H_1^m)^{-1} \bigg( \lim_{m'\rightarrow \infty} H_1^{m'} \circ (H_i^{m'})^{-1} (z) \bigg) = \lim_{m \rightarrow \infty} \varphi_j^m \circ (H_i^m)^{-1} (z),
\]
and is true for the same reason.
\end{proof}

\subsection{Construction of a diffeomorphism $\Pi^m : \Sigma^m \rightarrow \Sigma^\infty$}

For $m$ large enough, $\Sigma^m$ is in a tubular neighborhood of $\Sigma^\infty$: hence we can define a projection $\Sigma^m \rightarrow \Sigma^\infty$. Since $\Sigma^m$ converges to $\Sigma^\infty$ (in the sense given above), this projection is actually a diffeomorphism.

\subsubsection{Construction of a projection $\Pi^m: \Sigma^m \rightarrow \Sigma^\infty$}

$\Sigma^\infty$ is a smooth compact embedded surface in $\R^q$, possibly disconnected, with only a finite number of connected components. We can thus consider the normal projection onto $\Sigma^\infty$: there exists $\tau>0$ (we may assume $\tau<\alpha/12$), a tubular neighborhood
\[
\mathcal{V} = \{x \in \R^q ~|~ d_{\euc}(x,\Sigma^\infty) < \tau \}
\]
and a smooth projection $\Pi : \mathcal{V} \rightarrow \Sigma^\infty$ verifying the following property (see \cite{Bredon}): if $x \in \mathcal{V}$, then $\Pi(x)$ is the closest point of $\Sigma^\infty$. For every $x \in \mathcal{V}$ we then have
\[
x - \Pi(x) \in (T_{\pi(x)} \Sigma^\infty)^\bot
\]
(see the picture below), where we denote by $T_{\pi(x)} \Sigma^\infty \subset \R^q$ is the tangent space of $\Sigma^\infty$ at the point $\pi(x)$, and $(T_{\pi(x)} \Sigma^\infty)^\bot$ its orthogonal in $\R^q$.

Thanks to section \ref{partieconvergencefonctionstransition}, we know that after passing to a subsequence the following is true:
\begin{fact} \label{propositiondifferenceuniformephii}
For every $m \in \N, i \in \{1,...,N\}$ and $z \in D(5\alpha/3)$ we have
\[
||\Phi_i^m(z) - \Phi_i^\infty(z)|| < \tau.
\]
\end{fact}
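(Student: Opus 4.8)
The plan is to read off the Euclidean norm $\|\Phi_i^m(z) - \Phi_i^\infty(z)\|$ coordinate by coordinate and then invoke the uniform convergence of the transition maps established in section \ref{partieconvergencefonctionstransition}. Comparing the expression (\ref{eqcartesgraphees}) for $\Phi_i^m$ with the definition (\ref{equationdefinitionphiiinfini}) of $\Phi_i^\infty$, the $i$-th ``$H$-block'' of both maps is equal to $z$ and the $i$-th ``$\varphi$-slot'' of both is equal to $1$ (here one uses that $\varphi \equiv 1$ on $D(5\alpha/3)$, so that $\overline{H_i^m}((H_i^m)^{-1}(z)) = z$ and $\varphi_i^m((H_i^m)^{-1}(z)) = \varphi(z) = 1$ for $z \in D(5\alpha/3)$). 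These coordinates cancel, leaving
\[
\|\Phi_i^m(z) - \Phi_i^\infty(z)\|^2 = \sum_{j \neq i} \big| \overline{H_j^m}\circ (H_i^m)^{-1}(z) - \overline{H_{ji}}(z) \big|^2 + \sum_{j \neq i} \big| \varphi_j^m \circ (H_i^m)^{-1}(z) - \varphi_{ji}(z) \big|^2,
\]
so it suffices to make each of these $2(N-1)$ terms small, uniformly in $z \in D(5\alpha/3)$ and in $i$.

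The key point is that $\overline{D}(5\alpha/3)$ is a \emph{compact} subset of $D(2\alpha)$, since $5/3 < 2$. By section \ref{partieconvergencefonctionstransition}, for every pair $i,j \in \{1,\dots,N\}$ the sequences $\overline{H_j^m}\circ (H_i^m)^{-1}$ and $\varphi_j^m \circ (H_i^m)^{-1}$ converge locally uniformly on $D(2\alpha)$ to $\overline{H_{ji}}$ and $\varphi_{ji}$ respectively; restricted to the compact set $\overline{D}(5\alpha/3)$ these convergences are uniform. Since there are only finitely many pairs $(i,j)$, there is $m_0 \in \N$ such that for every $m \geq m_0$, every such pair and every $z \in \overline{D}(5\alpha/3)$, each of the terms in the sum above is $< \tau^2/(2N)$; hence $\|\Phi_i^m(z) - \Phi_i^\infty(z)\|^2 < 2(N-1)\cdot \tau^2/(2N) < \tau^2$, i.e. $\|\Phi_i^m(z) - \Phi_i^\infty(z)\| < \tau$ for all $m \geq m_0$, all $i$ and all $z \in D(5\alpha/3)$. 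Passing to the subsequence $(d_m)_{m \geq m_0}$ and relabelling it according to the convention of this section, the inequality then holds for every $m \in \N$.

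Essentially all the work has already been done: this fact is only an assembly of the uniform convergence statements of section \ref{partieconvergencefonctionstransition} together with the bookkeeping of which coordinates of the graph maps cancel. The one delicate point — which is precisely why the constant $5\alpha/3$ was taken strictly between $\alpha$ and $2\alpha$ — is that the domain $D(5\alpha/3)$ of the graphs has closure a compact subset of $D(2\alpha)$, where the transition maps were shown to converge; this is what upgrades the pointwise-in-$z$ estimates to estimates uniform in $z$, and hence allows a single $m_0$ to work for all $z$ and all of the finitely many indices $i$.
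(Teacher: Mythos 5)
Your proof is correct and matches the paper's approach: the paper simply asserts this fact follows from the uniform convergences established in section \ref{partieconvergencefonctionstransition} after passing to a subsequence, and your argument supplies exactly the expected bookkeeping (the $i$-th coordinates cancel, $\overline{D}(5\alpha/3)$ is compact in $D(2\alpha)$ so locally uniform convergence becomes uniform, and finitely many pairs $(i,j)$ allow a single $m_0$).
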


Since every $x \in \Sigma^m$ can be written $x = \Phi_i^m(z)$ for some $i \in \{1,..,N\}$ and some $z \in D(\alpha)$, we have $d_{\euc}(x,\Sigma^\infty) <\tau$, so $\Sigma^m \subset \mathcal{V}$. We can thus consider the following restriction:
\[
\Pi^m := \Pi_{|\Sigma^m} : \Sigma^m \rightarrow \Sigma^\infty.
\]

\begin{center}
\begin{tikzpicture}
%\draw  [domain=-0.5:1.5] plot (5*\x,{0.5*cos(\x*180)});
%\draw (-2.5,-1.5) -- (7.5,-1.5);
%\draw[->,>=latex] (9,0) -- (9,-1.5);
%\draw (9,-0.75) node [right] {$\Pi^m$}; 
%\draw (-2.5,0) node [left] {$\Sigma^m$}; 
%\draw (-2.5,-1.5) node [left] {$\Sigma^\infty$};
%\draw (5*0.4,{0.5*cos(0.4*180)}) node {$\bullet$}; 
%\draw (5*0.4,{0.3+0.5*cos(0.4*180)}) node [above, right] {$x$}; 
%\draw (5*0.4,-1.5) node {$\bullet$}; 
%\draw (5*0.4,-1.5) node [below] {$\Pi^m(x)$}; 
\draw [domain=0.3:1][samples=200] plot (4*\x,{1.2*sqrt(1-\x*\x)});
\draw [domain=0.7:1.3][samples=200] plot (4*\x,{2/\x});
\draw (4*0.8,{1.2*sqrt(1-0.64)}) node  {$\bullet$};
\draw (4*0.8,{1.2*sqrt(1-0.64)}) node [below left] {$\Pi^m(x)$};
\draw (4*0.93,2/0.93) node {$\bullet$};
\draw (4*0.93,2/0.93) node [above right] {$x$};
\draw [dashed] (4*0.8,{1.2*sqrt(1-0.64)}) -- (4*0.93,2/0.93) ;
\draw (2.4,3) node {$\Sigma^m$};
\draw (1,1.4) node {$\Sigma^\infty$};
\end{tikzpicture}
\captionof{figure}{}
\end{center}

\subsubsection{$\Pi^m: \Sigma^m \rightarrow \Sigma^\infty$ is a diffeomorphism}

We want to show the following
\begin{proposition} \label{propositiondiffeomorphisme}
After passing to a subsequence, for every $m \in \N$, $\Pi^m : \Sigma^m \rightarrow \Sigma^\infty$ is a $\mathcal{C}^\infty$ diffeomorphism.
\end{proposition}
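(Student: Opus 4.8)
The plan is to prove that $\Pi^m:\Sigma^m\to\Sigma^\infty$ is a diffeomorphism in two stages: first showing it is a local diffeomorphism (immersion between surfaces of the same dimension), then showing it is bijective; properness is automatic since $\Sigma^m$ is compact. For the local diffeomorphism statement, fix $x\in\Sigma^m$ and use Fact \ref{propositiondifferenceuniformephii} together with Lemma \ref{lemmetechniquecartesgraphees}: if $x$ is in the graph number $i$ of $\Sigma^m$, say $x=\Phi_i^m(z)$ with $z\in D(\alpha)$, then $\Pi^m(x)$ is $\tau$-close to $x$, hence (using $\tau<\alpha/12$ and part 2' of Lemma \ref{lemmetechniquecartesgraphees}, applied to $\Phi_i^\infty(z)$, which is within $\tau<\alpha/6$ of $x$ by the Fact) the point $\Pi^m(x)$ lies in the graph number $i$ of $\Sigma^\infty$, i.e.\ $\Pi^m(x)=\Phi_i^\infty(w)$ for some $w\in D(7\alpha/6)$. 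In these graph coordinates both $\Sigma^m$ and $\Sigma^\infty$ are graphs of maps over $D(5\alpha/3)$ whose first ($\C$-valued) coordinate is the identity, and the maps defining $\Sigma^m$ converge in $\mathcal C^1$ to those defining $\Sigma^\infty$ (section \ref{partieconvergencefonctionstransition}). In the limit the projection along normals of $\Sigma^\infty$ restricted to $\Sigma^\infty$ itself is the identity, hence has invertible differential; by $\mathcal C^1$-convergence of $\Sigma^m$ to $\Sigma^\infty$ the differential of $\Pi^m$ at $x$ is, for $m$ large, a small perturbation of the identity in suitable bases of $T_x\Sigma^m$ and $T_{\Pi^m(x)}\Sigma^\infty$, hence invertible. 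A compactness/uniformity argument (the graph charts are finite in number, $z$ ranges over the compact set $\overline D(\alpha)$, and the convergence is uniform there) gives a single $m_0$ beyond which $d\Pi^m$ is invertible everywhere; after passing to that tail of the subsequence, $\Pi^m$ is a local diffeomorphism for all $m$.

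Next I would prove surjectivity. Since $\Pi^m$ is a local diffeomorphism and $\Sigma^m$ is compact, $\Pi^m(\Sigma^m)$ is open and closed in $\Sigma^\infty$, hence a union of connected components of $\Sigma^\infty$. To see it is all of $\Sigma^\infty$, take any $y\in\Sigma^\infty$; by Proposition \ref{propositionrecouvrementsigmainfini} write $y=\Phi_i^\infty(w)$ with $w\in D(\alpha)$, and consider the point $x_m:=\Phi_i^m(w)\in\Sigma^m$ (this lies in the graph number $i$ since $w\in D(\alpha)\subset D(5\alpha/3)$). By Fact \ref{propositiondifferenceuniformephii}, $\|x_m-y\|<\tau$, so $\Pi^m(x_m)$ is the point of $\Sigma^\infty$ closest to $x_m$; I would argue $\Pi^m(x_m)$ lies in the graph number $i$ of $\Sigma^\infty$ (again via Lemma \ref{lemmetechniquecartesgraphees}, part 2'), and in those graph coordinates, where the surfaces are graphs with identity first coordinate, the nearest-point projection of $x_m=(w,\Theta_i^m(w))$ onto the graph of $\Theta_i^\infty$ must have a first coordinate within $O(\tau)$ of $w$; letting $m\to\infty$ and using $\Theta_i^m\to\Theta_i^\infty$ uniformly near $w$, one sees $\Pi^m(x_m)\to y$. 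Thus $y$ is in the closure of $\Pi^m(\Sigma^m)$, but this set is already closed, so $y\in\Pi^m(\Sigma^m)$ once $m$ is large; combined with the open-and-closed argument this forces $\Pi^m(\Sigma^m)=\Sigma^\infty$ for all $m$ in a tail of the subsequence.

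Finally, injectivity. Suppose $x\neq x'$ in $\Sigma^m$ with $\Pi^m(x)=\Pi^m(x')=:p$. Since $\Pi$ on the tubular neighborhood $\mathcal V$ has fibers $\Pi^{-1}(p)=\big(p+(T_p\Sigma^\infty)^\perp\big)\cap\mathcal V$, a single normal disc of radius $\tau$, both $x$ and $x'$ lie on this normal disc, so $\|x-x'\|<2\tau<\alpha/6$. By Lemma \ref{lemmetechniquecartesgraphees} part 1, if $x$ is in the graph number $i$ then so is $x'$; writing $x=\Phi_i^m(z)$, $x'=\Phi_i^m(z')$ with $z,z'\in D(5\alpha/3)$, we have $|z-z'|\le\|x-x'\|<\alpha/6$, and in these graph coordinates the map $z\mapsto\Phi_i^m(z)$ is a graph, so the nearest-point projection to $\Sigma^\infty$ — which for $m$ large is $\mathcal C^1$-close to the (injective) nearest-point projection of the graph of $\Theta_i^\infty$ onto itself — is injective on the relevant small neighborhood; hence $z=z'$ and $x=x'$. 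Uniformity in $m$ again comes from the finitely many graph charts and uniform convergence on the compact parameter domain. Putting the three parts together and passing to the corresponding tail subsequence, $\Pi^m$ is a bijective local diffeomorphism, hence a $\mathcal C^\infty$ diffeomorphism for every $m$.

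The main obstacle I anticipate is the \emph{uniformity in $m$}: each of the three properties (local diffeomorphism, surjectivity, injectivity) is easy to obtain for $m$ large using the $\mathcal C^1$-convergence $\Sigma^m\to\Sigma^\infty$, but one must package all the estimates so that a \emph{single} threshold $m_0$ works, and then absorb the finitely many small indices by discarding an initial segment of the already-extracted subsequence — the statement is phrased ``for every $m\in\N$'' precisely because of this re-indexing. Making the differential estimate fully rigorous requires choosing, for each graph chart $i$, adapted trivializations of $T\Sigma^m$ and $T\Sigma^\infty$ and controlling the normal bundle data of $\Sigma^\infty$ uniformly, but all the quantitative input is already available: $\tau<\alpha/12$, the finiteness of $N$, and the uniform (with derivatives) convergence of the transition maps on $D(2\alpha)$.
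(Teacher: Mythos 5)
Your overall structure (immersion, surjectivity, injectivity) matches the paper's, and the key inputs are the same: Fact \ref{propositiondifferenceuniformephii}, Lemma \ref{lemmetechniquecartesgraphees}, Proposition \ref{propositioninclusionpim}, and the locally uniform $\mathcal{C}^1$-convergence of the graph functions $\Theta_i^m \to \Theta_i^\infty$ coming from Montel's theorem. The main presentational difference is that you argue directly (``$D\Pi^m$ is a uniformly small perturbation of the identity, hence invertible and injective''), whereas the paper argues by contradiction in Lemma \ref{lemmeimmersioninjective}: assume a bad sequence $x_m$ exists, pass to subsequences, and derive a contradiction from the convergence of tangent planes (for the immersion part) and from the mean value theorem applied componentwise (for the injectivity part). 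The contradiction phrasing buys you exactly the uniformity you flag as the main obstacle at the end of your write-up, at no extra cost: one never has to package explicit $\mathcal{C}^1$-estimates into a single threshold $m_0$, because the compactness is absorbed by the subsequence extraction. Your direct argument can certainly be made rigorous, but the extra bookkeeping is precisely what the contradiction device is designed to avoid.

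There is, however, one genuine logical flaw, in your surjectivity step. You write ``letting $m\to\infty$ \dots one sees $\Pi^m(x_m)\to y$. Thus $y$ is in the closure of $\Pi^m(\Sigma^m)$, but this set is already closed, so $y\in\Pi^m(\Sigma^m)$ once $m$ is large.'' This conflates a limit taken over varying $m$ with a closure taken at a fixed $m$: from $\Pi^m(x_m)\to y$ as $m\to\infty$ you cannot conclude that $y$ lies in the (compact, hence closed) set $\Pi^m(\Sigma^m)$ for any particular $m$, only that $y$ is approximated by points of \emph{different} such sets. The clean fix is the one the paper uses: prove separately that $\Sigma^\infty$ is path-connected (the paper does this right before Lemma \ref{lemmeimmersioninjective}, by projecting paths of a fixed $\Sigma^m$ down via $\Pi^m$), after which nonempty $+$ open $+$ closed $+$ connected forces $\Pi^m(\Sigma^m)=\Sigma^\infty$ for every $m$. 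Alternatively you can repair your own argument without any limit over $m$: by Proposition \ref{propositioninclusionpim} one has $\Pi^m(x_m)\in\Phi_i^\infty(D(7\alpha/6))$, while $y\in\Phi_i^\infty(D(\alpha))$; both lie in the connected set $\Phi_i^\infty(D(5\alpha/3))$, so $y$ and $\Pi^m(x_m)$ are in the same connected component of $\Sigma^\infty$, and since $\Pi^m(\Sigma^m)$ is a union of components containing $\Pi^m(x_m)$, it already contains $y$.
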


For technical reasons, we first show that for every $i \in \{1,...,N\}$, points in $\Sigma^m$ in the graph number $i$ are sent to points in $\Sigma^\infty$ in the graph number $i$, and conversely:
\begin{proposition} \label{propositioninclusionpim}
The following inclusions are true for every $m \in \N$ and every $i \in \{1,...,N\}$:
\[
1. ~ (\Pi^m)^{-1}(\Phi_i^\infty(D(4\alpha/3))) \subset \Phi_i^m(D(5\alpha/3)),
\]
\[
2. ~ \Pi^m(\Phi_i^m(D(4\alpha/3))) \subset \Phi_i^\infty(D(5\alpha/3)),
\]
and
\[
2'. ~ \Pi^m(\Phi_i^m(D(\alpha))) \subset \Phi_i^\infty(D(7\alpha/6)).
\]
\end{proposition}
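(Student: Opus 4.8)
The plan is to prove the three inclusions in Proposition \ref{propositioninclusionpim} using the key estimate from Fact \ref{propositiondifferenceuniformephii} (that $\|\Phi_i^m(z)-\Phi_i^\infty(z)\|<\tau$ with $\tau<\alpha/12$ small) together with the ``locality'' statements in Lemma \ref{lemmetechniquecartesgraphees}. The underlying geometric picture is: the normal projection $\Pi^m$ moves a point of $\Sigma^m$ by less than $\tau<\alpha/12$ (since $\Sigma^m$ lies in the $\tau$-tube), and the graph pieces are ``$\alpha/6$-robust'' by Lemma \ref{lemmetechniquecartesgraphees}; so small Euclidean displacements cannot leave a graph piece. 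I would treat the three items in order, each as a short chase through these two facts.

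For item 2, I would start with $x_0=\Phi_i^m(z_0)\in\Phi_i^m(D(4\alpha/3))$. By Fact \ref{propositiondifferenceuniformephii}, $\|\Phi_i^m(z_0)-\Phi_i^\infty(z_0)\|<\tau<\alpha/12$; since $\Pi^m(x_0)$ is the closest point of $\Sigma^\infty$ to $x_0$ and $\Phi_i^\infty(z_0)\in\Sigma^\infty$, we get $\|x_0-\Pi^m(x_0)\|\le\|x_0-\Phi_i^\infty(z_0)\|<\alpha/12$, hence $\|\Pi^m(x_0)-\Phi_i^\infty(z_0)\|<\alpha/6$. Now $\Phi_i^\infty(z_0)\in\Phi_i^\infty(D(4\alpha/3))\subset\Sigma^\infty$ and $\Pi^m(x_0)\in\Sigma^\infty$ with Euclidean distance $<\alpha/6$, so part 2 of Lemma \ref{lemmetechniquecartesgraphees} gives $\Pi^m(x_0)\in\Phi_i^\infty(D(5\alpha/3))$, which is exactly item 2. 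Item $2'$ is identical with $D(4\alpha/3)$ replaced by $D(\alpha)$ and using part $2'$ of Lemma \ref{lemmetechniquecartesgraphees} instead of part 2.

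For item 1, I would argue contrapositively in the graph index. Let $x\in\Sigma^m$ with $\Pi^m(x)\in\Phi_i^\infty(D(4\alpha/3))$. Write $x=\Phi_j^m(z)$ for some $j$ and $z\in D(\alpha)\subset D(4\alpha/3)$. By item 2 already proved, $\Pi^m(x)=\Pi^m(\Phi_j^m(z))\in\Phi_j^\infty(D(5\alpha/3))$; but also $\Pi^m(x)\in\Phi_i^\infty(D(4\alpha/3))$ by hypothesis. I then want to conclude $j$ and $i$ describe the same graph piece near $\Pi^m(x)$ — more precisely, I use Fact \ref{propositiondifferenceuniformephii} to return to $\Sigma^m$: $\|x-\Pi^m(x)\|<\alpha/12$, and $\Pi^m(x)$ being in $\Phi_i^\infty(D(4\alpha/3))$ forces (via the uniform closeness and part 2 of Lemma \ref{lemmetechniquecartesgraphees} applied in the $m$-picture, i.e. part 1 of that lemma) that $x$ itself lies in $\Phi_i^m(D(5\alpha/3))$. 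The cleanest route is: pick a point $\Phi_i^m(w)$ within $\tau$ of $\Phi_i^\infty(z_0)=\Pi^m(x)$ using Fact \ref{propositiondifferenceuniformephii} (where $\Pi^m(x)=\Phi_i^\infty(z_0)$, $z_0\in D(4\alpha/3)$), so $\|x-\Phi_i^m(w)\|\le\|x-\Pi^m(x)\|+\|\Pi^m(x)-\Phi_i^m(w)\|<\alpha/12+\tau<\alpha/6$, with $\Phi_i^m(w)\in\Phi_i^m(D(4\alpha/3))$; then part 1 of Lemma \ref{lemmetechniquecartesgraphees} gives $x\in\Phi_i^m(D(5\alpha/3))$, which is item 1.

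The main obstacle is purely bookkeeping with the radii: one must keep $\tau<\alpha/12$ (as arranged when constructing the tubular neighborhood) and carefully add up displacements so that every application of Lemma \ref{lemmetechniquecartesgraphees} has its hypothesis $\|x-x_0\|<\alpha/6$ satisfied, and so that the conclusion radius $5\alpha/3$ (or $7\alpha/6$) is the one claimed. There is no analytic difficulty here; the only subtlety is that in item 1 one needs to pass back and forth between the $\Sigma^m$-picture and the $\Sigma^\infty$-picture, which is why Fact \ref{propositiondifferenceuniformephii} and \emph{both} the ``$m$-version'' (part 1) and ``$\infty$-version'' (parts 2, $2'$) of Lemma \ref{lemmetechniquecartesgraphees} are used. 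Once Proposition \ref{propositioninclusionpim} is in place, Proposition \ref{propositiondiffeomorphisme} will follow by showing $\Pi^m$ is a local diffeomorphism graph-by-graph (both $\Sigma^m$ and $\Sigma^\infty$ are graphs over the same disc, and $\Phi_i^m\to\Phi_i^\infty$ in $\mathcal{C}^1$, so the differential of $\Pi^m$ is close to the identity for $m$ large) and bijective, but that is the next proposition, not this one.
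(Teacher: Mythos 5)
Your proof is correct and follows essentially the same route as the paper: both items use Fact~\ref{propositiondifferenceuniformephii} to bound $\|\Phi_i^m(z)-\Phi_i^\infty(z)\|<\tau$ and $\|x-\Pi^m(x)\|<\tau$, add these to stay under $\alpha/6$, and invoke the appropriate part of Lemma~\ref{lemmetechniquecartesgraphees}. The initial musing about comparing graph indices $i$ and $j$ in item~1 is a dead end, but your ``cleanest route'' (taking $w=z_0$ with $\Pi^m(x)=\Phi_i^\infty(z_0)$) is exactly the paper's argument.
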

\begin{proof}
This is a straightforward consequence of lemma \ref{lemmetechniquecartesgraphees} and fact \ref{propositiondifferenceuniformephii}.

\noindent Proof of 1). Let $x \in (\Pi^m)^{-1}(\Phi_i^\infty(D(4\alpha/3)))$. There exists $z \in D(4\alpha/3)$ such that $\Pi^m(x) = \Phi_i^\infty(z)$. By proposition \ref{propositiondifferenceuniformephii} we have 
\[
 ||\Pi^m(x)-\Phi_i^m(z)|| = ||\Phi_i^\infty(z)-\Phi_i^m(z)|| <\tau<\alpha/12,
\]
and we also have $||\Pi^m(x)-x||<\tau<\alpha/12$ (by definition of the normal projection $\Pi$). Hence $||x - \Phi_i^m(z)||<\alpha/6$, and the identity 1. in lemma \ref{lemmetechniquecartesgraphees} shows that $x \in \Phi_i^m(D(5\alpha/3))$.

\noindent Proof of 2). (The proof of 2') is perfectly analogous, using 2') in lemma \ref{lemmetechniquecartesgraphees} instead of 2)) Let $z \in D(4\alpha/3)$ and $x = \Phi_i^m(z)$. By proposition \ref{propositiondifferenceuniformephii} we have
\[
||x - \Phi_i^\infty(z)|| = ||\Phi_i^m(z) - \Phi_i^\infty(z)|| < \tau < \alpha/12,
\]
and we also have $||x - \Pi^m(x)|| < \tau < \alpha/12$. Hence $||\Pi^m(x) - \Phi_i^\infty(z)|| < \alpha/6$, and we can use the identity 2. in lemma \ref{lemmetechniquecartesgraphees} to show that $\Pi^m(x) \in \Phi_i^\infty(D(5\alpha/3))$.
\end{proof}

We can now prove the following
\begin{proposition}
$\Sigma^\infty$ is path-connected.
\end{proposition}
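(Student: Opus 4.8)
The plan is to transfer path-connectedness from the approximating surfaces $\Sigma^m$ to $\Sigma^\infty$ by means of the normal projection $\Pi^m$. The starting observation is that $\Sigma$ is a connected smooth manifold, hence path-connected, so each $\Sigma^m=\Psi^m(\Sigma)$ is path-connected as a continuous image of $\Sigma$. I would fix one index $m$ belonging to the subsequence for which fact \ref{propositiondifferenceuniformephii} and proposition \ref{propositioninclusionpim} hold and $\Sigma^m\subset\mathcal{V}$; no passage to the limit is needed here. I would also record that, for each $i\in\{1,\dots,N\}$, the ``graph piece'' $\Phi_i^\infty(D(5\alpha/3))$ is path-connected, being the continuous image of a disc under the smooth map $\Phi_i^\infty$ (recall $D(5\alpha/3)\subset D(2\alpha)$, where $\Phi_i^\infty$ is defined).

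The argument itself splits into four short steps. First, given $x_0,x_1\in\Sigma^\infty$, use proposition \ref{propositionrecouvrementsigmainfini} to write $x_k=\Phi_{i_k}^\infty(z_k)$ with $i_k\in\{1,\dots,N\}$ and $z_k\in D(4\alpha/3)$. Second, lift each endpoint to a nearby point of $\Sigma^m$: set $p_k:=\Phi_{i_k}^m(z_k)\in\Sigma^m$, which by fact \ref{propositiondifferenceuniformephii} satisfies $\|p_k-x_k\|<\tau$. Third, join $p_0$ to $p_1$ by a continuous path $\gamma$ inside the path-connected set $\Sigma^m$, and push it forward by the (continuous) projection: since $\Sigma^m\subset\mathcal{V}$, the curve $\Pi^m\circ\gamma=\Pi\circ\gamma$ is a path in $\Sigma^\infty$ from $\Pi^m(p_0)$ to $\Pi^m(p_1)$. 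Fourth, close the two gaps: by proposition \ref{propositioninclusionpim} (part 2., applied with $z_k\in D(4\alpha/3)$) we have $\Pi^m(p_k)\in\Phi_{i_k}^\infty(D(5\alpha/3))$, while trivially $x_k=\Phi_{i_k}^\infty(z_k)\in\Phi_{i_k}^\infty(D(4\alpha/3))\subset\Phi_{i_k}^\infty(D(5\alpha/3))$; hence $x_k$ and $\Pi^m(p_k)$ lie in one and the same path-connected subset of $\Sigma^\infty$ and can be joined there by a path. Concatenating the path from $x_0$ to $\Pi^m(p_0)$, then $\Pi^m\circ\gamma$, then the path from $\Pi^m(p_1)$ to $x_1$, yields a path in $\Sigma^\infty$ from $x_0$ to $x_1$, and since $x_0,x_1$ were arbitrary this proves the claim.

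The only genuine subtlety, and the step I would treat most carefully, is the fourth one: a priori two points of $\Sigma^\infty$ that are close in $\R^q$ need not lie in the same connected component (indeed the very point of this proposition is that $\Sigma^\infty$ could otherwise be disconnected, as in the figure above). What rescues the argument is that we know more than mere proximity of $\Pi^m(p_k)$ to $x_k$: proposition \ref{propositioninclusionpim}, which ultimately rests on the local graph structure encoded in lemma \ref{lemmetechniquecartesgraphees}, tells us $\Pi^m(p_k)$ lands in the specific graph piece around $x_k$, and that piece is path-connected. Everything else is routine; in particular no use is made of the still-unproved fact that $\Pi^m$ is a diffeomorphism (proposition \ref{propositiondiffeomorphisme}) — continuity of $\Pi^m$ together with the inclusions of proposition \ref{propositioninclusionpim} is all that is required.
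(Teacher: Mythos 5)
Your proof is correct and follows essentially the same route as the paper: fix one index $m$ (the paper takes $m=1$), join the lifted endpoints $\Phi_{i_k}^m(z_k)$ by a path in the connected surface $\Sigma^m$, push it down by $\Pi^m$, and close the two endpoint gaps by observing that proposition \ref{propositioninclusionpim} places $\Pi^m(\Phi_{i_k}^m(z_k))$ inside the same path-connected graph piece $\Phi_{i_k}^\infty(D(5\alpha/3))$ as $x_k$. Your remark that the crucial input is the graph-piece inclusion of proposition \ref{propositioninclusionpim}, not mere $\tau$-closeness in $\R^q$, and that proposition \ref{propositiondiffeomorphisme} is not used, are both accurate and consistent with the paper's logical order.
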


\begin{proof}
Let $x = \Phi_i^\infty(z)$ and $y = \Phi_j^\infty(z')$ be two points in $\Sigma^\infty$, with $z,z' \in D(4\alpha/3)$. For $m=1$, let $\gamma$ be a continuous path in $\Sigma^m$ joigning $\Phi_i^m(z)$ and $\Phi_j^m(z')$ (recall that $\Sigma^m$ is connected). Then, $\Pi^m \circ \gamma$ is a continuous path in $\Sigma^\infty$ joigning $\Pi^m(\Phi_i^m(z))$ and $\Pi^m(\Phi_j^m(z'))$. We have
\[
\Pi^m(\Phi_i^m(z)) \in \Pi^m(\Phi_i^m(D(4\alpha/3))) \subset \Phi_i^\infty(D(5\alpha/3)),
\]
and since $x \in \Phi_i^\infty(D(5\alpha/3))$ and $\Phi_i^\infty(D(5\alpha/3))$ is path-connected, we can join $\Pi^m(\Phi_i^m(z))$ and $x$ by a continuous path. For the same reason we can also join $\Pi^m(\Phi_j^m(z'))$ and $y$ by a continuous path, thus we can join $x$ and $y$ by a continuous path.
\end{proof}
To prove proposition \ref{propositiondiffeomorphisme}, we only need to prove the following
\begin{lemma} \label{lemmeimmersioninjective}
After passing to a subsequence, for every $m \in \N$, $\Pi^m$ is an injective immersion.
\end{lemma}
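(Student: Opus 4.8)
The plan is to push everything onto the graph parametrisations. The central point is that on any fixed compact subdisc of $D(5\alpha/3)$ the maps $F_i^m := \Pi\circ\Phi_i^m = \Pi^m\circ\Phi_i^m$ converge, in the $C^1$ topology, to a smooth embedding, after which immersivity and injectivity --- both $C^1$-open conditions for maps out of a compact manifold --- transfer to $F_i^m$ for $m$ large. First I would record this convergence. Since $\Phi_i^\infty(D(5\alpha/3))\subset\Sigma^\infty$ and $\Pi$ restricts to the identity on $\Sigma^\infty$, one has $\Pi\circ\Phi_i^\infty=\Phi_i^\infty$ on $D(5\alpha/3)$; and by the $C^1$-convergences established in Section~\ref{partieconvergencefonctionstransition} (locally uniform, together with all derivatives, on $D(2\alpha)\supset D(5\alpha/3)$), together with $\Phi_i^m(K_0)\subset\mathcal{V}$ for any compact $K_0\subset D(5\alpha/3)$ (Fact~\ref{propositiondifferenceuniformephii}) and smoothness of $\Pi$ on $\mathcal{V}$, the maps $F_i^m$ converge to $\Phi_i^\infty$ uniformly in $C^1$ on every compact subset of $D(5\alpha/3)$. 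Note that on $D(5\alpha/3)$ the $i$-th coordinate pair of $\Phi_i^\infty(z)$ is $z$ itself, so $\Phi_i^\infty$ is a smooth embedding; in particular, on a fixed compact subdisc it is an immersion with $\|d(\Phi_i^\infty)_z v\|\geq\|v\|$ and an injective map with a uniform modulus of injectivity.

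For the immersion part: by Fact~\ref{propositionrecouvrementsigma} every $x\in\Sigma^m$ is of the form $\Phi_i^m(z)$ for some $i$ and some $z\in\overline{D}(\alpha)$, and on $T_x\Sigma^m$ one has $d\Pi^m_x = d(F_i^m)_z\circ(d\Phi_i^m_z)^{-1}$; as $\Phi_i^m$ is an embedding, $\Pi^m$ is an immersion at $x$ if and only if $F_i^m$ is an immersion at $z$. The $C^1$-convergence and the bound $\|d(\Phi_i^\infty)_z v\|\geq\|v\|$ on the compact set $\overline{D}(\alpha)$ give $\|d(F_i^m)_z v\|\geq\tfrac12\|v\|$ for all $z\in\overline{D}(\alpha)$ once $m$ is large; since there are finitely many indices $i$, $\Pi^m$ is an immersion for all large $m$.

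For the injectivity part: suppose $m$ is large and $\Pi^m(x)=\Pi^m(y)=:p$ with $x,y\in\Sigma^m$, and write $x=\Phi_i^m(z)$ with $z\in\overline{D}(\alpha)$. By Proposition~\ref{propositioninclusionpim}(2'), $p\in\Phi_i^\infty(D(7\alpha/6))$; re-running the computation of Lemma~\ref{lemmetechniquecartesgraphees}(1) with the radius $7\alpha/6$ in place of $4\alpha/3$ (the extra $\alpha/6$ of slack coming from Fact~\ref{propositiondifferenceuniformephii} still keeps one inside the region where $\varphi\equiv 1$) then forces $(\Pi^m)^{-1}(p)\subset\Phi_i^m(D(4\alpha/3))$, so $y=\Phi_i^m(w)$ with $w\in D(4\alpha/3)$ --- the \emph{same} chart $i$. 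Since $x$ and $y$ both lie in the normal fibre of $\Pi$ over $p$, one has $\|x-p\|,\|y-p\|<\tau$; combining this with $\|\Phi_i^m-\Phi_i^\infty\|<\tau$ (Fact~\ref{propositiondifferenceuniformephii}) and writing $p=\Phi_i^\infty(z'')$ with $z''\in D(7\alpha/6)$ gives $\|\Phi_i^\infty(z)-\Phi_i^\infty(z'')\|<2\tau$ and $\|\Phi_i^\infty(w)-\Phi_i^\infty(z'')\|<2\tau$; as $\Phi_i^\infty$ is an embedding, $z$ and $w$ lie within a distance of $z''$ controlled by $\tau$, so --- having fixed the tube radius $\tau$ small compared to $\alpha$ --- both $z$ and $w$ lie in one fixed compact subdisc of $D(5\alpha/3)$. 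On that subdisc $F_i^m$ converges in $C^1$ to the embedding $\Phi_i^\infty$, hence is injective for $m$ large, so $F_i^m(z)=\Pi^m(x)=\Pi^m(y)=F_i^m(w)$ yields $z=w$ and $x=y$. Passing to a subsequence that starts beyond the finitely many exceptional indices gives the statement for every $m$ of that subsequence (and Proposition~\ref{propositiondiffeomorphisme} then follows, since a bijective immersion between compact equidimensional manifolds, with $\Sigma^\infty$ connected, is a $\mathcal{C}^\infty$ diffeomorphism).

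I expect the main obstacle to be precisely the bookkeeping in the injectivity step: one must guarantee that \emph{both} preimages of $p$ are captured by one and the same graph chart and by parameters confined to a \emph{fixed} compact subdisc of $D(5\alpha/3)$, so that the merely chart-by-chart $C^1$-convergence of the $\Phi_i^m$ becomes a genuinely global input. This is exactly the purpose of the nesting $\alpha<7\alpha/6<4\alpha/3<5\alpha/3$ in Proposition~\ref{propositioninclusionpim} and Lemma~\ref{lemmetechniquecartesgraphees} together with the smallness of $\tau$. The softer ingredient --- that immersivity and injectivity are $C^1$-open conditions for smooth maps from a compact manifold --- is standard.
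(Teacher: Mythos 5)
Your proposal is correct, and it takes a genuinely different route from the paper's. The paper argues by contradiction twice, and in both steps it works directly with the orthogonality between $T_{x_m}\Sigma^m$ and the normal space $(T_{\Pi^m(x_m)}\Sigma^\infty)^\bot$: for the immersion it produces a unit vector $u_m\in\ker(D\Pi^m)$ and its ``shadow'' $v_m\in T_{\Pi^m(x_m)}\Sigma^\infty$, gets $1\leq\|u_m-v_m\|$, and kills this with $C^1$-convergence of $\Theta^m\to\Theta^\infty$; for injectivity it exploits $x_m-x'_m\perp T_{\Pi^m(x_m)}\Sigma^\infty$ together with the mean-value theorem on the scalar components of $\Theta^m$ to reach $0=1+\|D\Theta^\infty(z)\,v\|^2$. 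You instead isolate the cleaner statement that $F_i^m=\Pi\circ\Phi_i^m\to\Phi_i^\infty$ in $C^1$ on compacts of $D(5\alpha/3)$ (using that $\Pi$ is smooth on the tube and restricts to the identity on $\Sigma^\infty$), note that $\Phi_i^\infty$ is an embedding with $\|d(\Phi_i^\infty)v\|\geq\|v\|$ because its $i$-th block is the identity, and then invoke the standard fact that being an embedding of a compact set is a $C^1$-open condition. The paper's version is self-contained and in effect re-proves that openness fact on the spot; yours is more modular and, to my mind, makes the mechanism clearer. The one place you must be careful, and you are, is the chart-bookkeeping in the injectivity step: you need both preimages of $p$ to sit in the \emph{same} graph $\Phi_i^m$ with parameters confined to a fixed compact subdisc. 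You get the same chart from Proposition~\ref{propositioninclusionpim}(1) (or your tightened variant of Lemma~\ref{lemmetechniquecartesgraphees}(1) with $7\alpha/6$ and $4\alpha/3$ in place of $4\alpha/3$ and $5\alpha/3$, which indeed still lands inside the region where $\varphi\equiv 1$ since $\tfrac{10}{9}\cdot\tfrac{4\alpha}{3}<\tfrac{5\alpha}{3}$); and confinement to $\overline{D}(4\alpha/3)$ follows either from that tightened lemma or directly from your observation $|z-z''|,|w-z''|<2\tau<\alpha/6$. Both give what you need. One small nit: you write $z\in\overline{D}(\alpha)$, but the covering in Fact~\ref{propositionrecouvrementsigma} actually gives $z\in D(\alpha)$; this does not affect anything.
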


\begin{proof}[Proof of proposition \ref{propositiondiffeomorphisme}]
$\Pi^m$ is a diffeomorphism onto its image, which is an open and closed subset of $\Sigma^\infty$, thus is $\Sigma^\infty$ itself by connectedness.
\end{proof}

\begin{proof}[Proof of lemma \ref{lemmeimmersioninjective}]
There are two distinct steps. We prove both steps by contradiction: roughly speaking, since $\Sigma^m$ converges to $\Sigma^\infty$ (in the sense given above), the tangent spaces have to converge as well, and this will give a contradiction.

\noindent
\emph{First step:} we show that after passing to a subsequence, $\Pi^m : \Sigma^m \rightarrow \Sigma^\infty$ is an immersion.

Suppose this is not true. Then $\Pi^m$ is an immersion only for a finite number of $m \in \N$: there exists $M_0 \in \N$ such that $\Pi^m$ is not an immersion for $m \geq M_0$. We assume that $m \geq M_0$.

There exists a sequence $x_m \in \Sigma^m$ verifying $\ker(D\Pi^m (x_m)) \neq \{0\}$. By compactness, we may assume $x_m \rightarrow x \in \Sigma^\infty$; we also have $\Pi^m(x_m) = \Pi(x_m) \rightarrow \Pi(x)=x$ (recall that $\Pi$ is the normal projection onto $\Sigma^\infty$, and $\Pi^m$ its restriction to $\Sigma^m$).

Moreover, $x \in \Phi_i^\infty(D(4\alpha/3)$ for some $i \in \{1,...,N\}$: for simplicity, we may assume $i=1$. Let $z \in D(4\alpha/3)$ such that $x = \Phi_1^\infty(z)$. For $m$ large enough, $\Pi^m(x_m) \in \Phi_1^\infty(D(4\alpha/3))$, so by proposition \ref{propositioninclusionpim} we get
\[
x_m \in (\Pi^m)^{-1}(\Phi_1^\infty(D(4\alpha/3))) \subset \Phi_1^m(D(5\alpha/3)).
\]
We then have sequences $z_m$ and $z'_m$ in $D(5\alpha/3)$ such that $x_m = \Phi_1^m(z_m)$ and $\Pi^m(x_m) = \Phi_1^\infty(z'_m)$. For simplicity, we write $x_m$ and $\Pi^m(x_m)$ under the following form:
\[
x_m = (z_m, \Theta^m(z_m)) \mbox{ and } \Pi^m(x_m) = (z'_m, \Theta^\infty(z'_m)),
\]
where $\Theta^m$ and $\Theta^\infty$ are smooth maps, and $\Theta^m \rightarrow \Theta^\infty$ uniformly (and all the derivatives) on every compact set of $D(2\alpha)$ (see section \ref{partieconvergencefonctionstransition}). We have
\[
\ker(D\Pi^m (x_m)) = T_{x_m} \Sigma^m \cap (T_{\Pi^m(x_m)} \Sigma^\infty)^\bot \neq \{0\},
\]
so we can consider a unit vector $u_m$ in this vector space. We know a basis of $T_{x_m} \Sigma^m$, so there exists real numbers $a_m$ and $b_m$ such that
\[
u_m = a_m (1,0,\partial_x \Theta^m(z_m)) + b_m (0,1,\partial_y \Theta^m (z_m)).
\]
Since $u_m$ is a unit vector, we have $|a_m|\leq 1$ and $|b_m|\leq 1$. We can consider the following vector in $T_{\Pi^m(x_m)} \Sigma^\infty$:
\[
v_m = a_m (1,0,\partial_x \Theta^\infty (z'_m)) + b_m (0,1,\partial_y \Theta^\infty(z'_m)).
\]
Since $u_m$ and $v_m$ are orthogonal, we have $1 = ||u_m||^2 \leq ||u_m - v_m||^2$, so $1 \leq ||u_m - v_m||$ and
\begin{eqnarray*}
 1 & \leq & |a_m| \cdot  ||\partial_x \Theta^m(z_m) - \partial_x\Theta^\infty(z'_m)|| + |b_m| \cdot ||\partial_y \Theta^m (z_m) - \partial_y\Theta^\infty(z'_m)|| \\
  & \leq & ||\partial_x \Theta^m(z_m) - \partial_x \Theta^\infty (z'_m)|| + ||\partial_y \Theta^m(z_m) - \partial_y \Theta^\infty(z'_m)||.
\end{eqnarray*}
This is a contradiction: when $m$ goes to infinity, $x_m$ and $\Pi^m(x_m)$ converge to $x$, so $z_m$ and $z'_m$ converge to $z$, and we have uniform convergence of the derivatives of $\Theta^m$ to the derivatives of $\Theta^\infty$, which shows that the right-hand side term of the inequality goes to zero.

\noindent
\emph{Second step:} we show that after passing to a subsequence, $\Pi^m : \Sigma^m \rightarrow \Sigma^\infty$ is injective.

Suppose this is not true. Then $\Pi^m$ is injective only for a finite number of $m \in \N$: there exists $M_0 \in \N$ such that $\Pi^m$ is not injective for every $m \geq M_0$. We assume that $m \geq M_0$.

There exists sequences $x_m, x'_m \in \Sigma^m$, with $x_m \neq x'_m$, such that $\Pi^m(x_m)= \Pi^m(x'_m)$, so we have $x_m - x'_m \in (T_{\Pi^m(x_m)} \Sigma^\infty)^\bot$:
\begin{center}
\begin{tikzpicture}
\draw [domain = 90:270] plot(\x:1.5);
\draw (0,1.5) -- (1,1.5);
\draw (0,-1.5) -- (1,-1.5);
\draw (-2,0) -- (1,0);
\draw (1,1.5) node [right] {$\Sigma^m$}; 
\draw (-2,0) node [left] {$\Sigma^\infty$};
\draw ({1.5*cos(130)},{1.5*sin(130)}) node {$\bullet$};
\draw ({1.5*cos(130)},{1.5*sin(130)}) node [above left] {$x_m$};
\draw ({1.5*cos(230)},{1.5*sin(230)}) node {$\bullet$};
\draw ({1.5*cos(230)},{1.5*sin(230)}) node [below left] {$x'_m$};
\draw [dashed] ({1.5*cos(230)},{1.5*sin(230)}) -- ({1.5*cos(130)},{1.5*sin(130)});
\draw ({1.5*cos(230)},0) node {$\bullet$};
\draw ({1.5*cos(230)},0) node [above right] {$\Pi^m(x_m)=\Pi^m(x'_m)$};
\end{tikzpicture}
\captionof{figure}{}
\end{center}
We can also suppose that $x_m$ and $x'_m$ converge, and these sequences have the same limit $x \in \Sigma^\infty$, since $\lim x_m= \lim \Pi^m(x_m)$ and  $\lim x'_m= \lim \Pi^m(x'_m)$. We know that there exists some $i \in \{1,...,N\}$ such that $x \in \Phi_i^\infty(D(4\alpha/3))$; for simplicity, we may assume $i=1$. There exists $z \in D(4\alpha/3)$ such that $x = \Phi_1^\infty(z)$. If $m$ is large enough, $x_m$ and $x'_m$ are also in $\Phi_1^\infty(D(4\alpha/3))$, so there exists $z_m$ and $z'_m$ such that
\[
x_m = (z_m, \Theta^m(z_m)) \mbox{ and } x'_m = (z'_m, \Theta^m(z'_m))
\]
(with the notations as above); we then have
\[
x_m - x'_m = (z_m - z'_m, \Theta^m(z_m) - \Theta^m(z'_m)).
\]
If $m$ is large enough, $\Pi^m(x_m) = \Pi^m(x_m')$ is also in $\Phi_1^\infty(D(4\alpha/3))$, so we can also write
\[
\Pi^m(x_m) = \Pi^m(x'_m) = (u_m, \Theta^\infty(u_m)),
\]
for some $u_m \in D(4\alpha/3)$.

Write $z_m = a_m + i b_m$ and $z'_m = a'_m + i b'_m$ for $a_m, b_m \in \R$ and consider the following vector in $T_{\Pi^m(x_m)} \Sigma^\infty$:
\[
\frac{a_m - a'_m}{|z_m - z'_m|^2} \cdot \big( 1,0, \partial_x \Theta^\infty(u_m) \big) + \frac{b_m - b'_m}{|z_m - z'_m|^2} \cdot \big(0,1, \partial_y \Theta^\infty(u_m) \big)
\]
($x_m \neq x'_m$ implies $z_m \neq z'_m$).
By taking the scalar product with $x_m - x'_m \in (T_{\Pi^m(x_m)} \Sigma^\infty)^\bot$ we obtain
\[
0 = 1 + <\frac{\Theta^m(z_m) - \Theta^m(z'_m)}{|z_m - z'_m|}, D\Theta^\infty(u_m) \left( \frac{z_m-z'_m}{|z_m - z'_m|} \right) >
\]
(we denote by $<~,~>$ the Euclidean scalar product in $\R^{q-2}$). We want to use the mean-value theorem, hence we need to consider real-valued maps.
We can write the components of $\Theta^m$ and $\Theta^\infty$ as
\[
\Theta^m = (\Theta^{m,1},..., \Theta^{m,q-2}) \mbox{ and } \Theta^\infty = (\Theta^{\infty,1},..., \Theta^{\infty,q-2})
\]
with functions $\Theta^{m,j}, \Theta^{\infty,j} : D(5\alpha/3) \rightarrow \R$. We can then write
\[
0 = 1+\sum_{j=1}^{q-2} \left(\frac{\Theta^{m,j}(z_m) - \Theta^{m,j}(z'_m)}{|z_m - z'_m|}\right) \cdot D\Theta^{\infty,j}(u_m) \left( \frac{z_m-z'_m}{|z_m - z'_m|} \right).
\]
Since the $\Theta^{m,j}$ are functions with values in $\R$, by the mean-value theorem, we know that for every $j \in \{1,...,q-2\}$, there exists some $\zeta_m^j \in [z_m,z'_m]$ such that $\Theta^{m,j}(z_m) - \Theta^{m,j}(z'_m) = D\Theta^{m,j}(\zeta_m^j) \cdot (z_m - z'_m)$: we then have
\[
0 = 1+\sum_{j=1}^{q-2} D\Theta^{m,j}(\zeta_m^j) \cdot \left( \frac{z_m-z'_m}{|z_m - z'_m|} \right) \cdot D\Theta^{\infty,j}(u_m) \left( \frac{z_m-z'_m}{|z_m - z'_m|} \right).
\]
By compactness we can suppose that $\frac{z_m-z'_m}{|z_m-z'_m|} \rightarrow v \in \mathbb{S}^1$, and since $\Theta^m$ (and its derivatives) converge to $\Theta^\infty$, we get $0 = 1 + ||D\Theta^\infty(z) \cdot (v) ||^2$
and this is a contradiction.
\end{proof}

\subsection{End of the proof of the Main theorem} \label{sectionconclusion}

We have constructed the following diffeomorphisms:
\[
\Sigma ~ \underset{\Psi^m} {\overset{\sim} \longrightarrow} ~ \Sigma^m \subset \R^q ~ \underset{\Pi^m}{\overset{\sim }\longrightarrow} ~ \Sigma^\infty \subset \R^q.
\]
Recall that $\Psi^m$ is obtained by Whitney's embedding, and $\Pi^m$ is the restriction to $\Sigma^m$ of the normal projection onto $\Sigma^\infty$. We can consider the following metric on $\Sigma^\infty$:
\[
\widetilde{d_m} := ((\Pi^m \circ \Psi^m)^{-1})^* d_m,
\]
that is
\[
\widetilde{d_m}(x,y) = d_m((\Pi^m \circ \Psi^m)^{-1}(x),(\Pi^m \circ \Psi^m)^{-1}(y)),
\]
so that $(\Sigma,d_m)$ is isometric to $(\Sigma^\infty,\widetilde{d_m})$. To finish the proof of the Main theorem, we need to show that $\widetilde{d_m}$ converges uniformly to some metric with B.I.C. $\widetilde{d}$ on $\Sigma^\infty$.

In section \ref{sectionlocalproperties}, we prove that $(\widetilde{d_m}(x,y))$ converges, if $x$ and $y$ are in the same graph $\Phi_i^\infty(D(4\alpha/3))$. Then, in section \ref{sectionconstructionetconclusion}, we prove that $(\widetilde{d_m}(x,y))$ converges for every $x$ and $y$ in $\Sigma$; we can define $\widetilde{d}(x,y):=\lim_{m\rightarrow \infty} \widetilde{d_m}(x,y)$. To finish the proof of the main theorem, we show that $\widetilde{d_m}$ converges \emph{uniformly} to $\widetilde{d}$, and $\widetilde{d}$ is a metric with B.I.C.

\subsubsection{Local properties}\label{sectionlocalproperties}
By proposition \ref{propositioninclusionpim}, for every $m \in \N$ and every $i \in \{1,...,N\}$ we have 
\[
(\Pi^m)^{-1}(\Phi_i^\infty(D(4\alpha/3))) \subset \Phi_i^m(D(5\alpha/3)),
\]
so we can consider a map $f_i^m : D(4\alpha/3) \rightarrow D(5\alpha/3)$ such that the following diagram commutes:
\[
\begin{array}{ccl}
D(5\alpha/3) & \overset{\sim} \longleftarrow & \Phi_i^m(D(5\alpha/3)) \\
f_i^m \bigg\uparrow &  & (\Pi^m)^{-1} \bigg\uparrow \\
D(4\alpha/3) & \overset{\sim} \longrightarrow & \Phi_i^\infty(D(4\alpha/3))
\end{array}
\]
that is $f_i^m= (\Phi_i^m)^{-1} \circ (\Pi^m)^{-1} \circ \Phi_i^\infty$.

\begin{proposition} \label{propositionconvergenceverslinclusion}
For every $i \in \{1,...,N\}$, $f_i^m : D(4\alpha/3) \rightarrow D(5\alpha/3)$ converges uniformly to the inclusion $D(4\alpha/3) \hookrightarrow D(5\alpha/3)$.
\end{proposition}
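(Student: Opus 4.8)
The map to understand is the composite $f_i^m = (\Phi_i^m)^{-1}\circ(\Pi^m)^{-1}\circ\Phi_i^\infty$: it reads a parameter $z$, lifts it to the point $\Phi_i^\infty(z)$ of the limit surface, pulls that point back along the normal projection to the unique point of $\Sigma^m$ lying above it, and records the $i$-th coordinate of that point. The plan is to estimate $|f_i^m(z)-z|$ directly, using three facts already available: the uniform convergence $\Phi_i^m\to\Phi_i^\infty$ on compact subsets of $D(2\alpha)$ from section \ref{partieconvergencefonctionstransition}; the fact that $\Pi^m$ is the restriction to $\Sigma^m$ of the closest-point projection $\Pi$ onto $\Sigma^\infty$ and is a bijection onto $\Sigma^\infty$ (proposition \ref{propositiondiffeomorphisme}); and the structural fact that the $i$-th complex coordinate of $\Phi_i^\infty$ (see (\ref{equationdefinitionphiiinfini})), resp. of $\Phi_i^m$ (see (\ref{eqcartesgraphees})), is the parameter itself.

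Fix $i$, say $i=1$ to lighten notation. By proposition \ref{propositioninclusionpim} the map $f_i^m$ is well defined from $D(4\alpha/3)$ to $D(5\alpha/3)$. Since $\overline{D}(5\alpha/3)$ is a compact subset of $D(2\alpha)$ on which the graph description (\ref{eqcartesgraphees}) of $\Phi_i^m$ is valid (as $\varphi\equiv1$ on $\overline{D}(5\alpha/3)$), section \ref{partieconvergencefonctionstransition} lets us set $\varepsilon_m:=\sup_{\overline{D}(5\alpha/3)}\|\Phi_i^m-\Phi_i^\infty\|\to0$. Now take $z\in D(4\alpha/3)$ and put $w:=f_i^m(z)\in D(5\alpha/3)$; by definition $\Phi_i^m(w)=(\Pi^m)^{-1}(\Phi_i^\infty(z))$, and since $\Pi^m$ is the restriction of $\Pi$ to $\Sigma^m$ and bijective onto $\Sigma^\infty$ this gives $\Pi(\Phi_i^m(w))=\Phi_i^\infty(z)$. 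The key estimate is that $\Phi_i^\infty(w)$ lies on $\Sigma^\infty$ and is therefore a competitor for the closest-point projection of $\Phi_i^m(w)$, so that
\[
\|\Phi_i^m(w)-\Phi_i^\infty(z)\|=d_{\euc}(\Phi_i^m(w),\Sigma^\infty)\le\|\Phi_i^m(w)-\Phi_i^\infty(w)\|\le\varepsilon_m.
\]
Combining this with $\|\Phi_i^m(w)-\Phi_i^\infty(w)\|\le\varepsilon_m$ and the triangle inequality yields $\|\Phi_i^\infty(w)-\Phi_i^\infty(z)\|\le2\varepsilon_m$, and reading off the $i$-th coordinate (which equals the parameter) gives $|f_i^m(z)-z|=|w-z|\le2\varepsilon_m$, uniformly in $z\in D(4\alpha/3)$. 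Letting $m\to\infty$ concludes.

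I do not anticipate a genuine obstacle: once the chain of inequalities above is spotted the argument is short. The only point requiring a little care is domain bookkeeping — one must know that $w=f_i^m(z)$ lands in a compact set on which $\Phi_i^m\to\Phi_i^\infty$ uniformly and on which the graph parametrization of $\Phi_i^m$ is defined; here both hold automatically because $f_i^m$ takes values in $D(5\alpha/3)$ and $\overline{D}(5\alpha/3)\subset D(2\alpha)$. (If one wished to avoid even this observation, the a priori bound $|w-z|<2\tau<\alpha/6$ — obtained exactly as in lemma \ref{lemmetechniquecartesgraphees} using $\Sigma^m\subset\mathcal V$ and fact \ref{propositiondifferenceuniformephii} — confines $w$ to $\overline{D}(3\alpha/2)$, and the same conclusion follows.)
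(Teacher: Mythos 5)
Your argument is correct and is essentially the paper's proof: both hinge on the observation that $\|\Phi_i^m(w)-\Phi_i^\infty(z)\|=\|\Pi^m(\Phi_i^m(w))-\Phi_i^m(w)\|$ is the Euclidean distance from $\Phi_i^m(w)$ to $\Sigma^\infty$, hence bounded by the competitor $\|\Phi_i^m(w)-\Phi_i^\infty(w)\|\le\varepsilon_m$. The only (immaterial) difference is the last step: the paper reads the $i$-th component of $\Phi_i^\infty(z)-\Phi_i^m(w)$ directly, yielding $|z-w|\le\varepsilon_m$, while you insert one extra triangle inequality through $\Phi_i^\infty(w)$ and get $2\varepsilon_m$.
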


\begin{proof}
For every $z \in D(4\alpha/3)$, let $z' = f_i^m(z) \in D(5\alpha/3)$. Since $z$ (resp. $z'$) is the $i$-th component of $\Phi_i^\infty(z)$ (resp. $\Phi_i^m(z')$), we have
\[
 |z-z'| \leq ||\Phi_i^\infty(z) - \Phi_i^m(z')|| = ||\Pi^m (\Phi_i^m (z')) - \Phi_i^m(z')|| \leq ||\Phi_i^\infty(z') - \Phi_i^m(z')||:
\]
the last inequality comes from the fact that $||\Pi^m (\Phi_i^m (z')) - \Phi_i^m(z')||$ is the distance between $\Phi_i^m(z')$ and the embedded surface $\Sigma^\infty$, and we have $\Phi_i^\infty(z') \in \Sigma^\infty$. We then have
\[
|z-f_i^m(z)| \leq \sup_{u \in D(5\alpha/3)} ||\Phi_i^\infty(u) - \Phi_i^m(u)||,
\]
and we know that the right-hand side goes to zero as $m$ goes to infinity.
\end{proof}

We know that for every $m \in \N$ and every $i \in \{1,...,N\}$ we have an isometry
\begin{equation} \label{isometriedistancelocale}
(B_m(x_i^m,\varepsilon), {d_m}_{|B_m(x_i^m,\varepsilon)}) ~ \underset{H_i^m}{\overset{\sim} \longrightarrow} ~ 
(D(1/2),d_{\omega_i^m,h_i^m}).
\end{equation}
Moreover, for every $i \in \{1,...,N\}$, by the important corollary \ref{corollaireconvergencedistance}, there exists a measure $\widetilde{\omega}_i$, with support in $\overline{D}(1/2)$, and a constant $C_i>0$ such that, after passing to a subsequence, $d_{\omega_i^m,h_i^m}$ converge locally uniformly on $D(2\alpha)$ to the metric $d^i :=  C_i \cdot \overline{d}_{\widetilde{\omega}_i,0}$ (when $m$ goes to infinity).

Now, if we consider the diagram at the beginning of section \ref{sectionconclusion} we can consider the following metric on $\Phi_i^\infty(D(4\alpha/3))$:
\[
\widetilde{d^i} := ((\Phi_i^\infty)^{-1})^* d^i.
\]
\begin{proposition} \label{propositionconvergencelocaledistance}
Let $i \in \{1,...,N\}$ and $(x_m),(y_m)$ be two sequences of points in $\Sigma^\infty$ such that $x_m \rightarrow x \in \Phi_i^\infty(D(4\alpha/3))$ and $y_m \rightarrow y \in \Phi_i^\infty(D(4\alpha/3))$. Then
\[
\widetilde{d_m}(x_m,y_m) \underset{m \rightarrow \infty}\longrightarrow \widetilde{d^i}(x,y).
\]
\end{proposition}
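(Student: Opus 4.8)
The plan is to identify $\widetilde{d_m}(x_m,y_m)$, for $m$ large, with a distance of the form $d_{\omega_i^m,h_i^m}(\cdot,\cdot)$ between two points of $D(2\alpha)$ that converge, and then to invoke Corollary~\ref{corollaireconvergencedistance}. First I would reduce everything to the graph number $i$. Write $x=\Phi_i^\infty(z)$ and $y=\Phi_i^\infty(w)$ with $z,w\in D(4\alpha/3)$. Since $x_m\to x$ and $y_m\to y$, for $m$ large we have $\|x_m-x\|<\alpha/6$ and $\|y_m-y\|<\alpha/6$, so the second part of Lemma~\ref{lemmetechniquecartesgraphees} (applied with $x_0=x$, respectively $x_0=y$) gives $x_m,y_m\in\Phi_i^\infty(D(5\alpha/3))$; write $x_m=\Phi_i^\infty(z_m)$ and $y_m=\Phi_i^\infty(w_m)$. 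As the $i$-th component of $\Phi_i^\infty(\zeta)$ is $\zeta$ (see~(\ref{equationdefinitionphiiinfini})), the numbers $z_m$ and $w_m$ are the $i$-th components of $x_m$ and $y_m$, hence $z_m\to z$ and $w_m\to w$; since $D(4\alpha/3)$ is open, I may assume $z_m,w_m\in D(4\alpha/3)$ for every $m$.

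Next I would rewrite the pre-images under $\Pi^m\circ\Psi^m$. By Proposition~\ref{propositioninclusionpim}(1), $(\Pi^m)^{-1}(x_m)\in\Phi_i^m(D(5\alpha/3))$, and by the very definition of $f_i^m$ this point equals $\Phi_i^m(f_i^m(z_m))$; recalling that $\Phi_i^m=\Psi^m\circ(H_i^m)^{-1}$ on $D(5\alpha/3)$ (see~(\ref{eqcartesgraphees})), it follows that $(\Pi^m\circ\Psi^m)^{-1}(x_m)=(H_i^m)^{-1}(f_i^m(z_m))$, and likewise $(\Pi^m\circ\Psi^m)^{-1}(y_m)=(H_i^m)^{-1}(f_i^m(w_m))$. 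Set $p_m:=(H_i^m)^{-1}(f_i^m(z_m))$ and $q_m:=(H_i^m)^{-1}(f_i^m(w_m))$. Because $f_i^m(z_m),f_i^m(w_m)\in D(5\alpha/3)\subset D(2\alpha)$ and, by the first part of Theorem~\ref{theoremerecouvrement}, $D(2\alpha)\subset H_i^m(B_m(x_i^m,\varepsilon/4))$, both $p_m$ and $q_m$ lie in $B_m(x_i^m,\varepsilon/4)$.

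The key step is to replace the global distance $d_m$ by the intrinsic distance of the chart. Since $d_m(p_m,q_m)\le d_m(p_m,x_i^m)+d_m(x_i^m,q_m)<\varepsilon/2$, the points $p_m,q_m$ are joined by curves of $d_m$-length $<3\varepsilon/4$, and such a curve cannot leave $B_m(x_i^m,\varepsilon)$ (a curve starting in $B_m(x_i^m,\varepsilon/4)$ and meeting the complement of $B_m(x_i^m,\varepsilon)$ has length at least $3\varepsilon/4$); hence $d_m(p_m,q_m)={d_m}_{|B_m(x_i^m,\varepsilon)}(p_m,q_m)$. The isometry~(\ref{isometriedistancelocale}) then yields
\[
\widetilde{d_m}(x_m,y_m)=d_m(p_m,q_m)=d_{\omega_i^m,h_i^m}\big(f_i^m(z_m),f_i^m(w_m)\big).
\]
Finally, Proposition~\ref{propositionconvergenceverslinclusion} gives that $f_i^m$ converges uniformly on $D(4\alpha/3)$ to the inclusion $D(4\alpha/3)\hookrightarrow D(5\alpha/3)$, so $|f_i^m(z_m)-z_m|\to0$, whence $f_i^m(z_m)\to z$, and likewise $f_i^m(w_m)\to w$, with $z,w\in D(2\alpha)$. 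Since $d_{\omega_i^m,h_i^m}\to d^i$ locally uniformly on $D(2\alpha)$ (Corollary~\ref{corollaireconvergencedistance}), the right-hand side above converges to $d^i(z,w)$, and by definition of $\widetilde{d^i}$ one has $d^i(z,w)=\widetilde{d^i}(\Phi_i^\infty(z),\Phi_i^\infty(w))=\widetilde{d^i}(x,y)$, which is the claim.

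The only genuinely delicate point is the third step: Reshetnyak's theorem, and hence Corollary~\ref{corollaireconvergencedistance}, controls the \emph{intrinsic} distance $d_{\omega_i^m,h_i^m}$ attached to the conformal chart, not the global distance $d_m$ of the surface, so one must check — as above — that for points sitting deep inside the chart (radius $\varepsilon/4$ rather than $\varepsilon$) the two distances coincide. Everything else is bookkeeping with the graph parametrizations $\Phi_i^m$, $\Phi_i^\infty$ and the map $f_i^m$.
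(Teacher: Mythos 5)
Your proposal is correct and follows essentially the same route as the paper: both pass through the chain $\widetilde{d_m}(x_m,y_m)=d_m((\Pi^m\circ\Psi^m)^{-1}x_m,(\Pi^m\circ\Psi^m)^{-1}y_m)=d_{\omega_i^m,h_i^m}(f_i^m(\cdot),f_i^m(\cdot))$, using the observation that the restricted (intrinsic) distance on $B_m(x_i^m,\varepsilon)$ agrees with the global one for points lying in $B_m(x_i^m,\varepsilon/4)$, and then invoke the uniform convergence of $f_i^m$ to the inclusion together with Corollary~\ref{corollaireconvergencedistance}. You correctly single out the intrinsic-vs-global distance check as the one genuinely delicate point, which is exactly where the paper also inserts a parenthetical justification.
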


\begin{proof}
Suppose $m$ is large enough so that $x_m, y_m \in \Phi_i^\infty(D(4\alpha/3))$. By corollary \ref{corollaireconvergencedistance}, $d_{\omega_i^m,h_i^m}$ converge locally uniformly to $d^i$ on $D(2\alpha)$, and $f_i^m : D(4\alpha/3) \rightarrow D(5\alpha/3)$ converges uniformly to the inclusion $D(4\alpha/3) \hookrightarrow D(5\alpha/3)$ (this is proposition \ref{propositionconvergenceverslinclusion}), so
\begin{eqnarray*}
\widetilde{d^i}(x,y) & = & d^i((\Phi_i^\infty)^{-1} (x),(\Phi_i^\infty)^{-1} (y) \\
 & = & \lim_{m\rightarrow \infty} d_{\omega_i^m,h_i^m}(f_i^m \circ (\Phi_i^\infty)^{-1} (x_m),f_i^m \circ (\Phi_i^\infty)^{-1} (y_m)).
\end{eqnarray*}
And by the isometry (\ref{isometriedistancelocale}), we have, for every $z,z' \in D(5\alpha/3)$,
\[
 d_{\omega_i^m,h_i^m} (z,z') = {d_m}_{|B_m(x_i^m,\varepsilon)} ((H_i^m)^{-1}(z),(H_i^m)^{-1}(z')) = {d_m}((H_i^m)^{-1}(z),(H_i^m)^{-1}(z')):
\]
indeed, by theorem \ref{theoremerecouvrement}, we have $(H_i^m)^{-1}(z),(H_i^m)^{-1}(z') \in B_m(x_i^m,\varepsilon/4)$, so a curve which (almost) minimizes the distance $d_m((H_i^m)^{-1}(z),(H_i^m)^{-1}(z'))$ has to stay inside $B_m(x_i^m,\varepsilon)$ (we have $d_m((H_i^m)^{-1}(z),(H_i^m)^{-1}(z')) \leq \varepsilon/2$, and a curve which joins $(H_i^m)^{-1}(z)$ and $(H_i^m)^{-1}(z')$, and which is not contained in $B_m(x_i^m,\varepsilon)$ has a length $\geq 2 \cdot (\varepsilon - \varepsilon/4)>\varepsilon/2$). Hence we get
\[
\widetilde{d^i}(x,y) = \lim_{m\rightarrow \infty} d_m ((H_i^m)^{-1} \circ f_i^m \circ (\Phi_i^\infty)^{-1} (x_m),(H_i^m)^{-1} \circ f_i^m \circ (\Phi_i^\infty)^{-1} (y_m)).
\]

By definition of $\Phi_i^m$, we have $(H_i^m)^{-1} = (\Psi^m)^{-1} \circ \Phi_i^m$, and with the equality $f_i^m \circ (\Phi_i^\infty)^{-1} = (\Phi_i^m)^{-1} \circ (\Pi^m)^{-1}$ (see the commutative diagram at the beginning of this section) we obtain
\begin{eqnarray*}
\widetilde{d^i}(x,y)& = & \lim_{m\rightarrow \infty} d_m((\Psi^m)^{-1} \circ (\Pi^m)^{-1} (x_m),(\Psi^m)^{-1} \circ (\Pi^m)^{-1} (y_m)) \\
 & = & \lim_{m\rightarrow \infty} \widetilde{d_m}(x_m,y_m),
\end{eqnarray*}
and this ends the proof.
\end{proof}

\subsubsection{Construction of the limit metric $\widetilde{d}$ and conclusion} \label{sectionconstructionetconclusion}

We already know that $\lim_{m \rightarrow \infty} \widetilde{d_m}(x,y)$ exists if $x$ and $y$ are in the same graph (that is, if there exists some $i \in \{1,...,N\}$ with $x,y \in \Phi_i^\infty(D(4\alpha/3))$); see proposition \ref{propositionconvergencelocaledistance}. To prove that this limit exists for every $x,y \in \Sigma$, the idea is to consider $\widetilde{d_m}-$geodesics between $x$ and $y$, $\gamma_m : [0,1] \rightarrow \Sigma$, and to cut the segment $[0,1]$ into subintervals for which we can apply proposition \ref{propositionconvergencelocaledistance}.

\begin{proposition} \label{propositionexistencelimite}
Let $x,y \in \Sigma^\infty$: the limit $\lim_{m\rightarrow \infty} \widetilde{d_m}(x,y)$ exists, and we set
\[
\widetilde{d}(x,y) := \lim_{m\rightarrow \infty} \widetilde{d_m}(x,y) \in [0,+\infty).
\]
\end{proposition}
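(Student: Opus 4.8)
The plan is to reduce everything to a soft compactness argument for the \emph{family of metrics} $\{\widetilde{d_m}\}$, rather than trying to track individual geodesics through the whole surface. First, finiteness is free: since $\diam(\Sigma,d_m)\le D$ for all $m$ (Proposition~\ref{propositionequivalencedefinition}), we have $\widetilde{d_m}(x,y)\le D$ for all $x,y\in\Sigma^\infty$, so any limit lies in $[0,D]$. The real task is to produce the limit, and the strategy is: (i) show that the functions $\widetilde{d_m}$ on $\Sigma^\infty\times\Sigma^\infty$ are uniformly bounded and equicontinuous, so Arz\'ela--Ascoli applies along subsequences; (ii) show that every subsequential uniform limit is an intrinsic pseudo-metric that restricts to $\widetilde{d^i}$ on each graph $\Phi_i^\infty(D(4\alpha/3))$; (iii) observe that an intrinsic pseudo-metric is determined by these local restrictions, so all subsequential limits coincide and the whole sequence converges.

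The heart of the proof is the uniform modulus of continuity in step (i). I claim there are $C>0$, $\beta:=\delta/2\pi\in(0,1]$ and $r_0>0$, \emph{independent of $m$}, with $\widetilde{d_m}(x,x')\le C\|x-x'\|^{\beta}$ whenever $\|x-x'\|<r_0$ (Euclidean norm in $\R^q$), for all large $m$. Given such $x,x'$, the covering (Proposition~\ref{propositionrecouvrementsigmainfini}) and part $2'$ of Lemma~\ref{lemmetechniquecartesgraphees} place both points in a common graph $\Phi_i^\infty(D(4\alpha/3))$; writing $z,z'$ for their $z$-coordinates there one has $|z-z'|\le\|x-x'\|$, and by the commutative diagram of Section~\ref{sectionlocalproperties} together with the $C^1$-convergence $f_i^m\to\mathrm{id}$ (Proposition~\ref{propositionconvergenceverslinclusion} and Section~\ref{partieconvergencefonctionstransition}), $\widetilde{d_m}(x,x')=d_{\omega_i^m,h_i^m}(w,w')$ with $|w-w'|\le 2|z-z'|$ and the segment $[ww']\subset\overline{D}(2\alpha)$. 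Bounding $d_{\omega_i^m,h_i^m}(w,w')$ by the length of that segment, using Theorem~\ref{theoremetermeharmoniqueborne} to bound $h_i^m\le M(\overline{D}(2\alpha))$ on it, Proposition~\ref{propositionmajorationdistance} for the metric without harmonic term, and the bound $(\omega_i^m)^+(D(1/2))=\omega_m^+(B_m(x_i^m,\varepsilon))\le 2\pi-\delta$, one gets $d_{\omega_i^m,h_i^m}(w,w')\le e^{M}\tfrac{4\pi}{\delta}|w-w'|^{\beta}$, hence the claim (the finitely many small $m$ are absorbed since each $\widetilde{d_m}$ is individually uniformly continuous on the compact $\Sigma^\infty$). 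Since $|\widetilde{d_m}(x,y)-\widetilde{d_m}(x',y')|\le\widetilde{d_m}(x,x')+\widetilde{d_m}(y,y')$, the family is equicontinuous and uniformly bounded by $D$, so every subsequence of $(\widetilde{d_m})$ has a further subsequence converging uniformly on $\Sigma^\infty\times\Sigma^\infty$ to some pseudo-metric $\rho\le D$.

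For step (ii): $\rho$ is intrinsic, because a uniform limit of intrinsic metrics on a compact space has approximate midpoints, hence is intrinsic; and for $x,y\in\Phi_i^\infty(D(4\alpha/3))$ Proposition~\ref{propositionconvergencelocaledistance} already gives $\widetilde{d_m}(x,y)\to\widetilde{d^i}(x,y)$ for the full sequence, so $\rho(x,y)=\widetilde{d^i}(x,y)$. For step (iii): an intrinsic pseudo-metric on $\Sigma^\infty$ is determined by its restrictions to the $\Phi_i^\infty(D(4\alpha/3))$. Indeed, for a continuous curve $\gamma$ in $\Sigma^\infty$, uniform continuity in the Euclidean metric plus Lemma~\ref{lemmetechniquecartesgraphees} yield a subdivision $0=t_0<\dots<t_k=1$ with each pair $\gamma(t_{j-1}),\gamma(t_j)$ in a common graph; since refining a subdivision only increases $\sum_j\rho(\gamma(t_{j-1}),\gamma(t_j))$, the $\rho$-length of $\gamma$ is the supremum over such fine subdivisions of $\sum_j\widetilde{d^{i_j}}(\gamma(t_{j-1}),\gamma(t_j))$, a quantity depending only on the $\widetilde{d^i}$; taking the infimum over curves from $x$ to $y$ recovers $\rho(x,y)$ from the $\widetilde{d^i}$ alone. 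Hence all subsequential limits agree, $\widetilde{d_m}$ converges (uniformly, in fact) to a single intrinsic pseudo-metric, and we set $\widetilde{d}(x,y):=\lim_m\widetilde{d_m}(x,y)\in[0,D]$.

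I expect the main obstacle to be precisely the uniform modulus of continuity: there is no uniform \emph{lower} bound on the conformal factor $e^{V[\omega_i^m]+h_i^m}$ (it degenerates wherever $\omega_m^-$ concentrates, e.g. at a cone point of angle $>2\pi$), so one cannot compare $\widetilde{d_m}$ with the Euclidean metric; the resolution is that only an \emph{upper} bound on the length of short segments is needed, which is exactly Proposition~\ref{propositionmajorationdistance} combined with the control of the harmonic term from Theorem~\ref{theoremetermeharmoniqueborne}, the H\"older exponent $\delta/2\pi$ coming from hypothesis~1 in the definition of $\M_\Sigma(A,c,\varepsilon,\delta)$. Everything else (Arz\'ela--Ascoli, intrinsic limits, the subdivision argument — the ``cutting of geodesics'' of the sketch, here applied to a curve almost realizing $\rho$) is routine.
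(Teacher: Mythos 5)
Your proof is correct in outline but takes a genuinely different route from the paper. The paper works with a fixed pair $(x,y)$: it chooses a subsequence along the $\liminf$, takes $\widetilde{d_{m_j}}$-minimizing geodesics $\gamma_{m_j}$ from $x$ to $y$, cuts $[0,1]$ into $n$ pieces with $D/n\le\kappa\varepsilon/2$ so that (after another subsequence) each pair $\gamma_{m_j}(k/n),\gamma_{m_j}((k+1)/n)$ lies in a fixed graph $\Phi_{i(k)}^\infty(D(7\alpha/6))$, applies Proposition~\ref{propositionconvergencelocaledistance} to each piece and closes via the triangle inequality to get $\liminf\ge\limsup$. You instead treat $\{\widetilde{d_m}\}$ as a family of functions on the compact $\Sigma^\infty\times\Sigma^\infty$, extract uniform subsequential limits by Arz\'ela--Ascoli, identify each limit on same-graph pairs via Proposition~\ref{propositionconvergencelocaledistance}, and argue that an intrinsic pseudo-metric is determined by those local data (through the subdivision argument applied to arbitrary curves in the limit, rather than to the finite-stage geodesics). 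Your approach is more abstract but also more economical: it simultaneously yields what the paper proves separately as Corollary~\ref{corollaireconvergenceuniformedistance} (uniform convergence $\widetilde{d_m}\to\widetilde d$) and the intrinsicness of $\widetilde d$, which the paper establishes afterwards via converging geodesics.

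One imprecision worth flagging: you invoke \emph{$C^1$-convergence} of $f_i^m$ to the identity and cite Proposition~\ref{propositionconvergenceverslinclusion}, but that proposition only establishes \emph{uniform} convergence of $f_i^m$ (and the $C^1_{loc}$ convergences in Section~\ref{partieconvergencefonctionstransition} concern the transition maps $H_j^m\circ(H_i^m)^{-1}$, not $f_i^m$). Consequently the clean H\"older bound $\widetilde{d_m}(x,x')\le C\|x-x'\|^{\beta}$ uniform in $m$, as you state it, does not follow: with $w=f_i^m(z)$, $w'=f_i^m(z')$ you only get $|w-w'|\le|z-z'|+2\|f_i^m-\mathrm{id}\|_\infty$, not $|w-w'|\le 2|z-z'|$. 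However, equicontinuity of the family — which is all Arz\'ela--Ascoli requires — does follow from this weaker estimate together with Proposition~\ref{propositionmajorationdistance} and Theorem~\ref{theoremetermeharmoniqueborne}: given $\epsilon>0$, fix $r_0$ with $e^{M}\tfrac{4\pi}{\delta}(3r_0)^\beta<\epsilon$, then choose $m_0$ with $\max_i\|f_i^m-\mathrm{id}\|_\infty<r_0$ for $m\ge m_0$; for $m\ge m_0$ and $\|x-x'\|<r_0$ the bound gives $\widetilde{d_m}(x,x')<\epsilon$, and the finitely many $m<m_0$ are handled by the uniform continuity of each individual $\widetilde{d_m}$. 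With that correction your argument goes through; the only other point to tighten is the boundary bookkeeping in step (iii) when placing consecutive subdivision points in a common graph $\Phi_i^\infty(D(4\alpha/3))$ — one should track the $z$-coordinates (as in the proofs of Lemma~\ref{lemmetechniquecartesgraphees} and Fact~\ref{faitconvergencefinal}) rather than relying on the disc radii in Lemma~\ref{lemmetechniquecartesgraphees} matching exactly.
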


\begin{proof}
Let $x,y \in \Sigma^\infty$. Consider some subsequence $(m_j)$ of $(m)$ such that 
\[
\widetilde{d_{m_j}}(x,y) \underset{j \rightarrow \infty} \longrightarrow \liminf_{m \rightarrow \infty} \widetilde{d_m}(x,y).
\]
Let $\gamma_{m_j} : [0,1] \rightarrow \Sigma^\infty$ be minimizing geodesics for the metric $\widetilde{d_{m_j}}$, between $x$ and $y$ (with $\gamma_{m_j}(0)=x$ and $\gamma_{m_j}(1)=y$), and parametrized with constant speed: we have, for $t,t' \in [0,1]$,
\[
\widetilde{d_{m_j}}(\gamma_{m_j}(t),\gamma_{m_j}(t')) = \widetilde{d_{m_j}}(x,y) \cdot |t-t'|.
\]
After taking a subsequence of $m_j$ (as usual, we do not change the name of the sequence), the following fact is true:
\begin{fact} \label{faitconvergencefinal}
Let $n \in \N$ be an integer such that $D/n \leq \kappa \varepsilon/2$. For every $k \in \{0,...,n-1\}$, there exists $i(k) \in \{1,...,N\}$ such that for every $j \in \N$,
\[
\gamma_{m_j}(k/n) \mbox{ and } \gamma_{m_j}((k+1)/n) \mbox{ belong to } \Phi_{i(k)}^\infty(D(7\alpha/6)).
\]
\end{fact}
\begin{proof}
By the covering (\ref{equationrecouvrementsigma}), we know that for every $k \in \{0,...,n-1\}$ and for every $j \in \N$, there exists an integer $i(k,j) \in \{1,...,N\}$ such that
\begin{equation} \label{appartenancelemmetechniquefinal}
(\Pi^{m_j} \circ \Psi^{m_j})^{-1}(\gamma_{m_j}(k/n)) \in B_{m_j}(x_{i(k,j)}^{m_j},\kappa \varepsilon/2).
\end{equation}
Since $i(k,j)$ belongs to a finite set, after taking a subsequence of $(m_j)$, we may assume that for every $k \in \{0,...,n-1\}$, $i(k,j)$ does not depend on $j$: we can write $i(k,j) = i(k)$. To finish the proof, we will show that
\[
\gamma_{m_j}([k/n,(k+1)/n]) \subset \Phi_{i(k)}^\infty(D(7\alpha/6)).
\]
Let $t \in [k/n,(k+1)/n]$: we have
\[
\widetilde{d_{m_j}}(\gamma_{m_j}(t),\gamma_{m_j}(k/n)) = d_{m_j}(x,y) \cdot |t-k/n|,
\]
so
\[
d_{m_j}((\Pi^{m_j} \circ \Psi^{m_j})^{-1}(\gamma_{m_j}(t)),(\Pi^{m_j} \circ \Psi^{m_j})^{-1}(\gamma_{m_j}(k/n))) \leq D \cdot 1/n \leq \kappa \varepsilon/2,
\]
and with (\ref{appartenancelemmetechniquefinal}) this shows that
\[
(\Pi^{m_j} \circ \Psi^{m_j})^{-1}(\gamma_{m_j}(t)) \in B_{m_j}(x_{i(k)}^{m_j},\kappa \varepsilon).
\]
So
\[
(\Pi^{m_j})^{-1}(\gamma_{m_j}(t)) \in \Psi^{m_j}(B_{m_j}(x_{i(k)}^{m_j},\kappa \varepsilon)) = \Phi_{i(k)}^{m_j} \circ H_{i(k)}^{m_j} (B_{m_j}(x_{i(k)}^{m_j},\kappa \varepsilon)) \subset \Phi_{i(k)}^{m_j}(D(\alpha))
\]
(the last inclusion comes from theorem \ref{theoremerecouvrement}). With the identity 2'. in proposition \ref{propositioninclusionpim}, we obtain
\[
\gamma_{m_j}(t) \in \Pi^{m_j} (\Phi_{i(k)}^{m_j}(D(\alpha))) \subset \Phi_{i(k)}^\infty(D(7\alpha/6)),
\]
and this ends the proof of fact \ref{faitconvergencefinal}.
\end{proof}
After passing to a subsequence of $(m_j)$, we may also assume that for every $k\in \{0,...,n\}$,
\[
\gamma_{m_j}(k/n) \underset{j\rightarrow \infty} \longrightarrow \alpha_{k/n} \in \overline{\Phi_{i(k)}^\infty(D(7\alpha/6))} \subset \Phi_{i(k)}^\infty (D(4\alpha/3)),
\]
where we have $\alpha_0=x$ and $\alpha_1=y$. For every $k \in \{0,...,n-1\}$ we have $\alpha_{k/n},\alpha_{(k+1)/n} \in \Phi_{i(k)}^\infty(D(4\alpha/3))$, and proposition \ref{propositionconvergencelocaledistance} gives
\[
\widetilde{d_{m_j}} (\gamma_{m_j}(k/n), \gamma_{m_j}((k+1)/n))  \underset{j \rightarrow \infty} \longrightarrow \widetilde{d^{i(k)}} (\alpha_{k/n},\alpha_{(k+1)/n}).
\]
Since the curves $\gamma_{m_j}$ are minimizing geodesics, we have
\[
\widetilde{d_{m_j}}(x,y) = \sum_{k=0}^{n-1} \widetilde{d_{m_j}} (\gamma_{m_j}(k/n), \gamma_{m_j}((k+1)/n)),
\]
hence when $j$ goes to infinity we obtain
\begin{eqnarray*}
\liminf_{m \rightarrow \infty} \widetilde{d_m}(x,y) & = & \sum_{k=0}^{n-1} \widetilde{d^{i(k)}} (\alpha_{k/n},\alpha_{(k+1)/n}) \\
 & = & \sum_{k=0}^{n-1} \limsup_{m\rightarrow \infty} \widetilde{d_m}(\alpha_{k/n},\alpha_{(k+1)/n}) \\
 & \geq & \limsup_{m\rightarrow \infty} \sum_{k=0}^{n-1} \widetilde{d_m}(\alpha_{k/n},\alpha_{(k+1)/n}) \\
 & \geq & \limsup_{m\rightarrow \infty} \widetilde{d_m}(x,y).
\end{eqnarray*}
Hence $\lim_{m\rightarrow \infty} \widetilde{d_m}(x,y)$ exists in $[0,+\infty]$, and this limit is finite since $\widetilde{d_m}(x,y) \leq D$. This ends the proof of proposition \ref{propositionexistencelimite}.
\end{proof}

We know prove the \emph{uniform} convergence of $(\widetilde{d_m})$ to $\widetilde{d}$:

\begin{corollary} \label{corollaireconvergenceuniformedistance}
Let $(x_m)$ and $(y_m)$ be two sequences in $\Sigma^\infty$, such that $x_m \rightarrow x \in \Sigma^\infty$ and $y_m \rightarrow y \in \Sigma^\infty$. Then
\[
\widetilde{d_m}(x_m,y_m) \underset{m \rightarrow \infty} \longrightarrow \widetilde{d}(x,y).
\]
\end{corollary}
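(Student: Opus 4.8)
The plan is to deduce the stated convergence along sequences from two facts already established: the pointwise convergence $\widetilde{d_m}(x,y)\to\widetilde d(x,y)$ of Proposition \ref{propositionexistencelimite}, and the ``collapsing'' estimate that $\widetilde{d_m}(x_m,x)\to 0$ whenever $x_m\to x$ in $\Sigma^\infty$. Granting the latter, the corollary is immediate: by the triangle inequality for $\widetilde{d_m}$,
\[
\big|\widetilde{d_m}(x_m,y_m)-\widetilde{d_m}(x,y)\big|\leq \widetilde{d_m}(x_m,x)+\widetilde{d_m}(y_m,y)\longrightarrow 0,
\]
and combining with $\widetilde{d_m}(x,y)\to\widetilde d(x,y)$ yields $\widetilde{d_m}(x_m,y_m)\to\widetilde d(x,y)$. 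So the whole content is the collapsing estimate, and the ``same graph'' trick (writing $x_m\to x$ against the constant sequence $x$) lets us treat it in one localized situation.

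To prove $\widetilde{d_m}(x_m,x)\to 0$ I would work inside a single graph. By Proposition \ref{propositionrecouvrementsigmainfini} choose $i$ with $x\in\Phi_i^\infty(D(4\alpha/3))$ and write $x=\Phi_i^\infty(z)$ with $|z|<4\alpha/3$. For $m$ large, $\|x_m-x\|<\alpha/6$, so part 2 of Lemma \ref{lemmetechniquecartesgraphees} gives $x_m\in\Phi_i^\infty(D(5\alpha/3))$, i.e. $x_m=\Phi_i^\infty(z_m)$; since the $i$-th coordinate of $\Phi_i^\infty$ is the identity, $z_m\to z$, hence $z_m\in D(4\alpha/3)$ for $m$ large. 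Using the identity $(\Pi^m\circ\Psi^m)^{-1}\circ\Phi_i^\infty=(H_i^m)^{-1}\circ f_i^m$ on $D(4\alpha/3)$ (which follows from the commutative diagram defining $f_i^m$ together with $\Phi_i^m=\Psi^m\circ(H_i^m)^{-1}$), we get
\[
\widetilde{d_m}(x_m,x)=d_m\big((H_i^m)^{-1}(f_i^m(z_m)),(H_i^m)^{-1}(f_i^m(z))\big).
\]
Exactly as in the proof of Proposition \ref{propositionconvergencelocaledistance}: $f_i^m(z_m),f_i^m(z)\in D(5\alpha/3)\subset D(2\alpha)$, and $(H_i^m)^{-1}(D(2\alpha))\subset B_m(x_i^m,\varepsilon/4)$ by Theorem \ref{theoremerecouvrement}, so a near-minimizing $d_m$-curve between these two points cannot leave $B_m(x_i^m,\varepsilon)$; hence the right-hand side equals $d_{\omega_i^m,h_i^m}(f_i^m(z_m),f_i^m(z))$.

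It remains to pass to the limit. By Proposition \ref{propositionconvergenceverslinclusion}, $f_i^m\to\mathrm{id}$ uniformly on $D(4\alpha/3)$, so with $z_m\to z$ we get $f_i^m(z_m)\to z$ and $f_i^m(z)\to z$, both limits lying in $D(2\alpha)$; then Corollary \ref{corollaireconvergencedistance} (the local uniform convergence $d_{\omega_i^m,h_i^m}\to d^i$ on $D(2\alpha)$, along the subsequence already extracted in Section \ref{sectionlocalproperties}) gives $d_{\omega_i^m,h_i^m}(f_i^m(z_m),f_i^m(z))\to d^i(z,z)=0$. This proves the collapsing estimate and hence the corollary.

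I do not expect a deep obstacle here: the analytic work — the bound on the harmonic term, the line-segment length estimate, Reshetnyak's local convergence theorem, and the entire embedding/projection construction — is all behind us, and the argument above is essentially formal. The one point that requires care, and which I would regard as the ``main'' (but purely bookkeeping) difficulty, is respecting the nested radii $D(\alpha)\subset D(7\alpha/6)\subset D(4\alpha/3)\subset D(5\alpha/3)\subset D(2\alpha)$ so that each invoked statement — Lemma \ref{lemmetechniquecartesgraphees}, the definition of $f_i^m$, the identification with $d_{\omega_i^m,h_i^m}$, and Corollary \ref{corollaireconvergencedistance} — is applied only on the region where it is valid; this is the same constraint that already governs the proof of Proposition \ref{propositionconvergencelocaledistance}.
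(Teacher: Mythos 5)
Your proof is correct and follows the same strategy as the paper: triangle inequality plus Proposition \ref{propositionexistencelimite} for the middle term, reducing everything to the collapsing estimate $\widetilde{d_m}(x_m,x)\to 0$. The paper obtains that estimate immediately by applying Proposition \ref{propositionconvergencelocaledistance} to the pair of sequences $(x_m)$ and the constant sequence $y_m=y=x$, whereas you re-derive that special case from the underlying lemmas (\ref{lemmetechniquecartesgraphees}, \ref{propositioninclusionpim}, \ref{propositionconvergenceverslinclusion} and Corollary \ref{corollaireconvergencedistance}); this is a valid but unnecessary detour, since the cited proposition already covers the case of one constant sequence.
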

\begin{proof}
We have
\begin{eqnarray*}
|\widetilde{d_m}(x_m,y_m) - \widetilde{d}(x,y)| & \leq & |\widetilde{d_m}(x_m,y_m) - \widetilde{d_m}(x,y)| + |\widetilde{d_m}(x,y) - \widetilde{d}(x,y)| \\
 & \leq & \widetilde{d_m}(x_m,x) + \widetilde{d_m}(y_m,y) + |\widetilde{d_m}(x,y) - \widetilde{d}(x,y)|.
\end{eqnarray*}
By definition, $|\widetilde{d_m}(x,y) - \widetilde{d}(x,y)|$ goes to zero. And if $i \in \{1,...,N\}$ is such that $x \in \Phi_i^\infty(D(4\alpha/3))$, then proposition \ref{propositionconvergencelocaledistance} shows that $\widetilde{d_m}(x_m,x) \rightarrow \widetilde{d}^i(x,x)=0$. For the same reason $\widetilde{d_m}(y_m,y) \rightarrow 0$, and this ends the proof.
\end{proof}

To finish the proof of the Main theorem, we need to show that $\widetilde{d}$ is a metric with B.I.C. on $\Sigma^\infty$.

\begin{proposition}
$\widetilde{d}$ is a distance on $\Sigma^\infty$.
\end{proposition}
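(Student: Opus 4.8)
The plan is to verify the three axioms of a distance for $\widetilde d$. Symmetry and the triangle inequality are immediate from the fact that $\widetilde d$ is a pointwise limit of the distances $\widetilde{d_m}$ (each $\widetilde{d_m}$ being the pull-back of the distance $d_m$ by the homeomorphism $\Pi^m\circ\Psi^m$): for all $x,y,z\in\Sigma^\infty$, $\widetilde d(x,y)=\lim_m\widetilde{d_m}(x,y)=\lim_m\widetilde{d_m}(y,x)=\widetilde d(y,x)$, and $\widetilde d(x,z)=\lim_m\widetilde{d_m}(x,z)\le\lim_m\bigl(\widetilde{d_m}(x,y)+\widetilde{d_m}(y,z)\bigr)=\widetilde d(x,y)+\widetilde d(y,z)$. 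Non-negativity and $\widetilde d(x,x)=0$ are clear. So the only point requiring work is the separation property: $\widetilde d(x,y)>0$ whenever $x\ne y$.

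First I would record that each local metric $\widetilde{d^i}$ is a genuine distance on the graph $\Phi_i^\infty(D(4\alpha/3))$. By Corollary \ref{corollaireconvergencedistance} we have $d^i=C_i\cdot\overline d_{\widetilde\omega_i,0}$ with $C_i>0$, $\widetilde\omega_i=\omega_i+\mu_i$ as in its proof (with $\mu_i$ supported on $\partial D(r)$), and $\widetilde{d^i}=((\Phi_i^\infty)^{-1})^*d^i$; since $\Phi_i^\infty$ is injective on $D(4\alpha/3)$, it suffices that $\overline d_{\widetilde\omega_i,0}$ separates the points of $D(4\alpha/3)$. The total mass bound $\omega_{H_i^m}^+(\overline D(1/2))=\omega_m^+(B_m(x_i^m,\varepsilon))\le2\pi-\delta$ passes to the weak limit, and $\mu_i$ is absolutely continuous with respect to arclength on $\partial D(r)$ — being a weak limit of measures of uniformly bounded density, the bound on $\tfrac{\partial h_i^m}{\partial\nu}$ coming from Theorem \ref{theoremetermeharmoniqueborne} through the Cauchy estimates — hence atomless. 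Therefore $\widetilde\omega_i^+(\{z\})\le 2\pi-\delta<2\pi$ for every $z\in\overline D(1/2)$, so $\overline d_{\widetilde\omega_i,0}$ has no cusp and, by the same argument as in Proposition \ref{propositioninegaliteomegaplus}, is a bona fide distance; hence so is $\widetilde{d^i}$.

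Now suppose $x\ne y$ and, for contradiction, $\widetilde d(x,y)=0$. The proof of Proposition \ref{propositionexistencelimite} produces an integer $n$ and points $x=\alpha_0,\alpha_{1/n},\ldots,\alpha_1=y$ such that, for each $k$, $\alpha_{k/n}$ and $\alpha_{(k+1)/n}$ lie in a common graph $\Phi_{i(k)}^\infty(D(4\alpha/3))$, together with the identity
\[
\widetilde d(x,y)=\sum_{k=0}^{n-1}\widetilde{d^{i(k)}}\bigl(\alpha_{k/n},\alpha_{(k+1)/n}\bigr).
\]
Every summand is non-negative, hence all vanish, and by the previous paragraph each $\widetilde{d^{i(k)}}$ is a distance; so $\alpha_{k/n}=\alpha_{(k+1)/n}$ for every $k$, and telescoping gives $x=\alpha_0=\cdots=\alpha_1=y$, a contradiction. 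This proves that $\widetilde d$ is a distance on $\Sigma^\infty$.

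The main obstacle is the second paragraph: one must check that the limiting conformal data $\widetilde\omega_i$ still satisfies the no-cusp condition $\widetilde\omega_i^+(\{z\})<2\pi$ everywhere, so that the limit metric $\overline d_{\widetilde\omega_i,0}$ does not degenerate — this is the last place the uniform bound $\omega^+(B(x,\varepsilon))\le2\pi-\delta$ from the definition of $\M_\Sigma(A,c,\varepsilon,\delta)$ enters, together with the atomlessness of $\mu_i$. Everything else is a formal consequence of the pointwise convergence $\widetilde{d_m}\to\widetilde d$ and of the decomposition of $\widetilde d$ into local pieces already obtained.
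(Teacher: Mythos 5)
Your proof is correct, but the separation argument is routed differently from the paper's. The paper's approach is more direct: assuming $\widetilde d(x,y)=0$, it uses the covering of $\Sigma$ by the balls $B_m(x_i^m,\kappa\varepsilon/2)$ to show that, along a subsequence and for $j$ large, the pre-images of $x$ and $y$ under $\Pi^{m_j}\circ\Psi^{m_j}$ both lie in a single ball $B_{m_j}(x_i^{m_j},\kappa\varepsilon)$; by Theorem \ref{theoremerecouvrement} and Proposition \ref{propositioninclusionpim} this places $x$ and $y$ in one graph $\Phi_i^\infty(D(4\alpha/3))$, and Proposition \ref{propositionconvergencelocaledistance} then gives $\widetilde{d^i}(x,y)=0$, whence $x=y$. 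You instead re-invoke the chain decomposition $\widetilde d(x,y)=\sum_k\widetilde{d^{i(k)}}(\alpha_{k/n},\alpha_{(k+1)/n})$ from the proof of Proposition \ref{propositionexistencelimite} and telescope; this also works, at the cost of reusing a heavier piece of machinery than is needed. On the other hand, your write-up is more thorough in that it isolates and justifies the fact that each $\widetilde{d^i}$ (equivalently $\overline d_{\widetilde\omega_i,0}$ on $D(4\alpha/3)$) separates points, a fact the paper uses silently both here and in Lemma \ref{lemmechoixkappa}, relying on the no-cusp bound recorded inside the proof of Corollary \ref{corollaireconvergencedistance}. Your Cauchy-estimate argument for the atomlessness of $\mu_i$ on $\partial D(r)$ is, however, superfluous: the support of $\mu_i^\pm$ is contained in $\partial D(r)$, which is disjoint from $D(2\alpha)\supset D(4\alpha/3)$, so $\mu_i^+(\{z\})=0$ for $z\in D(2\alpha)$ automatically; combined with $\omega_i^+(\{z\})\le\omega_i^+(D(1/2))\le 2\pi-\delta$ (the uniform bound passing to the weak limit), this already yields $\widetilde\omega_i^+(\{z\})<2\pi$ on $D(2\alpha)$, which is all the separation property requires.
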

\begin{proof}
By definition of $\widetilde{d}(x,y)=\lim_{m \rightarrow \infty}\widetilde{d_m}(x,y)$, symmetry and triangular inequality are clear. Now, consider some $x,y\in\Sigma^\infty$ with $\widetilde{d}(x,y)=0$. Then $\widetilde{d_m}(x,y) \rightarrow 0$. For every $m \in \N$, there exists an integer $i(m) \in \{1,...,N\}$ such that
\[
(\Pi^m \circ \Psi^m)^{-1}(x) \in B_m(x_{i(m)}^{m},\kappa \varepsilon/2);
\]
since $i(m)$ belong to a finite set, there exists a subsequence $m_j$ of $(m)$ and an integer $i\in \{1,...,N\}$ such that $i({m_j})=i$ for all $j \in \N$:
\[
(\Pi^{m_j} \circ \Psi^{m_j})^{-1}(x) \in B_{m_j}(x_i^{m_j},\kappa \varepsilon/2).
\]
If $j$ is large enough so that $\widetilde{d_{m_j}}(x,y) \leq \kappa \varepsilon/2$, we have
\[
d_{m_j}((\Pi^{m_j} \circ \Psi^{m_j})^{-1}(x),(\Pi^{m_j} \circ \Psi^{m_j})^{-1}(y)) \leq \kappa \varepsilon/2,
\]
hence
\[
(\Pi^{m_j} \circ \Psi^{m_j})^{-1}(x) \mbox{ and } (\Pi^{m_j} \circ \Psi^{m_j})^{-1}(y) \mbox{ belong to } B_{m_j}(x_i^{m_j},\kappa \varepsilon).
\]
Then as in the end of the proof of fact \ref{faitconvergencefinal} we have
\[
(\Pi^{m_j})^{-1}(x) \mbox{ and } (\Pi^{m_j})^{-1}(y) \mbox{ belong to } \Psi^{m_j}(B_{m_j}(x_i^{m_j},\kappa \varepsilon)),
\]
and we have
\[
\Psi^{m_j}(B_{m_j}(x_i^{m_j},\kappa \varepsilon)) = \Phi_i^{m_j} (H_i^{m_j} (B_{m_j}(x_i^{m_j},\kappa \varepsilon))) \subset \Phi_i^{m_j}(D(\alpha))
\]
(the last inclusion comes from theorem \ref{theoremerecouvrement}). Hence we obtain
\[
x \mbox{ and } y \mbox{ belong to } \Pi^{m_j} (\Phi_i^{m_j}(D(\alpha))) \subset \Phi_i^\infty(D(7\alpha/6))
\]
(the last inclusion comes from the identity 2'. in proposition \ref{propositioninclusionpim}). Since $x,y \in \Phi_i^\infty(D(4\alpha/3))$, we can apply proposition \ref{propositionconvergencelocaledistance} to obtain $\widetilde{d_{m_j}}(x,y) \rightarrow \widetilde{d}^i(x,y)$, so $\widetilde{d^i}(x,y)=0$ and $x=y$.
\end{proof}

The fact that $\widetilde{d}$ is an intrinsic distance comes from the following lemma, which has its own interest:
\begin{proposition}
Let $x,y \in \Sigma^\infty$ and $\gamma_m : [0,1] \rightarrow \Sigma^\infty$ be minimizing geodesics for the metric $\widetilde{d_m}$, between $x$ and $y$ (with $\gamma_m(0)=x$ and $\gamma_m(1)=y$), and parametrized with constant speed. Then there exists a subsequence $(m_j)$ of $(m)$ such that $\gamma_{m_j}$ converges uniformly to a continuous curve $\gamma : [0,1] \rightarrow \Sigma^\infty$, and for the metric $\widetilde{d}$, $\gamma$ is a minimizing geodesic between $x$ and $y$.
\end{proposition}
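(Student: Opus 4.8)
The plan is to apply the Arzelà--Ascoli theorem to the curves $\gamma_m$, viewed as maps from the compact interval $[0,1]$ into the compact set $\Sigma^\infty\subset\R^q$, and then to pass to the limit in the equality that characterises a constant-speed minimizing geodesic. First I would record the uniform Lipschitz bound: since $\gamma_m$ is a minimizing geodesic for $\widetilde{d_m}$ parametrized with constant speed, for all $s,t\in[0,1]$ we have $\widetilde{d_m}(\gamma_m(s),\gamma_m(t))=\widetilde{d_m}(x,y)\,|s-t|$, and $\widetilde{d_m}(x,y)=d_m\big((\Pi^m\circ\Psi^m)^{-1}(x),(\Pi^m\circ\Psi^m)^{-1}(y)\big)\le\diam(\Sigma,d_m)\le D$, so $\widetilde{d_m}(\gamma_m(s),\gamma_m(t))\le D\,|s-t|$ for every $m$.

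Next I would turn this moving-metric Lipschitz bound into honest equicontinuity with respect to one fixed metric on $\Sigma^\infty$. Using that $\widetilde d$ is compatible with the (compact) manifold topology of $\Sigma^\infty$ — which, exactly as in the proof that $\widetilde d$ is a distance, follows from Proposition \ref{propositionconvergencelocaledistance}, since near any point $\widetilde d$ coincides with the B.I.C. distance $\widetilde{d^i}$ — one gets a uniform modulus: for every $\epsilon>0$ there is $\delta>0$ with $\widetilde d(p,q)<\delta\Rightarrow\|p-q\|<\epsilon$ for all $p,q\in\Sigma^\infty$. Combined with the uniform convergence $\|\widetilde{d_m}-\widetilde d\|_\infty\to 0$ (Corollary \ref{corollaireconvergenceuniformedistance}) and the inequality $\widetilde d(\gamma_m(s),\gamma_m(t))\le D|s-t|+\|\widetilde{d_m}-\widetilde d\|_\infty$, this gives equicontinuity (w.r.t.\ the Euclidean norm of $\R^q$) of $\{\gamma_m\}$ for $m$ large; the finitely many remaining $m$ are handled by the individual uniform continuity of each $\gamma_m$ (it is $\widetilde{d_m}$-Lipschitz and $\widetilde{d_m}$ is compatible with the topology of $\Sigma^\infty$). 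Since all images lie in the compact set $\Sigma^\infty$, Arzelà--Ascoli produces a subsequence $(m_j)$ and a continuous curve $\gamma:[0,1]\to\R^q$ with $\gamma_{m_j}\to\gamma$ uniformly; $\Sigma^\infty$ being closed, $\gamma$ takes values in $\Sigma^\infty$, with $\gamma(0)=x$ and $\gamma(1)=y$.

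Finally I would identify $\gamma$ as a minimizing $\widetilde d$-geodesic. Fix $s,t\in[0,1]$. Since $\gamma_{m_j}(s)\to\gamma(s)$ and $\gamma_{m_j}(t)\to\gamma(t)$ in $\Sigma^\infty$, Corollary \ref{corollaireconvergenceuniformedistance} gives $\widetilde{d_{m_j}}(\gamma_{m_j}(s),\gamma_{m_j}(t))\to\widetilde d(\gamma(s),\gamma(t))$, while $\widetilde{d_{m_j}}(x,y)\to\widetilde d(x,y)$ by the definition of $\widetilde d$ (Proposition \ref{propositionexistencelimite}). Passing to the limit in $\widetilde{d_{m_j}}(\gamma_{m_j}(s),\gamma_{m_j}(t))=\widetilde{d_{m_j}}(x,y)\,|s-t|$ yields
\[
\widetilde d(\gamma(s),\gamma(t))=\widetilde d(x,y)\,|s-t|\qquad\text{for all }s,t\in[0,1].
\]
In particular every partition $0=t_0\le\dots\le t_n=1$ gives $\sum_i\widetilde d(\gamma(t_i),\gamma(t_{i+1}))=\widetilde d(x,y)$, so $L_{\widetilde d}(\gamma)=\widetilde d(x,y)$; as any curve joining $x$ to $y$ has $\widetilde d$-length at least $\widetilde d(x,y)$, the continuous curve $\gamma$ realises the distance, i.e.\ it is a minimizing geodesic for $\widetilde d$, and the displayed identity says precisely that it has constant speed.

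The only genuinely delicate point is the second paragraph — converting the natural $\widetilde{d_m}$-Lipschitz estimate (a bound for a varying metric) into equicontinuity for a single fixed metric on $\Sigma^\infty$, which rests on the uniform convergence $\widetilde{d_m}\to\widetilde d$, the compactness of $\Sigma^\infty$, and the compatibility of all these metrics with the manifold topology. Everything else (the extension of the charts, the uniform convergence of the transition maps, and the distance-convergence statements) is already available, and once equicontinuity is in hand the passage to the limit in the geodesic equation is immediate.
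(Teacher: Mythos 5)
Your proof is correct, and the core strategy is the same as the paper's: an Arzel\`a--Ascoli extraction based on the equi-Lipschitz bound $\widetilde{d_m}(\gamma_m(s),\gamma_m(t))=\widetilde{d_m}(x,y)\,|s-t|\le D\,|s-t|$ furnished by the constant-speed parametrization and the diameter bound, followed by passage to the limit in that identity using Corollary \ref{corollaireconvergenceuniformedistance}. The organisation differs in one genuine respect, which you have correctly identified as the delicate point. You first establish that $\widetilde d$ is compatible with the manifold topology of $\Sigma^\infty$ (via the local identification $\widetilde d=\widetilde{d^i}$ from Proposition \ref{propositionconvergencelocaledistance}), so as to trade the moving-metric Lipschitz bound for honest Euclidean equicontinuity and cite Arzel\`a--Ascoli as a black box. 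The paper instead re-runs the Ascoli mechanism in-line: it fixes a countable dense set $\{t_k\}$, extracts diagonally so that each $\gamma_{m_j}(t_k)$ converges, observes that the limiting assignment is $\widetilde d$-Lipschitz, extends by density, and then verifies uniform convergence working directly against the varying metrics $\widetilde{d_{m_j}}$ (using only $\widetilde d\le\widetilde{d_{m_j}}+\varepsilon$ for $j$ large). The paper thereby never has to invoke compatibility of $\widetilde d$ with the topology at this stage, a fact it only records in the final proposition, whereas your route introduces that dependency earlier. Both variants are sound --- compatibility of $\widetilde d$ with the topology does follow from the local identification with $\widetilde{d^i}$, or equivalently from $\widetilde d$ being a distance and a uniform limit of the topology-compatible $\widetilde{d_m}$ --- so the difference is one of logical hygiene rather than correctness. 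Your concluding step, passing to the limit to get $\widetilde d(\gamma(s),\gamma(t))=\widetilde d(x,y)\,|s-t|$ and summing over a partition to conclude $L_{\widetilde d}(\gamma)=\widetilde d(x,y)$, matches the paper's.
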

\begin{proof}
We adapt the proof of Arzela-Ascoli's lemma to our setting. We have
\[
\widetilde{d_m}(\gamma_m(t),\gamma_m(t'))= \widetilde{d_m}(x,y) \cdot |t-t'| \leq D \cdot |t-t'|.
\]
Now let $\{t_k, k \in \N\}$ be a dense subset of $[0,1]$: by a diagonal argument, we can construct a subsequence $(m_j)$ of $(m)$ such that for every $k \in \N$, $\gamma_{m_j}(t_k) \rightarrow_{j\rightarrow \infty} \alpha_k \in \Sigma^\infty$. Let $\gamma : \{t_k, k \in \N\} \rightarrow \Sigma^\infty$ be the map defined by $\gamma(t_k) := \alpha_k$. We have
\[
\widetilde{d_{m_j}}(\gamma_{m_j}(t_{k_1}),\gamma_{m_j}(t_{k_2})) \leq D \cdot |t_{k_1}-t_{k_2}|,
\]
and when $j$ goes to infinity, with corollary \ref{corollaireconvergenceuniformedistance} we get
\[
\widetilde{d}(\gamma(t_{k_1}),\gamma(t_{k_2})) \leq D \cdot |t_{k_1} - t_{k_2}|.
\]
$\gamma$ is then Lipschitz on $\{t_k, k \in \N\}$, which is dense in $[0,1]$, so there exists a (unique) Lipschitz extension $\gamma : [0,1] \rightarrow \Sigma^\infty$. Then, when $j$ goes to infinity,
\[
\gamma_{m_j} \mbox{ converges uniformly on } [0,1] \mbox{ to  } \gamma.
\]
Indeed, let $(u_{m_j})$ be a sequence in $[0,1]$ such that $u_{m_j} \rightarrow u \in [0,1]$: we want to show that $\gamma_{m_j}(u_{m_j}) \rightarrow \gamma(u)$ when $j$ goes to infinity. Let $\varepsilon>0$, and suppose $j$ is large enough so that $\widetilde{d} \leq \widetilde{d_{m_j}} + \varepsilon$ on $\Sigma^\infty \times \Sigma^\infty$. Take some $k \in \N$ such that $|t_k - u| \leq \varepsilon/D$. Then we have
\begin{eqnarray*}
\widetilde{d}(\gamma_{m_j}(u_{m_j}),\gamma(u)) & \leq & \widetilde{d}(\gamma_{m_j}(u_{m_j}),\gamma_{m_j}(t_k)) + \widetilde{d}(\gamma_{m_j}(t_k),\gamma(t_k)) + \widetilde{d}(\gamma(t_k),\gamma(u)) \\
 & \leq & \widetilde{d_{m_j}}(\gamma_{m_j}(u_{m_j}),\gamma_{m_j}(t_k)) + \varepsilon + \widetilde{d}(\gamma_{m_j}(t_k),\gamma(t_k)) + \widetilde{d}(\gamma(t_k),\gamma(u)) \\
 & \leq & D \cdot |u_{m_j}-t_k| + \varepsilon + \widetilde{d}(\gamma_{m_j}(t_k),\gamma(t_k)) + D \cdot |t_k-u| \\
 & \leq & D \cdot |u_{m_j}-t_k| + \varepsilon + \widetilde{d}(\gamma_{m_j}(t_k),\gamma(t_k)) + \varepsilon,
\end{eqnarray*}
and the right-hand side is $\leq 3\varepsilon$ if $j$ is large enough, so $\gamma_{m_j}(u_{m_j})$ converges to $\gamma(u)$ as $j$ goes to infinity.

Finally, the metric $\widetilde{d}$, $\gamma$ is a minimizing geodesic between $x$ and $y$: for every subdivision $0=\lambda_1 \leq \lambda_2 \leq ... \leq \lambda_p=1$ of $[0,1]$, since $\gamma_{m_j}$ is a minimizing geodesic we have
\[
\widetilde{d_{m_j}}(x,y) = \sum_{k=0}^{p-1} \widetilde{d_{m_j}}(\gamma_{m_j}(\lambda_k), \gamma_{m_j}(\lambda_{k+1})).
\]
When $j$ goes to infinity we get
\[
\widetilde{d}(x,y) = \sum_{k=0}^{p-1} \widetilde{d}(\gamma(\lambda_k), \gamma(\lambda_{k+1})),
\]
and this proves the claim, taking the supremum over all subdivisions $0=\lambda_1 \leq \lambda_2 \leq ... \leq \lambda_p=1$ of $[0,1]$: $\widetilde{d}(x,y)$ is equal to the $\widetilde{d}-$length of the curve $\gamma$.
\end{proof}

The next proposition finishes the proof of the Main theorem.

\begin{proposition}
 $\widetilde{d}$ is a metric with B.I.C. on $\Sigma^\infty$.
\end{proposition}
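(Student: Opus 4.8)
The plan is to check the three conditions of the definition of a metric with B.I.C. by approximation (Definition \ref{definitionsurfacesBICbyapproximation}), applied to the closed surface $\Sigma^\infty$ in place of $\Sigma$. The first of these, that $\widetilde{d}$ is an intrinsic distance on $\Sigma^\infty$, has already been established in the two preceding propositions, so the work reduces to two points: that $\widetilde{d}$ is compatible with the topology of $\Sigma^\infty$, and that there is a sequence of Riemannian metrics on $\Sigma^\infty$ whose length distances converge uniformly to $\widetilde{d}$ and whose integrals of $|\K|$ are uniformly bounded.

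For the topology, write $\tau$ for the smooth topology of the closed surface $\Sigma^\infty$. Each $\widetilde{d_m} = ((\Pi^m \circ \Psi^m)^{-1})^* d_m$ is compatible with $\tau$, since $\Pi^m \circ \Psi^m : \Sigma \to \Sigma^\infty$ is a diffeomorphism and $d_m$ is compatible with the topology of $\Sigma$. By Corollary \ref{corollaireconvergenceuniformedistance} the convergence $\widetilde{d_m} \to \widetilde{d}$ is uniform on $\Sigma^\infty \times \Sigma^\infty$, so for any $\delta>0$ there is an $m$ with $\widetilde{d} \leq \widetilde{d_m} + \delta$; since $\widetilde{d_m}$ induces $\tau$, this shows that the identity map $(\Sigma^\infty, \tau) \to (\Sigma^\infty, \widetilde{d})$ is continuous, and, being a continuous bijection from a compact space to a metric (hence Hausdorff) space, it is a homeomorphism. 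Thus $\widetilde{d}$ induces $\tau$.

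For the approximating sequence I would use a diagonal argument. Each $\widetilde{d_m}$ is isometric to $(\Sigma, d_m)$, hence is itself a metric with B.I.C. on $\Sigma^\infty$, with total curvature $|\widetilde{\omega}_m|(\Sigma^\infty) = |\omega|(\Sigma, d_m) \leq \Omega$. By the smoothing theorem of Reshetnyak (\cite{Reshetnyak_livre}), $\widetilde{d_m}$ is the uniform limit of a sequence of smooth Riemannian metrics $g_{m,k}$ on $\Sigma^\infty$ with $\int_{\Sigma^\infty} |\K_{g_{m,k}}| \, d\mathcal{A}_{g_{m,k}} \to |\widetilde{\omega}_m|(\Sigma^\infty) \leq \Omega$ as $k \to \infty$; in particular $\int_{\Sigma^\infty} |\K_{g_{m,k}}| \, d\mathcal{A}_{g_{m,k}} \leq \Omega + 1$ for $k$ large. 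Choosing for each $m$ an index $k(m)$ with $\| d_{g_{m,k(m)}} - \widetilde{d_m} \|_\infty \leq 1/m$ and $\int_{\Sigma^\infty} |\K_{g_{m,k(m)}}| \, d\mathcal{A}_{g_{m,k(m)}} \leq \Omega+1$, and setting $g_m := g_{m,k(m)}$, one gets $\| d_{g_m} - \widetilde{d}\|_\infty \leq 1/m + \| \widetilde{d_m} - \widetilde{d}\|_\infty \to 0$ and $\sup_m \int_{\Sigma^\infty} |\K_{g_m}| \, d\mathcal{A}_{g_m} < \infty$. Together with the intrinsic property and the topology compatibility, Definition \ref{definitionsurfacesBICbyapproximation} then yields that $\widetilde{d}$ is a metric with B.I.C. on $\Sigma^\infty$, finishing the proof of the Main theorem.

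The step I expect to be the main obstacle is securing the \emph{uniform} curvature bound in the last point: one needs the smooth approximants of $\widetilde{d_m}$ to have total absolute curvature asymptotically controlled by $|\widetilde{\omega}_m|$, and not merely bounded, so that the constant $\Omega+1$ does not depend on $m$. This refined version of the approximation theorem is available in \cite{Reshetnyak_livre}; alternatively one can carry out the mollification directly in the conformal charts, where the metric reads $g_{\omega,h} = e^{2V[\omega]+2h}|dz|^2$ and where replacing $\omega = \omega^+ - \omega^-$ by $\rho \ast \omega^+ - \rho \ast \omega^-$ does not increase the total variation $|\omega|$, and then patch these local smoothings together. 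As a by-product of the whole argument one also obtains, after passing to a further subsequence, the weak convergence of curvature measures $\widetilde{\omega}_m \rightharpoonup \widetilde{\omega}$ and the convergence of the area measures, but these facts are not needed for the statement at hand.
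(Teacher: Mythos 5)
Your proof is correct and follows essentially the same route as the paper: verify topology compatibility from the uniform convergence $\widetilde{d_m}\to\widetilde{d}$, then approximate $\widetilde{d}$ by a diagonal sequence of smooth Riemannian metrics taken from approximating sequences of the $\widetilde{d_m}$, using that each $\widetilde{d_m}$ (being isometric to $(\Sigma,d_m)$) is B.I.C.\ with total absolute curvature $\leq\Omega$, so the smooth approximants can be chosen with total absolute curvature $\leq\Omega+1$ uniformly in $m$. Your explicit flagging of the need for uniform control on $\int|\K_{g_m}|\,d\mathcal{A}_{g_m}$ — which the paper uses implicitly — is the one subtlety, and you correctly point to Reshetnyak's smoothing theorem (or direct mollification in conformal charts) as the source of that control.
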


\begin{proof}
$\widetilde{d}$ is an intrinsic metric, and is compatible with the topology of $\Sigma^\infty$: for every $\varepsilon>0$, if $m$ is large enough, for any $x \in \Sigma$ we have
\[
B_{\widetilde{d}}(x,\varepsilon/2) \subset B_{\widetilde{d_m}}(x,\varepsilon) \subset B_{\widetilde{d}}(x,2\varepsilon)
\]
(with obvious notations), and the metric $\widetilde{d_m}$ is compatible with the topology of $\Sigma^\infty$, so $\widetilde{d}$ is also compatible with the topology of $\Sigma^\infty$.

And for every $\varepsilon>0$, consider some $m \in \N$ such that $||\widetilde{d_m} - \widetilde{d}||_{\infty} \leq \varepsilon$. Since $\widetilde{d_m}$ is a metric with B.I.C., there exists some smooth Riemannian metric $g$ on $\Sigma^\infty$ with $||\widetilde{d_m}-d_g|| \leq \varepsilon$, and with $\int_{\Sigma^\infty} |\K_g| d\mathcal{A}_g \leq \Omega+1$. We then have $||\widetilde{d}-d_g|| \leq 2\varepsilon$: $\widetilde{d}$ can be uniformly approximate by Riemannian metrics, with $\int_\Sigma |\K_g| d\mathcal{A}_g$ bounded, hence $\widetilde{d}$ is a metric with B.I.C. (see definition \ref{definitionsurfacesBICbyapproximation}).
\end{proof}

\section*{Appendix: conformal geometry of an annulus}

The material presented here is standard; we recall it to fix the notations we use in this article. A classical reference is the book of L. Ahlfors $\cite{Ahlfors}$.

\begin{definition} \label{definitionannulus}
 A \emph{topological annulus} is a bounded open set of the plane $U$, such that $\C-U$ has only one bounded component, and this component is not reduced to a point.
\end{definition}

Every topological annulus $U$ is conformal to a regular annulus
\[
A(R_1,R_2) = \{ z \in \mathbb{C} | ~ R_1 < |z| < R_2 \},
\]
for some $0< R_1 < R_2<\infty$, and the ratio $R_2/R_1$ is uniquely determined by $U$ (see \cite{Ahlfors2}).

\subsection*{Modulus of a topological annulus $U$}

Let $U$ be a topological annulus. Let $\Gamma$ be the set of continuous simple curves $\gamma : [0,l] \rightarrow U$, parametrized by arc-length (that is for every $t_1 \leq t_2$, the Euclidean length of $\gamma_{|[t_1,t_2]}$ is $t_2-t_1$), joigning the bounded and the unbounded components of $\C-U$: that is, $\gamma(0)$ (resp., $\gamma(1)$) belongs to the bounded (resp., unbounded) component of $\C-U$, and $\gamma(t) \in U$ for $t \in (0,1)$. If $\rho : U \rightarrow [0,+\infty]$ is a mesurable map, we define the $\rho-$length of $\gamma$ by
\[
L_\rho (\gamma) := \int_\gamma \rho |dz|= \int_0^l \rho(\gamma(t)) dt,
\]
and the $\rho-$area of $U$ by
\[
A_\rho(U) := \iint_U \rho^2 d\lambda.
\]
These are the length of $\gamma$ (resp., the area of $U$) for the (singular) Riemannian metric $g=\rho^2 |dz|^2$. We define the modulus of $U$ as follows:
\[
\module(U) := \sup_{\rho} \frac{\inf_{\gamma \in \Gamma} L_\rho (\gamma)^2}{A_\rho (U)},
\]
where the supremum is taken over all mesurable maps $\rho$ with $0 < A_\rho(U) < +\infty$.

The modulus of an annulus is a conformal invariant, and it measures the "thickness" of the annulus: if $U$ and $U'$ are two annuli with $U \subset U'$, then $\module(U) \leq \module(U')$. For example for a regular annulus $U=A(R_1,R_2)$ we have $\module(U) = \frac{1}{2\pi} \ln(R_2/R_1)$. 

\subsection*{The Grötzsch annulus}

Let $0<r<1$. The \emph{Grötzsch annulus} is the following topological annulus:
\[
G(r) := D(1) - [0,r].
\]
\begin{center}
\begin{tikzpicture}[scale=2]
\draw (0,0) circle (1) ;
\draw (0,0) -- (0.6,0) ;
\draw (0,0) node {$\bullet$};
\draw (0,0) node [below] {0};
\draw (0.6,0) node {$\bullet$};
\draw (0.6,0) node [below] {$r$};
\draw (0,0.5) node {$G(r)$};
\end{tikzpicture}
\captionof{figure}{}
\end{center}

It is also defined as the set of complex numbers $z$ with $|z|>1$ and $z \notin [1/r,+\infty)$; we can pass from one definition to the other by the conformal map $z \mapsto 1/z$.

The map $r \in (0,1) \mapsto \module (G(r))$ is decreasing, and we have $\lim_{r \rightarrow 0^+} \module (G(r)) = +\infty$ and $\lim_{r \rightarrow 1^-} \module (G(r)) = 0$. We will use the following theorem:
\begin{theorem}[Grötzsch] \label{theoremegrotzsch}
Let $U \subset D(1)$ be a topological annulus, not containing 0 and $r$. Then
\[
\module (U) \leq \module (G(r)).
\]
\end{theorem}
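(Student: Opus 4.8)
The plan is to prove Theorem~\ref{theoremegrotzsch} by \emph{circular symmetrization}. Write $\C\setminus U = E_0\sqcup E_1$, where $E_0$ is the bounded complementary component (a continuum, not a point) and $E_1\ni\infty$; since $U\subset D(1)$ one has $\{|z|\geq 1\}\subset E_1$, hence $E_0\subset D(1)$, and the hypothesis is to be read as $0,r\in E_0$ (this is the configuration in all our applications, and the inequality fails without it). Because $U\subset D(1)\setminus E_0$ and the two topological annuli $U$ and $D(1)\setminus E_0$ have the same bounded complementary component $E_0$, the monotonicity of the modulus under inclusion (recalled in this appendix) gives $\module(U)\leq\module(D(1)\setminus E_0)$. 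So it is enough to show that, among all continua $K\subset D(1)$ with $0,r\in K$, the segment $K=[0,r]$ maximizes $\module(D(1)\setminus K)$, i.e. $\module(D(1)\setminus K)\leq\module(G(r))$.

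Next I would symmetrize $K$. Let $\ell=\{z:\arg z=0\}$ and let $K^{\sharp}$ be the circular symmetrization of $K$ about $\ell$: for each $\rho>0$, $K^{\sharp}\cap\{|z|=\rho\}$ is the closed sub-arc of $\{|z|=\rho\}$ centered at $\rho\in\ell$ whose angular length equals the Lebesgue measure of $K\cap\{|z|=\rho\}$ (the whole circle when that measure is $2\pi$, the single point $\{\rho\}$ when it is $0$), and $0\in K^{\sharp}$. Then $K^{\sharp}\subset D(1)$ (symmetrization preserves $|z|$), $K^{\sharp}$ is again a continuum (a classical fact), and it is not a point since it contains $0$ and $r$. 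Moreover, as $K$ is connected and contains $0$ and $r$, the function $z\mapsto|z|$ takes every value of $[0,r]$ on $K$, so $K\cap\{|z|=\rho\}\neq\emptyset$ for all $\rho\in[0,r]$, whence $\rho\in K^{\sharp}$ for each such $\rho$; that is, $[0,r]\subset K^{\sharp}$. Therefore $D(1)\setminus K^{\sharp}\subset D(1)\setminus[0,r]=G(r)$, and monotonicity gives $\module(D(1)\setminus K^{\sharp})\leq\module(G(r))$. The theorem thus reduces to the symmetrization inequality $\module(D(1)\setminus K)\leq\module(D(1)\setminus K^{\sharp})$.

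This inequality is the one genuinely non-routine step, and is where the main work lies. I would establish it by the length--area method through the extremal harmonic metric. Let $u$ be the harmonic measure of $D(1)\setminus K$, i.e. the harmonic function with boundary values $1$ on $K$ and $0$ on $\partial D(1)$; the extremal density in the definition $\module(D(1)\setminus K)=\sup_{\rho}(\inf_\gamma L_\rho(\gamma))^2/A_\rho$ is $\rho=|\nabla u|$, which yields $\module(D(1)\setminus K)=1/\iint_{D(1)\setminus K}|\nabla u|^2$ (the normalisation is checked on a round annulus, where it reproduces $\frac{1}{2\pi}\ln(R_2/R_1)$). Let $u^{\sharp}$ be the circular rearrangement of $u$: on each circle $\{|z|=\rho\}$ replace $u(\rho e^{i\cdot})$, defined on the arcs of $(D(1)\setminus K)\cap\{|z|=\rho\}$, by its symmetric non-increasing rearrangement about $\ell$ on a single arc of equal total angular length, oriented so that the large values sit next to $K^{\sharp}$. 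Splitting $\iint|\nabla u|^2=\iint\big((\partial_\rho u)^2+\rho^{-2}(\partial_\phi u)^2\big)\,\rho\,d\rho\,d\phi$, the one-dimensional Pólya--Szegő inequality applied circle by circle gives $\int(\partial_\phi u^{\sharp})^2\leq\int(\partial_\phi u)^2$, and a coarea/level-set analysis controls the radial contribution, so that $\iint_{D(1)\setminus K^{\sharp}}|\nabla u^{\sharp}|^2\leq\iint_{D(1)\setminus K}|\nabla u|^2$. Since $u^{\sharp}$ is an admissible competitor for the Dirichlet problem on $D(1)\setminus K^{\sharp}$ with the same boundary data, its energy dominates that of the harmonic measure of $D(1)\setminus K^{\sharp}$, and therefore $\module(D(1)\setminus K^{\sharp})=1/\iint_{D(1)\setminus K^{\sharp}}|\nabla(\text{harm.\ meas.})|^2\geq 1/\iint_{D(1)\setminus K}|\nabla u|^2=\module(D(1)\setminus K)$. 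The hard part will be exactly this Pólya--Szegő-type estimate for circular symmetrization --- the cleanest modern proof goes through Baernstein's $\star$-function --- together with the measure-theoretic bookkeeping for the radii where $(D(1)\setminus K)\cap\{|z|=\rho\}$ is a full circle, a union of several arcs, or a null set; I would also note that, should $K^{\sharp}$ (or its rearrangement) degenerate, the inclusion $D(1)\setminus K^{\sharp}\subset G(r)$ already forces the conclusion by a direct comparison of admissible metrics (equivalently, of condenser capacities).
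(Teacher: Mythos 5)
The paper does not actually prove Theorem~\ref{theoremegrotzsch}: the appendix only recalls the statement, with a reference to Ahlfors's \emph{Conformal Invariants}, so there is no in-paper argument to compare yours against. Your outline is a correct account of the classical circular-symmetrization proof. The reduction by monotonicity to full continua $K\ni 0,r$, the observation that $[0,r]\subset K^\sharp$ (hence $D(1)\setminus K^\sharp\subset G(r)$), and the identification of the real content as the capacity identity $\module(D(1)\setminus K)=1/\iint|\nabla u|^2$ combined with a P\'olya--Szeg\H{o} inequality for circular rearrangement, are all in order, and you are right to flag the rearrangement inequality as the one genuinely hard step. Your reading of the hypothesis as $0,r\in E_0$ is also the correct one: with the literal reading ``$0,r\notin U$'' the inequality is false, since a thin round annulus around $0$, with $r$ in the unbounded complementary component, has arbitrarily large modulus. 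Two details deserve to be made explicit when filling this in. First, that $D(1)\setminus E_0$ and $D(1)\setminus K^\sharp$ are topological annuli rests on the fact that each complementary component of a doubly connected plane domain is a \emph{full} continuum (a consequence of the unicoherence of $S^2$). Second, for the P\'olya--Szeg\H{o} step the cleanest route is polarization across half-lines through the origin, which reduces the energy comparison to a pointwise reflection inequality for $|\nabla u|^2$ and yields circular symmetrization as a limit, sidestepping the radial coarea bookkeeping you mention.
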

If the annulus $U$ does not intersect the whole line segment $[0,r]$, then $U \subset G(r)$ and the theorem is useless. But in the following case, the inequality $\module (U) \leq \module (G(r))$ is not obvious:

\begin{center}
\begin{tikzpicture}[scale=2]
\draw (0,0.5) node {$U$};
\draw (0,0) circle (1) ;
\draw (0,0) node {$\bullet$};
\draw (0,0) node [below] {0};
\draw (0.6,0) node {$\bullet$};
\draw (0.6,0) node [below] {$r$};
\draw[-] (-0.15,0) to[out=90,in=90] (0.1,0.2);
\draw[-] (0.1,0.2) to[out=270,in=180] (0.2,-0.2);
\draw[-] (0.2,-0.2) to[out=0,in=180] (0.3,-0.1);
\draw[-] (0.3,-0.1) to[out=0,in=270] (0.5,-0.15);
\draw[-] (0.5,-0.15) to[out=90,in=180] (0.5,0.3);
\draw[-] (0.5,0.3) to[out=0,in=90] (0.8,-0.1);
\draw[-] (0.8,-0.1) to[out=270,in=0] (0.5,-0.4);
\draw[-] (0.5,-0.4) to[out=180,in=270] (-0.2,-0.2);
\draw[-] (-0.2,-0.2) to[out=90,in=270] (-0.15,0);
\end{tikzpicture}
\captionof{figure}{}
\end{center}

%\newpage

\end{document}